\definecolor{green_dark}{rgb}{0,0.6,0}
\xpatchcmd{\paragraph}{\normalfont}{{\normalfont\bfseries}}{}{}
\newtheorem{theorem}{Theorem}[section]
\newtheorem{lem}[theorem]{Lemma} 
\newtheorem{prop}[theorem]{Proposition}
\newtheorem{coro}[theorem]{Corollary} 
\theoremstyle{definition}
\newtheorem{rem}[theorem]{Remark}
\newcommand{\N}{\mathbb N}
\newcommand{\Z}{\mathbb Z}
\newcommand{\R}{\mathbb R}
\newcommand{\C}{\mathbb C}
\newcommand{\Lc}{\mathcal L}
\newcommand{\Kc}{\mathcal K}
\newcommand{\Hc}{\mathcal H}
\newcommand{\Jc}{\mathcal J}
\newcommand{\Rc}{\mathcal R}
\newcommand{\Sch}{\mathscr S}
\newcommand{\Lch}{\mathscr L}
\newcommand{\ep}{\epsilon}
\newcommand{\Ga}{\Gamma}
\newcommand{\re}[1]{\mbox{Re} \ #1} 
\newcommand{\im}[1]{\mbox{Im} \ #1} 
\newcommand{\scal}[1]{\left\langle #1 \right\rangle} 
\newcommand{\defendproof}{\hfill $\Box$} 
\title[Global Strichartz Fractional Schr\"odinger Equation]{Global in time Strichartz estimates for the fractional Schr\"odinger equations on asymptotically Euclidean manifolds} 
\author[V. D. Dinh]{Van Duong Dinh}
\address[V. D. Dinh]{Institut de Math\'ematiques de Toulouse, Universit\'e Toulouse III Paul Sabatier, 31062 Toulouse Cedex 9, France and Department of Mathematics, HCMC University of Pedagogy, 280 An Duong Vuong, Ho Chi Minh, Vietnam}
\email{dinhvan.duong@math.univ-toulouse.fr}
\keywords{Global in time Strichartz estimate; fractional Schr\"odinger equation; Littlewood-Paley decomposition, Isozaki-Kitada parametrix}
\subjclass[2010]{35G20, 35G25}
\begin{document}

\maketitle
\begin{abstract}
In this paper, we prove global in time Strichartz estimates for the fractional Schr\"odinger operators, namely $e^{-it\Lambda_g^\sigma}$ with $\sigma \in (0,\infty)\backslash \{1\}$ and $\Lambda_g:=\sqrt{-\Delta_g}$ where $\Delta_g$ is the Laplace-Beltrami operator on asymptotically Euclidean manifolds $(\R^d,g)$. Let $f_0\in C^\infty_0(\R)$ be a smooth cutoff equal 1 near zero. We firstly show that the high frequency part $(1-f_0)(P)e^{-it\Lambda_g^\sigma}$ satisfies global in time Strichartz estimates as on $\R^d$ of dimension $d\geq 2$ inside a compact set under non-trapping condition. On the other hand, under the moderate trapping assumption $(\ref{assump resolvent})$, the high frequency part also satisfies the global in time Strichartz estimates outside a compact set. We next prove that the low frequency part $f_0(P)e^{-it\Lambda_g^\sigma}$ satisfies global in time Strichartz estimates as on $\R^d$ of dimension $d\geq 3$ without using any geometric assumption on $g$. As a byproduct, we prove global in time Strichartz estimates for the fractional Schr\"odinger and wave equations on $(\R^d, g), d\geq 3$ under non-trapping condition.
\end{abstract}

\section{Introduction}
\setcounter{equation}{0}
Let $(M,g)$ be a $d$-dimensional Riemannian manifold. We consider the time dependent fractional Schr\"odinger equation on $(M,g)$, namely
\begin{align}
i\partial_tu -\Lambda_g^\sigma u =0, \quad u_{\vert t=0} =u_0, \label{fractional schrodinger equation}
\end{align}
with $\sigma \in (0,\infty)\backslash \{1\}, \Lambda_g=\sqrt{-\Delta_g}$ where $\Delta_g$ is the Laplace-Beltrami operator associated to the metric $g$. The fractional Schr\"odinger equation $(\ref{fractional schrodinger equation})$ arises in many physical contexts. When $\sigma \in (0,2)\backslash \{1\}$, the fractional Schr\"odinger equation was discovered by N. Laskin (see \cite{Laskin2000}, \cite{Laskin2002}) as a result of extending the Feynman path integral, from the Brownian-like to L\'evy-like quantum mechanical paths. This type of equation also appears in the water wave models (see \cite{IonescuPusateri}, \cite{Nguyen}). When $\sigma=2$, it corresponds to the well-known Schr\"odinger equation. In the case $\sigma=4$, it is the fourth-order Schr\"odinger equation introduced by Karpman \cite{Karpman} and Karpman and Shagalov \cite{KarpmanShagalov} to take into account the role of small fourth-order dispersion terms in the propagation of intense laser beams in a bulk medium with Kerr nonlinearity. \newline
\indent When $M=\R^d$ and $g=\text{Id}$, i.e. the flat Euclidean space, the solution to $(\ref{fractional schrodinger equation})$ enjoys the following global in time Strichartz estimates (see \cite{Dinh}),
\[
\|u\|_{L^p(\R, L^q(\R^d))} \lesssim \|u_0\|_{\dot{H}^{\gamma_{p,q}}(\R^d)},
\]
where $(p,q)$ satisfies the fractional admissible condition, i.e.
\begin{align}
p\in [2,\infty],\quad q \in [2, \infty), \quad (p,q,d) \ne (2,\infty,2), \quad \frac{2}{p}+\frac{d}{q} \leq \frac{d}{2}, \label{fractional admissible}
\end{align}
with
\begin{align}
\gamma_{p,q}=\frac{d}{2}-\frac{d}{q}-\frac{\sigma}{p}. \label{define gamma pq}
\end{align}
Remark that one also has global in time Strichartz estimates for $q=\infty$, but one has to replace the Lebesgue norm $L^\infty(\R^d)$ by a corresponding Besov norm due to the Littlewood-Paley theorem. We refer the reader to \cite{Dinh} for more details. \newline
\indent When $M$ is a compact Riemannian manifold without boundary and $g$ is smooth, we also have (see \cite{Dinhcompact}) Strichartz estimates but only local in time,
\[
\|u\|_{L^p([0,1],L^q(M))} \lesssim \|u_0\|_{H^\gamma(M)}.
\]
In the case $\sigma \in (0,1)$, 
we have the same (local in time) Strichartz estimates as in $(\R^d, \text{Id})$, i.e. $\gamma=\gamma_{p,q}$. In the case $\sigma \in (1,\infty)$, there is a ``loss'' of derivatives $(\sigma-1)/p$ comparing to the one on $(\R^d,\text{Id})$, i.e. $\gamma=\gamma_{p,q}+(\sigma-1)/p$. \newline
\indent When $M$ is a non-compact Riemannian manifold, global in time Strichartz estimates for the Schr\"odinger equation (i.e. $\sigma=2$) have been studied intensively. In \cite{BTglobalstrichartz}, Bouclet-Tzvetkov established global in time Strichartz estimates on asymptotically Euclidean manifold, i.e. $\R^d$ equipped with a long range perturbation metric $g$ (see $(\ref{assump long range})$) with a low frequency cutoff under non-trapping condition. The first breakthrough on this topic was done by Tataru in \cite{Tataru} where the author considered long range and globally small perturbations of the Euclidean metric with $C^2$ and time dependent coefficients. In this setting, no trapping could occur. Later, Marzuola-Metcalfe-Tataru in \cite{MarzuolaMetcalfeTataru} improved the results considering more general perturbations in a compact set, including some weak trapping. Afterward, Hassell-Zhang in \cite{HassellZhang} extended those results for general geometric framework of asymptotically conic manifolds and including very short range potentials with non-trapping condition. Recently, Bouclet-Mizutani in \cite{BoucletMizutani} established global in time Strichartz estimates for a more general class of asymptotically conic manifolds including all usual smooth long range perturbations of the Euclidean metric. After that, Zhang-Zheng \cite{ZhangZheng-scattering} extended the result of Hassell-Zhang \cite{HassellZhang} and proved global in time Strichartz estimates for Schr\"odinger operators with potentials on assymptotically conic manifoldswith non-trapping condition. They also extended Bouclet-Mizutani's result \cite{BoucletMizutani} by considering Schr\"odinger operators with short range potentials on asymptotically conic manifolds with hyperbobic trapping condition. Recently, Zhang-Zheng \cite{ZhangZheng-cone} established global in time Strichartz estimates for Schr\"odinger operators on metric cone.\newline
\indent In order to prove Strichartz estimates on curved backgrounds, one uses the Littlewood-Paley decomposition to localize the solution in frequency. One then uses microlocal techniques to derive dispersive estimates and obtain Strichartz estimates for each spectrally localized components. By summing over all frequency pieces, one gets Strichartz estimates for the solution. For local in time Strichartz estimates, this usual scheme works very well. However, for global in time Strichartz estimates, one has to face a difficulty arising at low frequency. Due to the uncertainty principle, one can only use microlocal techniques for data supported outside compact sets at low frequency. Therefore, one has to use another technique for data supported inside compact sets. Note also that on $\R^d$, one can use the scaling technique to reduce the analysis at low frequency to the study at frequency one, but this technique does not work on manifolds in general. \newline
\indent The goal of this paper is to study global in time Strichartz estimates for the fractional Schr\"odinger equation on asymptotically Euclidean manifolds. In the case of Schr\"odinger equation, it can be seen as a completion of those in \cite{BTglobalstrichartz} of spatial dimensions greater than or equal to 3. In order to achieve this goal, we will use the techniques of \cite{BoucletMizutani} combined with the analysis of \cite{BTglobalstrichartz}. Note that since we consider a larger range of admissible condition comparing to the sharp Schr\"odinger admissible condition (i.e. the inequality in $(\ref{fractional admissible})$ is replaced by the equality) of \cite{BoucletMizutani}, we have to be more careful in order to apply the techniques of \cite{BoucletMizutani}.  \newline
\indent Before giving the main results, let us introduce some notations. Let $g(x)=(g_{jk}(x))_{j,k=1}^d$ be a metric on $\R^d, d\geq 2$, and denote $G(x)=(g^{jk}(x))_{j,k=1}^d:=g^{-1}(x)$. We consider the Laplace-Beltrami operator associated to $g$, i.e.
\[
\Delta_g= \sum_{j,k=1}^{d}|g(x)|^{-1} \partial_{x_j} \left( g^{jk}(x) |g(x)| \partial_{x_k}\right),
\]
where $|g(x)|:=\sqrt{\det g(x)}$. Throughout the paper, we assume that $g$ satisfies the following assumptions.
\begin{enumerate}
\item There exists $C>0$ such that for all $x,\xi\in \R^d$, 
\begin{align}
C^{-1}|\xi|^2 \leq \sum_{j,k=1}^{d}g^{jk}(x)\xi_j\xi_k \leq C |\xi|^2. \label{assump elliptic}
\end{align}
\item There exists $\rho>0$ such that for all $\alpha \in \N^d$, there exists $C_\alpha>0$ such that for all $x \in \R^d$,
\begin{align}
\left| \partial^\alpha \left(g^{jk}(x) -\delta_{jk} \right) \right| \leq C_\alpha \scal{x}^{-\rho -|\alpha|}. \label{assump long range}
\end{align}
\end{enumerate}
The elliptic assumption $(\ref{assump elliptic})$ implies that $|g(x)|$ is bounded from below and above by positive constants. Thus for $1\leq q \leq \infty$, the spaces $L^q(\R^d, d_gx)$ where $d_gx = |g(x)|dx$ and $L^q(\R^d)$ coincide and have equivalent norms. In the sequel, we will use the same notation $L^q(\R^d)$. It is well-known that $-\Delta_g$ is essentially self-adjoint on $C^\infty_0(\R^d)$ under the assumptions $(\ref{assump elliptic})$ and $(\ref{assump long range})$. We denote the unique self-adjoint extension on $L^2(\R^d)$ by $P$. Note that the principal symbol of $P$ is 
\begin{align}
p(x,\xi)=\xi^t G(x)\xi = \sum_{j,k=1}^d g^{jk}(x)\xi_j \xi_k. \label{principle symbol p}
\end{align}
\indent Now let $\gamma \in \R$ and $q \in [1,\infty]$. The inhomogeneous Sobolev space $W^{\gamma,q}_g(\R^d)$ associated to $P$ is defined as a closure of the Schwartz space $\Sch(\R^d)$ under the norm 
\[
\|u\|_{W^{\gamma,q}_g(\R^d)}:= \|\scal{\Lambda_g}^{\gamma}u\|_{L^q(\R^d)}.
\]
It is very useful to recall that for all $\gamma \in \R$ and $q \in (1, \infty)$, there exists $C>1$ such that 
\begin{align}
C^{-1} \|\scal{\Lambda}^{\gamma}u\|_{L^q(\R^d)} \leq \|u\|_{W^{\gamma,q}_g(\R^d)} \leq C \|\scal{\Lambda}^{\gamma}u\|_{L^q(\R^d)}, \label{equivalent sobolev norms}
\end{align}
with $\scal{\Lambda}=\sqrt{1-\Delta}$ where $\Delta$ is the free Laplace operator on $\R^d$. This fact follows from the $L^q$-boundedness of zero order pseudo-differential operators (see e.g \cite[Theorem 3.1.6]{SoggeFIO}). The estimates $(\ref{equivalent sobolev norms})$ allow us to use the Sobolev embedding as on $\R^d$. For the homogeneous Sobolev space associated to $P$, one should be careful since the Schwartz space is not a good candidate due to the singularity at 0 of $\lambda \mapsto |\lambda|^\gamma$. Recall that (see \cite[Appendix]{GinibreVelo85}, \cite[chapter 5]{Triebel} and \cite[Chapter 6]{BerghLofstom}) on $\R^d$, the homogeneous Sobolev space $\dot{W}^{\gamma,q}(\R^d)$ is the closure of $\Lch(\R^d)$ under the norm
\[
\|u\|_{\dot{W}^{\gamma,q}(\R^d)} :=\|\Lambda^\gamma u \|_{L^q(\R^d)},
\]
where 
\[
\Lch(\R^d):= \left\{ u \in \Sch(\R^d) \ | \ D^\alpha \hat{u}(0)=0, \  \forall \alpha \in \N^d \right\}.
\]
Here $\hat{\cdot}$ is the spatial Fourier transform. Since there is no Fourier transform on manifolds, we need to use the spectral theory instead. We denote
\begin{align}
\Lch_g(\R^d):= \left\{\varphi(P) u \ | \ u \in \Sch(\R^d), \varphi \in C^\infty_0((0,\infty)) \right\}. \label{dense space}
\end{align}
We define the homogeneous Sobolev space $\dot{W}^{\gamma,q}_g(\R^d)$ associated to $P$ as the closure of $\Lch_g(\R^d)$ under the norm
\[
\|u\|_{\dot{W}^{\gamma,q}_g(\R^d)}:= \|\Lambda_g^{\gamma} u\|_{L^q(\R^d)}.
\]
When $q=2$, we use $H^\gamma(\R^d), \dot{H}^\gamma(\R^d), H^\gamma_g(\R^d)$ and $\dot{H}^\gamma_g(\R^d)$ instead of $W^{\gamma,2}(\R^d), \dot{W}^{\gamma,2}(\R^d), W^{\gamma,2}
_g(\R^d)$ and $\dot{H}^\gamma_g(\R^d)$ respectively. Thanks to the equivalence $(\ref{equivalent sobolev norms})$, we will only use the usual notation $H^\gamma(\R^d)$ in the sequel. It is important to note (see \cite[Proposition 2.3]{Boucletlowfreq} or \cite[Lemma 2.4]{SoggeWang}) that for $d\geq 2$,
\begin{align}
\|u\|^2_{\dot{H}^1_g(\R^d)} =(\Lambda_gu, \Lambda_g u) = (u, Pu) \simeq \|\nabla u\|^2_{L^2(\R^d)} =\|u\|^2_{\dot{H}^1(\R^d)}. \label{equivalent homogeneous sobolev norms}
\end{align}
\indent By the Stone theorem, the solution to $(\ref{fractional schrodinger equation})$ is given by $u(t)=e^{-it\Lambda_g^\sigma} u_0$. Let $f_0 \in C^\infty_0(\R)$ be such that $f_0=1$ on $[-1,1]$. We split 
\[
u(t)=u_{\text{low}}(t)+u_{\text{high}}(t),
\]
where 
\begin{align}
u_{\text{low}}(t):= f_0(P)e^{-it\Lambda_g^\sigma} u_0, \quad u_{\text{high}}(t)=(1-f_0)(P) e^{-it\Lambda_g^\sigma}u_0. \label{split low high freqs}
\end{align}
We see that $u_{\text{low}}(t)$ and $u_{\text{high}}(t)$ correspond to the low and high frequencies respectively. By the Littlewood-Paley decomposition which is very similar to the one given in \cite[Subsection 4.2]{BoucletMizutani} (see Subsection $\ref{subsection littlewood paley}$), we split the high frequency term into two parts: inside and outside a compact set. Our first result concerns the global in time Strichartz estimates for the high frequency term inside a compact set.
\begin{theorem} \label{theorem strichartz inside compact}
Consider $\R^d, d\geq 2$ equipped with a smooth metric $g$ satisfying $(\ref{assump elliptic}), (\ref{assump long range})$ and assume that the geodesic flow associated to $g$ is non-trapping. Then for all $\chi \in C^\infty_0(\R^d)$ and all $(p,q)$ fractional admissible, there exists $C>0$ such that for all $u_0 \in \Lch_g(\R^d)$,
\begin{align}
\|\chi u_{\emph{high}}\|_{L^p(\R,L^q(\R^d))} \leq C \|u_0\|_{\dot{H}^{\gamma_{p,q}}_g(\R^d)}. \label{strichartz inside compact}
\end{align} 
\end{theorem}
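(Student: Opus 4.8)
The plan is to follow the now-standard scheme for such estimates---Littlewood-Paley localisation, semiclassical parametrix, $TT^*$---as in \cite{BTglobalstrichartz} and \cite{BoucletMizutani}, the point being to carry out each step uniformly in the spectral parameter and to accommodate the full fractional admissible range $(\ref{fractional admissible})$ rather than only the sharp (equality) case.

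\emph{Reduction to a single frequency.} Using the Littlewood-Paley partition of $(1-f_0)(P)$ from Subsection~\ref{subsection littlewood paley}, write $u_{\text{high}}=\sum_{0<h\le 1}\varphi(h^2P)e^{-it\Lambda_g^\sigma}u_0$, the sum over dyadic $h$ and $\varphi\in C_0^\infty((0,\infty))$. Since $p,q\ge 2$, the spectrally defined square function estimate for $P$---valid under $(\ref{assump elliptic})$--$(\ref{assump long range})$ via the Gaussian bounds for $e^{-tP}$---combined with Minkowski's inequality and $\|u_0\|_{\dot{H}^{\gamma_{p,q}}_g}^2\simeq\sum_h h^{-2\gamma_{p,q}}\|\varphi(h^2P)u_0\|_{L^2(\R^d)}^2$ reduces $(\ref{strichartz inside compact})$ to proving, uniformly in $h\in(0,1]$,
\[
\|\chi\,e^{-it\Lambda_g^\sigma}\varphi(h^2P)v_0\|_{L^p(\R,L^q(\R^d))}\lesssim h^{-\gamma_{p,q}}\|v_0\|_{L^2(\R^d)}.
\]
Rescaling time by $s=h^{-\sigma}t$ and writing $\Lambda_g^\sigma=h^{-\sigma}(h^2P)^{\sigma/2}$, so that $e^{-it\Lambda_g^\sigma}\varphi(h^2P)=e^{-is(h^2P)^{\sigma/2}}\varphi(h^2P)$, and using $\gamma_{p,q}+\sigma/p=d/2-d/q$ from $(\ref{define gamma pq})$, this is equivalent to the semiclassical Strichartz estimate
\[
\|\chi\,e^{-is(h^2P)^{\sigma/2}}\varphi(h^2P)v_0\|_{L^p(\R_s,L^q(\R^d))}\lesssim h^{-d(1/2-1/q)}\|v_0\|_{L^2(\R^d)}.
\]

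\emph{Bounded rescaled times.} On a fixed interval $|s|\le s_0$ one constructs a semiclassical parametrix for $e^{-is(h^2P)^{\sigma/2}}\varphi(h^2P)$ of WKB/Isozaki-Kitada type, generated by the Hamilton flow of $p^{\sigma/2}$; modulo an $O_N(h^N)$ remainder, $\chi\,e^{-is(h^2P)^{\sigma/2}}\varphi(h^2P)\chi$ is then a semiclassical Fourier integral operator. Since $\sigma\ne 1$, the Hessian in the frequency variables of the generating phase is nondegenerate on the energy shell carrying the support of $\varphi(h^2\cdot)$, so stationary phase gives $\|\chi\,e^{-is(h^2P)^{\sigma/2}}\varphi(h^2P)\chi\|_{L^1\to L^\infty}\lesssim h^{-d}|s|^{-d/2}$ for $0<|s|\le s_0$, while unitarity gives the $L^2\to L^2$ bound. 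Interpolating, combining with the semiclassical Bernstein estimate $\|\varphi(h^2P)\|_{L^2\to L^q}+\|\varphi(h^2P)\|_{L^{q'}\to L^2}\lesssim h^{-d(1/2-1/q)}$ (again from the heat kernel bounds together with $(\ref{equivalent sobolev norms})$ and Sobolev embedding), and running the $TT^*$/Keel-Tao argument---whose endpoint version is needed only when the inequality in $(\ref{fractional admissible})$ is an equality, the remaining cases following from Hardy-Littlewood-Sobolev and the room $d(1/2-1/q)\ge 2/p$---yields the previous display with $L^p(\R_s,\cdot)$ replaced by $L^p(I,\cdot)$ for any interval $I$ of length $\le s_0$, with constant independent of $I$.

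\emph{Globalisation and the main obstacle.} It remains to control all of $\R_s$, and this is where non-trapping enters essentially. At semiclassical energy of size one the bicharacteristics of $p^{\sigma/2}$ are a time-reparametrisation of the geodesic flow, so under the non-trapping hypothesis each of them meets a neighbourhood of $\mathrm{supp}\,\chi$ only for a bounded rescaled time; the solution restricted to $\mathrm{supp}\,\chi$ is accordingly a superposition of pieces each of bounded $s$-duration. The ``geometric-optics'' part---after a finite time all of it is outgoing, by non-trapping---is handled by the outgoing Isozaki-Kitada representation, valid for all later times, which after straightening reduces it to an estimate of the type holding on $\R^d$; the genuinely dispersive remainder is absorbed by the global-in-time local-smoothing (Kato) estimate for $(1-f_0)(P)e^{-it\Lambda_g^\sigma}$, which holds under non-trapping through the limiting absorption principle / non-trapping resolvent bounds, fed into an inhomogeneous Strichartz estimate linking the weighted space $\scal{x}^{-\mu}L^2_{t,x}$ to $L^p_tL^q_x$. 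Reassembling these regimes, undoing the time rescaling, summing over $h$, and using time reversal to restrict to $s\ge 0$ gives $(\ref{strichartz inside compact})$. The hard part is precisely this last step: making the parametrix remainder, the smoothing estimate, and the inhomogeneous Strichartz estimate all quantitative and uniform in $h$ over unbounded rescaled time, and checking that the whole fractional admissible range survives---this is where the extra room $d(1/2-1/q)\ge 2/p$ and the Bernstein input are used in an essential way.
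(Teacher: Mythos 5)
Your reduction to a single dyadic frequency and the local-in-time step (WKB parametrix, stationary phase, Keel--Tao with the extra room $d(1/2-1/q)\ge 2/p$) match the paper's Section 4.1 (Theorem~\ref{theorem wkb for fractional schrodinger}, Propositions~\ref{prop dispersive estimates fractional schrodinger}, \ref{prop TTstar} and Corollary~\ref{coro Keel Tao}). The gap is the globalisation step, which you flag as the hard part but describe in a way that is both vague and, in one respect, off target. The paper does \emph{not} use the Isozaki--Kitada parametrix to prove $(\ref{strichartz inside compact})$: that machinery is reserved entirely for the \emph{outside}-compact estimate, Theorem~\ref{theorem strichartz outside compact}. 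Your picture of splitting the propagated data into an ``outgoing geometric-optics'' piece handled by Isozaki--Kitada and a ``dispersive remainder'' handled by local smoothing imports the outside-compact strategy where it is not needed, and as written it is a phase-space heuristic, not an argument.

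The actual globalisation, following Staffilani--Tataru as in \cite{StaTata}, is considerably more elementary. One sets $v(t)=\scal{x}^{-1}f(h^2P)e^{-ith^{-1}(h\Lambda_g)^\sigma}u_0$ and partitions the rescaled time axis as $v=\sum_{k\in\Z}v_k$ with $v_k(t)=\theta((t-t_k)/t_0)v(t)$, $t_k=t_0k$, where $t_0$ is the small time from Corollary~\ref{coro Keel Tao}. For $k\ne 0$ one has $v_k(0)=0$, so Duhamel exhibits $v_k$ as the inhomogeneous solution driven by $(hD_t+\psi(h^2P))v_k$, which splits into a $\theta'$-term and a commutator $[\psi(h^2P),\scal{x}^{-1}]$-term; both carry an extra factor of $h$ and remain spatially localised. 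Applying the local inhomogeneous Strichartz estimate $(\ref{inhomogeneous strichartz with spectral localization})$ on each $I_k$ and summing by almost-orthogonality in time (using $p\geq 2$), one is left exactly with $\|\scal{x}^{-1}f(h^2P)e^{-ith^{-1}(h\Lambda_g)^\sigma}u_0\|_{L^2(\R,L^2)}$, which is controlled by the global Kato-smoothing estimate $(\ref{L2 global integrability})$; under non-trapping $N_0=1$, which delivers precisely the $h$-weight needed after undoing the time rescaling. Your intuition that bicharacteristics leave $\mathrm{supp}\,\chi$ after bounded rescaled time is correct, but it is never converted into a geometric argument: non-trapping enters only through the resolvent bound that powers the smoothing estimate, and the time-partition-plus-commutator mechanism does the rest. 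Your sketch names the right two ingredients (local Strichartz, Kato smoothing) but omits the concrete time-partition/Duhamel/commutator bridge connecting them, and this bridge is the whole content of the globalisation.
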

The proof of Theorem $\ref{theorem strichartz inside compact}$ is based on local in time Strichartz estimates and global $L^2$ integrability estimates of the fractional Schr\"odinger operator. This strategy was first used in \cite{StaTata} for the Schr\"odinger equation. We will make use of dispersive estimates given in \cite{Dinhcompact} to get Strichartz estimates with a high frequency spectral cutoff on a small time interval. Thanks to global $L^2$ integrability estimates, we can upgrade these local in time Strichartz estimates in to global in time Strichartz estimates. This strategy depends heavily on the non-trapping condition. We believe that one can improve this result to allow some weak trapped condition such as hyperbolic trapping in \cite{BuGH}. We hope to come back this interesting question in a future work. \newline
\indent Our next result is the following global in time Strichartz estimates for the high frequency term outside a compact set.
\begin{theorem} \label{theorem strichartz outside compact}
Consider $\R^d, d\geq 2$ equipped with a smooth metric $g$ satisfying $(\ref{assump elliptic}), (\ref{assump long range})$ and assume that there exists $M>0$ large enough such that for all $\chi \in C^\infty_0(\R^d)$,
\begin{align}
\|\chi (P-\lambda\pm i0)^{-1}\chi\|_{\Lc(L^2(\R^d))} \lesssim_{\chi} \lambda^{M}, \quad \lambda \geq 1. \label{assump resolvent}
\end{align}
Then there exists $R > 0$ large enough such that for all $(p,q)$ fractional admissible, there exists $C>0$ such that for all $u_0 \in \Lch_g(\R^d)$,
\begin{align}
\|\mathds{1}_{\{|x|>R\}} u_{\emph{high}}\|_{L^p(\R, L^q(\R^d))} \leq C \|u_0\|_{\dot{H}^{\gamma_{p,q}}_g(\R^d)}. \label{strichartz outside compact}
\end{align}
\end{theorem}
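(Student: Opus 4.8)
The plan is to reduce the estimate outside a large ball to a global-in-time estimate for a model operator obtained via the Isozaki–Kitada parametrix, following the strategy of Bouclet–Mizutani adapted to the fractional dispersion $\Lambda_g^\sigma$. First I would insert the Littlewood–Paley decomposition $u_{\mathrm{high}} = \sum_{h^{-2}\in 2^{\N}} \varphi(h^2 P)(1-f_0)(P) e^{-it\Lambda_g^\sigma} u_0$ (with $\varphi$ supported away from $0$) and record the square-function estimate so that it suffices to prove a frequency-localized bound for $v_h(t):=\mathds{1}_{\{|x|>R\}}\varphi(h^2P) e^{-it\Lambda_g^\sigma} u_0$ that is summable in $h$ with the gain $h^{-2\gamma_{p,q}}$; note that here, because we allow the full fractional-admissible range (inequality, not equality), the summation requires a bit of extra care via Sobolev embedding exactly as flagged in the introduction.

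The core step is the microlocal decomposition of $\varphi(h^2P)$ localized to $|x|>R$ into outgoing and incoming pieces. Using the Isozaki–Kitada parametrix, one writes $\mathds{1}_{\{|x|>R\}}\varphi(h^2 P) = \sum_{\pm} J_h^{\pm} e^{\mp it'\,|D|^\sigma\text{-type}} (J_h^{\pm})^* + \text{(remainder)}$, more precisely one constructs semiclassical FIOs $J_h^{\pm}$ intertwining $e^{-it\Lambda_g^\sigma}$ with the free fractional propagator $e^{-it|D|^\sigma}$ up to acceptable errors: the outgoing/incoming symbols are supported where $\pm x\cdot\xi > \delta|x||\xi|$ and $|x|$ large, and conjugation gives $(J_h^{\pm})^* e^{-it\Lambda_g^\sigma} J_h^{\pm} = e^{-it(h^{-\sigma})\,a^{w}} + \mathcal{O}(h^\infty)$-type remainders after the natural time rescaling $t\mapsto h^{-\sigma}t$. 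For the free fractional propagator one already has the global-in-time Strichartz estimates on $\R^d$ from \cite{Dinh}; transferring them through the $L^2$-bounded (uniformly in $h$) operators $J_h^\pm$ and summing the Littlewood–Paley pieces yields the claimed bound for the main terms.

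The remaining contributions are of two types, and this is where assumption $(\ref{assump resolvent})$ enters. The parametrix is only a good approximation for times $|t|\lesssim h^{-\sigma}$ (or on the outgoing region); the long-time / trapped-region remainder is controlled by writing it through the resolvent via Duhamel and Kato's smoothing-type estimates, where the polynomial bound $\|\chi(P-\lambda\pm i0)^{-1}\chi\|\lesssim \lambda^M$ is exactly what is needed to absorb the loss — the point being that the remainder carries many powers of $h$ from the parametrix construction, and only finitely many (namely $\sim M$) are eaten by the resolvent estimate, so choosing the parametrix accurate to order $h^{N}$ with $N>M$ closes the argument. One also needs a low-frequency-free, purely high-frequency nontrapping-type local smoothing estimate near spatial infinity (which holds automatically outside a large ball by the non-trapping-at-infinity structure of long-range metrics, independent of interior trapping), to handle the piece where the frequency is localized but the point has not yet escaped to the region $|x|>R$.

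**The main obstacle** I expect is the bookkeeping of the two rescalings simultaneously — the semiclassical parameter $h$ from Littlewood–Paley and the time rescaling $t\mapsto h^{-\sigma}t$ forced by the $\sigma$-homogeneity of the symbol — while keeping all FIO mapping bounds and all remainder estimates uniform in $h$; in particular, verifying that the Isozaki–Kitada construction for $|D|^\sigma$ (rather than $|D|^2$) still produces symbols in the right rescaled pseudo-differential classes, and that the stationary-phase analysis yielding the dispersive decay for $e^{-it|D|^\sigma}$ survives the conjugation, is the technically delicate heart of the matter. The interaction with the enlarged admissible range is a secondary but genuine nuisance: one cannot simply invoke the endpoint result of \cite{BoucletMizutani} as a black box and must re-run the summation argument with the correct $\gamma_{p,q}$.
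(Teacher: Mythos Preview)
Your overall strategy---Littlewood--Paley reduction, microlocal splitting into outgoing/incoming pieces supported in $\Gamma^\pm(R,J,\tau)$, and the Isozaki--Kitada parametrix intertwining $e^{-it\Lambda_g^\sigma}$ with the free fractional flow $e^{-it|D|^\sigma}$---matches the paper's approach exactly. However, two points in your sketch are genuine gaps rather than mere imprecision.

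First, the step ``transfer the free Strichartz estimates of \cite{Dinh} through the $L^2$-bounded operators $J_h^\pm$'' does not work as stated: the FIOs $J_h^\pm$ are uniformly bounded on $L^2$ but you have no reason to expect uniform $L^q\to L^q$ bounds for $q\neq 2$, which is what transferring a mixed $L^p_tL^q_x$ estimate would require. The paper instead proves the \emph{dispersive estimate} $\|J_h^\pm(a^\pm)e^{-ith^{-1}(h\Lambda)^\sigma}J_h^\pm(b^\pm)^\star\|_{L^1\to L^\infty}\lesssim h^{-d}(1+|t|h^{-1})^{-d/2}$ directly by stationary phase on the explicit oscillatory-integral kernel (this is where $\sigma\neq 1$ is used, via nondegeneracy of $\nabla^2_\xi|\xi|^\sigma$), and then applies the abstract $TT^\star$ machinery of Keel--Tao. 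This is the correct route and is not harder than what you propose.

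Second, your statement that ``the parametrix is only a good approximation for times $|t|\lesssim h^{-\sigma}$'' misreads the Isozaki--Kitada construction: its whole point is that the approximation is \emph{global in time} on the outgoing (resp.\ incoming) half-line $\pm t\geq 0$, with remainders that \emph{decay} in $t$. The resolvent assumption (\ref{assump resolvent}) enters not to cover a long-time regime where the parametrix fails, but to control the factors $e^{-i(t-s)h^{-1}\psi(h^2P)}$ that appear in the Duhamel representation of the four remainder terms $R^\pm_k(N,t,h)$: one converts (\ref{assump resolvent}) into weighted local energy decay $\|\langle x\rangle^{-1-k}e^{-ith^{-1}(h\Lambda_g)^\sigma}f(h^2P)\langle x\rangle^{-1-k}\|_{L^2\to L^2}\lesssim h^{-N_k}\langle th^{-1}\rangle^{-k}$ via the Stone formula, and then into microlocal propagation estimates of the type $\|Op^h(\chi^\pm)^\star e^{-ith^{-1}(h\Lambda_g)^\sigma}f(h^2P)\langle x\rangle^{-l}\|\lesssim h^{-m}\langle t\rangle^{-3l/4}$ for $\pm t\leq 0$. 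The polynomial loss $h^{-N_k}$ is indeed beaten by the $h^N$ from the parametrix, as you say, but the mechanism is time-decay of weighted propagators rather than Kato smoothing.
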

The assumption $(\ref{assump resolvent})$ is known to hold in certain trapping situations (see e.g. \cite{Datch}, \cite{NonZwor} or \cite{BuGH}) as well as in the non-trapping case (see \cite{Robert92} or \cite{Vodev}). We remark that under the trapping condition of \cite{Datch}, \cite{NonZwor} or \cite{BuGH}, we have 
\[
\|\chi(P-\lambda \pm i0)^{-1}\chi\|_{\Lc(L^2(\R^d))} \lesssim_{\chi} \lambda^{-1/2} \log \lambda, \quad \lambda \geq 1,
\]
and under non-trapping condition, we have (see e.g. \cite{Burq}, \cite{Robert92}) that
\[
\|\chi (P-\lambda \pm i0)^{-1}\chi\|_{\Lc(L^2(\R^d))} \lesssim_{\chi} \lambda^{-1/2}, \quad \lambda \geq 1.
\]
The proof of Theorem $\ref{theorem strichartz outside compact}$ relies on the so called Isozaki-Kitada parametrix (see \cite{BTglobalstrichartz}) and resolvent estimates given in \cite{BoucletMizutani} using $(\ref{assump resolvent})$. Recall that the Isozaki-Kitada parametrix was first introduced on $\R^d$ to study the scattering theory of Schr\"odinger operators with long range potentials \cite{IsozakiKitada}. It was then modified and successfully used to show the Strichartz estimates for Schr\"odinger equation outside a compact set in many papers (see e.g. \cite{BTlocalstrichartz}, \cite{BTglobalstrichartz}, \cite{Bouclethyper}, \cite{Mizutani}, \cite{Mizutaniscattering} or \cite{BoucletMizutani}). \newline
\indent The low frequency term in $(\ref{split low high freqs})$ enjoys the following global in time Strichartz estimates. 
\begin{theorem} \label{theorem strichartz low freq}
Consider $\R^d, d\geq 3$ equipped with a smooth metric $g$ satisfying $(\ref{assump elliptic}), (\ref{assump long range})$. Then for all $(p,q)$ fractional admissible, there exists $C>0$ such that for all $u_0 \in \Lch_g(\R^d)$,
\begin{align}
\|u_{\emph{low}}\|_{L^p(\R,L^q(\R^d))} \leq C \|u_0\|_{\dot{H}^{\gamma_{p,q}}_g(\R^d)}. \label{strichartz low freq}
\end{align}
\end{theorem}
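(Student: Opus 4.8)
The plan is to prove Theorem~\ref{theorem strichartz low freq} by a Littlewood--Paley decomposition at low frequency, reducing matters to uniform Strichartz estimates for dyadically localized pieces, and then to use a rescaling argument together with the low-frequency resolvent / dispersive analysis of \cite{BoucletMizutani}. The point of working at low frequency only (the cutoff $f_0(P)$) is that the microlocal difficulty mentioned in the introduction --- that one cannot localize inside a compact set at low frequency --- is dissolved by scaling: if one conjugates $P$ by the dilation $x \mapsto hx$, a frequency of size $h$ becomes a frequency of size $1$, while the metric $g(hx)$ remains a uniform long-range perturbation of the Euclidean metric (indeed $g(hx) \to \mathrm{Id}$ as $h \to 0$ in the relevant seminorms, by $(\ref{assump long range})$), so one is back to a frequency-$1$ problem handled by existing local-in-time technology, but now with constants uniform in the scaling parameter.

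First I would choose a Littlewood--Paley partition adapted to $P$: pick $\varphi \in C_0^\infty((0,\infty))$ with $\sum_{j \le 0} \varphi(2^{-2j} \lambda) = 1$ near $\lambda \in (0,1]$, so that $f_0(P) = \sum_{j \le 0} \varphi(2^{-2j}P)$ up to a harmless smoothing term, and write $u_{\mathrm{low}} = \sum_{j \le 0} \varphi(2^{-2j}P) e^{-it\Lambda_g^\sigma} u_0$. For each $j$ set $h = 2^j \le 1$ and rescale: let $(S_h v)(x) = v(hx)$ and note $S_h^{-1} P S_h = h^{-2} P_h$ where $P_h = -\Delta_{g_h}$ with $g_h(x) = g(hx)$, a metric satisfying $(\ref{assump elliptic})$, $(\ref{assump long range})$ with uniform constants. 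The operator $e^{-it\Lambda_g^\sigma}\varphi(2^{-2j}P)$ then transforms, under $S_h$ and the time rescaling $t \mapsto h^{-\sigma} t$, into $e^{-it\Lambda_{g_h}^\sigma}\varphi(P_h)$ acting on data at frequency $\sim 1$. The next step is to prove the \emph{uniform} (in $h \in (0,1]$) frequency-localized global-in-time estimate
\[
\big\| e^{-it\Lambda_{g_h}^\sigma}\varphi(P_h) v \big\|_{L^p(\R, L^q(\R^d))} \le C \|v\|_{L^2(\R^d)},
\]
with $C$ independent of $h$. This is where I would borrow from \cite{BoucletMizutani}: combine a uniform local-in-time dispersive estimate $\|e^{-it\Lambda_{g_h}^\sigma}\varphi(P_h)\|_{L^1 \to L^\infty} \lesssim |t|^{-d/2}$ for $|t| \lesssim 1$ (via a WKB/stationary-phase parametrix for the half-wave-type propagator $e^{-it\Lambda_{g_h}^\sigma}$ at frequency one, valid uniformly because the $g_h$ are uniformly nice), with uniform global-in-time decay / smoothing estimates coming from uniform low-frequency resolvent bounds $\|\langle x\rangle^{-s}(P_h - z)^{-1}\langle x\rangle^{-s}\| \lesssim 1$; the large-time behaviour is controlled by a Kato-smoothing + $TT^*$ argument as in the non-trapping low-frequency analysis. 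Applying the Keel--Tao machinery (with the non-endpoint and endpoint cases, $d \ge 3$ ensuring the endpoint $q = 2d/(d-2)$-type pair is covered and that the low-frequency summation converges) yields the displayed uniform estimate.

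Finally I would undo the scaling: the estimate above, pulled back through $S_h$ and $t \mapsto h^{\sigma}t$, gives $\|\varphi(2^{-2j}P) e^{-it\Lambda_g^\sigma} u_0\|_{L^p L^q} \lesssim 2^{j\gamma_{p,q}} \|\varphi(2^{-2j}P)u_0\|_{L^2}$, where the power $2^{j\gamma_{p,q}}$ is exactly the product of the Jacobian factors from the $L^q_x$ and $L^p_t$ rescalings, matching the definition $(\ref{define gamma pq})$ of $\gamma_{p,q}$; equivalently this reads $\lesssim \|\Lambda_g^{\gamma_{p,q}}\varphi(2^{-2j}P)u_0\|_{L^2}$. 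Since $\gamma_{p,q} = \frac d2 - \frac dq - \frac\sigma p \ge 0$ on the fractional admissible range (in fact one needs $\gamma_{p,q} < d/2$, automatic here), summing over $j \le 0$ is a convergent (almost orthogonal, after squaring) sum controlled by $\|\Lambda_g^{\gamma_{p,q}} f_0(P) u_0\|_{L^2} \lesssim \|u_0\|_{\dot H^{\gamma_{p,q}}_g}$, using that $\Lambda_g^{\gamma_{p,q}}$ commutes with the spectral cutoffs and the Littlewood--Paley square-function characterization of $L^2$ (trivial for $q_{\mathrm{sum}}=2$, but one actually sums in $L^p_tL^q_x$, so one invokes the Minkowski inequality in the direction $p,q \ge 2$ and reduces to an $\ell^2$ sum over $j$). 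I expect the main obstacle to be establishing the dispersive and resolvent estimates \emph{uniformly in the scaling parameter} $h$ down to $h = 0$ --- in particular handling the region $|x| \gtrsim 1/h$ where the rescaled metric $g_h$ is close to Euclidean but the parametrix construction must be glued to the free flow --- together with the fact that, because we allow the wider fractional-admissible range (inequality rather than equality in $(\ref{fractional admissible})$), the dispersive exponent and the admissibility bookkeeping must be tracked more carefully than in \cite{BoucletMizutani}, exactly as flagged in the introduction.
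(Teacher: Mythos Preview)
Your scaling idea is exactly what underlies the paper's approach: the rescaled pseudo-differential operators $Op_\epsilon(a)=D_\epsilon Op(a)D_\epsilon^{-1}$ with $D_\epsilon u(x)=\epsilon^{d/2}u(\epsilon x)$ are precisely the conjugation by dilations you describe, and the rescaled symbol $p_\epsilon(x,\xi)=p(\epsilon^{-1}x,\xi)$ is your $g_h$-principal symbol. So the overall architecture is the same.

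The gap is in your frequency-one step. The strategy ``local-in-time WKB dispersive estimate $+$ Kato smoothing $+$ $TT^\star$'' is the mechanism of Section~4 of the paper, and it only produces global-in-time Strichartz for \emph{spatially localized} pieces: the $L^2_tL^2_x$ integrability one gets from resolvent bounds is for $\langle x\rangle^{-1}\varphi(P_h)e^{-it\Lambda_{g_h}^\sigma}$, not for the unweighted operator, and the gluing argument in Section~4 uses this weight in an essential way (one commutes the propagator with $\langle x\rangle^{-1}$ and the commutator gains a power of the weight). You never isolate a spatially localized piece, so your large-time step has no hook to attach the smoothing estimate to. The paper resolves this by a genuine spatial splitting built into the low-frequency Littlewood--Paley theorem itself (Theorem~\ref{theorem littlewood paley estimates}, item 2): the square function already contains a term $(1-\chi)(\epsilon x)f(\epsilon^{-2}P)v$ supported at $|\epsilon x|\gtrsim 1$ and a weighted term $\epsilon^{d/2-d/q}\langle\epsilon x\rangle^{-1}f(\epsilon^{-2}P)v$. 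The weighted term is exactly what your Kato-smoothing argument handles (Proposition~\ref{prop global L2 integrability fractional schrodinger}, item 2). For the unweighted piece outside the rescaled compact set, the paper does \emph{not} use WKB $+$ smoothing at all; it builds a low-frequency Isozaki--Kitada parametrix (Theorem~\ref{theorem Isozaki-Kitada parametrix low freq}) that directly yields a \emph{global-in-time} dispersive estimate $\lesssim \epsilon^d(1+\epsilon|t|)^{-d/2}$ for the main terms (Proposition~\ref{prop estimate main terms IK}), with remainder terms controlled by microlocal propagation estimates (Proposition~\ref{prop microlocal propagation estimate}) and the same low-frequency resolvent bounds. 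Your sentence about ``gluing to the free flow in the region $|x|\gtrsim 1/h$'' is pointing at precisely this missing ingredient, but it is not a detail---it is the bulk of the work.

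A secondary point: your summation step assumes a standard $L^q$ Littlewood--Paley inequality for $P$ at low frequency. The paper proves a version of this tailored to the problem (again Theorem~\ref{theorem littlewood paley estimates}, item 2), relying on heat-kernel Gaussian upper bounds and a homogeneous Sobolev embedding (Lemma~\ref{lem homogeneous Sobolev embedding}); the extra terms in that square-function bound are what make the spatial splitting above possible. Your Minkowski step is fine once this is in place, but the Littlewood--Paley input is not free.
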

As mentioned earlier, since we consider a larger range of admissible condition than the one studied in \cite{BoucletMizutani}, we can not apply directly the low frequency Littlewood-Paley decomposition given in \cite{BoucletMizutani}. We thus need a ``refined'' version of Littlewood-Paley decomposition. To do so, we will take the advantage of heat kernel estimates (see Subsection $\ref{subsection littlewood paley}$). As a result, we split the low frequency term into two parts: one supported outside a compact set and another one localized in a weak sense, i.e. by means of a spatial decay weight. The term with a spatial decay weight is treated easily by using global $L^p$ integrability estimates of the fractional Schr\"odinger operator at low frequency. Note that this type of global $L^p$ integrability estimate relies on the low frequency resolvent estimates of \cite{BoucletRoyer} which is available for spatial dimensions $d\geq 3$. We expect that global in time Strichartz estimates for the fractional Schr\"odinger equation at low frequency may hold in dimension $d=2$ as well. However, we do not know how to prove it at the moment. For the term outside a compact set, we make use of microlocal techniques and a low frequency version of the Isozaki-Kitada parametrix. We refer the reader to Section $\ref{section strichartz estimates outside compact}$ for more details. \newline
\indent Combining Theorem $\ref{theorem strichartz inside compact}$, Theorem $\ref{theorem strichartz outside compact}$ and Theorem $\ref{theorem strichartz low freq}$, we have the following result.
\begin{theorem} \label{theorem global strichartz frac schro}
Consider $\R^d, d\geq 3$ equipped with a smooth metric $g$ satisfying $(\ref{assump elliptic}), (\ref{assump long range})$ and assume that the geodesic flow associated to $g$ is non-trapping. Let $u$ be a weak solution to $(\ref{fractional schrodinger equation})$. Then for all $(p,q)$ fractional admissible, there exists $C>0$ such that for all $u_0 \in \Lch_g(\R^d)$,
\begin{align}
\|u\|_{L^p(\R,L^q(\R^d))} \leq C \|u_0\|_{\dot{H}^{\gamma_{p,q}}_g(\R^d)}. \label{global strichartz frac schro}
\end{align} 
\end{theorem}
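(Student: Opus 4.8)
The plan is to deduce Theorem \ref{theorem global strichartz frac schro} directly from the three preceding theorems by splitting the solution according to $(\ref{split low high freqs})$ and controlling the high frequency part with a finite partition of unity subordinate to the decomposition ``inside a compact set'' versus ``outside a compact set''. Concretely, write $u = u_{\text{low}} + u_{\text{high}}$ with $u_{\text{low}} = f_0(P)e^{-it\Lambda_g^\sigma}u_0$ and $u_{\text{high}} = (1-f_0)(P)e^{-it\Lambda_g^\sigma}u_0$. Since $d\geq 3$ and $g$ satisfies $(\ref{assump elliptic})$ and $(\ref{assump long range})$, Theorem \ref{theorem strichartz low freq} applies verbatim and gives
\[
\|u_{\text{low}}\|_{L^p(\R,L^q(\R^d))} \leq C\|u_0\|_{\dot{H}^{\gamma_{p,q}}_g(\R^d)}.
\]
It therefore remains to bound $\|u_{\text{high}}\|_{L^p(\R,L^q(\R^d))}$ by the same right-hand side, and then to add the two estimates (using $\gamma_{p,q}$ is the same in both).

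For the high frequency part, fix $R>0$ large enough so that Theorem \ref{theorem strichartz outside compact} applies; note that under the non-trapping hypothesis the resolvent bound $(\ref{assump resolvent})$ holds (indeed with $\lambda^{-1/2}$ in place of $\lambda^M$, as recalled after that theorem), so Theorem \ref{theorem strichartz outside compact} is available. Choose $\chi \in C^\infty_0(\R^d)$ with $\chi \equiv 1$ on $\{|x|\leq R\}$, so that $1 = \chi + (1-\chi)$ and $\mathrm{supp}(1-\chi) \subset \{|x|>R\}$, hence $|1-\chi| \leq \mathds{1}_{\{|x|>R\}}$ pointwise. Then
\[
\|u_{\text{high}}\|_{L^p(\R,L^q)} \leq \|\chi u_{\text{high}}\|_{L^p(\R,L^q)} + \|(1-\chi)u_{\text{high}}\|_{L^p(\R,L^q)} \leq \|\chi u_{\text{high}}\|_{L^p(\R,L^q)} + \|\mathds{1}_{\{|x|>R\}}u_{\text{high}}\|_{L^p(\R,L^q)}.
\]
The first term is controlled by Theorem \ref{theorem strichartz inside compact} (non-trapping is assumed), the second by Theorem \ref{theorem strichartz outside compact}, each bounded by $C\|u_0\|_{\dot{H}^{\gamma_{p,q}}_g(\R^d)}$. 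Summing the three contributions proves $(\ref{global strichartz frac schro})$ for $u_0 \in \Lch_g(\R^d)$.

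Two small technical points deserve attention. First, one must make sure the definitions are consistent: the statement is phrased for $u_0 \in \Lch_g(\R^d)$, which is exactly the dense class on which the homogeneous norm $\dot{H}^{\gamma_{p,q}}_g$ and the three component estimates are stated, so no density/extension argument is strictly needed for the stated conclusion (although one could extend $(\ref{global strichartz frac schro})$ to the completion afterwards by density, since $e^{-it\Lambda_g^\sigma}$ commutes with $f_0(P)$ and $(1-f_0)(P)$ and is unitary on $L^2$). Second, one should note that $e^{-it\Lambda_g^\sigma}$ commutes with the spectral multipliers $f_0(P)$, $(1-f_0)(P)$, so $u_{\text{low}}(t) = f_0(P)u(t)$ and $u_{\text{high}}(t) = (1-f_0)(P)u(t)$ genuinely reconstruct the weak solution $u$; this is where ``weak solution to $(\ref{fractional schrodinger equation})$'' is pinned down to $u(t) = e^{-it\Lambda_g^\sigma}u_0$ via Stone's theorem as recalled before $(\ref{split low high freqs})$.

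There is essentially no genuine obstacle here: the theorem is a packaging corollary, and the only thing to be careful about is the bookkeeping of hypotheses — checking that $d\geq 3$ and non-trapping together supply all the assumptions ($(\ref{assump elliptic})$, $(\ref{assump long range})$, non-trapping for Theorem \ref{theorem strichartz inside compact}, the resolvent bound $(\ref{assump resolvent})$ for Theorem \ref{theorem strichartz outside compact}, and $d\geq 3$ for Theorem \ref{theorem strichartz low freq}) — and that the cutoff $\chi$ can be chosen compatibly with the radius $R$ furnished by Theorem \ref{theorem strichartz outside compact}. The mild subtlety, if any, is that $R$ in Theorem \ref{theorem strichartz outside compact} is not free but determined by the Isozaki-Kitada construction; one simply picks $\chi$ after $R$ is fixed, which is legitimate since Theorem \ref{theorem strichartz inside compact} holds for \emph{every} $\chi \in C^\infty_0(\R^d)$.
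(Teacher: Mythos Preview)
Your proof is correct and matches the paper's approach: the paper explicitly states that Theorem \ref{theorem global strichartz frac schro} follows by combining Theorems \ref{theorem strichartz inside compact}, \ref{theorem strichartz outside compact}, and \ref{theorem strichartz low freq}, and the combination is carried out exactly as you describe (see also Proposition \ref{prop reduction of high freq problem}, which packages the two high-frequency pieces into $\|u_{\text{high}}\|_{L^p(\R,L^q)} \leq C\|u_0\|_{\dot{H}^{\gamma_{p,q}}_g}$). Your bookkeeping of hypotheses---in particular that non-trapping implies $(\ref{assump resolvent})$ and that $\chi$ is chosen after $R$---is precisely the point.
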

Using the homogeneous Strichartz estimate $(\ref{global strichartz frac schro})$ and the Christ-Kiselev Lemma, we get the following inhomogeneous Strichartz estimates.
\begin{prop} \label{prop global inhomogeneous strichartz frac schro}
Consider $\R^d, d\geq 3$ equipped with a smooth metric $g$ satisfying $(\ref{assump elliptic}), (\ref{assump long range})$ and assume that the geodesic flow associated to $g$ is non-trapping. Let $\sigma \in (0,\infty)\backslash \{1\}$ and $u$ be a weak solution to the Cauchy problem
\begin{align}
\left\{
\begin{array}{rcl}
i\partial_t u(t,x) - \Lambda_g^\sigma u(t,x)&=& F(t,x), \quad (t,x) \in \R \times \R^d, \\
u(0,x)&=&u_0(x), \quad x \in \R^d,
\end{array}
\right.
\label{linear fractional schrodinger equation}
\end{align} 
with data $u_0 \in \Lch_g$ and $F\in C(\R, \Lch_g)$. Then for all $(p,q)$ and $(a,b)$ fractional admissible, there exists $C>0$ such that 
\begin{align}
\|u\|_{L^p(\R,L^q(\R^d))} + \|u\|_{L^\infty(\R, \dot{H}^{\gamma_{p,q}}_g(\R^d))} \leq C \Big( \|u_0\|_{\dot{H}^{\gamma_{p,q}}_g(\R^d)} + \|F\|_{L^{a'}(\R,L^{b'}(\R^d))} \Big), \label{global inhomogeneous strichartz frac schro}
\end{align} 
provided that $(p,a) \ne (2,2)$ and 
\begin{align}
\gamma_{p,q}=\gamma_{a',b'}+\sigma. \label{gap condition frac schro}
\end{align}
\end{prop}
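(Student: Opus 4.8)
The plan is to deduce the proposition from the homogeneous estimate $(\ref{global strichartz frac schro})$ of Theorem~\ref{theorem global strichartz frac schro} by a $TT^*$-type duality argument together with the Christ-Kiselev lemma, as is standard. Write $\mathcal U(t):=e^{-it\Lambda_g^\sigma}$; a weak solution of $(\ref{linear fractional schrodinger equation})$ is given by the Duhamel formula $u(t)=\mathcal U(t)u_0-i\int_0^t\mathcal U(t-s)F(s)\,ds$, and I would estimate the two terms separately. The homogeneous term is immediate: $(\ref{global strichartz frac schro})$ gives $\|\mathcal U(t)u_0\|_{L^p(\R,L^q(\R^d))}\lesssim\|u_0\|_{\dot{H}^{\gamma_{p,q}}_g(\R^d)}$, while, since $\Lambda_g^{\gamma_{p,q}}$ commutes with $\mathcal U(t)$ by the functional calculus for $P$ and $\mathcal U(t)$ is unitary on $L^2(\R^d)$, one has $\|\mathcal U(t)u_0\|_{L^\infty(\R,\dot{H}^{\gamma_{p,q}}_g(\R^d))}=\|u_0\|_{\dot{H}^{\gamma_{p,q}}_g(\R^d)}$; both $\mathcal U(t)$ and $\Lambda_g^{\gamma_{p,q}}$ preserve $\Lch_g(\R^d)$, so no regularization is needed for $u_0\in\Lch_g$.

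For the inhomogeneous term I would first dualize $(\ref{global strichartz frac schro})$ applied to the admissible pair $(a,b)$. Since $\mathcal U(s)^*=\mathcal U(-s)$ and the dual of $\dot{H}^{\gamma}_g(\R^d)$ is $\dot{H}^{-\gamma}_g(\R^d)$ for the $L^2$ pairing (carried out on the dense space $\Lch_g(\R^d)$), this gives $\big\|\int_\R\mathcal U(-s)G(s)\,ds\big\|_{\dot{H}^{-\gamma_{a,b}}_g(\R^d)}\lesssim\|G\|_{L^{a'}(\R,L^{b'}(\R^d))}$. A short computation from $(\ref{define gamma pq})$, using $1/b+1/b'=1$ and $1/a+1/a'=1$, yields $\gamma_{a,b}+\gamma_{a',b'}=-\sigma$, so the gap condition $(\ref{gap condition frac schro})$ is precisely $\gamma_{p,q}=-\gamma_{a,b}$. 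Setting $V:=\int_\R\mathcal U(-s)F(s)\,ds$ and combining the dualized estimate with $(\ref{global strichartz frac schro})$ once more, I obtain the bound for the \emph{untruncated} Duhamel operator, $\big\|\mathcal U(t)V\big\|_{L^p(\R,L^q(\R^d))}\lesssim\|V\|_{\dot{H}^{\gamma_{p,q}}_g(\R^d)}=\|V\|_{\dot{H}^{-\gamma_{a,b}}_g(\R^d)}\lesssim\|F\|_{L^{a'}(\R,L^{b'}(\R^d))}$.

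To replace $\int_\R$ by the retarded integral $\int_0^t$ in the $L^p(\R,L^q)$ estimate, I would apply the Christ-Kiselev lemma to the operator with kernel $(t,s)\mapsto\mathds{1}_{\{s<t\}}\mathcal U(t-s)$ from $L^{a'}(\R,L^{b'}(\R^d))$ to $L^p(\R,L^q(\R^d))$; this is valid precisely when $a'<p$, which holds since $p\ge2$ forces $a'\le2\le p$, equality being excluded exactly by the hypothesis $(p,a)\ne(2,2)$. For the energy term no truncation lemma is needed: for each fixed $t$, applying the dualized estimate to $\mathds{1}_{\{s<t\}}F$ and using again that $\mathcal U(t)$ is unitary on $\dot{H}^{\gamma_{p,q}}_g(\R^d)$ gives $\big\|\int_0^t\mathcal U(t-s)F(s)\,ds\big\|_{\dot{H}^{\gamma_{p,q}}_g(\R^d)}\lesssim\|F\|_{L^{a'}(\R,L^{b'}(\R^d))}$ uniformly in $t$, and taking the supremum over $t$ controls $\|u\|_{L^\infty(\R,\dot{H}^{\gamma_{p,q}}_g(\R^d))}$. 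Summing the homogeneous and inhomogeneous contributions then gives $(\ref{global inhomogeneous strichartz frac schro})$.

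The main obstacle I anticipate is not analytic but one of bookkeeping in the duality step: one has to perform the pairing on the dense subspace $\Lch_g(\R^d)$, justify the identification of the dual of $\dot{H}^{\gamma}_g(\R^d)$ with $\dot{H}^{-\gamma}_g(\R^d)$, and transfer norms to the standard homogeneous Sobolev spaces on $\R^d$ via the $L^q$-boundedness of zero order pseudo-differential operators together with $(\ref{equivalent sobolev norms})$ and $(\ref{equivalent homogeneous sobolev norms})$; one must also verify that $\mathds{1}_{\{s<t\}}\mathcal U(t-s)$ indeed defines a bounded map between the relevant Bochner spaces so that the Christ-Kiselev lemma applies. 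Beyond Theorem~\ref{theorem global strichartz frac schro}, no new ingredient should be required.
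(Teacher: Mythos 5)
Your proposal is correct and follows essentially the same route as the paper: Duhamel splitting, the $T_{\gamma} T^{\star}_{\gamma'}$ duality argument built on the homogeneous estimate $(\ref{global strichartz frac schro})$, the identity $\gamma_{a,b}=-\gamma_{p,q}$ from the gap condition, and Christ--Kiselev to pass from $\int_{\R}$ to $\int_0^t$ (with $a'<p$ guaranteed by admissibility and $(p,a)\ne(2,2)$). The only cosmetic difference is in the energy term: you apply the dual estimate directly to $\mathds{1}_{(0,t)}F$ and use unitarity of $e^{-it\Lambda_g^\sigma}$ on $\dot H^{\gamma_{p,q}}_g$, whereas the paper invokes Christ--Kiselev there as well; both are valid and equivalent in substance.
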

\begin{rem}\label{rem global inhomogeneous strichartz frac schro}
\begin{itemize}
\item[1.] The homogeneous Strichartz estimates $(\ref{global strichartz frac schro})$ and the Minkowski inequality imply
\begin{align}
\|u\|_{L^p(\R,L^q(\R^d))} \leq C \Big(\|u_0\|_{\dot{H}^{\gamma_{p,q}}_g(\R^d)} + \|F\|_{L^1(\R, \dot{H}^{\gamma_{p,q}}_g(\R^d))}\Big). \label{global inhomogeneous strichartz frac schro remark}
\end{align}
\item[2.] When $\sigma \in (0,2)\backslash \{1\}$, we always have $\gamma_{p,q}>0$ for any fractional admissible pair $(p,q)$ except $(p,q)=(\infty,2)$. Thus, condition $(\ref{gap condition frac schro})$ implies that $(p,a) \ne (2,2)$, and $(\ref{global inhomogeneous strichartz frac schro})$ includes the endpoint case. When $\sigma\geq2$, the estimates $(\ref{global inhomogeneous strichartz frac schro})$ do not include the endpoint estimate.
\item[3.] In the case $\sigma \in (0,2]\backslash \{1\}$, we can replace the homogeneous Sobolev norms in $(\ref{global inhomogeneous strichartz frac schro})$ and $(\ref{global inhomogeneous strichartz frac schro remark})$ by the inhomogeneous ones.
\end{itemize}
\end{rem}
\begin{prop} \label{prop global inhomogeneous strichartz frac wave}
Consider $\R^d, d\geq 3$ equipped with a smooth metric $g$ satisfying $(\ref{assump elliptic}), (\ref{assump long range})$ and assume that the geodesic flow associated to $g$ is non-trapping. Let $\sigma\in (0,\infty)\backslash \{1\}$ and $v$ be a weak solution to the Cauchy problem
\begin{align}
\left\{
\begin{array}{rcl}
\partial^2_t v(t,x) + \Lambda_g^{2\sigma} v(t,x) &=&F(t,x), \quad (t,x) \in \R \times \R^d, \\
v(0,x)=v_0(x), && \partial_t v(0,x)=v_1(x), \quad x \in \R^d,
\end{array}
\right.
\label{linear fractional wave equation}
\end{align} 
with data $v_0, v_1 \in \Lch_g$ and $F \in C(\R, \Lch_g)$. Then for all $(p,q)$ and $(a,b)$ fractional admissible, there exists $C>0$ such that 
\begin{align}
\|v\|_{L^p(\R,L^q(\R^d))} + \|[v]\|_{L^\infty(\R, \dot{H}^{\gamma_{p,q}}_g(\R^d))} 
\leq C \left( \|[v](0)\|_{\dot{H}^{\gamma_{p,q}}_g(\R^d)} + \|F\|_{L^{a'}(\R,L^{b'}(\R^d))} \right), \label{global inhomogeneous strichartz frac wave}
\end{align} 
where $[v](t):=(v(t), \partial_t v(t))$ and 
\[
\|[v]\|_{L^\infty(\R, \dot{H}^{\gamma_{p,q}}_g(\R^d))}:=\|v\|_{L^\infty(\R, \dot{H}^{\gamma_{p,q}}_g(\R^d))} +\|\partial_t v\|_{L^\infty(\R,\dot{H}^{\gamma_{p,q}-\sigma}_g(\R^d))}
\]
provided that $(p,a) \ne (2,2)$ and 
\begin{align}
\gamma_{p,q}=\gamma_{a',b'}+2\sigma. \label{gap condition frac wave}
\end{align}
\end{prop}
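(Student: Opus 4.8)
The plan is to reduce the second-order wave-type equation $(\ref{linear fractional wave equation})$ to a first-order system to which the estimates for the fractional Schrödinger propagator of Theorem $\ref{theorem global strichartz frac schro}$ and Proposition $\ref{prop global inhomogeneous strichartz frac schro}$ apply. First I would write the equation in terms of the operator $\Lambda_g^\sigma$: setting $w(t) := v(t) + i\Lambda_g^{-\sigma}\partial_t v(t)$, a direct computation using $\partial_t^2 v = -\Lambda_g^{2\sigma} v + F$ gives
\[
i\partial_t w - \Lambda_g^\sigma w = \Lambda_g^{-\sigma} F,
\]
with initial datum $w(0) = v_0 + i\Lambda_g^{-\sigma} v_1 \in \Lch_g$ (this is where one uses that $v_0, v_1 \in \Lch_g$, so that $\Lambda_g^{-\sigma}$ makes sense on them via the spectral calculus, since functions in $\Lch_g$ are spectrally localized away from $0$). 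Then $v = \re{w}$ and $\partial_t v = \Lambda_g^\sigma \im{w}$, so any space-time norm of $v$ is controlled by the same norm of $w$, and $\|\partial_t v\|_{\dot H^{\gamma_{p,q}-\sigma}_g} = \|\im w\|_{\dot H^{\gamma_{p,q}}_g} \lesssim \|w\|_{\dot H^{\gamma_{p,q}}_g}$.

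Next I would apply Proposition $\ref{prop global inhomogeneous strichartz frac schro}$ to $w$ with forcing term $\Lambda_g^{-\sigma} F$. This yields
\[
\|w\|_{L^p(\R,L^q)} + \|w\|_{L^\infty(\R,\dot H^{\gamma_{p,q}}_g)} \leq C\Big( \|w(0)\|_{\dot H^{\gamma_{p,q}}_g} + \|\Lambda_g^{-\sigma}F\|_{L^{a'}(\R,L^{b'})}\Big),
\]
provided $(p,a)\neq(2,2)$ and $\gamma_{p,q} = \gamma_{a',b'} + \sigma$. To match the forcing term with the statement I would invoke the boundedness of $\Lambda_g^{-\sigma}$ (more precisely, of $\langle\Lambda_g\rangle^{-\sigma}$ composed with a spectral cutoff away from $0$, as arises for data in $\Lch_g$) from $L^{b'}$ with a shifted Sobolev exponent; concretely, replacing the Schrödinger gap $\gamma_{p,q} = \gamma_{a',b'}+\sigma$ by requiring $\gamma_{p,q} = \gamma_{a',b'} + 2\sigma$ absorbs exactly one extra factor of $\Lambda_g^{-\sigma}$ into the admissible pair $(a,b)$ via $\gamma_{a',b'}^{\text{wave}} = \gamma_{a',b'}^{\text{Schr}} - \sigma$, and the boundedness of $\langle\Lambda_g\rangle^{-\sigma}$ between the corresponding Sobolev spaces (equivalent to standard Sobolev spaces by $(\ref{equivalent sobolev norms})$) closes the estimate. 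Since $w(0) = v_0 + i\Lambda_g^{-\sigma}v_1$ and $\|[v](0)\|_{\dot H^{\gamma_{p,q}}_g} = \|v_0\|_{\dot H^{\gamma_{p,q}}_g} + \|v_1\|_{\dot H^{\gamma_{p,q}-\sigma}_g}$, we get $\|w(0)\|_{\dot H^{\gamma_{p,q}}_g} \lesssim \|[v](0)\|_{\dot H^{\gamma_{p,q}}_g}$, and the $L^\infty_t$ term on the left controls $\|[v]\|_{L^\infty(\R,\dot H^{\gamma_{p,q}}_g)}$ as defined.

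The condition $(p,a)\neq(2,2)$ is carried over verbatim from Proposition $\ref{prop global inhomogeneous strichartz frac schro}$, since the Christ-Kiselev argument underlying the inhomogeneous estimate is applied at the level of $w$. I expect the main technical point — rather than a genuine obstacle — to be the careful bookkeeping of homogeneous versus inhomogeneous Sobolev norms when applying $\Lambda_g^{-\sigma}$: on $\Lch_g$ one is always working with functions spectrally supported away from the origin, so $\Lambda_g^{-\sigma}$, $\langle\Lambda_g\rangle^{-\sigma}$ and the switch between $\dot H^s_g$ and $H^s_g$ are all harmless and equivalent there, but this needs to be stated cleanly (using $(\ref{equivalent sobolev norms})$ and the definition $(\ref{dense space})$ of $\Lch_g$) to justify that the forcing term $F\in C(\R,\Lch_g)$ is mapped correctly and that the gap condition $(\ref{gap condition frac wave})$ is precisely the one that makes the reduction work. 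No new geometric input beyond what Theorem $\ref{theorem global strichartz frac schro}$ already assumes (non-trapping, $(\ref{assump elliptic})$, $(\ref{assump long range})$, $d\geq 3$) is required.
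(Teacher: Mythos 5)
Your first-order reduction $w=v+i\Lambda_g^{-\sigma}\partial_t v$ is different from what the paper does (the paper works directly with the $\cos/\sin$ Duhamel representation and reruns the $TT^\star$+Christ--Kiselev argument for the operator $\int_0^t \Lambda_g^{-\sigma}e^{-i(t-s)\Lambda_g^\sigma}F(s)\,ds$ using only the homogeneous Theorem~\ref{theorem global strichartz frac schro}), but that part of your proposal is fine. Two minor issues first: the sign in $i\partial_t w-\Lambda_g^\sigma w=\Lambda_g^{-\sigma}F$ should be $-\Lambda_g^{-\sigma}F$, and for complex-valued data $v\ne\re{w}$; the standard fix is to introduce both $w_\pm=v\mp i\Lambda_g^{-\sigma}\partial_t v$, each solving a fractional Schr\"odinger equation with propagator $e^{\mp it\Lambda_g^\sigma}$, and to write $v=\tfrac12(w_++w_-)$. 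These are cosmetic.

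The real problem is the step where you try to absorb the extra $\Lambda_g^{-\sigma}$ into the admissibility bookkeeping and then apply Proposition~\ref{prop global inhomogeneous strichartz frac schro} as a black box. For that to work you need an intermediate fractional admissible pair $(a_1,b_1)$ satisfying the Schr\"odinger gap $\gamma_{p,q}=\gamma_{a_1',b_1'}+\sigma$, together with the Sobolev bound $\Lambda_g^{-\sigma}:L^{b'}\to L^{b_1'}$. But from $\gamma_{a_1',b_1'}=-\gamma_{a_1,b_1}-\sigma$, the Schr\"odinger gap forces $\gamma_{a_1,b_1}=-\gamma_{p,q}$. When $\sigma\le 2$, every fractional admissible pair has nonnegative $\gamma$, so this forces $\gamma_{p,q}\le 0$, hence $\gamma_{p,q}=0$, i.e.\ $(p,q)=(\infty,2)$. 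On the other hand the wave gap condition~\eqref{gap condition frac wave} gives $\gamma_{a,b}=\sigma-\gamma_{p,q}$, which allows admissible $(a,b)$ for the whole range $\gamma_{p,q}\in[0,\sigma]$. So for $\sigma\in(0,2)\setminus\{1\}$ and any $(p,q)$ with $\gamma_{p,q}\in(0,\sigma]$ (e.g.\ $(p,q)=(a,b)$ with $\gamma_{p,q}=\sigma/2$), the conclusion of Proposition~\ref{prop global inhomogeneous strichartz frac wave} is non-vacuous but your reduction cannot produce any admissible $(a_1,b_1)$, and the argument breaks. The fix is exactly what the paper does: do not route through Proposition~\ref{prop global inhomogeneous strichartz frac schro}; instead use the homogeneous estimate~\eqref{global strichartz frac schro} to bound $T_{\gamma_{p,q}}$ and $T_{\gamma_{a,b}}$ separately for the two given admissible pairs, note that $-\gamma_{p,q}-\gamma_{a,b}=-\sigma$ under~\eqref{gap condition frac wave} so that $T_{\gamma_{p,q}}T^\star_{\gamma_{a,b}}$ is precisely the operator with the $\Lambda_g^{-\sigma}$ factor, and finish with Christ--Kiselev.
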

\begin{rem}\label{rem global inhomogeneous strichartz frac wave}
As in Remark $\ref{rem global inhomogeneous strichartz frac schro}$, we have
\begin{align}
\|v\|_{L^p(\R,L^q(\R^d))} \leq C \Big(\|[v](0)\|_{\dot{H}^{\gamma_{p,q}}_g(\R^d)} + \|F\|_{L^1(\R, \dot{H}^{\gamma_{p,q}-\sigma}_g(\R^d))}\Big). \label{global inhomogeneous strichartz frac wave remark}
\end{align}
\end{rem}
We finally emphasize that this paper is only devoted to study global in time Strichartz estimates. We refer the reader to \cite{Dinh} for applications of Strichartz estimates to the local well-posedness of the nonlinear fractional Schr\"odinger and wave equations. \newline
\indent The paper is organized as follows. In Section 2, we recall some properties of the semi-classical and rescaled pseudo-differential operators, and prove some propagation estimates related to our problem. We then prove a ``refined'' version of the Littlewood-Paley decomposition at low frequency and give a reduction of global in time Strichartz estimates in Section 3. In Section 4, we prove global in time Strichartz estimates inside compact sets at high frequency. Section 5 is devoted to the construction of the Isozaki-Kitada parametrix for the fractional Schr\"odinger operator both at high and low frequencies and to the proofs of Strichartz estimates outside compact sets. Finally, we give in Section 6 the proofs for the inhomogeneous Strichartz estimates due to the Christ-Kiselev Lemma. \newline
\textbf{Notation.} In this paper the constant may change from line to line and will be denoted by the same $C$. The constants with a subscript $C_1,C_2,...$ will be used when we need to compare them to one another. The notation $A \lesssim_D B$ means that there exists a universal constant $C>0$ depending on $D$ such that $A \leq C B$. For Banach spaces $X$ and  $Y$, the notation $\|\cdot\|_{\Lc(X,Y)}$ denotes the operator norm from $X$ to $Y$ and $\|\cdot\|_{\Lc(X)}:=\|\cdot\|_{\Lc(X,X)}$. The one $T=O_{\Lc(X,Y)}(A)$ means that there exists $C>0$ such that $\|T\|_{\Lc(X,Y)} \leq C A$. In order to simplify the presentation, we shall drop the notation $\R^d$ and only write $L^q, \Sch, \Lch_g, W^{\gamma,q},  \dot{W}^{\gamma,q}_g, H^\gamma, \dot{H}^\gamma_g$. 
\section{Functional calculus and propagation estimates} \label{section functional calculus}
\setcounter{equation}{0}
In this section, we recall some well-known results on pseudo-differential operators and prove some propagation estimates related to our problem. 
\subsection{Pseudo-differential operators.}
Let $\mu, m \in \R$. We consider the symbol class $S(\mu, m)$ the space of smooth functions $a$ on $\R^{2d}$ satisfying 
\[
\left|\partial^\alpha_x \partial^\beta_\xi  a(x,\xi)\right| \leq C_{\alpha\beta} \scal{x}^{\mu-|\alpha|} \scal{\xi}^{m-|\beta|}.
\]
In practice, we mainly use $S(\mu,-\infty):= \cap_{m\in \R} S(\mu,m)$. \newline
\indent For $a \in S(\mu,m)$ and $h \in (0,1]$, we consider the \textbf{semi-classical pseudo-differential operator} $Op^h(a)$ which is defined by
\begin{align}
Op^h(a) u(x)= (2\pi h)^{-d}\iint_{\R^{2d}} e^{ih^{-1}(x-y)\xi} a(x,\xi) u(y)dyd\xi. \label{semi PDO definition}
\end{align}
By the long range assumption $(\ref{assump long range})$, we see that $h^2P= Op^h(p)+hOp^h(p_1)$ with $p \in S(0,2)$ given in $(\ref{principle symbol p})$ and $p_1 \in S(-\rho-1,1) \subset S(-1,1)$. We recall that for $a \in S(\mu_1, m_1)$ and $b\in S(\mu_2,m_2)$, the composition $Op^h(a)Op^h(b)$ is given by
\begin{align}
Op^h(a)Op^h(b) = \sum_{j=0}^{N-1} h^j Op^h((a\#b)_j) +h^N Op^h(r^\#_N(h)), \label{composition PDO}
\end{align}
where $(a\#b)_j= \sum_{|\alpha|=j} \frac{1}{\alpha!} \partial^\alpha_\xi a D^\alpha_x b \in S(\mu_1+\mu_2-j,m_1+m_2-j)$ and $(r^\#_N(h))_{h\in(0,1]}$ is a bounded family in $S(\mu_1+\mu_2-N,m_1+m_2-N)$. The adjoint with respect to the Lebesgue measure $Op^h(a)^\star$ is given by
\begin{align}
Op^h(a)^\star = \sum_{j=0}^{N-1} h^j Op^h(a^\star_j) +h^N Op^h(r^\star_N(h)), \label{adjoint PDO}
\end{align}
where $a^\star_j= \sum_{|\alpha|=j} \frac{1}{\alpha!} \partial^\alpha_\xi D^\alpha_x \overline{a} \in S(\mu_1-j,m_1-j)$ and $(r^\star_N(h))_{h\in(0,1]}$ is a bounded family in $S(\mu_1-N,m_1-N)$. \newline
\indent We next recall the definition of rescaled pseudo-differential operator which is essentially given in \cite{BoucletMizutani}. This type of operator is very useful for the analysis at low frequency. Let $a \in S(\mu,m)$ and $\ep \in (0,1]$. The \textbf{rescaled pseudo-differential operator} $Op_\ep(a)$ is defined by
\[
Op_\ep(a) u(x) = (2\pi)^{-d} \iint_{\R^{2d}} e^{i(x-y)\xi} a(\ep x, \ep^{-1}\xi) u(y) dy d\xi.
\]
Setting $D_\ep u(x):= \ep^{d/2}u(\ep x)$. It is easy to see that $D_\ep$ is a unitary map on $L^2$ and
\begin{align}
Op_\ep(a)= D_\ep Op(a) D^{-1}_\ep, \label{define D_epsilon}
\end{align}
where $D^{-1}_\ep u(x)=\ep^{-d/2}u(\ep^{-1}x)$ and $Op(a):=Op^1(a)$, i.e. $h=1$ in $(\ref{semi PDO definition})$. Thanks to $(\ref{composition PDO}), (\ref{adjoint PDO})$ and $(\ref{define D_epsilon})$, the composition $Op_\ep(a)Op_\ep(b)$ and the adjoint with respect to the Lebesgue measure $Op_\ep(a)^\star$ with $a \in S(\mu_1, m_1)$ and $b\in S(\mu_2,m_2)$ are given by
\[
Op_\ep(a)Op_\ep(b) = \sum_{j=0}^{N-1} Op_\ep((a\#b)_j) +Op_\ep(r^\#_N),\quad Op_\ep(a)^\star = \sum_{j=0}^{N-1} Op_\ep(a^\star_j) + Op_\ep(r^\star_N).
\]
\subsection{Functional calculus.}
In this subsection, we will recall the approximations for $\phi(h^2P)$ and $\zeta(\ep x) \phi(\ep^{-2}P)$ in terms of semi-classical and rescaled pseudo-differential operators respectively where $\phi \in C^\infty_0(\R)$ and $\zeta \in C^\infty(\R^d)$ is supported outside $B(0,1)$ and equal to 1 near infinity. Here $B(0,1)$ is the open unit ball in $\R^d$.\newline
\indent We firstly recall the following $\Lc(L^q, L^r)$-bound of pseudo-differential operators (see e.g. \cite[Proposition 2.4]{BTlocalstrichartz}). 
\begin{prop} \label{prop lq lr bounds PDO}
Let $m >d$ and $a$ be a continuous function on $\R^{2d}$ smooth with respect to the second variable satisfying for all $\beta \in \N^d$, there exists $C_\beta>0$ such that for all $x, \xi \in \R^d$,
\[
|\partial^\beta_\xi a(x,\xi)| \leq C_\beta \scal{\xi}^{-m}.
\]
Then for $1 \leq q \leq r \leq \infty$, there exists $C>0$ such that for all $h \in (0,1]$,
\[
\|Op^h(a)\|_{\Lc(L^q, L^r)} \leq C h^{d/r-d/q}.
\]
\end{prop}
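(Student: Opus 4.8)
The plan is to realize $Op^h(a)$ as an integral operator, establish a pointwise kernel bound by integration by parts in $\xi$, and then conclude by Young's inequality. First I would write $Op^h(a)u(x) = \int_{\R^d} K_h(x,y)u(y)\,dy$ with
\[
K_h(x,y) = (2\pi h)^{-d} \int_{\R^d} e^{ih^{-1}(x-y)\xi} a(x,\xi)\,d\xi .
\]
Since $m>d$, the hypothesis $|a(x,\xi)| \leq C_0 \scal{\xi}^{-m}$ makes this integral absolutely convergent, so $K_h$ is a genuine continuous function with $|K_h(x,y)| \lesssim h^{-d}$; in particular no smoothness of $a$ in the first variable is ever needed, consistently with the hypothesis.

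To produce decay in $x-y$, I would integrate by parts in $\xi$ using $(1-\Delta_\xi)e^{ih^{-1}(x-y)\xi} = (1+h^{-2}|x-y|^2)e^{ih^{-1}(x-y)\xi}$ and iterating $N$ times. This is legitimate because $a$ is smooth in $\xi$ and every $\xi$-derivative decays like $\scal{\xi}^{-m}$, which is integrable, so there are no boundary terms. One gets
\[
K_h(x,y) = (2\pi h)^{-d}(1+h^{-2}|x-y|^2)^{-N} \int_{\R^d} e^{ih^{-1}(x-y)\xi}(1-\Delta_\xi)^N a(x,\xi)\,d\xi .
\]
Applying the hypothesis for $|\beta|\leq 2N$ gives $|(1-\Delta_\xi)^N a(x,\xi)| \lesssim_N \scal{\xi}^{-m}$, and since $m>d$ the $\xi$-integral is bounded uniformly in $x,y,h$, hence
\[
|K_h(x,y)| \lesssim_N h^{-d}\, \scal{h^{-1}(x-y)}^{-2N}.
\]

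Now fix $N$ with $2N>d$. Then $|Op^h(a)u(x)| \leq (G_h * |u|)(x)$ with $G_h(z) := C h^{-d}\scal{h^{-1}z}^{-2N}$, which lies in $L^s(\R^d)$ for every $s\in[1,\infty]$. For $1\leq q\leq r\leq\infty$ choose $s$ by $1/s = 1-(1/q-1/r)$; this lies in $[0,1]$ precisely because $q\leq r$ and $1/q-1/r\leq 1$, so Young's inequality applies and yields $\|Op^h(a)u\|_{L^r} \leq \|G_h\|_{L^s}\|u\|_{L^q}$. The change of variables $w=h^{-1}z$ gives $\|G_h\|_{L^s} = C h^{-d}h^{d/s}\big(\int_{\R^d}\scal{w}^{-2Ns}\,dw\big)^{1/s} = C' h^{-d(1-1/s)} = C' h^{d/r-d/q}$, the last integral converging since $2Ns\geq 2N>d$. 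This is exactly the asserted estimate.

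Since the argument is elementary, the only points needing mild care are the endpoint exponents — $q=1$ or $r=\infty$, where $s$ equals $\infty$ and one uses the pointwise bound $\|G_h\|_{L^\infty}\lesssim h^{-d}$ directly — and bookkeeping the constant $C_\beta$ from the hypothesis through the $2N$ integrations by parts; neither presents any real obstacle, so I do not anticipate a genuinely hard step.
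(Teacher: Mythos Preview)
The paper does not give its own proof of this proposition; it is stated as a known result with a reference to \cite[Proposition 2.4]{BTlocalstrichartz}. Your argument --- pointwise kernel bound via integration by parts in $\xi$, followed by Young's convolution inequality --- is correct and is precisely the standard route to such $\Lc(L^q,L^r)$ estimates for semi-classical pseudo-differential operators with symbols decaying fast enough in $\xi$. One small inaccuracy: the case $s=\infty$ in Young's inequality occurs only when $q=1$ \emph{and} $r=\infty$ simultaneously (since $1/s=1-(1/q-1/r)$ vanishes only when $1/q-1/r=1$), not when ``$q=1$ or $r=\infty$''; but your treatment of that endpoint via the pointwise bound $\|G_h\|_{L^\infty}\lesssim h^{-d}$ is correct regardless.
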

The following proposition gives an approximation of $\phi(h^2P)$ in terms of semi-classical pseudo-differential operators (see e.g. \cite{BTlocalstrichartz} or \cite{Robert}).
\begin{prop} \label{prop parametrix phi}
Consider $\R^d$ equipped with a smooth metric $g$ satisfying $(\ref{assump elliptic})$ and $(\ref{assump long range})$. Then for a given $\phi \in C^\infty_0(\R)$, there exist a sequence of symbols $q_j \in S(-j,-\infty)$ satisfying $q_0= \phi \circ p$ and $\emph{supp}(q_j) \subset \emph{supp}(\phi \circ p)$ such that for all $N \geq 1$,
\[
\phi(h^2P)= \sum_{j=0}^{N-1} h^j Op^h(q_j)+ h^N R_N(h),
\]
and for $m \geq 0$ and $1 \leq q \leq r \leq \infty$, there exists $C>0$ such that for all $h \in (0,1]$,
\begin{align}
\|R_N(h) \scal{x}^N\|_{\Lc(L^q, L^r)} &\leq C h^{d/r-d/q}, \nonumber \\
\|R_N(h) \scal{x}^N\|_{\Lc(H^{-m}, H^m)} &\leq C h^{-2m}. \nonumber
\end{align}
\end{prop}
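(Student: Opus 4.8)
The plan is to follow the standard Helffer–Sjöstrand functional calculus approach, rescaled at the semiclassical level. First I would fix an almost-analytic extension $\tilde\phi \in C^\infty_0(\C)$ of $\phi$, satisfying $\tilde\phi|_\R = \phi$, $\mathrm{supp}\,\tilde\phi \subset \{|\mathrm{Im}\,z| \leq 1\}$, and $|\bar\partial \tilde\phi(z)| \leq C_N |\mathrm{Im}\,z|^N$ for every $N$. Then
\[
\phi(h^2P) = -\frac{1}{\pi} \int_\C \bar\partial \tilde\phi(z)\, (h^2P - z)^{-1} \, L(dz),
\]
where $L(dz)$ is Lebesgue measure on $\C$. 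The idea is to construct a semiclassical parametrix for the resolvent $(h^2P - z)^{-1}$ with polynomial control in $|\mathrm{Im}\,z|^{-1}$, plug it into the formula, and integrate in $z$ to obtain the expansion in powers of $h$.

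The key construction: writing $h^2 P = Op^h(p) + h\,Op^h(p_1)$ as noted in the text, I would seek an approximate resolvent of the form $\sum_{j=0}^{N-1} h^j Op^h(b_j(z))$ by solving the symbol equation $(Op^h(p) + h Op^h(p_1) - z)\big(\sum_j h^j Op^h(b_j(z))\big) = I + h^N(\text{remainder})$ iteratively via the composition formula $(\ref{composition PDO})$. This forces $b_0(z) = (p - z)^{-1}$, and each subsequent $b_j(z)$ is a finite sum of terms of the form (derivatives of $p$ and $p_1$) times powers of $(p-z)^{-1}$, hence of the form $(p-z)^{-k_j} \times S(-j,-\infty)$-type symbols with $k_j \le 2j+1$ or so. Substituting into the Helffer–Sjöstrand integral, the $j$-th term becomes $Op^h(q_j)$ with $q_j = -\frac{1}{\pi}\int_\C \bar\partial\tilde\phi(z)\, b_j(z)\, L(dz)$; since $b_0(z) = (p-z)^{-1}$ one recovers $q_0 = \phi \circ p$, and because each $b_j(z)$ is supported (in $(x,\xi)$) where $p(x,\xi) \in \mathrm{supp}\,\phi$, we get $\mathrm{supp}(q_j) \subset \mathrm{supp}(\phi\circ p)$ and $q_j \in S(-j,-\infty)$. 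The convergence of these $z$-integrals, despite the negative powers $(p-z)^{-k_j}$, is exactly where the vanishing of $\bar\partial\tilde\phi$ to high order at the real axis is used — one needs $N$ in the almost-analytic extension large relative to the $k_j$ appearing up to step $N-1$.

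The main obstacle — and the part I expect to require the most care — is the remainder estimate, i.e. bounding $R_N(h)\scal{x}^N$ in $\Lc(L^q,L^r)$ by $h^{d/r - d/q}$ and in $\Lc(H^{-m},H^m)$ by $h^{-2m}$. After the symbolic construction, $R_N(h)$ is $-\frac1\pi\int_\C \bar\partial\tilde\phi(z)\,(h^2P-z)^{-1} Op^h(r_N^\#(h,z))\,L(dz)$ where $r_N^\#(h,z)$ is a bounded family in a symbol class carrying roughly $(p-z)^{-2N-1}$-type growth in $z$. The two ingredients are: (i) the a priori resolvent bound $\|(h^2P - z)^{-1}\|_{\Lc(L^2)} \leq |\mathrm{Im}\,z|^{-1}$ (self-adjointness), upgraded to $\Lc(L^q,L^r)$ and weighted spaces by absorbing $Op^h(r_N^\#)$ and one more application of the elliptic parametrix together with Proposition 2.6; and (ii) commuting the weight $\scal{x}^N$ through — here one uses that $\scal{x}^N Op^h(r_N^\#(h,z))$ is, modulo lower order, $Op^h(\scal{x}^N r_N^\#)$ which lands in $S(0,-\infty)$ since $r_N^\#$ gained $\scal{x}^{-N}$ decay from the $\rho$-decay of $p_1$ and the $N$-fold composition. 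The $L^q\to L^r$ gain $h^{d/r-d/q}$ then comes from Proposition 2.6 applied to the resulting $S(0,-\infty)$ symbol, and the $H^{-m}\to H^m$ bound $h^{-2m}$ from writing $\scal{x}^m(h^2P+1)^m$ acting on both sides and tracking the $h^{-2}$ per derivative pair. Keeping the $z$-dependence polynomial throughout so that the final $\int |\bar\partial\tilde\phi(z)| |\mathrm{Im}\,z|^{-(\text{large})}\,L(dz)$ converges is the bookkeeping crux.
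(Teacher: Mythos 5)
Your approach is the standard one — Helffer–Sj\"ostrand functional calculus combined with an iterated elliptic parametrix for $(h^2P-z)^{-1}$ — and it is exactly what the paper's cited references (\cite{BTlocalstrichartz}, \cite{Robert}) do and what the paper itself does for the analogous low-frequency result (Proposition~\ref{prop parametrix phi low freq}). The overall structure of your proposal (construct $b_j(z)$ with $b_0=(p-z)^{-1}$, integrate against $\bar\partial\tilde\phi$ to get the $q_j$, then estimate the remainder by peeling off a PDO factor with $\scal{x}^{-N}$ decay and using the resolvent's $|\mathrm{Im}\,z|^{-1}$ bound together with the polynomial vanishing of $\bar\partial\tilde\phi$) is sound.

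One point in your reasoning is wrong as stated and worth correcting before you write out details. You justify $\mathrm{supp}(q_j)\subset\mathrm{supp}(\phi\circ p)$ by asserting that ``each $b_j(z)$ is supported (in $(x,\xi)$) where $p(x,\xi)\in\mathrm{supp}\,\phi$.'' This is false: $b_j(z)$ is a linear combination of terms $d_{\ell}(p-z)^{-1-\ell}$, which are rational functions of $p$ and vanish nowhere in $(x,\xi)$. The compact support of $q_j$ instead comes out only after the $z$-integration, via the Cauchy-type identities
\[
-\frac{1}{\pi}\int_\C \bar\partial\tilde\phi(z)\,(\lambda - z)^{-1-\ell}\,L(dz) \;=\; \frac{(-1)^\ell}{\ell!}\,\phi^{(\ell)}(\lambda),
\]
so that $q_j$ is a linear combination of $d_\ell\,\phi^{(\ell)}(p)$, and the support and $S(-j,-\infty)$ membership follow from $\mathrm{supp}(\phi^{(\ell)})\subset\mathrm{supp}(\phi)$ (the $\xi$-compactness of $\{p\in\mathrm{supp}\,\phi\}$, guaranteed by ellipticity, is what downgrades the $\xi$-order to $-\infty$). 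With this correction the rest of your sketch goes through.
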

Combining Proposition $\ref{prop lq lr bounds PDO}$ and Proposition $\ref{prop parametrix phi}$, one has the following result (see e.g. \cite[Proposition 2.9]{BTlocalstrichartz}).
\begin{prop} \label{prop Lq Lr bound of phi}
Consider $\R^d$ equipped with a smooth metric $g$ satisfying $(\ref{assump elliptic})$ and $(\ref{assump long range})$. Let $\phi \in C^\infty_0(\R)$. Then for $1 \leq q \leq r \leq \infty$, there exists $C>0$ such that for all $h \in (0,1]$,
\[
\|\phi(h^2P)\|_{\Lc(L^q, L^r)} \leq C h^{d/r-d/q}.
\]
\end{prop}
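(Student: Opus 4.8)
The plan is to combine the parametrix for $\phi(h^2P)$ furnished by Proposition $\ref{prop parametrix phi}$ with the $\Lc(L^q,L^r)$ bounds for pseudo-differential operators from Proposition $\ref{prop lq lr bounds PDO}$. Fix any $N\geq 1$ (in fact $N=1$ already suffices) and write
\[
\phi(h^2P)= \sum_{j=0}^{N-1} h^j Op^h(q_j) + h^N R_N(h),
\]
with $q_j \in S(-j,-\infty)$ and $R_N(h)$ the remainder. It then suffices to bound each of the finitely many terms in $\Lc(L^q,L^r)$ by $C h^{d/r-d/q}$ uniformly in $h\in(0,1]$, since $h\leq 1$ makes the extra powers $h^j$ and $h^N$ harmless when we sum.

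For the pseudo-differential terms I would note that each $q_j\in S(-j,-\infty)$ satisfies, taking $\alpha=0$ and any $\beta$, the estimate $|\partial^\beta_\xi q_j(x,\xi)| \leq C_\beta \scal{x}^{-j}\scal{\xi}^{m-|\beta|}$ for every $m\in\R$; choosing $m=-d-1$ and using $\scal{x}^{-j}\leq 1$ for $j\geq 0$ shows that $q_j$ fulfils the hypothesis of Proposition $\ref{prop lq lr bounds PDO}$ with exponent $d+1>d$. Hence $\|Op^h(q_j)\|_{\Lc(L^q,L^r)} \leq C h^{d/r-d/q}$, and consequently $h^j\|Op^h(q_j)\|_{\Lc(L^q,L^r)} \leq C h^{d/r-d/q}$.

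For the remainder, Proposition $\ref{prop parametrix phi}$ only gives the weighted bound $\|R_N(h)\scal{x}^N\|_{\Lc(L^q,L^r)} \leq C h^{d/r-d/q}$, so I would factor $R_N(h) = \big(R_N(h)\scal{x}^N\big)\circ \scal{x}^{-N}$ and use that multiplication by the bounded function $\scal{x}^{-N}$ is bounded on $L^q$ (with norm $\leq 1$) for every $q\in[1,\infty]$; this yields $\|R_N(h)\|_{\Lc(L^q,L^r)} \leq C h^{d/r-d/q}$, hence $h^N\|R_N(h)\|_{\Lc(L^q,L^r)} \leq C h^{d/r-d/q}$. Summing the contributions completes the proof. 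There is no serious obstacle here; the only point needing a little care is this last step, namely converting the weighted estimate on $R_N(h)\scal{x}^N$ into an unweighted one for $R_N(h)$ via the $L^q$-boundedness of multiplication by $\scal{x}^{-N}$.
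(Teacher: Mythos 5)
Your proposal is correct and takes essentially the same route as the paper, which simply states that the result follows by combining Propositions~\ref{prop parametrix phi} and \ref{prop lq lr bounds PDO} (citing Bouclet--Tzvetkov for details). You fill in exactly the expected steps: verifying that each $q_j\in S(-j,-\infty)$ meets the hypothesis of Proposition~\ref{prop lq lr bounds PDO}, and converting the weighted remainder bound into an unweighted one via the $L^q$-boundedness of multiplication by $\scal{x}^{-N}$.
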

It is also known (see \cite{BoucletMizutani}) that the rescaled pseudo-differential operator is very useful to approximate the low frequency localization of $P$, i.e. operators of the form $\phi(\ep^{-2}P)$. By the uncertainty principle, one can only expect to get such approximation whenever $|x|$ is large, typically $|x| \gtrsim \ep^{-1}$. 
\begin{rem}\label{rem property rescaled symbol}
Let  $\mu \leq 0, m \in \R$ and $a \in S(\mu,m)$. If we set
\[
a_\ep(x,\xi):= \ep^\mu a(\ep^{-1}x,\xi),
\]
then for all $\alpha,\beta \in \N^d$, there exists $C_{\alpha\beta}>0$ such that for all $|x| \geq 1, \xi \in \R^d$,
\[
|\partial^\alpha_x \partial^\beta_\xi a_\ep(x,\xi)| \leq C_{\alpha\beta} \scal{\xi}^{m-|\beta|}, \quad \forall \ep \in (0,1].
\]
\end{rem}
We next rewrite $\ep^{-2}P$ as $D_\ep (D_\ep^{-1}(\ep^{-2}P)D_\ep)D_\ep^{-1}$. A direct computation gives
\[
D_\ep^{-1}(\ep^{-2}P)D_\ep = Op(p_\ep)+Op(p_{\ep,1})=:P_\ep,
\]
where $p_{\ep}(x,\xi) = p(\ep^{-1}x,\xi)$ and $p_{\ep,1}(x,\xi)=\ep^{-1}p_1(\ep^{-1}x,\xi)$. We thus obtain
\begin{align}
\ep^{-2}P = Op_\ep(p_\ep)+ Op_\ep(p_{\ep,1}). \label{define p_epsilon}
\end{align}
Using the fact that $p\in S(0,2), p_1\in S(-1,1)$, Remark $\ref{rem property rescaled symbol}$ allows us to construct the parametrix for the resolvent $\zeta(\ep x)(\ep^{-2}P-z)^{-k}$ with $\zeta \in C^\infty(\R^d)$ supported outside $B(0,1)$ and equal to 1 near infinity. Indeed, by writing $\zeta(\ep x)(\ep^{-2}P-z)^{-k} = D_\ep \left[\zeta(x) ( P_\ep -z )^{-k} \right] D_\ep^{-1}$, we can apply the standard elliptic parametrix for $\zeta(x) ( P_\ep -z )^{-k}$ and we have (see e.g. \cite{BTlocalstrichartz} or \cite{BoucletMizutani}) the following result.
\begin{prop} \label{prop low freq parametrix} 
Let $\zeta, \tilde{\zeta}, \tilde{\tilde{\zeta}} \in C^\infty(\R^d)$ be supported outside $B(0,1)$ and equal to $1$ near infinity such that $\tilde{\zeta}=1$ near $\emph{supp}(\zeta)$ and $\tilde{\tilde{\zeta}} =1$ near $\emph{supp}(\tilde{\zeta})$. Then for all $k, N \geq 1$ integers and $z \in \C \backslash [0,+\infty)$, we have for $\ep \in (0,1]$,
\[
\zeta(\ep x) (\ep^{-2}P -z)^{-k}= \sum_{j=0}^{N-1} \zeta(\ep x) Op_\ep (b_{\ep,j}(z)) \tilde{\zeta}(\ep x) + R_N(z,\ep),
\]
where $(b_{\ep,j}(z))_{\ep \in (0,1]}$ is a bounded family in $S(-j,-2k-j)$ which is a linear combination of $d_{\ep,l} (p_\ep-z)^{-k-l}$ with $(d_{\ep,l})_{\ep \in (0,1]}$ a bounded family in $S(-j,2l-j)$ and 
\[
R_N(z,\ep)= \zeta(\ep x) Op_\ep(r_N(z,\ep))\tilde{\tilde{\zeta}}(\ep x) (\ep^{-2}P -z)^{-k}
\]
where $r_N(z,\ep) \in S(-N,-N)$ has seminorms growing polynomially in $1/\emph{dist}(z,\R^+)$ uniformly in $\ep \in (0,1]$ as long as $z$ belongs to a bounded set of $\C \backslash [0,+\infty)$.
\end{prop}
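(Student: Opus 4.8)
The plan is to use the rescaling $(\ref{define D_epsilon})$ to replace the low-frequency operator $\ep^{-2}P$ by a family that is uniformly elliptic ``at frequency $\sim 1$'', to build a standard elliptic parametrix for the latter, and to conjugate back. Since $\ep^{-2}P=D_\ep P_\ep D_\ep^{-1}$ by the computation preceding $(\ref{define p_epsilon})$, the unitary dilation $D_\ep$ intertwines $(\ep^{-2}P-z)^{-k}$ with $(P_\ep-z)^{-k}$, multiplication by $\eta(\ep x)$ with multiplication by $\eta(x)$ for any bounded function $\eta$, and (by $(\ref{define D_epsilon})$) $Op_\ep(a)$ with $Op(a)$; hence
\[
\zeta(\ep x)(\ep^{-2}P-z)^{-k}=D_\ep\big[\,\zeta(x)(P_\ep-z)^{-k}\,\big]D_\ep^{-1}.
\]
It therefore suffices to establish, with all symbol bounds uniform in $\ep\in(0,1]$, an expansion
\[
\zeta(x)(P_\ep-z)^{-k}=\sum_{j=0}^{N-1}\zeta(x)Op(b_{\ep,j}(z))\tilde\zeta(x)+\zeta(x)Op(r_N(z,\ep))\tilde{\tilde{\zeta}}(x)(P_\ep-z)^{-k}
\]
of the stated form, and then to conjugate everything back through $D_\ep$ using $D_\ep Op(a)D_\ep^{-1}=Op_\ep(a)$ and $D_\ep\eta(x)D_\ep^{-1}=\eta(\ep x)$.

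The key input is uniform-in-$\ep$ symbol control on the region where the cutoffs live. The symbols $p_\ep(x,\xi)=p(\ep^{-1}x,\xi)$ and $p_{\ep,1}(x,\xi)=\ep^{-1}p_1(\ep^{-1}x,\xi)$ are \emph{not} bounded families in $S(0,2)$, resp. $S(-1,1)$, on all of $\R^{2d}$, because an $x$-derivative produces a factor $\ep^{-|\alpha|}$. However, Remark $\ref{rem property rescaled symbol}$, applied to $p\in S(0,2)$ with $\mu=0$ and to $p_1\in S(-1,1)$ with $\mu=-1$, shows that on $\{|x|\geq 1\}$ — which contains $\text{supp}(\zeta)$, $\text{supp}(\tilde\zeta)$ and $\text{supp}(\tilde{\tilde{\zeta}})$ — one has $|\partial^\alpha_x\partial^\beta_\xi p_\ep|\leq C_{\alpha\beta}\scal{\xi}^{2-|\beta|}$ and $|\partial^\alpha_x\partial^\beta_\xi p_{\ep,1}|\leq C_{\alpha\beta}\scal{\xi}^{1-|\beta|}$, uniformly in $\ep$. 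Together with the ellipticity $(\ref{assump elliptic})$, which gives $p_\ep(x,\xi)=\xi^t G(\ep^{-1}x)\xi\geq C^{-1}|\xi|^2\geq 0$ for every $x$, this yields $|p_\ep(x,\xi)-z|\geq c\,\scal{\xi}^2$ with $c$ depending polynomially on $1/\text{dist}(z,\R^+)$ when $z$ stays in a fixed bounded subset of $\C\backslash[0,\infty)$; hence $(p_\ep-z)^{-1}$ and all its $(x,\xi)$-derivatives are $O(\scal{\xi}^{-2})$ times a fixed power of $1/\text{dist}(z,\R^+)$, uniformly in $\ep$ and in $\{|x|\geq 1\}$.

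Granting this, I would run the classical iterative elliptic parametrix for $(P_\ep-z)^k$ near $\text{supp}(\zeta)$: start from $b_{\ep,0}(z)=(p_\ep-z)^{-k}\in S(0,-2k)$ and use the composition formula $(\ref{composition PDO})$ to solve away, one order at a time, the errors of $\big(\sum_{j<N}Op(b_{\ep,j})\tilde\zeta\big)(P_\ep-z)^k-\tilde{\tilde{\zeta}}$. Each step produces a new $b_{\ep,j}(z)\in S(-j,-2k-j)$ of exactly the claimed shape — a finite linear combination of terms $d_{\ep,l}(p_\ep-z)^{-k-l}$, the $d_{\ep,l}$ being the bounded families in $S(-j,2l-j)$ created by repeatedly differentiating the factor $p_\ep-z$ in $\xi$ and $x$ — and after $N$ steps leaves a remainder symbol $r_N(z,\ep)\in S(-N,-N)$ with seminorms growing polynomially in $1/\text{dist}(z,\R^+)$, all uniformly in $\ep$. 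Composing the resulting approximate identity with $(P_\ep-z)^{-k}$ on the right and with $\zeta(x)$ on the left, and using $\zeta\tilde\zeta=\zeta$, $\tilde\zeta\,\tilde{\tilde{\zeta}}=\tilde\zeta$ and the fact that the commutators of $\tilde{\tilde{\zeta}}$ with $(P_\ep-z)^k$ are supported away from $\text{supp}(\zeta)$, gives the displayed expansion for $\zeta(x)(P_\ep-z)^{-k}$ with $R_N(z,\ep)$ of the stated localized form; conjugating back by $D_\ep$ then yields the proposition.

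The main obstacle — in fact the only point requiring real care — is the uniformity in $\ep\in(0,1]$: one must make sure that neither the explicit $\ep^{-1}$ in $p_{\ep,1}$ nor the $\ep^{-|\alpha|}$ factors created by $x$-differentiation ever survive, and this is exactly what restricting to $\{|x|\geq 1\}$ (equivalently, to $|\ep^{-1}x|\geq\ep^{-1}\geq 1$) buys us via Remark $\ref{rem property rescaled symbol}$ — which is precisely why the cutoffs $\zeta,\tilde\zeta,\tilde{\tilde{\zeta}}$ are taken supported outside $B(0,1)$. Correspondingly, the families $b_{\ep,j}(z)$ need only be bounded in $S(-j,-2k-j)$ on $\{|x|\geq 1\}$, which suffices since they occur sandwiched between $\zeta(\ep x)$ and $\tilde\zeta(\ep x)$. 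Once this uniform symbol control is in place, everything else is the textbook elliptic parametrix construction, and the polynomial dependence on $1/\text{dist}(z,\R^+)$ is read off mechanically from the finitely many resolvent factors $(p_\ep-z)^{-1}$ appearing in each $b_{\ep,j}$ and in $r_N$.
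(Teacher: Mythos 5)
Your proposal is correct and follows the same route the paper takes: conjugating by $D_\ep$ to reduce to $\zeta(x)(P_\ep-z)^{-k}$, invoking Remark~\ref{rem property rescaled symbol} together with the ellipticity~(\ref{assump elliptic}) for uniform-in-$\ep$ symbol control on $\{|x|\geq 1\}$, running the standard iterative elliptic parametrix there, and conjugating back. The paper merely cites \cite{BTlocalstrichartz,BoucletMizutani} for the "standard elliptic parametrix" step; you have filled in the details of that step faithfully, including the key point that the $\ep^{-|\alpha|}$ loss from $x$-derivatives and the $\ep^{-1}$ in $p_{\ep,1}$ are killed by the restriction to $\{|x|\geq 1\}$, which is exactly what the support condition on $\zeta,\tilde\zeta,\tilde{\tilde\zeta}$ is for.
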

A first application of Proposition $\ref{prop low freq parametrix}$ is the following result.
\begin{prop} \label{prop L2 Lq resolvent low freq}
Using the notations given in $\emph{Proposition}$ $\ref{prop low freq parametrix}$, let $k>d/2$ and $2\leq q \leq \infty$. Then there exists $C>0$ such that for all $\ep \in (0,1]$,
\begin{align}
\|\zeta(\ep x) (\ep^{-2}P+1)^{-k}\|_{\Lc(L^{2}, L^{q})} \leq C \ep^{d/2-d/q}. \label{L2 Lq estimate low freq parametrix}
\end{align}
\end{prop}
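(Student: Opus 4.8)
The plan is to reduce the claimed $\Lc(L^2,L^q)$ bound to the analogous statement for the rescaled parametrix supplied by Proposition~\ref{prop low freq parametrix}, applied with $z=-1$ (which indeed lies in $\C\backslash[0,+\infty)$) and with $k>d/2$. Writing
\[
\zeta(\ep x)(\ep^{-2}P+1)^{-k} = \sum_{j=0}^{N-1}\zeta(\ep x)Op_\ep(b_{\ep,j}(-1))\tilde\zeta(\ep x) + R_N(-1,\ep),
\]
it suffices to estimate each term of the finite sum and the remainder in $\Lc(L^2,L^q)$ by $C\ep^{d/2-d/q}$, for $N$ chosen large enough.

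First I would treat the main terms. Using the conjugation identity $Op_\ep(a)=D_\ep Op(a)D_\ep^{-1}$ together with the fact that $D_\ep$ is unitary on $L^2$ and scales $L^q$-norms by an explicit power of $\ep$, one computes that for a multiplication operator by $\zeta(\ep\cdot)$ and a rescaled operator $Op_\ep(b_{\ep,j})$ the composite is $D_\ep\big[\zeta(x)Op(b_{\ep,j})\tilde\zeta(x)\big]D_\ep^{-1}$; since $D_\ep:L^q\to L^q$ has norm $\ep^{d/q-d/2}$ on the left and $D_\ep^{-1}:L^2\to L^2$ is unitary, one gets
\[
\|\zeta(\ep x)Op_\ep(b_{\ep,j}(-1))\tilde\zeta(\ep x)\|_{\Lc(L^2,L^q)} = \ep^{d/2-d/q}\,\|\zeta(x)Op(b_{\ep,j}(-1))\tilde\zeta(x)\|_{\Lc(L^2,L^q)}.
\]
So everything comes down to a \emph{uniform-in-$\ep$} bound $\|\zeta Op(b_{\ep,j}(-1))\tilde\zeta\|_{\Lc(L^2,L^q)}\lesssim 1$. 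Since $(b_{\ep,j}(-1))_{\ep\in(0,1]}$ is a bounded family in $S(-j,-2k-j)$ and $-2k-j<-2k<-d$, each $b_{\ep,j}(-1)$ satisfies the hypotheses of Proposition~\ref{prop lq lr bounds PDO} (with $h=1$) uniformly in $\ep$; more precisely Proposition~\ref{prop lq lr bounds PDO} already gives $\|Op(b_{\ep,j}(-1))\|_{\Lc(L^2,L^q)}\lesssim 1$ directly (the cutoffs $\zeta,\tilde\zeta\in L^\infty$ only help), with constants depending only on finitely many seminorms of the symbol, hence uniform in $\ep$.

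For the remainder term I would write $R_N(-1,\ep)=\zeta(\ep x)Op_\ep(r_N(-1,\ep))\tilde{\tilde\zeta}(\ep x)(\ep^{-2}P+1)^{-k}$ and split the $\Lc(L^2,L^q)$ norm: first apply $(\ep^{-2}P+1)^{-k}$, then the rest. For the factor $(\ep^{-2}P+1)^{-k}$ I would use that it is $\Lc(L^2)$-bounded uniformly in $\ep$ (it is a bounded function of the self-adjoint operator $\ep^{-2}P\ge0$, with sup-norm $\le1$); for the factor $\zeta(\ep x)Op_\ep(r_N(-1,\ep))\tilde{\tilde\zeta}(\ep x)$ I would again conjugate by $D_\ep$ as above, gaining the scaling factor $\ep^{d/2-d/q}$, and bound $\|Op(r_N(-1,\ep))\|_{\Lc(L^2,L^q)}\lesssim1$ by Proposition~\ref{prop lq lr bounds PDO}, since $r_N(-1,\ep)\in S(-N,-N)$ with seminorms bounded uniformly in $\ep$ once $z=-1$ is fixed (the ``polynomial growth in $1/\mathrm{dist}(z,\R^+)$'' is harmless here because $z=-1$ is a fixed point at distance $1$ from $\R^+$), and $N>d$ for $N$ large. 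Combining the three pieces gives the remainder bound $\lesssim\ep^{d/2-d/q}$.

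The main obstacle — really the only subtlety — is bookkeeping the uniformity in $\ep$: one must make sure that the symbol seminorms controlling all the pseudodifferential bounds, after the $D_\ep$-conjugation, do not secretly blow up as $\ep\to0$. This is exactly what the rescaled-operator formalism of Proposition~\ref{prop low freq parametrix} is designed to guarantee (the families $b_{\ep,j},d_{\ep,l},r_N$ are \emph{bounded} families of symbols), so once the conjugation identities are set up cleanly the estimate follows. A minor point to check is that $q=\infty$ is allowed: Proposition~\ref{prop lq lr bounds PDO} permits $r=\infty$, and the scaling computation for $D_\ep$ on $L^\infty$ gives the factor $\ep^{d/2}$, consistent with $d/2-d/q$ at $q=\infty$.
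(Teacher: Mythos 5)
Your proof is correct and follows essentially the same approach as the paper: apply Proposition~\ref{prop low freq parametrix} at $z=-1$ with $N>d$, conjugate by $D_\ep$ to reduce to the case $\ep=1$, invoke Proposition~\ref{prop lq lr bounds PDO} with $h=1$ for the main and remainder symbols, use the spectral bound $\|(P_\ep+1)^{-k}\|_{\Lc(L^2)}\le 1$, and extract the factor $\ep^{d/2-d/q}$ from $\|D_\ep\|_{\Lc(L^q)}$. One small slip in the prose: you state $\|D_\ep\|_{\Lc(L^q)}=\ep^{d/q-d/2}$, but the correct exponent is $\ep^{d/2-d/q}$ (as your displayed equation and the paper both have), so this is just a sign typo in the text that does not affect the argument.
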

\begin{proof}
We apply Proposition $\ref{prop low freq parametrix}$ with $N>d$, we see that 
\begin{align*}
\zeta(\ep x) (\ep^{-2}P+1)^{-k}&= \sum_{j=0}^{N-1} \zeta(\ep x) Op_\ep (b_{\ep,j}(-1)) \tilde{\zeta}(\ep x) + \zeta(\ep x) Op_\ep(r_N(-1,\ep))\tilde{\tilde{\zeta}}(\ep x) (\ep^{-2}P +1)^{-k}, \nonumber \\
&= \sum_{j=0}^{N-1} D_\ep \left\{\zeta(x) Op(b_{\ep,j}(-1)) \tilde{\zeta}(x)+ \zeta(x) Op(r_N(-1,\ep))\tilde{\tilde{\zeta}} (x) (P_\ep +1)^{-k}\right\} D_\ep^{-1}, \nonumber
\end{align*}
where $(b_{\ep,j}(-1))_{\ep \in (0,1]}, (r_N(-1,\ep))_{\ep\in (0,1]}$ are bounded in $S(-j,-2k-j)$ and $S(-N,-N)$ respectively. The result then follows from Proposition $\ref{prop lq lr bounds PDO}$ with $h=1$ and that 
\[
\|D_\ep\|_{\Lc(L^q)} = \ep^{d/2-d/q},  \quad \|D_\ep^{-1}\|_{\Lc(L^2)} = 1.
\]
We also use that $\|(P_\ep+1)^{-k}\|_{\Lc(L^2)} \leq 1$ for the remainder term.
\end{proof}
Another application of Proposition $\ref{prop low freq parametrix}$ is the following approximation of $\zeta(\ep x)\phi(\ep^{-2}P)$ in terms of rescaled pseudo-differential operators.
\begin{prop} \label{prop parametrix phi low freq}
Consider $\R^d$ equipped with a smooth metric $g$ satisfying $(\ref{assump elliptic})$ and $(\ref{assump long range})$. Let $\phi \in C^\infty_0(\R)$ and $\zeta, \tilde{\zeta}, \tilde{\tilde{\zeta}}$ be as in \emph{Proposition } $\ref{prop low freq parametrix}$. Then there exists a sequence of bounded families of symbols $(q_{\ep,j})_{\ep \in (0,1]} \in S(-j,-\infty)$ with $q_{\ep,0}= \phi\circ p_\ep$ and $\emph{supp}(q_{\ep,j}) \subset \emph{supp}(\phi \circ p_\ep)$ such that for all $N \geq 1$,
\begin{align}
\zeta(\ep x)\phi(\ep^{-2}P)= \sum_{j=0}^{N-1} \zeta(\ep x) Op_\ep(q_{\ep,j}) \tilde{\zeta}(\ep x)+ R_N(\ep). \label{parametrix phi low freq}
\end{align}
Moreover, for any $m \geq 0$, there exists $C>0$ such that for all $\ep \in (0,1]$,
\begin{align}
\| (\ep^{-2}P+1)^m R_N(\ep) \scal{\ep x}^{N} \|_{\Lc(L^2)} \leq C. \label{estimate remainder low freq}
\end{align}
\end{prop}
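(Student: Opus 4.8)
The plan is to use the Helffer--Sj\"ostrand formula to express $\phi(\ep^{-2}P)$ as a resolvent integral and then to insert the low frequency elliptic parametrix of Proposition \ref{prop low freq parametrix}. First I would pick an almost analytic extension $\tilde{\phi} \in C^\infty_0(\C)$ of $\phi$, so that $\phi(\ep^{-2}P) = -\frac{1}{\pi}\int_{\C} \bar{\partial}\tilde{\phi}(z)\, (\ep^{-2}P - z)^{-1}\, L(dz)$, where $|\bar{\partial}\tilde{\phi}(z)| \lesssim_N |\mathrm{Im}\,z|^N$ for any $N$. Multiplying on the left by $\zeta(\ep x)$ and applying Proposition \ref{prop low freq parametrix} with $k=1$ gives, for each $z$,
\[
\zeta(\ep x)(\ep^{-2}P - z)^{-1} = \sum_{j=0}^{N-1} \zeta(\ep x)\, Op_\ep(b_{\ep,j}(z))\, \tilde{\zeta}(\ep x) + \zeta(\ep x)\, Op_\ep(r_N(z,\ep))\, \tilde{\tilde{\zeta}}(\ep x)\, (\ep^{-2}P - z)^{-1},
\]
where $b_{\ep,j}(z)$ is a linear combination of $d_{\ep,l}(p_\ep - z)^{-1-l}$. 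Integrating against $\bar{\partial}\tilde{\phi}(z)$ then produces the claimed expansion, with
\[
q_{\ep,j} := -\frac{1}{\pi}\int_{\C} \bar{\partial}\tilde{\phi}(z)\, b_{\ep,j}(z)\, L(dz),
\]
and the leading term recovers $q_{\ep,0} = \phi \circ p_\ep$ from the $l=0$, $j=0$ contribution by the standard functional calculus identity $-\frac{1}{\pi}\int \bar{\partial}\tilde{\phi}(z)(p_\ep - z)^{-1} L(dz) = \phi(p_\ep)$. The support property $\mathrm{supp}(q_{\ep,j}) \subset \mathrm{supp}(\phi \circ p_\ep)$ follows because each $b_{\ep,j}(z)$ is supported (in $\xi$) where $p_\ep$ is near $\mathrm{supp}(\phi)$, uniformly in $z$, and the symbol bounds $(q_{\ep,j})_\ep \in S(-j,-\infty)$ come from the fact that $(b_{\ep,j}(z))_\ep$ is bounded in $S(-j,-2-j)$ with seminorms growing only polynomially in $1/|\mathrm{Im}\,z|$, which is absorbed by the rapid vanishing of $\bar{\partial}\tilde{\phi}$.

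For the remainder estimate \eqref{estimate remainder low freq}, I would write $R_N(\ep) = -\frac{1}{\pi}\int_{\C} \bar{\partial}\tilde{\phi}(z)\, \zeta(\ep x)\, Op_\ep(r_N(z,\ep))\, \tilde{\tilde{\zeta}}(\ep x)\, (\ep^{-2}P - z)^{-1}\, L(dz)$ and then commute the weight $\scal{\ep x}^N$ and the factor $(\ep^{-2}P+1)^m$ through. Conjugating everything by $D_\ep$ reduces matters to estimating, uniformly in $\ep$, the $\Lc(L^2)$ norm of $(P_\ep+1)^m \zeta(x)\, Op(r_N(z,\ep))\, \tilde{\tilde{\zeta}}(x)\, (P_\ep - z)^{-1} \scal{x}^N$, where $r_N(z,\ep) \in S(-N,-N)$. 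Here I would move $\scal{x}^N$ to the left through the compactly-$x$-supported cutoffs and the order $-N$ pseudodifferential operator (paying only bounded commutator terms since $\zeta$ localizes away from the weight's growth, and $Op(r_N)$ gains $\scal{x}^{-N}$), use that $(P_\ep+1)^m \zeta(x)$ composed with an order $-N$ symbol is bounded on $L^2$ for $N$ large relative to $m$ (with seminorms polynomial in $1/|\mathrm{Im}\,z|$), and use $\|(P_\ep - z)^{-1}\|_{\Lc(L^2)} \leq |\mathrm{Im}\,z|^{-1}$. Taking $N$ large enough that $\bar{\partial}\tilde{\phi}(z) = O(|\mathrm{Im}\,z|^N)$ beats all the negative powers of $|\mathrm{Im}\,z|$, the $z$-integral converges and is bounded uniformly in $\ep$.

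The main obstacle I anticipate is bookkeeping the uniformity in $\ep$ of all symbol seminorms: the rescaled symbols $p_\ep$, $d_{\ep,l}$, $r_N(z,\ep)$ only satisfy good bounds for $|x| \gtrsim 1$ (Remark \ref{rem property rescaled symbol}), so one must consistently exploit that the cutoffs $\zeta, \tilde{\zeta}, \tilde{\tilde{\zeta}}$ are supported outside $B(0,1)$ to make sense of compositions like $\zeta(x) Op(b_{\ep,j}(z))\tilde{\zeta}(x)$ within the honest symbol calculus of Section 2, and to verify that the off-diagonal (nested cutoff) error terms from the composition formula \eqref{composition PDO} are themselves $O(\scal{x}^{-\infty})$ and can be folded into $q_{\ep,j}$ or $R_N(\ep)$. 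A secondary technical point is justifying that the $z$-integral of rescaled pseudodifferential operators equals the rescaled pseudodifferential operator with the $z$-integrated symbol, which is routine once one has the uniform seminorm control; and the factor $(\ep^{-2}P+1)^m$ in \eqref{estimate remainder low freq} is handled by noting $(\ep^{-2}P+1)^m = D_\ep(P_\ep+1)^m D_\ep^{-1}$ and that $(P_\ep+1)^m$ times the elliptic parametrix remainder stays bounded because $r_N$ was built with enough decay in $\xi$.
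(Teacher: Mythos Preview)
Your construction of the expansion \eqref{parametrix phi low freq} via the Helffer--Sj\"ostrand formula together with Proposition \ref{prop low freq parametrix} at $k=1$ is exactly the paper's argument, and your remarks about the symbol class, support, and leading term of $q_{\ep,j}$ are correct.

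The gap is in the remainder estimate. In the expression
\[
(\ep^{-2}P+1)^m\,\zeta(\ep x)\,Op_\ep(r_N(z,\ep))\,\tilde{\tilde{\zeta}}(\ep x)\,(\ep^{-2}P - z)^{-1}\,\scal{\ep x}^{N}
\]
the weight $\scal{\ep x}^{N}$ sits to the \emph{right} of the full resolvent, with no spatial cutoff on that side. Your plan to ``move $\scal{x}^N$ to the left'' uses the $\scal{x}^{-N}$ decay of $r_N$, but that decay lives to the \emph{left} of $(\ep^{-2}P - z)^{-1}$; to exploit it you must first commute $\scal{\ep x}^{N}$ past the resolvent, and this is not a free move uniformly in $\ep$ (the coefficients of $P_\ep$ only obey good bounds for $|x|\gtrsim 1$, cf.\ Remark \ref{rem property rescaled symbol}). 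Also, $\zeta,\tilde{\zeta},\tilde{\tilde{\zeta}}$ are \emph{not} compactly supported --- they vanish on $B(0,1)$ and equal $1$ near infinity --- so they do not localize away from the growth of $\scal{x}^N$ as you suggest.

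The paper resolves this by inserting $1=(1-\zeta_1)(\ep x)+\zeta_1(\ep x)$ to the right of the resolvent, with $\zeta_1$ supported outside $B(0,1)$ and equal to $1$ near $\mathrm{supp}(\tilde{\tilde{\zeta}})$. For the first piece, $(1-\zeta_1)(\ep x)\scal{\ep x}^N$ is bounded by compact support. For the second piece, one applies Proposition \ref{prop low freq parametrix} \emph{on the right} (i.e.\ to the adjoint) to expand $(\ep^{-2}P - z)^{-1}\zeta_1(\ep x)$ as a rescaled pseudodifferential parametrix, whose symbol decay then legitimately absorbs $\scal{\ep x}^N$. Adding this split, your argument goes through; without it, the commutation step is unjustified.
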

\begin{proof}
By using Proposition $\ref{prop low freq parametrix}$ with $k=1$ and the Helffer-Sj\"ostrand formula (see \cite{DimaSjos}) namely
\[
\phi(\ep^{-2}P) = -\frac{1}{\pi} \int_{\C} \overline{\partial} \widetilde{\phi}(z) 
(\ep^{-2}P-z)^{-1}dL(z),
\]
where $\widetilde{\phi}$ is an almost analytic extension of $\phi$, the Cauchy formula gives $(\ref{parametrix phi low freq})$ with 
\begin{align}
R_N(\ep)= \frac{1}{\pi} \int_{\C} \overline{\partial} \widetilde{\phi}(z) \zeta(\ep x) Op_\ep(r_N(z,\ep))\tilde{\tilde{\zeta}}(\ep x) (\ep^{-2}P -z)^{-1} dL(z). \label{explicit form remainder}
\end{align}
Here $(r_N(z,\ep))_{\ep \in (0,1]}$ is bounded in $S(-N,-N)$ and has semi-norms growing polynomially in $|\im{z}|^{-1}$ which is harmless since 
$\overline{\partial} \widetilde{\phi}(z) =O(|\im{z}|^{\infty})$. The left hand side of $(\ref{estimate remainder low freq})$ is bounded by
\[
\frac{1}{\pi} \int_{\C} |\overline{\partial}\widetilde{\phi}(z)| \|  (\ep^{-2}P+1)^m \zeta(\ep x) Op_\ep(r_N(z,\ep)) \tilde{\tilde{\zeta}}(\ep x) (\ep^{-2}P-z)^{-1} \scal{\ep x}^{N} \|_{\Lc(L^2)}dL(z).
\]
By choosing $\zeta_1 \in C^\infty(\R^d)$ supported outside $B(0,1)$ such that $\zeta_1=1$ near $\text{supp}(\tilde{\tilde{\zeta}})$, we can write
\[
(\ep^{-2}P-z)^{-1}=(\ep^{-2}P-z)^{-1} (1-\zeta_1)(\ep x)+(\ep^{-2}P-z)^{-1} \zeta_1(\ep x).
\] 
We note that $(1-\zeta_1)(\ep x) \scal{\ep x}^N$ is of size $O_{\Lc(L^2)}(1)$ due to the compact support in $\ep x$, and $(\ep^{-2}P+1)(\ep^{-2}P-z)^{-1}$ is of size $O_{\Lc(L^2)}(|\im{z}|^{-1})$ by functional calculus. Moreover, using $(\ref{define p_epsilon})$ and the same process as in the proof of Proposition $\ref{prop L2 Lq resolvent low freq}$, there exists $\tau(m)\in \N$ such that 
\[
\|(\ep^{-2}P+1)^m \zeta(\ep x) Op_\ep(r_N(z,\ep)) \tilde{\tilde{\zeta}}(\ep x) (\ep^{-2}P+1)^{-1}\|_{\Lc(L^2)} \leq C |\im{z}|^{-\tau(m)}.
\]
This shows that 
\begin{align}
\|(\ep^{-2}P+1)^m R_N(\ep) (1-\zeta_1)(\ep x) \scal{\ep x}^N\|_{\Lc(L^2)} \leq C. \label{estimate term 1}
\end{align}
For the term $(\ep^{-2}P+1)^m R_N(\ep) \zeta_1(\ep x) \scal{\ep x}^N$, using Proposition $\ref{prop low freq parametrix}$ (by taking the adjoint), we see that
\[
(\ep^{-2}P-z)^{-1}\zeta_1(\ep x) = \sum_{j=0}^{{N'}-1} \tilde{\zeta}_1(\ep x) Op_\ep (\tilde{b}_{\ep,j}(z)) \zeta_1(\ep x) + \tilde{R}_{N'}(z,\ep),
\]
where $(\tilde{b}_{\ep,j}(z))_{\ep \in (0,1]}$ is a bounded family in $S(-j,-2-j)$ and
\[
\tilde{R}_{N'}(z,\ep)= (\ep^{-2}P -z)^{-1}\tilde{\tilde{\zeta}}_1(\ep x) Op_\ep(\tilde{r}_{N'}(z,\ep))\zeta_1(\ep x),
\]
where $\tilde{r}_{N'}(z,\ep) \in S(-N',-N')$ has seminorms growing polynomially in $|\im{z}|^{-1}$ uniformly in $\ep \in (0,1]$. By the same argument as above, we obtain
\begin{align}
\|(\ep^{-2}P+1)^m R_N(\ep) \zeta_1(\ep x) \scal{\ep x}^N\|_{\Lc(L^2)} \leq C. \label{estimate term 2}
\end{align}
Combining $(\ref{estimate term 1})$ and $(\ref{estimate term 2})$, we prove $(\ref{estimate remainder low freq})$.
\end{proof}
As a consequence of Proposition $\ref{prop parametrix phi low freq}$, we have the following result.
\begin{coro} \label{coro Lq Lr estimate of parametrix low freq}
Let $\phi \in C^\infty_0(\R)$. Then for $2 \leq q \leq r \leq \infty$, there exists $C>0$ such that for all $\ep \in (0,1]$,
\begin{align}
\|\zeta(\ep x) \phi (\ep^{-2}P)\|_{\Lc(L^q, L^r)} &\leq C \ep^{d/q-d/r}. \label{estimate low freq appro Lq-Lr} 
\end{align}
\end{coro}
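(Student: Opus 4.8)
The plan is to feed the parametrix expansion \eqref{parametrix phi low freq} from Proposition \ref{prop parametrix phi low freq} into the $\Lc(L^q,L^r)$ estimate and control each piece separately, using the rescaling identity $Op_\ep(a) = D_\ep Op(a) D_\ep^{-1}$ from \eqref{define D_epsilon} together with the scaling of $D_\ep$ on Lebesgue spaces, namely $\|D_\ep\|_{\Lc(L^r)} = \ep^{d/2 - d/r}$ and $\|D_\ep^{-1}\|_{\Lc(L^q)} = \ep^{d/q - d/2}$ (so that $\|D_\ep\|_{\Lc(L^r)}\|D_\ep^{-1}\|_{\Lc(L^q)} = \ep^{d/q - d/r}$, exactly the target power). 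First I would write, for $N > d$,
\[
\zeta(\ep x)\phi(\ep^{-2}P) = \sum_{j=0}^{N-1} \zeta(\ep x) Op_\ep(q_{\ep,j}) \tilde{\zeta}(\ep x) + R_N(\ep),
\]
and treat the main terms and the remainder by different arguments.

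For the main terms, since $(q_{\ep,j})_{\ep\in(0,1]}$ is a bounded family in $S(-j,-\infty) \subset S(0,-\infty)$, each symbol in particular satisfies the hypotheses of Proposition \ref{prop lq lr bounds PDO} with $h=1$ uniformly in $\ep$ (one picks $m > d$, which is fine since these symbols decay to all orders in $\xi$); note the multiplication operators $\zeta(\ep x)$ and $\tilde\zeta(\ep x)$ are bounded on every $L^q$ uniformly in $\ep$. Rewriting $\zeta(\ep x) Op_\ep(q_{\ep,j}) \tilde{\zeta}(\ep x) = D_\ep\big[\zeta(x) Op(q_{\ep,j}) \tilde{\zeta}(x)\big] D_\ep^{-1}$ and bounding $\|D_\ep\|_{\Lc(L^r)} \cdot \|\zeta Op(q_{\ep,j})\tilde\zeta\|_{\Lc(L^q,L^r)} \cdot \|D_\ep^{-1}\|_{\Lc(L^q)}$, Proposition \ref{prop lq lr bounds PDO} gives the middle factor $\leq C$ uniformly in $\ep$, and the two outer factors contribute $\ep^{d/q - d/r}$; summing the finitely many $j$ gives the claimed bound for the main part.

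For the remainder $R_N(\ep)$, the idea is to insert the identity $(\ep^{-2}P+1)^{-m}(\ep^{-2}P+1)^{m}$ and use \eqref{estimate remainder low freq} from Proposition \ref{prop parametrix phi low freq}: choosing $m > d/4$ (so that $2m > d/2 > d/2 - d/q$ and Sobolev embedding applies), write $R_N(\ep) = \big[(\ep^{-2}P+1)^{-m} \zeta_2(\ep x)\big]\big[\zeta_2(\ep x)^{-1}(\ep^{-2}P+1)^{m} R_N(\ep)\big]$ — more cleanly, since $R_N(\ep) = \zeta(\ep x) Op_\ep(r_N)\tilde{\tilde\zeta}(\ep x)(\ep^{-2}P-z)^{-1}\,dL(z)$-type object carrying a $\zeta(\ep x)$ on the left, one factors $R_N(\ep) = \zeta(\ep x)(\ep^{-2}P+1)^{-m}\cdot (\ep^{-2}P+1)^m R_N(\ep)$ only after commuting, which is where care is needed. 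The cleaner route: $\|R_N(\ep)\|_{\Lc(L^q,L^r)} \leq \|R_N(\ep)\|_{\Lc(L^2,L^r)}\cdot(\text{duality/interpolation to reduce } L^q \text{ to } L^2)$; but since $q\geq 2$, instead bound $\|R_N(\ep) \scal{\ep x}^{-N}\|_{\Lc(L^2, L^r)}$ after noting $\scal{\ep x}^{-N}$ costs nothing from $L^q$ into $L^2$-type spaces — this is the delicate bookkeeping. Concretely I would use: $\|R_N(\ep)\|_{\Lc(L^q,L^r)} \lesssim \|(\ep^{-2}P+1)^{-m}\zeta_3(\ep x)\|_{\Lc(L^2,L^r)} \cdot \|\zeta_3(\ep x)^{-1}\scal{\ep x}^{-N}\|_{\cdots}\cdot\|(\ep^{-2}P+1)^m R_N(\ep)\scal{\ep x}^N\|_{\Lc(L^2)}\cdot\|\scal{\ep x}^{-N}\|_{\Lc(L^q,L^2)}$, using Proposition \ref{prop L2 Lq resolvent low freq} for the resolvent-with-cutoff factor (giving $\ep^{d/2-d/r}$), \eqref{estimate remainder low freq} for the bounded factor, and the elementary scaling $\|\scal{\ep x}^{-N}\|_{\Lc(L^q,L^2)} \lesssim \ep^{-d(1/2-1/q)} = \ep^{d/q-d/2}$ (valid for $N$ large, by computing $\|\scal{\ep x}^{-N}\|_{L^s}$ with $1/s = 1/2 - 1/q$ via the change of variables $y = \ep x$). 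The product of these powers is again $\ep^{d/q-d/r}$, completing the estimate.

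The main obstacle is the remainder term: keeping the powers of $\ep$ aligned while juggling the spatial weights $\scal{\ep x}^{\pm N}$, the resolvent powers $(\ep^{-2}P+1)^{\pm m}$, and the various cutoffs $\zeta,\tilde\zeta,\zeta_1,\dots$ requires inserting auxiliary cutoff functions (as in the proof of Proposition \ref{prop parametrix phi low freq}) so that every product of a weight and a compactly-supported-in-$\ep x$ factor is genuinely $O_{\Lc(L^2)}(1)$ or carries exactly the scaling power one wants; the key quantitative inputs are \eqref{estimate remainder low freq}, Proposition \ref{prop L2 Lq resolvent low freq}, and the elementary fact that $\|\scal{\ep x}^{-N}\|_{L^s(\R^d)} = \ep^{-d/s}\|\scal{y}^{-N}\|_{L^s(\R^d)}$ for $N > d/s$. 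Once the remainder is handled this way, the corollary follows by the triangle inequality.
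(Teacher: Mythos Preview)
Your approach is essentially the same as the paper's: split via the parametrix \eqref{parametrix phi low freq}, handle the main terms by rescaling through $D_\ep$ and Proposition~\ref{prop lq lr bounds PDO} at $h=1$, and control the remainder using \eqref{estimate remainder low freq}, Proposition~\ref{prop L2 Lq resolvent low freq}, and the scaling of $\scal{\ep x}^{-N}$ from $L^q$ to $L^2$. The only difference is cosmetic: the paper observes directly (from the explicit form \eqref{explicit form remainder} and the proof of \eqref{estimate remainder low freq}) that the remainder can be written as $R_N(\ep)=\tilde{\zeta}(\ep x)(\ep^{-2}P+1)^{-m} B_\ep \scal{\ep x}^{-N}$ with $B_\ep=O_{\Lc(L^2)}(1)$, which makes the three-factor estimate immediate and spares you the awkward insertion of $\zeta_3(\ep x)^{-1}$ and the commutation worries you flag---that decomposition is already built in, so no extra cutoffs or commutators are needed.
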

\begin{proof}
By $(\ref{parametrix phi low freq})$ and $(\ref{estimate remainder low freq})$ (see also $(\ref{explicit form remainder})$), we can write for any $N\geq 1$ and any $m\geq 0$,
\[
\zeta(\ep x)\phi(\ep^{-2}P)= \sum_{j=0}^{N-1} \zeta(\ep x) Op_\ep(q_{\ep,j}) \tilde{\zeta}(\ep x)+ R_N(\ep),
\]
where
\[
R_N(\ep)=\tilde{\zeta}(\ep x)(\ep^{-2}P+1)^{-m} B_\ep \scal{\ep x}^{-N}
\]
with $B_\ep = O_{\Lc(L^2)}(1)$ uniformly in $\ep \in (0,1]$. The main terms can be estimated by using Proposition $\ref{prop lq lr bounds PDO}$ (see also the proof of Proposition $\ref{prop L2 Lq resolvent low freq}$). It remains to treat the remainder term. We firstly note that $\scal{\ep x}^{-N}=O_{\Lc(L^q,L^2)}(\ep^{d/q-d/2})$ provided $N>\frac{d(q-2)}{2q}$. Using this bound together with $B_\ep = O_{\Lc(L^2)}(1)$ and $(\ref{L2 Lq estimate low freq parametrix})$, we see that
\begin{align*}
\|R_N(\ep)\|_{\Lc(L^q,L^r)} &\lesssim \|\tilde{\zeta}(\ep x)(\ep^{-2}P+1)^{-m}\|_{\Lc(L^2,L^r)} \|B_\ep\|_{\Lc(L^2)} \|\scal{\ep x}^{-N}\|_{\Lc(L^q,L^2)} \\
&\lesssim \ep^{d/2-d/r} \ep^{d/q-d/2} \lesssim \ep^{d/q-d/r}.
\end{align*}
This proves $(\ref{estimate low freq appro Lq-Lr})$.
\end{proof}
Another consequence of Proposition $\ref{prop parametrix phi low freq}$ is the following estimate.
\begin{coro} \label{coro rescaled speudo-differential}
Let $\phi \in C^\infty_0(\R)$. For $m\geq 0$, there exists $C>0$ such that for all $\ep \in (0,1]$,
\begin{align}
\|\scal{\ep x}^{-m} \phi(\ep^{-2}P) \scal{\ep x}^{m}\|_{\Lc(L^2)} \leq C. \label{L2 bound low freq}
\end{align}
\end{coro}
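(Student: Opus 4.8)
The plan is to discard the region $|x|\lesssim\ep^{-1}$ by a soft argument and then, via the rescaling $(\ref{define D_epsilon})$ and the parametrix of Proposition $\ref{prop parametrix phi low freq}$, to reduce everything to the ordinary ($h=1$) pseudo-differential calculus, where conjugating a symbol of $S(-j,-\infty)$ by $\scal{x}^{m}$ produces again a symbol of $S(-j,-\infty)$. By duality it suffices to bound $\scal{\ep x}^{m}\overline{\phi}(\ep^{-2}P)\scal{\ep x}^{-m}$ in $\Lc(L^2)$ uniformly in $\ep\in(0,1]$, this being the adjoint of $\scal{\ep x}^{-m}\phi(\ep^{-2}P)\scal{\ep x}^{m}$ and $\overline{\phi}\in C^\infty_0(\R)$. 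Fix $\zeta,\tilde{\zeta},\tilde{\tilde{\zeta}}\in C^\infty(\R^d)$ supported outside $B(0,1)$ and equal to $1$ near infinity as in Proposition $\ref{prop parametrix phi low freq}$, and split $\overline{\phi}(\ep^{-2}P)=(1-\zeta)(\ep x)\overline{\phi}(\ep^{-2}P)+\zeta(\ep x)\overline{\phi}(\ep^{-2}P)$. Since $1-\zeta$ is compactly supported, the function $x\mapsto\scal{\ep x}^{m}(1-\zeta)(\ep x)$ is bounded uniformly in $\ep$, so, using the spectral theorem and $\|\scal{\ep x}^{-m}\|_{\Lc(L^2)}\le1$, the operator $\scal{\ep x}^{m}(1-\zeta)(\ep x)\overline{\phi}(\ep^{-2}P)\scal{\ep x}^{-m}$ is $O_{\Lc(L^2)}(1)$. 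There remains $\scal{\ep x}^{m}\zeta(\ep x)\overline{\phi}(\ep^{-2}P)\scal{\ep x}^{-m}$.

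Apply Proposition $\ref{prop parametrix phi low freq}$ to $\overline{\phi}$: for any $N\ge1$, $\zeta(\ep x)\overline{\phi}(\ep^{-2}P)=\sum_{j=0}^{N-1}\zeta(\ep x)Op_\ep(q_{\ep,j})\tilde{\zeta}(\ep x)+R_N(\ep)$ with $(q_{\ep,j})_{\ep\in(0,1]}$ bounded in $S(-j,-\infty)$. Using $(\ref{define D_epsilon})$ and the fact that conjugating the multiplication operator $h(\ep x)$ by $D_\ep$ yields the multiplication operator $h(x)$ (applied to $h\in\{\scal{\cdot}^{\pm m},\zeta,\tilde{\zeta}\}$), each main term equals $D_\ep\left[\scal{x}^{m}\zeta(x)Op(q_{\ep,j})\tilde{\zeta}(x)\scal{x}^{-m}\right]D_\ep^{-1}$. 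Here $\scal{x}^{m}\zeta(x)Op(q_{\ep,j})=Op(\scal{x}^{m}\zeta(x)q_{\ep,j})$ with symbol a bounded family in $S(m-j,-\infty)$, and composing on the right with multiplication by $\tilde{\zeta}(x)\scal{x}^{-m}\in S(-m,0)$ gives, by $(\ref{composition PDO})$, a bounded family in $S(-j,-\infty)\subset S(0,-\infty)$; hence by Proposition $\ref{prop lq lr bounds PDO}$ (with $h=1$, $q=r=2$) and the unitarity of $D_\ep$, each of these finitely many terms is $O_{\Lc(L^2)}(1)$ uniformly in $\ep$.

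For $R_N(\ep)$ I use its explicit form $(\ref{explicit form remainder})$, $R_N(\ep)=\frac{1}{\pi}\int_\C\overline{\partial}\widetilde{\overline{\phi}}(z)\,\zeta(\ep x)Op_\ep(r_N(z,\ep))\tilde{\tilde{\zeta}}(\ep x)(\ep^{-2}P-z)^{-1}\,dL(z)$, where $(r_N(z,\ep))_{\ep\in(0,1]}$ is bounded in $S(-N,-N)$ with seminorms growing at most polynomially in $|\im z|^{-1}$. Conjugating and rescaling as above, $\scal{\ep x}^{m}\zeta(\ep x)Op_\ep(r_N(z,\ep))\tilde{\tilde{\zeta}}(\ep x)=D_\ep Op(\tilde{r}_N(z,\ep))D_\ep^{-1}$ with $(\tilde{r}_N(z,\ep))_\ep$ bounded in $S(m-N,-N)$ and seminorms polynomial in $|\im z|^{-1}$; choosing $N>\max(m,d)$, Proposition $\ref{prop lq lr bounds PDO}$ bounds its $\Lc(L^2)$-norm by $C|\im z|^{-\tau}$ for some $\tau\ge0$, uniformly in $\ep$. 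Combined with $\|(\ep^{-2}P-z)^{-1}\|_{\Lc(L^2)}\le|\im z|^{-1}$, $\|\scal{\ep x}^{-m}\|_{\Lc(L^2)}\le1$ and $\overline{\partial}\widetilde{\overline{\phi}}(z)=O(|\im z|^{\infty})$, the $z$-integral converges, so $\|\scal{\ep x}^{m}R_N(\ep)\scal{\ep x}^{-m}\|_{\Lc(L^2)}\le C$ uniformly in $\ep$, and summing all pieces proves $(\ref{L2 bound low freq})$. The delicate point is exactly this treatment of the remainder: applying the black-box estimate $(\ref{estimate remainder low freq})$ directly would require controlling $\scal{\ep x}^{m}(\ep^{-2}P+1)^{-m}$, which is not obviously uniformly bounded, whereas the Helffer--Sj\"ostrand representation $(\ref{explicit form remainder})$ only ever involves the trivial resolvent bound $\|(\ep^{-2}P-z)^{-1}\|_{\Lc(L^2)}\le|\im z|^{-1}$ together with pseudo-differential pieces that $D_\ep$ turns into standard $S(0,-\infty)$ symbols.
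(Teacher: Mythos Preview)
Your proof is correct and follows essentially the same approach as the paper: split off the compactly supported region via $(1-\zeta)(\ep x)$, and handle the remaining piece through the rescaled parametrix of Proposition~\ref{prop parametrix phi low freq}. The paper achieves the same thing by splitting on the right of $\phi(\ep^{-2}P)$ and invoking the adjoint of $(\ref{parametrix phi low freq})$, which is equivalent to your taking the adjoint first and applying the parametrix to $\zeta(\ep x)\overline{\phi}(\ep^{-2}P)$; you simply spell out in more detail (via $(\ref{explicit form remainder})$ and the conjugation by $D_\ep$) what the paper leaves implicit.
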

\begin{proof}
By choosing $\zeta \in C^\infty(\R^d)$ supported outside $B(0,1)$ and equal to $1$ near infinity, we can write $\scal{\ep x}^{-m} \phi(\ep^{-2}P) \scal{\ep x}^{m}$ as
\[
\scal{\ep x}^{-m} \phi(\ep^{-2}P) \zeta(\ep x)\scal{\ep x}^{m}+ \scal{\ep x}^{-m} \phi(\ep^{-2}P) (1-\zeta)(\ep x)\scal{\ep x}^{m}.
\]
The $\Lc(L^2)$-boundedness of the first term follows from the parametrix of $\phi(\ep^{-2}P) \zeta(\ep x)$ which is obtained by taking the adjoint of $(\ref{parametrix phi low freq})$. The second term follows from the fact that $(1-\zeta)(\ep x)\scal{\ep x}^{m}$ is bounded in $\Lc(L^2)$ since $1-\zeta$ vanishes outside a compact set.
\end{proof}
\subsection{Propagation estimates.} 
In this subsection, we recall some results on resolvent estimates and prove some propagation estimates both at high and low frequencies. Let us start with the following result.
\begin{prop} \label{prop power resolvent estimates}
\begin{itemize}
\item[1.] Consider $\R^d, d\geq 2$ equipped with a smooth metric $g$ satisfying $(\ref{assump elliptic}), (\ref{assump long range})$ and suppose that the assumption $(\ref{assump resolvent})$ holds. Then for $k \geq 0$, there exist $C>0$ and non-decreasing $N_k \in \N$ such that for all $h\in (0,1]$ and all $\lambda$ belongs to a relatively compact interval of $(0,+\infty)$,
\begin{align}
\|\scal{x}^{-1-k} (h^2P - \lambda \mp i0)^{-1-k} \scal{x}^{-1-k}\|_{\Lc(L^2)} \leq C h^{-N_k}. \label{power resolvent estimates}
\end{align}
\item[2.] Consider $\R^d, d\geq 3$ equipped with a smooth metric $g$ satisfying $(\ref{assump elliptic}), (\ref{assump long range})$. Then for $k \geq 0$, there exists $C>0$ such that for all $\ep \in (0,1]$ and all $\lambda$ belongs to a relatively compact interval of $(0,+\infty)$,
\begin{align}
\|\scal{\ep x}^{-1-k} (\ep^{-2}P - \lambda \mp i0)^{-1-k} \scal{\ep x}^{-1-k}\|_{\Lc(L^2)} \leq C. \label{power resolvent estimates low freq}
\end{align}
\end{itemize}
\end{prop}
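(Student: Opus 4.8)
The strategy is to reduce both estimates to known uniform resolvent bounds for $P$ together with the parametrix constructions of the previous subsection, and then to propagate the gain in the number of resolvent factors by an integration-by-parts (or rather, derivative-in-$\lambda$) argument. For part~1, I would start from the hypothesis $(\ref{assump resolvent})$, which after the substitution $\lambda \mapsto h^{-2}\lambda$ (recall $h^2P-\lambda = h^2(P - h^{-2}\lambda)$, so that $(h^2P-\lambda\mp i0)^{-1} = h^{-2}(P - h^{-2}\lambda \mp i0)^{-1}$) gives
\[
\|\scal{x}^{-1}(h^2P-\lambda\mp i0)^{-1}\scal{x}^{-1}\|_{\Lc(L^2)} \lesssim h^{-2}(h^{-2}\lambda)^{M} \lesssim h^{-2-2M}
\]
for $\lambda$ in a fixed compact subinterval of $(0,\infty)$, using that $\chi\scal{x}^{-1}$ is bounded on $L^2$ for a suitable cutoff $\chi$ and a dyadic/localization argument away from the support of $\chi$ (the resolvent kernel is smoothing off-diagonal at fixed spectral parameter, but here the cleanest route is to absorb $\scal{x}^{-1}$ into $\chi$ after noting $\scal{x}^{-1} = \scal{x}^{-1}\chi + \scal{x}^{-1}(1-\chi)$ and treating the second piece by a non-trapping-free commutator estimate, or simply by quoting the global resolvent bound in the stated references). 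This establishes $(\ref{power resolvent estimates})$ for $k=0$ with $N_0 = 2+2M$.

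For $k\geq 1$ I would use the identity
\[
(h^2P-z)^{-1-k} = \frac{1}{k!}\,\frac{d^k}{dz^k}(h^2P-z)^{-1},
\]
valid for $z$ off the spectrum, and then pass to the boundary values $z = \lambda \pm i0$. Differentiating $k$ times in $\lambda$ the map $\lambda \mapsto \scal{x}^{-1}(h^2P-\lambda\mp i0)^{-1}\scal{x}^{-1}$ and using a standard interpolation/Cauchy-integral argument to control the derivatives by the sup of the function on a slightly larger interval (this is where the non-decreasing sequence $N_k$ and the weights $\scal{x}^{-1-k}$ come from: each derivative costs one more power of $\scal{x}$ on each side and worsens the power of $h^{-1}$, because the limiting absorption bound is only Hölder—not smooth—in $\lambda$ uniformly down to the real axis). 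Concretely, one writes $\scal{x}^{-1-k}(h^2P-\lambda\mp i0)^{-1-k}\scal{x}^{-1-k}$ as a product of $k+1$ copies of $\scal{x}^{-1}(h^2P-\lambda_j\mp i0)^{-1}\scal{x}^{-1}$ glued by the resolvent identity, absorbing the extra weights $\scal{x}^{-k}$ into commutators $[\scal{x}^{-1},\,(h^2P-\lambda)^{-1}] = (h^2P-\lambda)^{-1}[h^2P,\scal{x}^{-1}](h^2P-\lambda)^{-1}$ and using that $[h^2P,\scal{x}^{-1}]$ is $h$ times a first-order operator with coefficients decaying like $\scal{x}^{-2}$; iterating $k$ times yields $(\ref{power resolvent estimates})$ with some non-decreasing $N_k$. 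Part~2 is the low-frequency analogue: here one replaces $h^2P$ by $\ep^{-2}P$ and uses the rescaling $\ep^{-2}P = D_\ep P_\ep D_\ep^{-1}$ of $(\ref{define p_epsilon})$ together with the low-frequency uniform resolvent estimates of Bouclet–Royer (available precisely for $d\geq 3$), which give $\|\scal{\ep x}^{-1}(\ep^{-2}P-\lambda\mp i0)^{-1}\scal{\ep x}^{-1}\|_{\Lc(L^2)} \lesssim 1$ uniformly in $\ep\in(0,1]$ and $\lambda$ in a compact subinterval of $(0,\infty)$; the same commutator-and-derivative bootstrap—now with $[\ep^{-2}P,\scal{\ep x}^{-1}]$ bounded by $\ep\cdot\ep\scal{\ep x}^{-2}$ times a first-order operator in the rescaled variables—produces the uniform bound $(\ref{power resolvent estimates low freq})$ with no loss in $\ep$ (the constants $C$ are $\ep$-independent because all rescaled symbols lie in fixed symbol classes by Remark~\ref{rem property rescaled symbol}).

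The main obstacle is the bootstrap from $k=0$ to general $k$: one must be careful that differentiating the boundary-value resolvent in $\lambda$ genuinely corresponds to taking higher powers of the resolvent (this requires the limiting absorption principle in weighted spaces, i.e. that $\lambda\mapsto \scal{x}^{-s}(P-\lambda\mp i0)^{-1}\scal{x}^{-s}$ is $C^k$ into $\Lc(L^2)$ for $s$ large enough, with the $k$-th derivative equal to $k!$ times the $(k+1)$-st power sandwiched by $\scal{x}^{-s}$), and that the weights needed grow linearly in $k$—hence the exponent $1+k$—while the $h$-loss in part~1 accumulates to the non-decreasing $N_k$. In part~2 the delicate point is instead the uniformity in $\ep$: one checks that every commutator and every rescaled resolvent factor contributes an $\ep$-independent constant, which is exactly what the rescaled pseudodifferential calculus of Subsection~2.1 and Bouclet–Royer's low-frequency estimates are designed to guarantee.
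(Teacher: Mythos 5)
The paper does not prove Proposition~\ref{prop power resolvent estimates} at all: it simply cites the high-frequency estimates to \cite[Proposition~7.5]{BoucletMizutani} and the low-frequency estimates to \cite[Theorem~1.2]{BoucletRoyer}, adding the remark that under non-trapping one may take $N_k=k+1$ by \cite[Theorem~2.8]{Robertsmoothing}. Your proposal attempts to reconstruct these results from scratch, and the attempt has two real gaps.

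First, the hypothesis $(\ref{assump resolvent})$ controls $\chi(P-\lambda\pm i0)^{-1}\chi$ only for \emph{compactly supported} $\chi$, while the conclusion involves the non-compactly supported weight $\scal{x}^{-1-k}$. Passing from a compact cutoff to a polynomial weight is precisely the substantive content of the cited results, not a triviality: one must control the resolvent outside any compact set, which in this long-range asymptotically Euclidean setting is done via Mourre theory or propagation estimates (positive commutator methods with a global conjugate operator), not by "a dyadic/localization argument" or by "quoting the global resolvent bound in the stated references'' -- the latter begs the question, since the global weighted bound is exactly what is to be shown. Your scaling observation $(h^2P-\lambda\mp i0)^{-1}=h^{-2}(P-h^{-2}\lambda\mp i0)^{-1}$ and the resulting $N_0=2+2M$ are correct \emph{for the compactly cutoff version}, but the step from there to the $\scal{x}^{-1}$-weighted bound is missing.

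Second, the bootstrap from $k=0$ to general $k$ is not a formal consequence of the $k=0$ estimate. The identity $(h^2P-z)^{-1-k}=\frac{1}{k!}\partial_z^k(h^2P-z)^{-1}$ holds off the real axis, but to take boundary values you must know that the map $\lambda\mapsto\scal{x}^{-1-k}(h^2P-\lambda\mp i0)^{-1}\scal{x}^{-1-k}$ is $C^k$ into $\Lc(L^2)$ uniformly up to the real axis -- this is a genuine additional assertion, usually obtained from multiple commutator bounds in the Mourre framework, not from Cauchy's formula (which is unavailable since the bound holds only on the real axis) nor from the naive commutator computation $[\scal{x}^{-1},(h^2P-\lambda)^{-1}]=(h^2P-\lambda)^{-1}[h^2P,\scal{x}^{-1}](h^2P-\lambda)^{-1}$, which produces two resolvent factors and does not iterate to give a weighted estimate on $(h^2P-\lambda\mp i0)^{-1-k}$. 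You yourself flag this as "the main obstacle," but flagging it is not closing it. Finally, for part~2 the bootstrap is moot: \cite[Theorem~1.2]{BoucletRoyer} already gives $(\ref{power resolvent estimates low freq})$ for all $k\geq 0$ (and this, together with $d\geq 3$, is exactly why the paper invokes it directly). The cleanest correct proof is the one the paper gives: cite the two references.
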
 
The high frequency resolvent estimates $(\ref{power resolvent estimates})$ are given in \cite[Proposition 7.5]{BoucletMizutani} and the low frequency resolvent estimates $(\ref{power resolvent estimates low freq})$ are given in \cite[Theorem 1.2]{BoucletRoyer}. Note that under the non-trapping condition, the estimates $(\ref{power resolvent estimates})$ hold with $N_k =k+1$ (see e.g. \cite[Theorem 2.8]{Robertsmoothing}). We next use the resolvent estimates given in Proposition $\ref{prop power resolvent estimates}$ to have the following resolvent estimates for the fractional Schr\"odinger operator.
\begin{prop} \label{prop power resolvent estimates fractional schrodinger}
Let $\sigma \in (0,\infty)$.
\begin{itemize}
\item[1.] Consider $\R^d, d\geq 2$ equipped with a smooth metric $g$ satisfying $(\ref{assump elliptic}), (\ref{assump long range})$ and suppose that the assumption $(\ref{assump resolvent})$ holds. Then for $k \geq 0$, there exist $C>0$ and non-decreasing $N_k \in \N$ such that for all $h\in (0,1]$ and all $\mu \in I \Subset (0,+\infty)$,
\begin{align}
\|\scal{x}^{-1-k} ((h\Lambda_g)^\sigma - \mu \mp i0)^{-1-k} \scal{x}^{-1-k}\|_{\Lc(L^2)} \leq C h^{-N_k}. \label{power resolvent estimates fractional schrodinger}
\end{align}
\item[2.] Consider $\R^d, d\geq 3$ equipped with a smooth metric $g$ satisfying $(\ref{assump elliptic}), (\ref{assump long range})$. Then for $k \geq 0$, there exists $C>0$ such that for all $\ep \in (0,1]$ and all $\mu \in I \Subset (0,+\infty)$,
\begin{align}
\|\scal{\ep x}^{-1-k} ((\ep^{-1}\Lambda_g)^\sigma- \mu \mp i0)^{-1-k} \scal{\ep x}^{-1-k}\|_{\Lc(L^2)} \leq C. \label{power resolvent estimates fractional schrodinger low freq}
\end{align}
\end{itemize}
\end{prop}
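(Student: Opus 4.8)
The plan is to obtain \eqref{power resolvent estimates fractional schrodinger} and \eqref{power resolvent estimates fractional schrodinger low freq} from Proposition~\ref{prop power resolvent estimates} by a functional-calculus reduction. Since $(h\Lambda_g)^\sigma=h^\sigma P^{\sigma/2}=(h^2P)^{\sigma/2}$ and likewise $(\ep^{-1}\Lambda_g)^\sigma=(\ep^{-2}P)^{\sigma/2}$, the point is to express the resolvent of the fractional operator near a point $\mu\in I\Subset(0,\infty)$ in terms of the resolvent of $h^2P$ (resp.\ $\ep^{-2}P$) near the point $\lambda_0:=\mu^{2/\sigma}$, which ranges over the relatively compact set $J:=\{\mu^{2/\sigma}:\mu\in I\}\Subset(0,\infty)$; all the genuine analytic content is then supplied by Proposition~\ref{prop power resolvent estimates}, the rest being symbolic bookkeeping. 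I would carry out the high-frequency case (1) in detail with $A:=h^2P\geq0$; the low-frequency case (2) is identical upon replacing $A$ by $\ep^{-2}P$, the weight $\scal{x}$ by $\scal{\ep x}$, and Proposition~\ref{prop power resolvent estimates}(1) by Proposition~\ref{prop power resolvent estimates}(2), which is why $d\geq3$ enters there and no loss in $\ep$ appears.

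First I would introduce $\zeta(z):=z^{2/\sigma}$ (principal branch), holomorphic on a complex neighbourhood $V$ of $I$, with $\zeta(\mu)=\lambda_0$ and, after shrinking $V$, $\operatorname{sgn}\im{\zeta(z)}=\operatorname{sgn}\im{z}$; in particular $\zeta(z)\notin[0,\infty)$ whenever $\im{z}\neq0$. Using $\lambda^{\sigma/2}-z=(\lambda-\zeta(z))\Psi_z(\lambda)$ with $\Psi_z(\lambda):=\int_0^1\tfrac{\sigma}{2}\bigl(\zeta(z)+t(\lambda-\zeta(z))\bigr)^{\sigma/2-1}\,dt$, one checks that $\Psi_z$ is smooth in $\lambda\in(0,\infty)$ and stays close to $\Psi_\mu>0$, uniformly on compact subsets of $(0,\infty)$, for $z$ close to $\mu$. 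Pick $\phi\in C_0^\infty((0,\infty))$ equal to $1$ near $J$ and write $g_z(\lambda):=(\lambda^{\sigma/2}-z)^{-1}$. The spectral piece cut away from $J$ is harmless: examining $\lambda\to0$ and $\lambda\to\infty$ shows $|\lambda^{\sigma/2}-\mu|\geq\kappa>0$ on $\operatorname{supp}(1-\phi)$ uniformly for $\mu\in I$, so $((1-\phi)g_z)(A)$ is a holomorphic, $\Lc(L^2)$-bounded function of $z\in V$, and since $\scal{x}^{-s}$ is a contraction on $L^2$ it contributes only an $O(1)$ term after the differentiation below. On the complementary piece one has, for $z\in V$ with $\im{z}\neq0$, the operator identity
\[
(A^{\sigma/2}-z)^{-1}=\bigl(\phi\,\Psi_z^{-1}\bigr)(A)\,(A-\zeta(z))^{-1}+\bigl((1-\phi)g_z\bigr)(A),
\]
where $\phi\,\Psi_z^{-1}$ is a symbol in $C_0^\infty((0,\infty))$ with seminorms bounded uniformly in $z\in V$.

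Next I would pass to the $(1+k)$-th power via $(A^{\sigma/2}-z)^{-1-k}=\tfrac1{k!}\partial_z^k(A^{\sigma/2}-z)^{-1}$. Applying $\partial_z^k$ to the identity above — Leibniz together with the higher chain rule for $\partial_z^l(A-\zeta(z))^{-1}$ — yields a finite sum $\sum_{l=0}^{k}M_l(z)(A)\,(A-\zeta(z))^{-1-l}$ plus an $\Lc(L^2)$-bounded remainder (Cauchy estimates for the holomorphic term $((1-\phi)g_z)(A)$), where each $M_l(z)\in C_0^\infty((0,\infty))$ has seminorms bounded uniformly in $z\in V$. Letting $z=\mu\mp i0$, the sign convention on $\im{\zeta}$ identifies $(A-\zeta(z))^{-1-l}$ with the boundary value $(A-\lambda_0\mp i0)^{-1-l}$, $\lambda_0\in J\Subset(0,\infty)$, which exists in the weighted sense and obeys Proposition~\ref{prop power resolvent estimates}(1). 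To accommodate the extra factor $M_l(\mu)(A)$ and the larger weight exponent I would write $\scal{x}^{-1-k}=\scal{x}^{-(k-l)}\scal{x}^{-1-l}$, use $\|\scal{x}^{-(k-l)}\|_{\Lc(L^2)}\leq1$, commute $M_l(\mu)(A)$ past one weight at the cost of an $\Lc(L^2)$-bounded factor (a routine consequence of the parametrix of Proposition~\ref{prop parametrix phi}, or of Corollary~\ref{coro rescaled speudo-differential} in the low-frequency case, all uniform in $h$, resp.\ $\ep$, and in $\mu\in I$), and invoke the monotonicity of $l\mapsto N_l$. This bounds the $l$-th term by $Ch^{-N_l}\leq Ch^{-N_k}$ — by a constant in the low-frequency case — and summing over $l$ gives \eqref{power resolvent estimates fractional schrodinger}, respectively \eqref{power resolvent estimates fractional schrodinger low freq}.

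The step I expect to be delicate is the transfer of the limiting absorption across the nonlinear spectral change $\lambda\mapsto\lambda^{\sigma/2}$: verifying that the $\phi$-localized factorization really produces only the resolvent powers $(A-\lambda_0\mp i0)^{-1-l}$ with $l\leq k$ — so that Proposition~\ref{prop power resolvent estimates} applies termwise — together with $L^2$-bounded errors, and that the two $\mp i0$ prescriptions correspond (this is exactly what forces the use of $\zeta(z)=z^{2/\sigma}$ and of the holomorphic family $\Psi_z$, rather than the naive real factorization $\lambda^{\sigma/2}-\mu=(\lambda-\lambda_0)\Psi_\mu(\lambda)$, which would leave a $\lambda$-dependent imaginary shift). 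The remaining ingredients — the symbol bounds for $\phi\,\Psi_z^{-1}$ and $M_l(z)$, the elementary complex analysis, and the absorption of the weights — are routine.
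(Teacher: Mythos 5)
Your proposal is correct and takes essentially the same route as the paper: factorize $t^{\sigma/2}-\mu=(t-\lambda_0)Q(t,\mu)$ via the change of spectral variable $\mu\mapsto\mu^{2/\sigma}$, localize with a cutoff $\phi\in C^\infty_0((0,\infty))$ equal to $1$ near the image of $I$, reduce to Proposition~\ref{prop power resolvent estimates} for $h^2P$ (resp.\ $\ep^{-2}P$), and absorb the residual functional-calculus factor between the weights by the parametrix of Proposition~\ref{prop parametrix phi} (resp.\ Corollary~\ref{coro rescaled speudo-differential}). The only difference is bookkeeping: the paper raises the commuting factorization directly to the power $-1-k$ to get a single term $(h^2P-\lambda_0\mp i0)^{-1-k}\tilde{Q}(h^2P,\mu)$, while your $\partial_z^k$ differentiation yields a sum over $l\leq k$ (whence your use of the monotonicity of $N_l$), and your explicit holomorphic factorization with $\zeta(z)=z^{2/\sigma}$ and $\Psi_z$ simply makes precise the $\mp i0$ correspondence that the paper asserts as ``by functional calculus.''
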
 
\begin{proof}
We only give the proof for $(\ref{power resolvent estimates fractional schrodinger})$, the one for $(\ref{power resolvent estimates fractional schrodinger low freq})$ is similar using $(\ref{L2 bound low freq})$. We firstly note that the estimates $(\ref{power resolvent estimates fractional schrodinger})$ are equivalent to 
\[
\|\scal{x}^{-1-k} ((h\Lambda_g)^\sigma - \mu \mp i0)^{-1-k} \phi(h^2P) \scal{x}^{-1-k}\|_{\Lc(L^2)} \leq C h^{-N_k},
\]
where $\phi \in C^\infty_0((0,+\infty))$ satisfying $\phi=1$ near $I$. Note that here $\Lambda_g=\sqrt{P}$. Next, we write $\mu= \lambda^{\sigma/2}$ with $\lambda$ lied in a relatively compact interval of $(0,+\infty)$. By functional calculus, we write
\[
(h\Lambda_g)^\sigma - \mu \mp i0 = (h^2P -\lambda \mp i0) Q(h^2P, \mu),
\]
where $Q(\cdot,\mu)$ is smooth and non vanishing on the support of $\phi$. This implies for all $k \geq 0$,
\[
((h\Lambda_g)^\sigma - \mu \mp i0)^{-1-k} \phi(h^2P) = (h^2P -\lambda \mp i0)^{-1-k} \tilde{Q}(h^2P, \mu),
\]
where $\tilde{Q}(h^2P,\mu)= \phi(h^2P) Q^{-1-k}(h^2P,\mu)$. This allows us to approximate $\tilde{Q}(h^2P,\mu)$ by pseudo-differential operators by means of Proposition $\ref{prop parametrix phi}$. Thus, we have that $\scal{x}^{1+k} \tilde{Q}(h^2P,\mu) \scal{x}^{-1-k}$ is of size $O_{\Lc(L^2)}(1)$ uniformly in $\mu \in I \Subset(0,+\infty)$ and $h\in (0,1]$. Therefore, $(\ref{power resolvent estimates fractional schrodinger})$ follows from $(\ref{power resolvent estimates})$. The proof is complete.
\end{proof}
We now give an application of resolvent estimates given in Proposition $\ref{prop power resolvent estimates fractional schrodinger}$ when $k=0$ and obtain the following global $L^2$ integrability estimates for the fractional Schr\"odinger operators both at high and low frequencies.
\begin{prop} \label{prop global L2 integrability fractional schrodinger}
Let $\sigma \in (0,\infty)$ and $f \in C^\infty_0((0,+\infty))$.
\begin{itemize}
\item[1.] Consider $\R^d, d\geq 2$ equipped with a smooth metric $g$ satisfying $(\ref{assump elliptic}), (\ref{assump long range})$ and suppose that the assumption $(\ref{assump resolvent})$ holds. Then there exists $C>0$ such that for all $u_0 \in L^2$ and all $h \in (0,1]$,
\begin{align}
\|\scal{x}^{-1} f(h^2P) e^{-ith^{-1}(h\Lambda_g)^\sigma} u_0\|_{L^2(\R,L^2)} \leq C h^{{(1-N_0)}/2} \|u_0\|_{L^2}. \label{L2 global integrability}
\end{align}
\item[2.] Consider $\R^d, d\geq 3$ equipped with a smooth metric $g$ satisfying $(\ref{assump elliptic}), (\ref{assump long range})$. Then there exists $C>0$ such that for all $u_0\in L^2$ and all $\ep \in (0,1]$,
\begin{align}
\|\scal{\ep x}^{-1} f(\ep^{-2}P) e^{-it\ep(\ep^{-1}\Lambda_g)^\sigma} u_0 \|_{L^2(\R,L^2)} \leq C \ep^{-1/2}\|u_0\|_{L^2}. \label{L2 global integrability low freq}
\end{align}
\end{itemize}
\end{prop}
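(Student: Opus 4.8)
The plan is to deduce both estimates from the $k=0$ case of Proposition \ref{prop power resolvent estimates fractional schrodinger} by the classical $TT^\star$ argument for Kato-type smoothing estimates. I only describe the high frequency estimate (\ref{L2 global integrability}); the low frequency one (\ref{L2 global integrability low freq}) is obtained in exactly the same way, replacing $h$ by $\ep$ and $\scal{x}$ by $\scal{\ep x}$, using part $2$ of Proposition \ref{prop power resolvent estimates fractional schrodinger} (which carries no negative power of the small parameter) and Corollary \ref{coro rescaled speudo-differential} in place of Proposition \ref{prop parametrix phi}.

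Set $H_h:=h^{-1}(h\Lambda_g)^\sigma$, a nonnegative self-adjoint operator, and $B_h:=\scal{x}^{-1}f(h^2P)$, so that the left-hand side of (\ref{L2 global integrability}) equals $\|B_h e^{-itH_h}u_0\|_{L^2(\R,L^2)}$. First I would invoke the standard smoothing lemma: writing $T_h u_0:=(t\mapsto B_h e^{-itH_h}u_0)$, one has $\|T_h\|^2=\|T_hT_h^\star\|$, and $T_hT_h^\star$ is convolution in $t$ against the operator-valued kernel $t\mapsto B_he^{-itH_h}B_h^\star$; diagonalizing in $t$ by the Fourier transform and using Stone's formula, one obtains
\[
\|B_h e^{-itH_h}u_0\|_{L^2(\R,L^2)}^2\leq 2\sup_{\tau\in\R}\max_{\pm}\big\|B_h(H_h-\tau\mp i0)^{-1}B_h^\star\big\|_{\Lc(L^2)}\|u_0\|_{L^2}^2,
\]
provided the weighted boundary values on the right exist. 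Hence it suffices to bound this supremum by $Ch^{1-N_0}$.

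For the reduction to Proposition \ref{prop power resolvent estimates fractional schrodinger}, note that $B_h^\star=\overline{f}(h^2P)\scal{x}^{-1}$, that $f(h^2P)$, $\overline{f}(h^2P)$ and the resolvent of $(h\Lambda_g)^\sigma$ all commute, and that $(H_h-\tau)^{-1}=h\,\big((h\Lambda_g)^\sigma-h\tau\big)^{-1}$, so that with $g:=f\overline{f}\in C^\infty_0((0,\infty))$ and $\mu:=h\tau$ one gets $B_h(H_h-\tau\mp i0)^{-1}B_h^\star=h\,\scal{x}^{-1}g(h^2P)\big((h\Lambda_g)^\sigma-\mu\mp i0\big)^{-1}\scal{x}^{-1}$. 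Now I would fix a compact interval $I\Subset(0,\infty)$ containing an open neighborhood of the compact set $K:=\{s^{\sigma/2}:s\in\mathrm{supp}(f)\}$; since $(h\Lambda_g)^\sigma=(h^2P)^{\sigma/2}$, the operator $(h\Lambda_g)^\sigma$ has spectrum in $K$ on the range of $g(h^2P)$. If $\mu\in I$, I would split $\scal{x}^{-1}g(h^2P)=\big(\scal{x}^{-1}g(h^2P)\scal{x}\big)\scal{x}^{-1}$ with $\scal{x}^{-1}g(h^2P)\scal{x}=O_{\Lc(L^2)}(1)$ uniformly in $h$ (writing $g(h^2P)$ as a semiclassical pseudo-differential operator modulo a remainder controlled by a power of $\scal{x}^{-1}$, via Proposition \ref{prop parametrix phi}), and estimate the remaining factor $\scal{x}^{-1}\big((h\Lambda_g)^\sigma-\mu\mp i0\big)^{-1}\scal{x}^{-1}$ by $Ch^{-N_0}$ directly from the $k=0$ case of Proposition \ref{prop power resolvent estimates fractional schrodinger}, which gives $\|B_h(H_h-\tau\mp i0)^{-1}B_h^\star\|\leq h\cdot C\cdot Ch^{-N_0}=Ch^{1-N_0}$. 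If $\mu\notin I$, then $g(h^2P)\big((h\Lambda_g)^\sigma-\mu\mp i0\big)^{-1}$ is $O_{\Lc(L^2)}(1)$ uniformly in such $\mu$ by functional calculus (because $\mathrm{dist}(\mu,K)$ is bounded below), in particular the boundary values exist and $\|B_h(H_h-\tau\mp i0)^{-1}B_h^\star\|\leq h\cdot C\leq Ch^{1-N_0}$ since $h\in(0,1]$. Taking the supremum over $\tau$ and $\pm$ yields (\ref{L2 global integrability}).

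The only genuinely delicate step is the first one: setting up the $TT^\star$/Stone's-formula reduction carefully and making sure the weighted boundary values $B_h(H_h-\tau\mp i0)^{-1}B_h^\star$ make sense for every $\tau\in\R$, which is automatic off $I$ by functional calculus and is part of the content of Proposition \ref{prop power resolvent estimates fractional schrodinger} on $I$. Everything afterwards is bookkeeping of the scaling parameter; for (\ref{L2 global integrability low freq}), with $H_\ep:=\ep(\ep^{-1}\Lambda_g)^\sigma$ the identity $(H_\ep-\tau)^{-1}=\ep^{-1}\big((\ep^{-1}\Lambda_g)^\sigma-\ep^{-1}\tau\big)^{-1}$ produces the factor $\ep^{-1}$, part $2$ of Proposition \ref{prop power resolvent estimates fractional schrodinger} gives the bound $C$ (instead of $Ch^{-N_0}$), and Corollary \ref{coro rescaled speudo-differential} provides $\scal{\ep x}^{-1}g(\ep^{-2}P)\scal{\ep x}=O_{\Lc(L^2)}(1)$, so the supremum is $O(\ep^{-1})$ and its square root gives $\ep^{-1/2}$.
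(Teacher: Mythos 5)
Your proof is correct and follows essentially the same path as the paper: both reduce the global $L^2$ integrability to the weighted resolvent estimate of Proposition \ref{prop power resolvent estimates fractional schrodinger} with $k=0$ via the Kato-smoothing/limiting-absorption principle, split into $\mu$ near and away from the relevant compact set, and use the uniform $\Lc(L^2)$-boundedness of $\scal{x}^{-1}f(h^2P)\scal{x}$ (resp.\ $\scal{\ep x}^{-1}f(\ep^{-2}P)\scal{\ep x}$) to commute the weights past the spectral cutoff. The only cosmetic difference is that you keep the rescaled generator $H_h=h^{-1}(h\Lambda_g)^\sigma$ throughout and extract the factor $h$ from $(H_h-\tau)^{-1}=h\big((h\Lambda_g)^\sigma-h\tau\big)^{-1}$, whereas the paper works with $(h\Lambda_g)^\sigma$ and performs the scaling in $t$ at the end; these are the same change of variable.
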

\begin{rem} \label{rem global Lp integrability fractional schrodinger}
\begin{itemize}
\item[1.] By interpolating between $L^2(\R)$ and $L^\infty(\R)$, we get the following $L^p$ integrability estimates
\begin{align}
\|\scal{x}^{-1} f(h^2P) e^{-ith^{-1}(h\Lambda_g)^\sigma} u_0\|_{L^p(\R,L^2)} &\leq C h^{(1-N_0)/p} \|u_0\|_{L^2}. \label{Lp global integrability} \\
\|\scal{\ep x}^{-1} f(\ep^{-2}P) e^{-it\ep(\ep^{-1}\Lambda_g)^\sigma} u_0\|_{L^p(\R,L^2)} &\leq C \ep^{-1/p}\|u_0\|_{L^2}. \label{Lp global integrability low freq}
\end{align} 
\item[2.] Thanks to the fact that $P$ is non-negative, these estimates are still true for $f \in C^\infty_0(\R \backslash \{0\})$. Moreover, we can replace $\|u_0\|_{L^2}$ in the right hand side of $(\ref{L2 global integrability})$ and $(\ref{Lp global integrability})$ (resp. $(\ref{L2 global integrability low freq})$ and $(\ref{Lp global integrability low freq})$) by $\|f(h^2P)u_0\|_{L^2}$ (resp. $\|f(\ep^{-2}P)u_0\|_{L^2}$). Indeed, we choose $\tilde{f} \in C^\infty_0(\R \backslash 0)$ such that $\tilde{f}=1$ near $\text{supp}(f)$ and write $f(h^2P)=\tilde{f}(h^2P)f(h^2P)$. We apply $(\ref{L2 global integrability})$ and $(\ref{Lp global integrability})$ with $\tilde{f}$ instead of $f$. Similarly for the low frequency case.
\end{itemize}
\end{rem}
\noindent \textit{Proof of \emph{Proposition} $\ref{prop global L2 integrability fractional schrodinger}$.}
We again only consider the high frequency case, the low frequency one is completely similar. By the limiting absorption principle (see \cite[Theorem XIII.25]{ReebSimon}), we see that $\|\scal{x}^{-1}  f(h^2P) e^{-it (h\Lambda_g)^\sigma} u_0\|^2_{L^2(\R,L^2)}$ is bounded by
\[
2 \pi \sup_{\mu \in \R \atop \ep>0} \|\scal{x}^{-1} f(h^2P) ((h\Lambda_g)^\sigma -\mu -i\ep)^{-1} f(h^2P) \scal{x}^{-1}\|_{\Lc(L^2)} \|u_0\|^2_{L^2}.
\]
By functional calculus and the holomorphy of the resolvent, it suffices to bound $\|\scal{x}^{-1} f(h^2P)((h\Lambda_g)^\sigma-\mu -i 0)^{-1} f(h^2P)\scal{x}^{-1}\|_{\Lc(L^2)}$, uniformly with respect to $\mu \in \R$. As a function of $h\Lambda_g$, the operator $f(h^2P) ((h\Lambda_g)^\sigma -\mu -i0)^{-1} f(h^2P)$ reads $f(\lambda^2) (\lambda^\sigma -\mu -i0)^{-1} f(\lambda^2)$. Assume that $\text{supp}(f) \subset \left[1/c^2,c^2\right]$ for some $c>1$, so $ \lambda \in \left[ 1/c, c \right]$. \newline
\indent In the case $\mu \geq 2c^\sigma$ or $\mu \leq 1/2c^\sigma$, we have that $\mu -\lambda^\sigma \geq c^\sigma$ or $\lambda^\sigma -\mu \geq 1/2c^\sigma$. The functional calculus gives
\[
\|f(h^2P)((h\Lambda_g)^\sigma -\mu -i0)^{-1} f(h^2P)\|_{\Lc(L^2)} \leq 2c^\sigma\|f\|^2_{L^\infty(\R)}.
\]
\indent Thus we can assume that $\mu \in [1/2c^\sigma, 2c^\sigma]$. Using $(\ref{power resolvent estimates fractional schrodinger})$ with $k=0$, we have 
\[
\|\scal{x}^{-1} ((h\Lambda_g)^\sigma -\mu \mp i0 )^{-1} \scal{x}^{-1} \|_{\Lc(L^2)} \leq  C h^{-N_0}.
\]
On the other hand, $\scal{x}^{-1}f(h^2P) \scal{x}$ is bounded in $\Lc(L^2)$ by pseudo-differential calculus. This implies
\[
\|\scal{x}^{-1}  f(h^2P) e^{-it(h\Lambda_g)^\sigma} u_0\|_{L^2(\R,L^2)} \leq C h^{-N_0/2} \|u_0\|_{L^2}.
\]
By scaling in time, this gives the result. 
\defendproof \newline
\indent Another application of the resolvent estimates given in Proposition $\ref{prop power resolvent estimates fractional schrodinger}$ is the following local energy decays for the fractional Schr\"odinger operators both at high and low frequencies.
\begin{prop} \label{prop local energy decay fractional schrodinger}
Let $\sigma \in (0,\infty)$ and $f \in C^\infty_0(\R \backslash \{0\})$.
\begin{itemize}
\item[1.] Consider $\R^d, d\geq 2$ equipped with a smooth metric $g$ satisfying $(\ref{assump elliptic}), (\ref{assump long range})$ and suppose that the assumption $(\ref{assump resolvent})$ holds. Then for $k \geq 0$, there exists $C>0$ such that for all $t\in \R$ and all $h\in (0,1]$,
\begin{align} 
\|\scal{x}^{-1-k} e^{-ith^{-1}(h\Lambda_g)^\sigma} f(h^2P) \scal{x}^{-1-k}\|_{\Lc(L^2)} \leq C h^{-N_k} \scal{th^{-1}}^{-k}. \label{local energy decay fractional schrodinger}
\end{align}
\item[2.] Consider $\R^d, d\geq 3$ equipped with a smooth metric $g$ satisfying $(\ref{assump elliptic}), (\ref{assump long range})$. Then for $k \geq 0$, there exists $C>0$ such that for all $t\in \R$ and all $\ep \in (0,1]$,
\begin{align} 
\|\scal{\ep x}^{-1-k} e^{-it\ep (\ep^{-1}\Lambda_g)^\sigma} f(\ep^{-2}P) \scal{\ep x}^{-1-k}\|_{\Lc(L^2)} \leq C  \scal{\ep t}^{-k}. \label{local energy decay fractional schrodinger low freq} 
\end{align}
\end{itemize}
\end{prop}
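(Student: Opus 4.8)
The plan is to deduce the local energy decays from the power resolvent bounds of Proposition~\ref{prop power resolvent estimates fractional schrodinger} via Stone's formula and repeated integration by parts in the spectral parameter. I describe the high frequency statement \eqref{local energy decay fractional schrodinger}; the low frequency one \eqref{local energy decay fractional schrodinger low freq} follows by the identical argument with $h$ replaced by $\ep$, using \eqref{power resolvent estimates fractional schrodinger low freq} instead (whose bound is uniform in $\ep$, which is why one gets pure decay $\scal{\ep t}^{-k}$ with no negative power of the parameter), and checking that all constants remain uniform in $\ep \in (0,1]$.

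First I would set up a clean spectral picture. Since $P \geq 0$ and $f \in C^\infty_0(\R\setminus\{0\})$, we may assume $\operatorname{supp}(f) \subset [1/c^2,c^2]$ with $c>1$; then $f(h^2P) = f((h\Lambda_g)^2)$ equals $\chi(A)$ with $A := (h\Lambda_g)^\sigma$ and $\chi(\mu) := f(\mu^{2/\sigma}) \in C^\infty_0((0,\infty))$ supported in the compact interval $I := [c^{-\sigma},c^\sigma] \Subset (0,\infty)$. Put $s := th^{-1}$, so that $e^{-ith^{-1}(h\Lambda_g)^\sigma} = e^{-isA}$ and $\scal{th^{-1}} = \scal{s}$. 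By the limiting absorption principle (\cite[Theorem XIII.25]{ReebSimon}, exactly as in the proof of Proposition~\ref{prop global L2 integrability fractional schrodinger}) the boundary values $(A-\mu\mp i0)^{-1}$ and their $\mu$-derivatives exist in the relevant weighted spaces, and Stone's formula gives
\[
\chi(A) e^{-isA} = \frac{1}{2\pi i} \int_I e^{-is\mu}\,\chi(\mu) \big[ (A-\mu-i0)^{-1} - (A-\mu+i0)^{-1} \big]\, d\mu .
\]

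For $|s| \leq 1$ I would use this identity as it stands together with the $k=0$ case of \eqref{power resolvent estimates fractional schrodinger}. Since $\scal{x} \geq 1$ we have $\scal{x}^{-1-k} \leq \scal{x}^{-1}$ pointwise, so conjugating by $\scal{x}^{-1-k}$ and integrating over the compact set $I$ yields $\|\scal{x}^{-1-k}\chi(A)e^{-isA}\scal{x}^{-1-k}\|_{\Lc(L^2)} \leq C h^{-N_0} \leq C h^{-N_k}$, which is the desired bound since $\scal{s}^{-k} \simeq 1$ there. For $|s| > 1$ I would integrate by parts $k$ times, writing $e^{-is\mu} = (-is)^{-k}\partial_\mu^k e^{-is\mu}$; the boundary contributions vanish because $\chi \in C^\infty_0(I)$, and Leibniz's rule together with $\partial_\mu^{k-j}(A-\mu\mp i0)^{-1} = (k-j)!\,(A-\mu\mp i0)^{-(k-j)-1}$ gives
\[
\chi(A) e^{-isA} = \frac{C_k}{s^k} \sum_{j=0}^k \binom{k}{j}(k-j)! \int_I e^{-is\mu}\,\chi^{(j)}(\mu) \big[ (A-\mu-i0)^{-(k-j)-1} - (A-\mu+i0)^{-(k-j)-1} \big]\, d\mu .
\]
Conjugating by $\scal{x}^{-1-k}$ and splitting $\scal{x}^{-1-k} = \scal{x}^{-(k-j)-1}\scal{x}^{-j}$ on each side, I would drop the bounded outer factors $\scal{x}^{-j}$ (operator norm $\leq 1$) and bound the remaining weighted resolvent power by \eqref{power resolvent estimates fractional schrodinger} with exponent $k-j$, i.e.\ by $C h^{-N_{k-j}} \leq C h^{-N_k}$ using monotonicity of $k \mapsto N_k$. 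Integrating over the compact interval $I$ then gives $\|\scal{x}^{-1-k}\chi(A)e^{-isA}\scal{x}^{-1-k}\|_{\Lc(L^2)} \leq C |s|^{-k} h^{-N_k} \leq C \scal{s}^{-k} h^{-N_k}$, and combining the two ranges of $s$ proves \eqref{local energy decay fractional schrodinger}.

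The only genuinely delicate point is justifying Stone's formula in weighted $L^2$ spaces and the repeated integration by parts, i.e.\ that $\mu \mapsto (A-\mu\mp i0)^{-1-\ell}$ is a sufficiently differentiable $\Lc(\scal{x}^{-1-\ell}L^2, \scal{x}^{1+\ell}L^2)$-valued function on the compact interval $I$ and that the $\mu$-integrals converge; this is precisely the content of the limiting absorption principle underlying Proposition~\ref{prop power resolvent estimates fractional schrodinger}, handled exactly as in Proposition~\ref{prop global L2 integrability fractional schrodinger}. Everything else is routine bookkeeping; for part~2 one repeats the computation with $A_\ep = (\ep^{-1}\Lambda_g)^\sigma$, $s = \ep t$, the weights $\scal{\ep x}$, and the $\ep$-uniform bound \eqref{power resolvent estimates fractional schrodinger low freq}.
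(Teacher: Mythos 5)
Your proof is correct and follows essentially the same route as the paper: both start from Stone's formula, multiply by $s^k$ (equivalently integrate by parts $k$ times in $\mu$), expand by Leibniz into terms $\chi^{(j)}\,(A-\mu\mp i0)^{-1-(k-j)}$, and close with the power resolvent estimates \eqref{power resolvent estimates fractional schrodinger}--\eqref{power resolvent estimates fractional schrodinger low freq} together with the monotonicity of $N_k$ and a final scaling in time. The only cosmetic difference is that you make explicit the split $|s|\leq 1$ versus $|s|>1$ and the weight decomposition $\scal{x}^{-1-k}=\scal{x}^{-1-(k-j)}\scal{x}^{-j}$, steps the paper leaves implicit.
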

\begin{proof}
As above, we only give the proof for the high frequency case. Using the Stone formula, the operator $e^{-it(h\Lambda_g)^\sigma}f(h^2P)$ reads
\[
\frac{1}{2i \pi}\int_{\R} e^{-it\mu} f(\mu^{2/\sigma}) (((h\Lambda_g)^\sigma-\mu-i0)^{-1} - ((h\Lambda_g)^\sigma-\mu+i0)^{-1}) d\mu.
\]
We use the same trick as in \cite{BTglobalstrichartz}. By multiplying to above equality with $(it)^k$ and using integration by parts in the weighted spaces $\scal{x}^{-1-k}L^2$, we see that $(it)^k  e^{-it(h\Lambda_g)^\sigma}f(h^2P)$ is a linear combination with $l+n=k$ of terms of the form
\[
\int_{\R}  e^{-it\mu}  \partial^l_\mu ( f(\mu^{2/\sigma})) ( ((h\Lambda_g)^\sigma-\mu-i0)^{-1-n} - ((h\Lambda_g)^\sigma-\mu+i0)^{-1-n} ) d\mu.
\]
The compact support of $f$ implies that $\mu$ is bounded from above and below. The resolvent estimates $(\ref{power resolvent estimates fractional schrodinger})$ then imply
\[
\|\scal{x}^{-1-k} e^{-it(h\Lambda_g)^\sigma} f(h^2P) \scal{x}^{-1-k}\|_{\Lc(L^2)} \leq C h^{-N_k} \scal{t}^{-k}.
\]
Here we use that $N_m$ is non-decreasing with respect to $m$. By scaling in time, we have $(\ref{local energy decay fractional schrodinger})$. The proof is complete.
\end{proof}
\section{Reduction of the problem} \label{section reduction of the problem}
\setcounter{equation}{0}
\subsection{The Littlewood-Paley theorems}\label{subsection littlewood paley}
In this subsection, we recall some Littlewood-Paley type estimates which are essentially given in \cite{BoucletMizutani}. Let us introduce $f(\lambda)=f_0(\lambda)-f_0(2\lambda)$, where $f_0$ given as in $(\ref{split low high freqs})$. We have
$f\in C^\infty_0(\R \backslash \{0\})$ and 
\[
\sum_{k=1}^{\infty} f(2^{-k}\lambda)= (1-f_0)(\lambda), \quad \sum_{k=0}^{\infty}f(2^k \lambda)= \mathds{1}_{\R \backslash \{0\}}(\lambda) f_0(\lambda), \quad \lambda\in \R.
\]
The Spectral Theorem implies that
\begin{align}
(1-f_0)(P)= \sum_{k=1}^{\infty}f(2^{-k}P), \quad f_0(P)=\sum_{k=0}^{\infty}f(2^kP). \label{spectral localization}
\end{align}
In the second sum, we use the fact that $0$ is not an eigenvalue of $P$ (see e.g. \cite{TataruKoch}).
\begin{theorem} \label{theorem littlewood paley estimates}
\begin{itemize}
\item[1.] Let $N \geq 1 $ and $\chi \in C^\infty_0(\R^d)$. Then for $q\in [2,\infty)$, there exists $C>0$ such that
\begin{align}
\|(1-\chi)(1-f_0)(P) v\|_{L^q} \leq  C \Big(\sum_{h^2=2^{-k} } \|(1-\chi) f(h^2P) v\|^2_{L^q} +h^N \|\scal{x}^{-N}f(h^2P) v\|^2_{L^2} \Big)^{1/2}, \label{littlewood paley 1-chi}
\end{align}
for all $v \in \Sch(\R^d)$, where $k\in \N \backslash \{0\}$. The same estimates hold for $\chi$ in place of $1-\chi$.
\item[2.] Let $\chi \in C^\infty_0(\R^d)$ be such that $\chi (x)=1$ for $|x| \leq 1$. Then for $q\in (2,\infty)$, there exists $C>0$ such that for all $v\in L^2$,
\begin{align}
\|f_0(P)v\|_{L^q} \leq C \Big(\sum_{\ep^{-2}=2^{k}} \|(1-\chi)(\ep x) f(\ep^{-2}P) v\|^2_{L^q}  + \|\ep^{d/2-d/q}\scal{\ep x}^{-1} f(\ep^{-2}P) v\|^2_{L^2} \Big)^{1/2}. \label{littlewood paley low freq}
\end{align}
Here we use in the sum that $k \in \N$.
\end{itemize}
\end{theorem}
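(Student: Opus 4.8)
The plan is to deduce both estimates from the vector-valued (square-function) Littlewood--Paley estimate for $P$, combined with the semi-classical and rescaled parametrices of Section \ref{section functional calculus}, the spatial cutoffs being separated from the spectral localizations $f(h^2P)$ and $f(\ep^{-2}P)$ at the price of the weighted $L^2$ remainders. The basic input is that, under $(\ref{assump elliptic})$--$(\ref{assump long range})$, $P$ enjoys Gaussian heat kernel upper bounds, so that by the standard spectral-multiplier machinery one has, for every $s\in(1,\infty)$, both $\|(\sum_k|f(2^{-k}P)w|^2)^{1/2}\|_{L^s}\lesssim_s\|w\|_{L^s}$ and the dual bound $\|\sum_k w_k\|_{L^s}\lesssim_s\|(\sum_k|w_k|^2)^{1/2}\|_{L^s}$ whenever $w_k=\tilde f(2^{-k}P)w_k$, where $\tilde f\in C^\infty_0(\R\setminus\{0\})$ is a fattened bump with $\tilde f f=f$; the same holds for sums over $\ep^{-2}=2^k$, and, since $\tilde f(\ep^{-2}P)$ localizes at frequency $\sim\ep$, Gaussian bounds also give the Sobolev-type bound $\|\tilde f(\ep^{-2}P)\|_{\Lc(L^2,L^q)}\lesssim\ep^{d/2-d/q}$ for $q\ge2$.

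For part (2), I would apply the square-function estimate to $f_0(P)v=\sum_{\ep^{-2}=2^k}f(\ep^{-2}P)v$ (recall $(\ref{spectral localization})$), getting $\|f_0(P)v\|_{L^q}\lesssim\|(\sum_{\ep^{-2}=2^k}|f(\ep^{-2}P)v|^2)^{1/2}\|_{L^q}$, and then split pointwise $f(\ep^{-2}P)v=(1-\chi)(\ep x)f(\ep^{-2}P)v+\chi(\ep x)f(\ep^{-2}P)v$. Since $q\ge2$, Minkowski's inequality $L^q(\ell^2_k)\hookrightarrow\ell^2_k(L^q)$ turns the first piece into the first term of $(\ref{littlewood paley low freq})$. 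For the second piece the decisive estimate is $\|\chi(\ep x)f(\ep^{-2}P)v\|_{L^q}\lesssim\ep^{d/2-d/q}\|\scal{\ep x}^{-1}f(\ep^{-2}P)v\|_{L^2}$, which I would obtain by writing $f(\ep^{-2}P)=\tilde f(\ep^{-2}P)f(\ep^{-2}P)$, inserting $\scal{\ep x}\scal{\ep x}^{-1}$, moving $\scal{\ep x}$ past $\tilde f(\ep^{-2}P)$ by the rescaled calculus (Proposition \ref{prop parametrix phi low freq} and Corollary \ref{coro rescaled speudo-differential}) so that it sits against $\chi(\ep x)$ where it is bounded uniformly in $\ep$, and then using $\|\tilde f(\ep^{-2}P)\|_{\Lc(L^2,L^q)}\lesssim\ep^{d/2-d/q}$ (equivalently Corollary \ref{coro Lq Lr estimate of parametrix low freq}); squaring and summing yields the second term of $(\ref{littlewood paley low freq})$.

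For part (1) the same ``split-then-Minkowski'' scheme is too lossy, since a high-frequency cutoff inside a compact set costs a positive power of $h^{-1}$ in $L^q$; the $(1-\chi)$--localization must therefore be carried through the whole argument. From $(1-\chi)(1-f_0)(P)v=\sum_{h^2=2^{-k}}(1-\chi)f(h^2P)v$ I would choose $\chi_0\in C^\infty_0$ supported strictly inside $\{\chi=1\}$, at positive distance from $\mathrm{supp}(1-\chi)$, and, with $g_k=f(h^2P)v$ and $f(h^2P)=\tilde f(h^2P)f(h^2P)$, write $(1-\chi)g_k=(1-\chi)\tilde f(h^2P)(1-\chi_0)g_k+(1-\chi)\tilde f(h^2P)\chi_0g_k$. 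For the last term the disjoint supports make the semi-classical part of $(1-\chi)\tilde f(h^2P)\chi_0$ (via Proposition \ref{prop parametrix phi}) non-stationary, hence $O(h^\infty)$, while the parametrix remainder contributes $O(h^{N'+d/q-d/2})$ against $\scal{x}^{N'}$; absorbing the bounded factor $\chi_0\scal{x}^{N'}$ and choosing $N'$ large gives $\|(1-\chi)\tilde f(h^2P)\chi_0g_k\|_{L^q}\lesssim h^N\|\scal{x}^{-N}f(h^2P)v\|_{L^2}$, which Cauchy--Schwarz in $k$ converts into the remainder of $(\ref{littlewood paley 1-chi})$. For the first term, pairing $\sum_k(1-\chi)\tilde f(h^2P)(1-\chi_0)g_k$ with $w\in L^{q'}$, moving $\tilde f(h^2P)$ and $(1-\chi)$ onto $w$, and using Cauchy--Schwarz in $k$ together with the dual square-function bound reduces it to $\|(\sum_k|(1-\chi_0)g_k|^2)^{1/2}\|_{L^q}$ and hence, by Minkowski, to the first term of $(\ref{littlewood paley 1-chi})$ (with $1-\chi$ replaced by the slightly larger $1-\chi_0$, the stated form being recovered by the customary nesting of cutoffs); the $\chi$--version is symmetric, using a fattening $\chi_1=1$ near $\mathrm{supp}\,\chi$ so that $\chi\tilde f(h^2P)(1-\chi_1)=O(h^\infty)$.

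The main obstacle is precisely this non-commutation of the spatial cutoffs with $f(h^2P)$ (resp. $f(\ep^{-2}P)$): one has to arrange, by means of fattened cutoffs with a spatial gap and the parametrices of Section \ref{section functional calculus}, that the off-diagonal pieces are $O(h^\infty)$ (resp. $O(\ep^\infty)$) smoothing operators whose outputs are captured by $\|\scal{x}^{-N}f(h^2P)v\|_{L^2}$ and $\|\scal{\ep x}^{-1}f(\ep^{-2}P)v\|_{L^2}$; at low frequency the extra factor $\ep^{d/2-d/q}$ in the interior term is the band-limited Sobolev loss at scale $\ep^{-1}$, which is exactly what appears in $(\ref{littlewood paley low freq})$.
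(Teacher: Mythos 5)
Your overall strategy -- invoking the abstract square-function Littlewood--Paley theorem for $P$ (as a black box coming from the Gaussian heat kernel bounds), then splitting pointwise into an exterior and an interior piece and applying Minkowski -- is a genuinely different route from the paper's. The paper proves part~2 by a duality argument together with an explicit rescaled-pseudodifferential decomposition $(1-\chi)(\ep x)\tilde f(\ep^{-2}P)=Q(\ep)+R(\ep)$, a Calder\'on--Zygmund bound for the vector-valued operator $w\mapsto(\sum_k|Q^\star(\ep)w|^2)^{1/2}$ (Proposition~\ref{prop calderon zygmund low freq}, which plays the role your direct square function plays on the dual side), the homogeneous Sobolev embedding of Lemma~\ref{lem homogeneous Sobolev embedding}, and a discrete Schur/almost-orthogonality lemma (Lemma~\ref{lem discrete schur}) to resum the interior pieces in $L^2$. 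Your version trades the discrete Schur step for Minkowski applied \emph{before} the smoothing bound, which is cleaner, but the two arguments are not equivalent step by step and yours needs a slightly stronger input (the reverse square-function bound for $P$ itself, rather than the tailor-made $Q^\star(\ep)$).

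There are two concrete gaps. First, in part~2 the interior estimate $\|\chi(\ep x)f(\ep^{-2}P)v\|_{L^q}\lesssim\ep^{d/2-d/q}\|\scal{\ep x}^{-1}f(\ep^{-2}P)v\|_{L^2}$, equivalently the $\Lc(L^2,L^q)$-bound $O(\ep^{d/2-d/q})$ on $\chi(\ep x)\tilde f(\ep^{-2}P)\scal{\ep x}$, is true but cannot be obtained by ``moving $\scal{\ep x}$ past $\tilde f(\ep^{-2}P)$ by the rescaled calculus'': Proposition~\ref{prop parametrix phi low freq} only provides a parametrix against cutoffs $\zeta(\ep x)$ supported outside $B(0,1)$ (i.e., $|\ep x|\gtrsim1$), and Corollary~\ref{coro rescaled speudo-differential} only gives $\Lc(L^2)$-boundedness, not $L^2\to L^q$ with the $\ep$-gain; the compactly supported cutoff $\chi(\ep x)$ sits precisely in the region where the rescaled pseudodifferential calculus is unavailable by the uncertainty principle. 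You need instead the polynomial-decay kernel bound for $\tilde f(\ep^{-2}P)$ at scale $\ep^{-1}$, which does follow from the Gaussian heat kernel bound via the Helffer--Sj\"ostrand/Cauchy formula; with that, the Schur test gives $L^2\to L^2$ with constant $O(1)$ and $L^2\to L^\infty$ with constant $O(\ep^{d/2})$, and interpolation yields the claim. The paper avoids this entirely by using Sobolev embedding and discrete Schur so that it only ever needs $\Lc(L^2)$ boundedness (Corollary~\ref{coro rescaled speudo-differential}).

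Second, for part~1 your scheme produces, on the right-hand side, $\big(\sum_k\|(1-\chi_0)f(h^2P)v\|_{L^q}^2\big)^{1/2}$ with $\chi_0$ strictly inside $\{\chi=1\}$, hence $1-\chi_0\geq1-\chi$, which is weaker than the stated $(1-\chi)$ version. The dismissal ``recovered by the customary nesting of cutoffs'' does not work here: $\chi-\chi_0$ is supported in the transition annulus of $\chi$, where $1-\chi$ is not bounded below, so one cannot pass from $(1-\chi_0)g_k$ to $(1-\chi)g_k$ pointwise; and bounding $\|(\chi-\chi_0)g_k\|_{L^q}$ by the weighted $L^2$ term costs a power $h^{-(d/2-d/q)}$ (frequency-$h^{-1}$ Sobolev), which the positive power $h^{N}$ in the remainder cannot absorb. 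The paper's structure resolves this automatically: in the analogous low-frequency argument it is $Q^\star(\ep)$, not $\tilde f(\ep^{-2}P)$, that is moved onto the test function $w$, and because $Q(\ep)$ carries the outer cutoff $(1-\chi)(\ep x)$ on the \emph{left}, the Cauchy--Schwarz pairing produces the exact factor $(1-\chi)(\ep x)f(\ep^{-2}P)v$ with the same $\chi$. In practice your weaker statement (with an enlarged $1-\chi_0$) suffices for the paper's applications, since the Strichartz inputs hold for every cutoff, but as a proof of Theorem~\ref{theorem littlewood paley estimates} as stated the argument has a gap.
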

Note that the Littlewood-Paley theorem at low frequency is slightly different from the one in \cite[Theorem 4.1]{BoucletMizutani}. In \cite{BoucletMizutani}, Bouclet-Mizutani considered the sharp Schr\"odinger admissible condition. This allows to interpolate between the trivial Strichartz estimate for $(\infty,2)$ and the endpoint Strichartz estimate for the endpoint pair $(2,2^\star)$. The proof of the low frequency Littlewood-Paley theorem given in \cite{BoucletMizutani} makes use of the homogeneous Sobolev embedding
\begin{align}
\|v\|_{L^{2^\star}} \leq C \|\Lambda_g v\|_{L^2}, \quad 2^\star = \frac{2d}{d-2}. \label{homogeneous sobolev embedding}
\end{align}
Since we consider a larger range of admissible condition $(\ref{fractional admissible})$, we can not apply this interpolation technique. To overcome this difficulty, we will take the advantage of heat kernel estimates. Our estimate $(\ref{littlewood paley low freq})$ is robust and can be applied for another types of dispersive equations such as the wave or Klein-Gordon equations. \newline
\indent Let $K(t,x,y)$ be the kernel of the heat operator $e^{-tP}, t>0$, i.e.
\[
e^{-tP}u(x)= \int_{\R^d} K(t,x,y) u(y) dy.
\]
We recall some properties (see e.g. \cite{Chavel}, \cite{Grigo99}) of the heat kernel on arbitrary Riemannian manifold.
\begin{lem} \label{lem heat kernel property}
Let $(M,g)$ be an arbitrary Riemannian manifold. Then the heat kernel $K$ satisfies the following properties:
\begin{itemize}
\item[(i)] $K$ is a strictly positive $C^\infty$ function on $(0,\infty)\times M \times M$.
\item[(ii)] $K$ is symmetric in the space components.
\item[(iii)] (Maximum principle)
\[
\int_M K(t,x,y) d_g(y) \leq 1.
\]
\item[(iv)] (Semi-group property)
\[
\int_M K(s,x,y) K(t,y,z) d_g(y)=K(s+t,x,z).
\]
\end{itemize}
\end{lem}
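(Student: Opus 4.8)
The final statement to prove is Lemma~\ref{lem heat kernel property}, which collects four standard properties of the heat kernel $K(t,x,y)$ associated to $e^{-tP}$ on an arbitrary Riemannian manifold: smoothness and positivity, symmetry in the space variables, the maximum principle $\int_M K(t,x,y)\,d_g(y)\le 1$, and the semigroup identity.

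My plan is to recall these as consequences of standard heat-kernel theory rather than to reprove them from scratch, since they hold on any Riemannian manifold. First I would note that $P=-\Delta_g$ is a non-negative self-adjoint operator on $L^2(M,d_gx)$, so $e^{-tP}$ is well-defined by the spectral theorem for $t>0$ and forms a strongly continuous semigroup; the identity $e^{-sP}e^{-tP}=e^{-(s+t)P}$ is immediate from functional calculus, and rewriting it in terms of integral kernels gives property (iv) once we know the kernel exists. The existence of a smooth kernel and property (i) follow from local parabolic regularity theory: $u(t,x)=e^{-tP}u_0(x)$ solves $\partial_t u+Pu=0$, and hypoellipticity of the heat operator together with the construction of the minimal fundamental solution (for instance via an exhaustion by relatively compact open sets with Dirichlet boundary conditions, as in \cite{Chavel} or \cite{Grigo99}) yields a strictly positive $C^\infty$ kernel on $(0,\infty)\times M\times M$; strict positivity uses the strong maximum principle / Harnack inequality. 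Property (ii) is the statement that $e^{-tP}$ is self-adjoint on $L^2(M,d_gx)$, which again is part of the spectral theorem, and translates to $K(t,x,y)=K(t,y,x)$.

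For property (iii), the maximum principle, I would argue that the constant function $1$ is a (generalized) supersolution: since $P1=0$, the function $v(t,x)=1$ satisfies $\partial_t v+Pv=0$, and by comparison with the minimal fundamental solution one gets $\int_M K(t,x,y)\,d_g(y)\le 1$ (equality holds precisely when the manifold is stochastically complete, but the inequality is always valid because $K$ is built as an increasing limit of Dirichlet heat kernels on relatively compact domains, each of which has total mass $\le 1$). Alternatively, for $0\le u_0\le 1$ one has $0\le e^{-tP}u_0\le 1$ by the maximum principle, and testing against an increasing sequence of cutoffs exhausting $M$ gives the bound.

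There is no substantial obstacle here — everything is classical and the lemma is explicitly cited to \cite{Chavel} and \cite{Grigo99}. The only point requiring a little care, and the closest thing to a ``hard part,'' is justifying that the kernel exists as a genuine pointwise smooth function and that the operator-level semigroup and symmetry properties transfer to pointwise statements about $K$; this is handled by the standard construction of the minimal positive fundamental solution as a monotone limit of Dirichlet heat kernels on an exhaustion of $M$, for which the four listed properties are either built in or inherited in the limit. Accordingly, the proof amounts to recalling this construction and pointing to the references.
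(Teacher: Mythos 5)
The paper gives no proof of this lemma at all: it simply states the four properties and cites \cite{Chavel} and \cite{Grigo99}, treating them as classical facts. Your proposal takes the same route (appealing to the standard construction of the minimal heat kernel via a Dirichlet exhaustion and to the cited references), just with a bit more explanatory detail, so it is correct and matches the paper's approach.
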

In order to obtain the heat kernel estimate, we will make use of the Nash inequality (see e.g. \cite[Theorem 3.2.1]{Saloff-Coste}), namely
\begin{align}
\|u\|_{L^2} \leq C\|u\|_{L^1}^{\frac{2}{d+2}} \|\nabla u\|_{L^2}^{\frac{d}{d+2}}. \label{nash inequality Rd}
\end{align}
Note that the Nash inequality on $\R^d$ is valid for any $d\geq 1$. Thanks to $(\ref{equivalent homogeneous sobolev norms})$, we have for $d\geq 2$,
\begin{align}
\|u\|_{L^2} \leq C\|u\|_{L^1}^{\frac{2}{d+2}} \|\Lambda_g u\|_{L^2}^{\frac{d}{d+2}}. \label{nash inequality manifold}
\end{align}
Using $(\ref{nash inequality manifold})$, we have the following upper bound for the heat kernel. 
\begin{theorem}
There exists $C>0$ such that for all $x, y \in \R^d$ and all $t>0$ such that 
\begin{align}
K(t,x,x) &\leq C t^{-d/2}, \label{upperbound on-diagonal} \\
K(t,x,y) &\leq C t^{-d/2} \exp \Big( - \frac{|x-y|^2}{Ct} \Big). \label{upperbound off-diagonal usual distance}
\end{align}
In particular, 
\begin{align}
\|e^{-tP}\|_{\Lc(L^1,L^\infty)} \leq Ct^{-d/2}, \quad t>0. \label{L1 L8 estimate heat operator}
\end{align}
\end{theorem}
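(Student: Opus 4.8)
The plan is to deduce all three statements from Nash's inequality $(\ref{nash inequality manifold})$ together with the elementary properties of the heat kernel collected in Lemma $\ref{lem heat kernel property}$. The on-diagonal bound $(\ref{upperbound on-diagonal})$ and the ultracontractivity estimate $(\ref{L1 L8 estimate heat operator})$ will come from the classical Nash ODE argument, and the Gaussian bound $(\ref{upperbound off-diagonal usual distance})$ from the (by now standard) self-improvement of on-diagonal to off-diagonal Gaussian bounds, whose mechanism is Davies' exponential perturbation.

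First, by Lemma $\ref{lem heat kernel property}$ the operator $e^{-tP}$ is positivity preserving, and a contraction on $L^\infty$ (since $|e^{-tP}f(x)|\le\|f\|_{L^\infty}\int K(t,x,y)\,d_gy\le\|f\|_{L^\infty}$ by (iii)) and on $L^1$ (by Fubini together with the symmetry (ii) and (iii)). Fix $0\le f\in L^1\cap L^2$, set $u(t)=e^{-tP}f$ and $\phi(t)=\|u(t)\|_{L^2}^2$. Then $u(t)\ge 0$, $\|u(t)\|_{L^1}\le\|f\|_{L^1}$, and, since $P=\Lambda_g^2$ is non-negative self-adjoint, $\phi'(t)=-2(Pu(t),u(t))=-2\|\Lambda_g u(t)\|_{L^2}^2$ for $t>0$. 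Feeding $u(t)$ into $(\ref{nash inequality manifold})$ gives $\|\Lambda_g u(t)\|_{L^2}^2\ge c\|f\|_{L^1}^{-4/d}\phi(t)^{1+2/d}$, whence $\phi'(t)\le -2c\|f\|_{L^1}^{-4/d}\phi(t)^{1+2/d}$; integrating the induced inequality for $t\mapsto\phi(t)^{-2/d}$ yields $\phi(t)\le C\|f\|_{L^1}^2\,t^{-d/2}$, i.e. $\|e^{-tP}\|_{\Lc(L^1,L^2)}\le Ct^{-d/4}$. Since $e^{-tP}$ is self-adjoint on $L^2$, by duality $\|e^{-tP}\|_{\Lc(L^2,L^\infty)}\le Ct^{-d/4}$ as well, and the semigroup identity $e^{-tP}=e^{-(t/2)P}e^{-(t/2)P}$ then produces $(\ref{L1 L8 estimate heat operator})$. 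Since $K(t,\cdot,\cdot)$ is a continuous version of the Schwartz kernel of $e^{-tP}\colon L^1\to L^\infty$, we get $0<K(t,x,y)\le\|e^{-tP}\|_{\Lc(L^1,L^\infty)}\le Ct^{-d/2}$ for all $x,y$, which contains $(\ref{upperbound on-diagonal})$.

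For $(\ref{upperbound off-diagonal usual distance})$, let $\psi$ be a bounded real-valued Lipschitz function on $\R^d$ with $\sum_{j,k}g^{jk}\partial_j\psi\,\partial_k\psi\le 1$ a.e., let $\alpha\in\R$, and set $v(t)=e^{\alpha\psi}e^{-tP}(e^{-\alpha\psi}f)$. A direct computation, in which the first order terms cancel by the symmetry of $(g^{jk})$, gives
\[
\tfrac{d}{dt}\|v(t)\|_{L^2}^2=-2\|\Lambda_g v(t)\|_{L^2}^2+2\alpha^2\int\Big(\sum_{j,k}g^{jk}\partial_j\psi\,\partial_k\psi\Big)|v(t)|^2\le 2\alpha^2\|v(t)\|_{L^2}^2 ,
\]
so that $\|e^{\alpha\psi}e^{-tP}e^{-\alpha\psi}\|_{\Lc(L^2)}\le e^{\alpha^2 t}$ by Gronwall. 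Taking $f,h$ supported in sets $A,B$ with $r:=\mathrm{dist}_g(A,B)>0$ and $\psi(z)=\min\{\mathrm{dist}_g(z,A),r\}$ (so $\psi\equiv 0$ on $A$ and $\psi\equiv r$ on $B$), one gets $|(e^{-tP}f,h)|=e^{-\alpha r}\,|(e^{\alpha\psi}e^{-tP}e^{-\alpha\psi}f,h)|\le e^{\alpha^2 t-\alpha r}\|f\|_{L^2}\|h\|_{L^2}$; optimizing in $\alpha=r/(2t)$ yields the Davies--Gaffney bound $\|\mathds{1}_B e^{-tP}\mathds{1}_A\|_{\Lc(L^2)}\le e^{-\mathrm{dist}_g(A,B)^2/(4t)}$. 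Writing $K(2t,x,y)=\int K(t,x,z)K(t,z,y)\,d_gz$, splitting according to whether $\mathrm{dist}_g(z,x)\le\mathrm{dist}_g(x,y)/2$ or not, and estimating each half by Cauchy--Schwarz using $\int K(t,x,z)^2\,d_gz=K(2t,x,x)\le Ct^{-d/2}$ together with the Davies--Gaffney bound on dyadic annuli, one arrives at $K(t,x,y)\le Ct^{-d/2}\exp(-\mathrm{dist}_g(x,y)^2/(C't))$; this last, standard, step is carried out e.g. in \cite{Grigo99}. Finally uniform ellipticity $(\ref{assump elliptic})$ forces $\mathrm{dist}_g(x,y)\ge c\,|x-y|$, which converts the previous estimate into $(\ref{upperbound off-diagonal usual distance})$ after adjusting constants.

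The on-diagonal bound is just the Nash iteration, and $(\ref{L1 L8 estimate heat operator})$ and the reduction $\mathrm{dist}_g(x,y)\asymp|x-y|$ are immediate. The genuinely technical point is the passage from the Davies--Gaffney $L^2$ off-diagonal estimate and the on-diagonal bound to the pointwise Gaussian bound — in particular the dyadic-annulus control of $\int_{\mathrm{dist}_g(z,y)\ge\rho}K(t,z,y)^2\,d_gz$ — which is the one step that is not a short manipulation, although it is by now a well-documented argument that may simply be invoked.
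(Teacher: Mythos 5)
Your proof is correct and follows essentially the same route as the paper: Nash iteration to get the $t^{-d/2}$ on-diagonal/ultracontractive bound, the standard self-improvement to Gaussian off-diagonal decay (you spell out Davies' exponential perturbation, the paper simply invokes Grigor'yan), and the elliptic comparison $d(x,y)\simeq|x-y|$. The only cosmetic difference is that you run Nash at the level of operator norms ($L^1\to L^2$, then duality and the semigroup identity) to get $\|e^{-tP}\|_{\Lc(L^1,L^\infty)}\le Ct^{-d/2}$ first and read off the on-diagonal bound from it, whereas the paper applies the ODE argument directly to $J(t)=\|K(t,\cdot,x)\|_{L^2}^2$ using the maximum principle and symmetry of the kernel, obtains the on-diagonal bound as $J(t)=K(2t,x,x)$, and deduces $(\ref{L1 L8 estimate heat operator})$ only at the end as a corollary of $(\ref{upperbound off-diagonal usual distance})$; these are two equivalent packagings of the same argument.
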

\begin{proof}
The proof is similar to the one given in \cite[Theorem 6.1]{Grigo99} where the author shows how to get $(\ref{upperbound on-diagonal})$ from the homogeneous Sobolev embedding $(\ref{homogeneous sobolev embedding})$. For the reader's convenience, we give a sketch of the proof. Fix $x \in \R^d$ and denote $v(t,y) = K(t,y,x)$ and
\[
J(t):= \|v(t)\|^2_{L^2}. 
\]
Using the fact that $\partial_t v(t,y) = -Pv(t,y)$, we have
\[
J'(t)= 2 \scal{v(t), \partial_t v(t)} = -2 \scal{v(t),P v(t)} = - 2 \|\Lambda_gv(t)\|^2_{L^2}.
\]
This implies that $J(t)$ is non-increasing. On the other hand, the maximum principle (see also \cite{Grigo99}) shows that
\[
\|v(t)\|_{L^1}= \int_{\R^d} K(t,x,y) dy \leq 1.
\]
This together with $(\ref{nash inequality manifold})$ yield 
\[
\|v(t)\|^2_{L^2} \leq C\|v(t)\|^\frac{4}{d+2}_{L^1} \|\Lambda_g v(t)\|^{\frac{2d}{d+2}}_{L^2} \leq C\|\Lambda_g v(t)\|^{\frac{2d}{d+2}}_{L^2}.
\]
We thus get
\[
J'(t) \leq - C \|v(t)\|^{\frac{2(d+2)}{d}}_{L^2} = -C J(t)^{\frac{d+2}{d}}.
\]
Integrating between $0$ and $t$ with the fact that the non-increasing property of $J(t)$, we obtain
\[
J(t) \leq C t^{-d/2}.
\] 
The estimate $(\ref{upperbound on-diagonal})$ then follows by the symmetric property of $K(t,x,y)$, i.e. $J(t) = K(2t, x,x)$. Using $(\ref{upperbound on-diagonal})$, the off-diagonal argument (see also \cite{Grigo99}) implies the following upper bound for the heat kernel
\begin{align*}
K(t,x,y) \leq C t^{-d/2} \exp \Big( - \frac{d^2(x,y)}{Ct} \Big), \quad \forall x,y \in \R^d, t>0, 
\end{align*}
where $d(x,y)$ is the geodesic distance from $x$ to $y$. Thanks to the elliptic condition $(\ref{assump elliptic})$ of the metric $g$, it is easy to see that
\[
d(x,y)\sim |x-y|.
\]
This shows $(\ref{upperbound off-diagonal usual distance})$ and the proof is complete.
\end{proof}
We now give some applications of the upper bound $(\ref{upperbound off-diagonal usual distance})$. A first application is the following homogeneous Sobolev embedding.
\begin{lem} \label{lem homogeneous Sobolev embedding}
Let $q\in (2,\infty)$ and $\alpha=\frac{d}{2}-\frac{d}{q}$. Then the operator $\Lambda_g^{-\alpha}$ maps $L^2$ to $L^q$. In particular, there exists $C>0$ such that
\begin{align}
\|u\|_{L^q} \leq C \|\Lambda_g^\alpha u\|_{L^2}. \label{homogeneous sobolev embedding general}
\end{align}
\end{lem}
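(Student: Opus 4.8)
The plan is to realize $\Lambda_g^{-\alpha}=P^{-\alpha/2}$ as an integral operator whose kernel is pointwise dominated by the kernel of the Euclidean Riesz potential of order $\alpha$, and then to invoke the Hardy--Littlewood--Sobolev inequality on $\R^d$. First I would record the subordination identity $\lambda^{-\alpha/2}=\frac{1}{\Gamma(\alpha/2)}\int_0^\infty t^{\alpha/2-1}e^{-t\lambda}\,dt$, valid for $\lambda>0$ and $\alpha>0$, and combine it with the spectral theorem. For $v\in\Lch_g(\R^d)$ the spectral support of $v$ lies in a compact subset of $(0,\infty)$, so all the integrals below converge absolutely and
\[
\Lambda_g^{-\alpha}v=P^{-\alpha/2}v=\frac{1}{\Gamma(\alpha/2)}\int_0^\infty t^{\alpha/2-1}e^{-tP}v\,dt .
\]
Since $e^{-tP}$ has integral kernel $K(t,x,y)$, Fubini's theorem (justified by the absolute convergence discussed at the end) shows that $\Lambda_g^{-\alpha}$ is the integral operator with kernel
\[
G_\alpha(x,y)=\frac{1}{\Gamma(\alpha/2)}\int_0^\infty t^{\alpha/2-1}K(t,x,y)\,dt ,\qquad x\ne y .
\]

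Next I would insert the Gaussian upper bound $(\ref{upperbound off-diagonal usual distance})$, $K(t,x,y)\le Ct^{-d/2}\exp(-|x-y|^2/(Ct))$. After the change of variables $u=|x-y|^2/(Ct)$ the $t$-integral becomes $C|x-y|^{\alpha-d}\int_0^\infty u^{d/2-\alpha/2-1}e^{-u}\,du$, and the remaining integral is the finite constant $\Gamma(d/2-\alpha/2)$ precisely because $\alpha=\frac d2-\frac dq<\frac d2$. Hence
\[
0<G_\alpha(x,y)\le C\,|x-y|^{-(d-\alpha)},\qquad x\ne y ,
\]
so that $G_\alpha$ is bounded by a constant multiple of the kernel of the Riesz potential $I_\alpha$ on $\R^d$; in particular $|\Lambda_g^{-\alpha}v(x)|\le C\,I_\alpha(|v|)(x)$ for a.e.\ $x$. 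The same pointwise kernel bound also allows one to define $\Lambda_g^{-\alpha}v$ by this integral for an arbitrary $v\in L^2$, the definition agreeing with the spectral one on $\Lch_g(\R^d)$.

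Finally, since $0<\alpha<d$, $1<2<q<\infty$ and $\frac1q=\frac12-\frac\alpha d$, the Hardy--Littlewood--Sobolev inequality gives $\|I_\alpha(|v|)\|_{L^q}\le C\|v\|_{L^2}$, whence $\|\Lambda_g^{-\alpha}v\|_{L^q}\le C\|v\|_{L^2}$. Taking $v=\Lambda_g^\alpha u$ with $u\in\Lch_g(\R^d)$ yields $\|u\|_{L^q}\le C\|\Lambda_g^\alpha u\|_{L^2}$, and a density argument extends this to all $u\in\dot{H}^\alpha_g(\R^d)$ and gives the boundedness $\Lambda_g^{-\alpha}\colon L^2\to L^q$. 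I expect the only delicate point — more bookkeeping than genuine obstacle — to be the absolute convergence needed for the kernel representation: near $t=0$ it is supplied by the Gaussian factor when $x\ne y$ and, after integrating in $y$, by $\alpha>0$ (the singularity $|x-y|^{-(d-\alpha)}$ being locally integrable), while near $t=\infty$ it follows from $\alpha<d$.
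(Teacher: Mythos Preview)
Your argument is correct and follows essentially the same route as the paper: write $\Lambda_g^{-\alpha}=\frac{1}{\Gamma(\alpha/2)}\int_0^\infty t^{\alpha/2-1}e^{-tP}\,dt$, use the Gaussian heat kernel bound $(\ref{upperbound off-diagonal usual distance})$ and a change of variables to dominate the kernel by $C|x-y|^{-(d-\alpha)}$, and conclude with Hardy--Littlewood--Sobolev. If anything, your discussion of the convergence of the $t$-integral is more careful than the paper's.
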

\begin{proof}
We firstly recall the following version of Hardy-Littlewood-Sobolev theorem.
\begin{theorem}[\cite{HardyLittlewood,Sobolev}] \label{theorem hardy littlewood sobolev}
Let $1<p<q<\infty$, $\gamma=d+\frac{d}{q}-\frac{d}{p}$ and $K_\gamma(x):=|x|^{-\gamma}$. Then the convolution operator $T_\gamma:=f * K_\gamma$ maps $L^p$ to $L^q$. In particular, there exists $C>0$ such that
\[
\|T_\gamma u\|_{L^q} \leq C\|u\|_{L^p}.
\]
\end{theorem}
\noindent Now let $\Ga(z):= \int_0^{\infty} t^{z-1} e^{-t} dt, \re{(z)}>0$ be the Gamma function. The spectral theory with the fact $\Lambda_g=\sqrt{P}$ gives 
\[
\Lambda_g^{-\alpha}= P^{-\alpha/2}=\frac{1}{\Ga(\alpha/2)} \int_{0}^{\infty} e^{-tP} t^{\alpha/2-1} dt.
\]
Let $[\Lambda_g^{-\alpha}](x,y)$ be the kernel of $\Lambda_g^{-\alpha}$. By $(\ref{upperbound off-diagonal usual distance})$, 
\[
|[\Lambda_g^{-\alpha}](x,y)| \leq \frac{C}{\Ga(\alpha/2)} \int_0^\infty t^{-d/2} e^{-\frac{|x-y|^2}{Ct}} t^{\alpha/2-1} dt.
\]
A change of variable shows
\begin{align*}
|[\Lambda_g^{-\alpha}](x,y)| \leq \frac{C}{\Ga(\alpha/2)} |x-y|^{-(d-\alpha)}\int_{0}^{\infty} t^{d/2-\alpha/2-1} e^{-t} dt 
= \frac{C\Ga(d/2-\alpha/2)}{\Ga(\alpha/2)} |x-y|^{-(d-\alpha)}. 
\end{align*}
The result follows by applying Theorem $\ref{theorem hardy littlewood sobolev}$ with $\gamma=d-\alpha$ and $p=2$. 
\end{proof}
Another application of the heat kernel upper bound $(\ref{upperbound off-diagonal usual distance})$ is the following $L^q-L^r$-bound of the heat operator.
\begin{lem} \label{lem Lq Lr estimate heat operator}
Let $1 \leq q \leq r \leq \infty$. The heat operator $e^{-tP}, t>0$ maps $L^q$ to $L^r$. In particular, there exists $C>0$ such that for all $t>0$,
\[
\|e^{-tP}\|_{\Lc(L^q, L^r)} \leq C t^{-\frac{d}{2}\left(\frac{1}{q}-\frac{1}{r}\right)}.
\]
\end{lem}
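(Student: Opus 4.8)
The plan is to deduce the $L^q \to L^r$ bound from the endpoint cases $q=1, r=\infty$ (already established as $(\ref{L1 L8 estimate heat operator})$) and $q=r$ together with the Riesz--Thorin interpolation theorem. First I would record the two elementary endpoints. The bound $\|e^{-tP}\|_{\Lc(L^1,L^\infty)} \leq C t^{-d/2}$ is exactly $(\ref{L1 L8 estimate heat operator})$, obtained from the pointwise kernel estimate $(\ref{upperbound off-diagonal usual distance})$ by $\|e^{-tP}u\|_{L^\infty} \leq \sup_{x} \int |K(t,x,y)| |u(y)|\,dy \leq Ct^{-d/2}\|u\|_{L^1}$. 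For the diagonal endpoint $q=r$, note that $e^{-tP}$ is a bounded self-adjoint operator on $L^2$ with $\|e^{-tP}\|_{\Lc(L^2)} \leq 1$ by the spectral theorem (since $P \geq 0$); moreover the maximum principle, item (iii) of Lemma $\ref{lem heat kernel property}$, gives $\int_{\R^d} K(t,x,y)\,d_g(y) \leq 1$, and by symmetry (item (ii)) also $\int_{\R^d} K(t,x,y)\,d_g(x)\leq 1$, whence by Schur's test $\|e^{-tP}\|_{\Lc(L^1)} \leq 1$ and $\|e^{-tP}\|_{\Lc(L^\infty)}\leq 1$; in particular $\|e^{-tP}\|_{\Lc(L^p)}\leq 1$ for every $p \in [1,\infty]$.

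Next I would interpolate. Fix $1 \leq q \leq r \leq \infty$. Interpolating the $L^1\to L^\infty$ bound with the $L^1 \to L^1$ bound yields $\|e^{-tP}\|_{\Lc(L^1,L^r)} \leq Ct^{-\frac{d}{2}(1-1/r)}$ for all $r \in [1,\infty]$; then interpolating this against the $L^\infty \to L^r$ bound (which follows from $L^\infty\to L^\infty$ boundedness together with, say, the $L^1\to L^r$ bound, or directly from $\|e^{-tP}\|_{\Lc(L^\infty,L^\infty)}\le 1$) gives $\|e^{-tP}\|_{\Lc(L^q,L^r)} \leq C t^{-\frac{d}{2}(1/q-1/r)}$ for all $1\leq q\leq r\leq\infty$. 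Equivalently, in one stroke: $e^{-tP}$ maps $L^1\to L^1$ with norm $\leq 1$ and $L^1\to L^\infty$ with norm $\leq Ct^{-d/2}$, and it maps $L^\infty\to L^\infty$ with norm $\leq 1$, so applying Riesz--Thorin to the pair of endpoints $(L^{q_0}\to L^{r_0}) = (L^1 \to L^1)$ and $(L^{q_1}\to L^{r_1}) = (L^1\to L^\infty)$ handles $q=1$, and a second application handles general $q$ by using $L^\infty\to L^\infty$. Tracking the exponents, the bound $t^{-\frac{d}{2}(1/q-1/r)}$ comes out as the convex combination of the endpoint exponents $0$ and $d/2$.

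There is essentially no obstacle here: the entire content is the kernel bound $(\ref{upperbound off-diagonal usual distance})$, which has already been proved, plus the soft facts that $e^{-tP}$ is a positivity-preserving, $L^1$- and $L^\infty$-contractive semigroup (immediate from items (ii), (iii) of Lemma $\ref{lem heat kernel property}$) and bilinear complex interpolation. The only point requiring a word of care is the diagonal $L^p \to L^p$ contractivity, which one gets from the maximum principle together with symmetry of the kernel via Schur's test, rather than from the spectral theorem alone (which only directly gives $L^2$); once that is in hand, Riesz--Thorin finishes the proof immediately.
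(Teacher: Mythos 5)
Your proposal reproduces the paper's proof: Schur's lemma plus the symmetry and maximum-principle properties of the heat kernel give $L^q\to L^q$ boundedness, and interpolation with the $L^1\to L^\infty$ bound from $(\ref{L1 L8 estimate heat operator})$ gives the full $L^q\to L^r$ estimate. The only point to flag is a slip in your first presentation of the interpolation step: the claimed ``$L^\infty\to L^r$ bound'' is false for $r<\infty$ (the constant function $1\in L^\infty$ is essentially fixed by $e^{-tP}$ and is not in $L^r$), and it certainly does not follow from $L^\infty\to L^\infty$ boundedness together with $L^1\to L^r$; moreover, even if such a bound held, interpolating $(L^1,L^r)\to(L^\infty,L^r)$ would produce the exponent $\frac{d}{2}\bigl(\frac{1}{q}-\frac{1}{qr}\bigr)$, not the stated $\frac{d}{2}\bigl(\frac{1}{q}-\frac{1}{r}\bigr)$. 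The ``equivalently, in one stroke'' reformulation, however, is correct: interpolate $L^1\to L^{r/q}$ (which you obtain from the first step) against $L^\infty\to L^\infty$ to get $L^q\to L^r$ with the right exponent. The paper does this in a single Riesz--Thorin application between $L^1\to L^\infty$ and a suitably chosen $L^s\to L^s$, which is the cleanest formulation; you may want to adopt that or simply keep your ``one stroke'' version and drop the erroneous first attempt.
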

\begin{proof}
By the symmetric and maximal principle properties of the heat kernel, the Schur's Lemma yields
\begin{align}
\|e^{-tP}\|_{\Lc(L^q)} \leq C, \quad t>0. \label{Lq estimate heat operator}
\end{align} 
Interpolating between $(\ref{L1 L8 estimate heat operator})$ and $(\ref{Lq estimate heat operator})$, we have the result.
\end{proof}
\begin{coro} \label{coro f ep P}
Let $f\in C^\infty_0(\R \backslash \{0\})$ and $q \in [2,\infty]$. Then there exists $C>0$ such that for all $\ep \in (0,1]$,
\[
\|f(\ep^{-2}P)\|_{\Lc(L^2,L^q)} \leq C \ep^{d/2-d/q}.
\]
\end{coro}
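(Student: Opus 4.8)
The plan is to use the support condition on $f$ to factor $f(\ep^{-2}P)$ through the heat semigroup evaluated at the large time $t=\ep^{-2}$, and then invoke the $L^q$–$L^r$ bound for $e^{-tP}$ proved in Lemma \ref{lem Lq Lr estimate heat operator}. This is much cheaper than appealing to the low-frequency parametrix, precisely because $f$ vanishes near $0$.

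First I would introduce $\tilde{f}(\lambda):=e^{\lambda}f(\lambda)$. Since $0\notin\mathrm{supp}(f)$, we have $\tilde f\in C^\infty_0(\R\backslash\{0\})$; in particular $\tilde f$ is bounded, and $f(\lambda)=e^{-\lambda}\tilde f(\lambda)$. By the spectral theorem for the nonnegative self-adjoint operator $P$, the two factors commute as functions of $P$ and
\[
f(\ep^{-2}P)=e^{-\ep^{-2}P}\,\tilde f(\ep^{-2}P).
\]
The factor $\tilde f(\ep^{-2}P)$ is bounded on $L^2$ uniformly in $\ep$, since by functional calculus $\|\tilde f(\ep^{-2}P)\|_{\Lc(L^2)}\leq\sup_{\lambda\geq0}|\tilde f(\ep^{-2}\lambda)|\leq\|\tilde f\|_{L^\infty}$. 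For the heat factor, I would apply Lemma \ref{lem Lq Lr estimate heat operator} with the pair $(2,q)$ and time $t=\ep^{-2}>0$, which gives
\[
\|e^{-\ep^{-2}P}\|_{\Lc(L^2,L^q)}\leq C\,(\ep^{-2})^{-\frac{d}{2}(\frac12-\frac1q)}=C\,\ep^{d/2-d/q};
\]
for $q=2$ this is trivial and for $q=\infty$ it is the bound $\|e^{-tP}\|_{\Lc(L^2,L^\infty)}\lesssim t^{-d/4}$, both cases being covered by the Lemma. Composing,
\[
\|f(\ep^{-2}P)\|_{\Lc(L^2,L^q)}\leq\|e^{-\ep^{-2}P}\|_{\Lc(L^2,L^q)}\,\|\tilde f(\ep^{-2}P)\|_{\Lc(L^2)}\leq C\,\ep^{d/2-d/q},
\]
which is the assertion.

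There is essentially no analytic obstacle in this argument; the only points to be careful about are that the support hypothesis on $f$ is exactly what makes $\tilde f=e^{(\cdot)}f$ a legitimate bounded (indeed $C^\infty_0(\R\backslash\{0\})$) multiplier, so that $\tilde f(\ep^{-2}P)$ is $L^2$-bounded, and that $\ep\in(0,1]$ forces the heat time $t=\ep^{-2}\geq1$ to produce exactly the gain $\ep^{d/2-d/q}$. If one wanted the same estimate for a cutoff $f$ with $f(0)\neq0$, this factorization would fail and one would have to fall back on the low-frequency parametrix of Corollary \ref{coro Lq Lr estimate of parametrix low freq} together with a weighted remainder bound; but for $f$ supported away from the origin the heat-kernel route suffices.
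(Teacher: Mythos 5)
Your proof is correct and is essentially the same argument the paper gives: both factor $f(\ep^{-2}P)=e^{-\ep^{-2}P}\,\bigl(e^{\ep^{-2}P}f(\ep^{-2}P)\bigr)$, bound the second factor in $\Lc(L^2)$ by the spectral theorem using the compact support of $f$ away from $0$, and apply Lemma \ref{lem Lq Lr estimate heat operator} with $t=\ep^{-2}$ to the heat factor. The only cosmetic difference is that you give the second factor the explicit name $\tilde f(\ep^{-2}P)$ with $\tilde f(\lambda)=e^{\lambda}f(\lambda)$, which is exactly the same operator.
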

\begin{proof}
By writing
\[
f(\ep^{-2}P)= e^{-\ep^{-2}P} (e^{\ep^{-2}P} f(\ep^{-2}P)),
\]
and using Proposition $\ref{lem Lq Lr estimate heat operator}$ with $t= \ep^{-2}$, we get
\[
\|f(\ep^{-2}P)\|_{\Lc(L^2,L^q)} \leq  \|e^{-\ep^{-2}P}\|_{\Lc(L^2,L^q)} \|e^{\ep^{-2}P} f(\ep^{-2}P)\|_{\Lc(L^2)} \leq C \ep^{d/2-d/q}.
\]
Here, using the compactly supported property of $f$ and spectral theorem, we have  $e^{\ep^{-2}P} f(\ep^{-2}P)$ is of size $O_{\Lc(L^2)}(1)$. This gives the result.
\end{proof}
We now are able to prove Theorem $\ref{theorem littlewood paley estimates}$. We only give the proof for the low frequency case. The high frequency one is essentially given in \cite[Theorem 4.6]{BoucletMizutani}. \newline
\noindent \textit{Proof of \emph{Theorem} $\ref{theorem littlewood paley estimates}$.} 
By the second term of $(\ref{spectral localization})$, we have
\begin{align}
\|f_0(P)v\|_{L^q} = \sup_{\|w\|_{L^{q'}}=1} |(w, f_0(P)v)|= \sup_{\|w\|_{L^{q'}}=1} \Big|\lim_{M\rightarrow \infty} \sum_{k=0}^{M}(w,f(\ep^{-2}P)v)\Big|, \label{define norm Lq low freq}
\end{align}
where $\ep^{-2}=2^k$ and $(\cdot,\cdot)$ is the inner product on $L^2$. By choosing $\tilde{f}\in C^\infty_0(\R \backslash \{0\})$ satisfying $\tilde{f}=1$ near $\text{supp}(f)$, we use Proposition $\ref{prop parametrix phi low freq}$ to write $(1-\chi)(\ep x) \tilde{f}(\ep^{-2}P)=Q(\ep)+R(\ep)$, where
\[
Q(\ep)=(1-\chi)(\ep x) Op_\ep(\tilde{f}\circ p_\ep) \zeta(\ep x), \quad R(\ep)=\zeta(\ep x) (\ep^{-2}P+1)^{-m} B(\ep) \scal{\ep x}^{-1},
\]
with $\zeta \in C^\infty(\R^d)$ supported outside $B(0,1)$ and equal to 1 near $\text{supp}(1-\chi)$ and $B(\ep)=O_{\Lc(L^2)}(1)$ uniformly in $\ep \in (0,1]$. We next write
\[
f(\ep^{-2}P)= Q(\ep) (1-\chi)(\ep x) f(\ep^{-2}P) + A(\ep) \ep^{\alpha}\scal{\ep x}^{-1}f(\ep^{-2}P),
\]
with $\alpha=d/2-d/q$ and
\[
A(\ep)=\ep^{-\alpha} \Big((1-\chi)(\ep x) \tilde{f}(\ep^{-2}P) \chi(\ep x) + R(\ep) (1-\chi)(\ep x) + \chi(\ep x) \tilde{f}(\ep^{-2}P)\Big) \scal{\ep x}.
\]
We now bound
\begin{align}
\Big|\sum_{k=0}^{M}(w, f(\ep^{-2}&P)v)\Big| \lesssim \Big|\sum_{k=0}^{M} \left(w, Q(\ep)(1-\chi)(\ep x)f(\ep^{-2}P) v\right) \Big|  + \Big|\sum_{k=0}^{M} (w, A(\ep) \ep^{\alpha}\scal{\ep x}^{-1}f(\ep^{-2}P) v) \Big| \nonumber \\
&\lesssim \Big|\sum_{k=0}^{M} \left(Q^\star(\ep)w, (1-\chi)(\ep x)f(\ep^{-2}P) v\right) \Big|  + \|w\|_{L^{q'}} \Big\| \sum_{k=0}^{M} A(\ep)\ep^{\alpha}\scal{\ep x}^{-1}f(\ep^{-2}P) v \Big\|_{L^q} \nonumber \\
&=: \text{(I)} +\text{(II)}. \label{littlewood paley proof 1}
\end{align}
We use the Cauchy-Schwarz inequality in $k$ and the H\"older inequality in space to have
\[
\text{(I)}\lesssim \|\widetilde{S}_M w\|_{L^{q'}} \|S_M v\|_{L^q},
\]
where 
\[
\widetilde{S}_M w:= \Big(\sum_{k=0}^{M}|Q^\star(\ep) w|^2\Big)^{1/2}, \quad S_Mv:=\Big(\sum_{k=0}^{M}|(1-\chi)(\ep x) f(\ep^{-2}P)v|^2\Big)^{1/2}.
\]
We now make use of the following estimate (see \cite[Proposition 4.3]{BoucletMizutani}).
\begin{prop} \label{prop calderon zygmund low freq}
For $r\in (1,2]$, there exists $C>0$ such that for all $M \geq 0$ and all $w \in \Sch(\R^d)$,
\[
\| \widetilde{S}_Mw\|_{L^r} \leq C \|w\|_{L^r}.
\]
\end{prop}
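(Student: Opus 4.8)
The estimate is a vector-valued Littlewood--Paley square function bound, and the plan, following \cite[Proposition~4.3]{BoucletMizutani}, is to realise $w\mapsto T_Mw:=(Q^\star(\ep_k)w)_{0\le k\le M}$ (with $\ep_k:=2^{-k/2}$) as an $\ell^2$-valued Calder\'on--Zygmund operator with constants independent of $M$, so that $\|\widetilde S_Mw\|_{L^r}=\|T_Mw\|_{L^r(\R^d;\ell^2)}$ is controlled by the standard vector-valued singular integral theory. Recall that $Q(\ep)=(1-\chi)(\ep x)\,Op_\ep(\tilde f\circ p_\ep)\,\zeta(\ep x)$; writing $Op_\ep(a)=D_\ep Op(a)D_\ep^{-1}$ with $D_\ep$ unitary on $L^2$, each $Q^\star(\ep_k)$ is, up to conjugation by $D_{\ep_k}^{\pm1}$, a pseudo-differential operator whose rescaled symbol $\overline{\tilde f\circ p_{\ep_k}}$ is supported in the frequency annulus $|\xi|\sim1$ (times the cutoffs $\zeta,\,1-\chi$, which vanish on $B(0,1)$); equivalently, $Q^\star(\ep_k)$ localises to the frequencies $|\xi|\sim\ep_k$ and to the region $|x|\gtrsim\ep_k^{-1}$.

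\textbf{The $L^2$ bound.} One has $\|\widetilde S_Mw\|_{L^2}^2=\|T_Mw\|_{L^2(\ell^2)}^2=(w,T_M^\star T_Mw)$ and $\|T_M^\star T_M\|=\|T_MT_M^\star\|$, where the operator matrix $T_MT_M^\star=(Q^\star(\ep_k)Q(\ep_j))_{0\le j,k\le M}$ acts on $\ell^2(L^2)$. So it suffices to prove the almost-orthogonality estimate
\[
\|Q^\star(\ep_k)Q(\ep_j)\|_{\Lc(L^2)}\lesssim_N 2^{-N|j-k|}\qquad(N\ge0),
\]
for then the Schur test gives $\|T_MT_M^\star\|\lesssim\sum_{m\in\Z}2^{-N|m|}\lesssim1$ uniformly in $M$. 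To prove it, note that since $\zeta,1-\chi$ vanish on $B(0,1)$ the intermediate factor $(1-\chi)(\ep_kx)(1-\chi)(\ep_jx)$ in $Q^\star(\ep_k)Q(\ep_j)$ is supported in $|x|\gtrsim 2^{\max(j,k)/2}$, a region in which both $Op_{\ep_k}(\overline{\tilde f\circ p_{\ep_k}})^\star$ and $Op_{\ep_j}(\tilde f\circ p_{\ep_j})$ are honest rescaled pseudo-differential operators; bringing them to a common dyadic frame via the $D_\ep$'s, the two symbols are then supported at frequencies separated by the factor $2^{|j-k|/2}$, and a non-stationary phase argument in the rescaled calculus of Section~\ref{section functional calculus} produces the asserted decay. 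This is the main obstacle: the $Q^\star(\ep_k)$ are not exact frequency projections, and one must keep careful track of the fact that the $x$-dependence --- through the metric in $p_\ep$ and through the spatial cutoffs --- is slowly varying on the scale dictated by the frequency, so that the symbol calculus genuinely yields rapid decay in $|j-k|$.

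\textbf{Kernel estimates and conclusion.} Let $K_k(x,y)$ be the Schwartz kernel of $Q^\star(\ep_k)$. From the kernel formula for a rescaled pseudo-differential operator, $K_k(x,y)=\ep_k^{d}a_k(\ep_k x,\ep_k(x-y))$ with $a_k(\cdot,z)$ Schwartz in $z$ uniformly in $k$, so that
\[
|K_k(x,y)|\lesssim_N\ep_k^{d}\scal{\ep_k|x-y|}^{-N},\qquad |\nabla_yK_k(x,y)|\lesssim_N\ep_k^{d+1}\scal{\ep_k|x-y|}^{-N},
\]
and the remainder terms arising from Proposition~\ref{prop parametrix phi low freq} (with $m$ taken large) obey the same bounds. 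Summing the geometric series over $k\ge0$ (using $d\ge1$) gives $\big(\sum_k|K_k(x,y)|^2\big)^{1/2}\lesssim|x-y|^{-d}$ and $\big(\sum_k|\nabla_yK_k(x,y)|^2\big)^{1/2}\lesssim|x-y|^{-d-1}$, whence the $\ell^2$-valued kernel $(K_k(x,y))_k$ satisfies the size and H\"ormander regularity conditions of a Calder\'on--Zygmund kernel. Together with the $L^2$ bound, the vector-valued Calder\'on--Zygmund theorem shows $T_M$ is of weak type $(1,1)$ from $L^1$ to $L^{1,\infty}(\R^d;\ell^2)$ with constant independent of $M$; interpolating with the $L^2$ bound gives $\|T_M\|_{L^r\to L^r(\R^d;\ell^2)}\lesssim1$ for all $r\in(1,2]$, i.e.\ $\|\widetilde S_Mw\|_{L^r}\lesssim\|w\|_{L^r}$.
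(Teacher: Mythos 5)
The paper does not actually prove this proposition; it is invoked verbatim from \cite[Proposition 4.3]{BoucletMizutani}, so there is no internal argument to compare against. Your vector-valued Calder\'on--Zygmund scheme --- $\ell^2$-valued kernel size and H\"ormander bounds, an almost-orthogonality $L^2$ bound, weak $(1,1)$, then interpolation --- is the natural route and is, as far as one can tell, what the cited reference does. One small point: $Q(\ep)$ is \emph{defined} as the leading term $(1-\chi)(\ep x)Op_\ep(\tilde f\circ p_\ep)\zeta(\ep x)$ of the parametrix, not as the spectral multiplier $\tilde f(\ep^{-2}P)$, so there are no ``remainder terms from Proposition~\ref{prop parametrix phi low freq}'' to control in the kernel bounds --- your kernel estimates are simply the exact ones for a single rescaled symbol, and the parenthetical remark at the end of your kernel paragraph is a red herring rather than a missing step.

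The one place where the argument is genuinely incomplete as written is the almost-orthogonality estimate $\|Q^\star(\ep_k)Q(\ep_j)\|_{\Lc(L^2)}\lesssim 2^{-N|j-k|}$. You correctly flag that the slow variation of the $x$-dependence is what makes it work, but as stated the ``non-stationary phase'' sketch would not deliver the decay: after conjugating to a common frame $D_{\ep_k}^{-1}(\cdot)D_{\ep_k}$ and setting $\delta=\ep_j/\ep_k$, the rescaled symbol $a_j(\delta z,\delta^{-1}\eta)$ a priori has $z$-derivatives of size $\delta^{|\alpha|}$, exactly cancelling the $|\eta-\xi|^{-1}\sim\delta^{-1}$ gain from the frequency separation, so disjointness of the $\xi$-supports alone proves nothing. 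What rescues the argument is the combination of (a) the intermediate cutoff $(1-\chi)(\ep_j x)(1-\chi)(\ep_k x)$, which forces the $z$-integration onto $|z|\gtrsim\max(1,\delta)$, and (b) the $S(0,-\infty)$ class decay $|\partial_z^\alpha a_j|\lesssim\langle z\rangle^{-|\alpha|}$, which, evaluated at $\delta z$ on that region, turns each $\partial_z$ of the composed amplitude into an extra factor $\lesssim\delta^{-1}$ rather than $\delta$. Once this is spelled out, each integration by parts in $z$ gains $\delta^{-2}$, giving the claimed $2^{-N|j-k|}$ for every $N$ and hence the Schur/Cotlar bound uniformly in $M$; the rest of your write-up is then sound.
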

\noindent We thus get
\begin{align}
\text{(I)} \lesssim \|S_M v\|_{L^q} \|w\|_{L^{q'}} \lesssim \Big(\sum_{k=0}^M \|(1-\chi)(\ep x) f(\ep^{-2}P) v\|^2_{L^q}\Big)^{1/2} \|w\|_{L^{q'}}. \label{littlewood paley proof 2}
\end{align}
For the second term in $(\ref{littlewood paley proof 1})$, we use the homogeneous Sobolev embedding $(\ref{homogeneous sobolev embedding general})$ to have
\[
\Big\| \sum_{k=0}^{M} A(\ep) \ep^{\alpha}\scal{\ep x}^{-1}f(\ep^{-2}P) v \Big\|_{L^q} \lesssim  \Big\| \sum_{k=0}^{M} \Lambda_g^\alpha A(\ep) \ep^\alpha \scal{\ep x}^{-1}f(\ep^{-2}P) v \Big\|_{L^2}.
\]
We next write
\begin{align}
\Lambda_g^{\alpha} A(\ep)= (\ep^{-2}P)^{\alpha/2} (\ep^{-2}P+1)^{-\alpha} D(\ep), \label{littlewood paley proof 3}
\end{align}
with $D(\ep)=O_{\Lc(L^2)}(1)$ uniformly in $\ep \in (0,1]$. It is easy to have $(\ref{littlewood paley proof 3})$ from the first two terms in $A(\ep)$ by using Proposition $\ref{prop parametrix phi low freq}$. The less obvious contribution in $(\ref{littlewood paley proof 2})$ is the uniform $L^2$ boundedness of $(\ep^{-2}P+1)^\alpha \chi(\ep x) \tilde{f}(\ep^{-2}P) \scal{\ep x}$. By the functional calculus, it is enough to show for $N$ large enough the uniform $L^2$ boundedness of 
$(\ep^{-2}P+1)^N \chi(\ep x) \tilde{f}(\ep^{-2}P) \scal{\ep x}$. To see it, we write
\[
(\ep^{-2}P+1)^N \chi(\ep x) \tilde{f}(\ep^{-2}P) \scal{\ep x} = \chi(\ep x) (\ep^{-2}P+1)^N \tilde{f}(\ep^{-2}P) \scal{\ep x} + [(\ep^{-2}P+1)^N, \chi(\ep x)] \tilde{f}(\ep^{-2}P) \scal{\ep x},
\]
where $[\cdot, \cdot]$ is the commutator. The $L^2$ boundedness of $\chi(\ep x) (\ep^{-2}P+1)^N \tilde{f}(\ep^{-2}P) \scal{\ep x}$ follows as in $(\ref{L2 bound low freq})$. On the other hand, note that the commutator $[(\ep^{-2}P+1)^N, \chi(\ep x)]$ can be written as a sum of rescaled pseudo-differential operators vanishing outside the support of $\zeta(\ep x)$ for some $\zeta \in C^\infty(\R^d)$ supported outside $B(0,1)$ and equal to $1$ near infinity. This allows to use Proposition $\ref{prop parametrix phi low freq}$, and the $L^2$ boundedness of $[(\ep^{-2}P+1)^N, \chi(\ep x)] \tilde{f}(\ep^{-2}P) \scal{\ep x}$ follows. We next need to recall the following well-known discrete Schur estimate.
\begin{lem} \label{lem discrete schur}
Let $\theta>0$ and $(T_l)_{l}$ be a sequence of linear operators on a Hilbert space $\Hc$. If $\|T^\star_l T_k\|_{\Lc(\Hc)} \lesssim 2^{-\theta|k-l|}$, then there exits $C>0$ such that for all sequence $(v_k)_k$ of $\Hc$,
\[
\|\sum T_k v_k\|_{\Hc} \leq C\left(\sum \|v_k\|^2_{\Hc}\right)^{1/2}.
\]
\end{lem}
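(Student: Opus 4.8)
The plan is to prove this by the standard discrete almost-orthogonality (Cotlar--Stein type) argument. First I would reduce to the case of a finitely supported sequence $(v_k)_k$: for such sequences $\sum_k T_k v_k$ is a genuine finite sum in $\Hc$, I would establish the bound with a constant independent of the support, and then pass to the general square-summable case by a limiting argument, observing that the partial sums form a Cauchy sequence in $\Hc$ thanks to the same estimate applied to the tails.

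For a finite sum, I would expand the square of the norm using the inner product of $\Hc$,
\[
\Big\| \sum_k T_k v_k \Big\|_{\Hc}^2 = \sum_{k,l} \scal{T_k v_k, T_l v_l} = \sum_{k,l} \scal{T_l^\star T_k v_k, v_l},
\]
so that the hypothesis on $\|T_l^\star T_k\|_{\Lc(\Hc)}$ enters directly. By the Cauchy--Schwarz inequality in $\Hc$ and the assumed decay,
\[
\big| \scal{T_l^\star T_k v_k, v_l} \big| \le \|T_l^\star T_k\|_{\Lc(\Hc)} \, \|v_k\|_{\Hc} \|v_l\|_{\Hc} \lesssim 2^{-\theta |k-l|} \|v_k\|_{\Hc} \|v_l\|_{\Hc}.
\]
Then I would symmetrize via $\|v_k\|_{\Hc}\|v_l\|_{\Hc} \le \tfrac12(\|v_k\|_{\Hc}^2 + \|v_l\|_{\Hc}^2)$, and use that the weight $2^{-\theta|k-l|}$ is symmetric in $(k,l)$, which collapses the double sum:
\[
\Big\| \sum_k T_k v_k \Big\|_{\Hc}^2 \lesssim \sum_{k,l} 2^{-\theta|k-l|} \|v_k\|_{\Hc}^2 = \Big( \sum_{m \in \Z} 2^{-\theta|m|} \Big) \sum_k \|v_k\|_{\Hc}^2.
\]
Since $\theta>0$, the geometric series $\sum_{m\in\Z} 2^{-\theta|m|}$ converges, and taking square roots yields the claim, with $C$ the square root of the product of the implicit constant in the hypothesis and $\sum_{m\in\Z} 2^{-\theta|m|}$.

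I do not expect a genuine obstacle here; this is a textbook almost-orthogonality estimate. The only point needing a little care is the passage from finite to infinite sums, i.e. verifying that the partial sums converge in $\Hc$, which is immediate upon applying the finite-sum bound to the difference of two partial sums. I would also note that, unlike the full Cotlar--Stein lemma, only the hypothesis on $T_l^\star T_k$ is used, since the expansion of $\|\sum_k T_k v_k\|_{\Hc}^2$ never produces terms of the form $T_k T_l^\star$.
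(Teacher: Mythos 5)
Your proof is correct. The paper itself states this lemma as a ``well-known discrete Schur estimate'' and gives no proof, so there is nothing to compare against; your argument is the standard one and fills the gap cleanly. Two minor remarks: (i) in the paper's application the index set is finite ($k=0,\dots,M$), so the limiting step you describe, while correct and worth mentioning for the statement as written, is not actually needed there; (ii) you are right to flag that this is weaker than Cotlar--Stein --- expanding $\|\sum_k T_k v_k\|^2$ only ever produces $T_l^\star T_k$, never $T_k T_l^\star$, so the single hypothesis suffices, and the conclusion is correspondingly weaker (an $\ell^2\to\Hc$ bound on $(v_k)\mapsto\sum T_k v_k$ rather than a bound on $\sum T_k$ itself).
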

\noindent Now let $T_k=(\ep_k^{-2}P)^{\alpha/2} (\ep_k^{-2}P+1)^{-\alpha} D(\ep_k)$ with $\ep_k^{-2}=2^k$. We see that 
\[
T_l^\star T_k = 2^{\frac{\alpha(l+k)}{2}} D^\star(\ep_l)(2^lP+1)^{-\alpha} P^\alpha (2^kP+1)^{-\alpha} D(\ep_k).
\]
Note that $l+k= -|k-l| + 2k$ for $k\geq l$ and $l+k=-|k-l|+2l$ for $l\geq k$. Thus for $k\geq l$,
\[
\|T_l^\star T_k\|_{\Lc(L^2)} =2^{-\frac{\alpha|k-l|}{2}} \|D^\star(\ep_l) (2^lP+1)^{-\alpha} (2^kP)^\alpha (2^kP+1)^{-\alpha} D(\ep_k)\|_{\Lc(L^2)}\lesssim 2^{-\frac{\alpha|k-l|}{2}}.
\] 
Similarly for $l\geq k$. Therefore, we can apply Lemma $\ref{lem discrete schur}$ for  $T_k=(\ep_k^{-2}P)^{\alpha/2} (\ep_k^{-2}P+1)^{-\alpha} D(\ep_k)$ with $\ep_k^{-2}=2^k, \Hc=L^2$ and $\theta=\alpha/2$ to get
\begin{align}
\sup_{M} \Big\| \sum_{k=0}^{M} \Lambda_g^\alpha A(\ep) \ep^\alpha \scal{\ep x}^{-1} f(\ep^{-2}P) v\Big\|_{L^2} \lesssim \Big(\sum_{k\geq 0} \|\ep^\alpha \scal{\ep x}^{-1} f(\ep^{-2}P) v\|^2_{L^2}\Big)^{1/2}. \label{littlewood paley proof 4}
\end{align}
Collecting $(\ref{define norm Lq low freq}), (\ref{littlewood paley proof 1}), (\ref{littlewood paley proof 2})$ and $(\ref{littlewood paley proof 4})$, we prove $(\ref{littlewood paley low freq})$. The proof of Theorem $\ref{theorem littlewood paley estimates}$ is now complete.
\defendproof
\subsection{Reduction of the high frequency problem}
Let us now consider the high frequency case. For a given $\chi \in C^\infty_0(\R^d)$, we write $u_{\text{high}}=\chi u_{\text{high}} + (1-\chi)u_{\text{high}}$. Using $(\ref{littlewood paley 1-chi})$ and Minkowski inequality with $p,q \geq 2$, we have 
\begin{multline}
\|(1-\chi)u_{\text{high}}\|_{L^p(\R,L^q)} \leq C \Big(\sum_{h^2=2^{-k}} \|(1-\chi) f(h^2P) e^{-it\Lambda_g^\sigma} u_0\|^2_{L^p(\R,L^q)}  \Big.  \\
\Big. +h^N \|\scal{x}^{-N}f(h^2P) e^{-it\Lambda_g^\sigma}u_0\|^2_{L^p(\R,L^2)} \Big)^{1/2}. \label{littlewood paley minkowski 1-chi}
\end{multline}
The same estimate holds for $\|\chi u_{\text{high}}\|_{L^p(\R,L^q)}$ with $\chi$ in place of $1-\chi$. We can apply the Item 2 of Remark $\ref{rem global Lp integrability fractional schrodinger}$ with scaling in time for the second term in the right hand side of above quantity to get
\begin{align}
h^{N/2}\|\scal{x}^{-N}f(h^2P) e^{-it\Lambda_g^\sigma}u_0\|_{L^p(\R,L^2)} \leq C h^{N/2+(\sigma-N_0)/p}\|f(h^2P) u_0\|_{L^2}. \label{remainder littlewood paley minkowski}
\end{align}
By taking $N$ large enough, this term is bounded by $h^{-\gamma_{p,q}}\|f(h^2P)u_0\|_{L^2}$.  
Thus we have the following reduction.
\begin{prop} \label{prop reduction of high freq problem}
\begin{itemize}
\item[1.] Consider $\R^d, d\geq 2$ equipped with a smooth metric $g$ satisfying $(\ref{assump elliptic})$, $(\ref{assump long range})$ and suppose that the geodesic flow associated to $g$ is non-trapping. If for all $\chi \in C^\infty_0(\R^d)$ and all $(p,q)$ fractional admissible, there exists $C>0$ such that for all $u_0 \in \Lch_g$ and all $h \in (0,1]$,
\begin{align}
\|\chi e^{-it\Lambda_g^\sigma} f(h^2P) u_0\|_{L^p(\R,L^q)} \leq C h^{-\gamma_{p,q}}\|f(h^2P)u_0\|_{L^2}, \label{reduction chi u fractional schrodinger} 
\end{align}
then
\begin{align}
\|\chi u_{\emph{high}}\|_{L^p(\R,L^q)} \leq C \|u_0\|_{\dot{H}^{\gamma_{p,q}}_g},\label{strichartz chi high freq}
\end{align}
i.e. $\emph{Theorem}$ $\ref{theorem strichartz inside compact}$ holds true.
\item[2.] Consider $\R^d, d\geq 2$ equipped with a smooth metric $g$ satisfying $(\ref{assump elliptic}), (\ref{assump long range})$ and suppose that $(\ref{assump resolvent})$ is satisfied. If there exists $R>0$ large enough such that for all $(p,q)$ fractional admissible and all $\chi\in C^\infty_0(\R^d)$ satisfying $\chi=1$ for $|x| <R$, there exists $C>0$ such that for all $u_0 \in \Lch_g$ and all $h \in (0,1]$,
\begin{align}
\|(1-\chi)e^{-it\Lambda_g^\sigma} f(h^2P)u_0\|_{L^p(\R,L^q)} \leq C h^{-\gamma_{p,q}}\|f(h^2P)u_0\|_{L^2}, \label{reduction 1-chi u fractional schrodinger} 
\end{align}
then
\begin{align}
\|(1-\chi)u_{\emph{high}}\|_{L^p(\R,L^q)} \leq C \|u_0\|_{\dot{H}^{\gamma_{p,q}}_g}, \label{strichartz 1-chi high freq}
\end{align}
i.e. $\emph{Theorem}$ $\ref{theorem strichartz outside compact}$ holds true. 
\end{itemize}
Moreover, combining $(\ref{strichartz chi high freq})$ and $(\ref{strichartz 1-chi high freq})$, we have
\[
\|u_{\emph{high}}\|_{L^p(\R,L^q)} \leq C \|u_0\|_{\dot{H}^{\gamma_{p,q}}_g}.
\]
\end{prop}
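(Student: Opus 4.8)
The plan is to combine the Littlewood--Paley estimate $(\ref{littlewood paley 1-chi})$ with the two facts already at our disposal --- the global $L^p$ integrability bound for the remainder and the hypothesis on the spectrally localized propagator --- and then to sum over dyadic frequencies. The three assertions are proved by the same template, so I spell it out for Item~1 and indicate the (identical) modifications afterwards. Starting from $(\ref{littlewood paley minkowski 1-chi})$ with $\chi$ in place of $1-\chi$ gives
\begin{multline*}
\|\chi u_{\text{high}}\|_{L^p(\R,L^q)}^2 \lesssim \sum_{h^2=2^{-k}} \Big( \|\chi f(h^2P)e^{-it\Lambda_g^\sigma}u_0\|_{L^p(\R,L^q)}^2 \\ + h^N\|\scal{x}^{-N}f(h^2P)e^{-it\Lambda_g^\sigma}u_0\|_{L^p(\R,L^2)}^2 \Big),
\end{multline*}
the sum being over dyadic $h$ with $h^2=2^{-k}$, $k\in\N\backslash\{0\}$.

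For the remainder term, $(\ref{remainder littlewood paley minkowski})$ gives $h^{N/2}\|\scal{x}^{-N}f(h^2P)e^{-it\Lambda_g^\sigma}u_0\|_{L^p(\R,L^2)} \lesssim h^{N/2+(\sigma-N_0)/p}\|f(h^2P)u_0\|_{L^2}$, and since $N_0$ is a fixed finite integer ($N_0=1$ under the non-trapping hypothesis of Item~1, finite under $(\ref{assump resolvent})$) and $h\in(0,1]$, choosing $N$ so large that $N/2+(\sigma-N_0)/p\geq-\gamma_{p,q}$ makes this term $\lesssim h^{-\gamma_{p,q}}\|f(h^2P)u_0\|_{L^2}$. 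For the main term, $f(h^2P)$ commutes with $e^{-it\Lambda_g^\sigma}$ (both are functions of $P$), so $\chi f(h^2P)e^{-it\Lambda_g^\sigma}u_0 = \chi e^{-it\Lambda_g^\sigma}f(h^2P)u_0$ and the hypothesis $(\ref{reduction chi u fractional schrodinger})$ bounds it by $h^{-\gamma_{p,q}}\|f(h^2P)u_0\|_{L^2}$ as well. Hence $\|\chi u_{\text{high}}\|_{L^p(\R,L^q)}^2 \lesssim \sum_{h^2=2^{-k}} h^{-2\gamma_{p,q}}\|f(h^2P)u_0\|_{L^2}^2$, and everything is reduced to the square-function bound $\sum_{h^2=2^{-k}} h^{-2\gamma_{p,q}}\|f(h^2P)u_0\|_{L^2}^2 \lesssim \|u_0\|_{\dot{H}^{\gamma_{p,q}}_g}^2$.

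To prove this last bound I would use the functional-calculus identity $h^{-\gamma_{p,q}}f(h^2P) = \Lambda_g^{\gamma_{p,q}}\psi(h^2P)$, where $\psi(\lambda):=\lambda^{-\gamma_{p,q}/2}f(\lambda)$ lies in $C^\infty_0(\R\backslash\{0\})$ because $f$ is supported away from the origin (recall $\Lambda_g^{\gamma_{p,q}}=P^{\gamma_{p,q}/2}$). Writing $w:=\Lambda_g^{\gamma_{p,q}}u_0\in L^2$, which is well defined since $u_0\in\Lch_g$ and satisfies $\|w\|_{L^2}=\|u_0\|_{\dot{H}^{\gamma_{p,q}}_g}$, one has $\sum_{h^2=2^{-k}} h^{-2\gamma_{p,q}}\|f(h^2P)u_0\|_{L^2}^2 = \sum_{k\geq1}\|\psi(2^{-k}P)w\|_{L^2}^2$; since the supports of $\lambda\mapsto\psi(2^{-k}\lambda)$ are finitely overlapping, $\sum_k|\psi(2^{-k}\lambda)|^2\leq C$ uniformly in $\lambda>0$, and the spectral theorem then gives $\sum_{k\geq1}\|\psi(2^{-k}P)w\|_{L^2}^2\leq C\|w\|_{L^2}^2$. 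This establishes $(\ref{strichartz chi high freq})$, i.e.\ Theorem~$\ref{theorem strichartz inside compact}$. Item~2 is proved in exactly the same way, with $\chi$ replaced by $1-\chi$ (for $\chi=1$ on $\{|x|<R\}$) and $(\ref{reduction chi u fractional schrodinger})$ by $(\ref{reduction 1-chi u fractional schrodinger})$, yielding $(\ref{strichartz 1-chi high freq})$ and hence Theorem~$\ref{theorem strichartz outside compact}$; and the final assertion follows at once by fixing a single $\chi\in C^\infty_0(\R^d)$ equal to $1$ on $\{|x|<R\}$ so that both items apply, and then using $u_{\text{high}}=\chi u_{\text{high}}+(1-\chi)u_{\text{high}}$ with the triangle inequality.

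The argument is essentially an assembly of the tools developed in Sections~2 and~3. The two points that require some care are (a) checking that the spatially weighted remainder is negligible, which relies on the finiteness of the exponent $N_0$ in the (fractional) resolvent estimates and on the freedom to enlarge $N$ in the Littlewood--Paley remainder, and (b) the discrete square-function summation that turns the $\ell^2$ sum of $h^{-\gamma_{p,q}}\|f(h^2P)u_0\|_{L^2}$ into the homogeneous Sobolev norm $\|u_0\|_{\dot{H}^{\gamma_{p,q}}_g}$. I expect (b) --- recognizing that $\psi=\lambda^{-\gamma_{p,q}/2}f\in C^\infty_0(\R\backslash\{0\})$ and invoking spectral almost-orthogonality --- to be the one genuinely substantive step.
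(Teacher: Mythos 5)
Your proof is correct and follows essentially the same route as the paper: apply the Littlewood--Paley estimate $(\ref{littlewood paley 1-chi})$ via $(\ref{littlewood paley minkowski 1-chi})$, control the weighted remainder by $(\ref{remainder littlewood paley minkowski})$ for $N$ large, invoke the hypothesis for the main term, and close by almost orthogonality of the $f(2^{-k}P)$ in $L^2$; your explicit functional-calculus argument for the last step (the factorization $h^{-\gamma_{p,q}} f(h^2P) = \psi(h^2P)\Lambda_g^{\gamma_{p,q}}$ with $\psi\in C^\infty_0(\R\setminus\{0\})$ and finite overlap of the rescaled supports) is exactly what the paper leaves implicit when it says ``almost orthogonality and the support property of $f$.''
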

\begin{proof}
We only consider the case $1-\chi$, for $\chi$ it is similar. By using $(\ref{remainder littlewood paley minkowski})$ and $(\ref{reduction 1-chi u fractional schrodinger})$, we see that $(\ref{littlewood paley minkowski 1-chi})$ implies
\[
\|(1-\chi) u_{\text{high}}\|_{L^p(\R,L^q)} \leq C \Big( \sum_{h^2=2^{-k}} h^{-2\gamma_{p,q}} \|f(h^2P)u_0\|^2_{L^2} \Big)^{1/2} \leq C \|u_0\|_{\dot{H}^{\gamma_{p,q}}_g}.
\]
Here we use the almost orthogonality and the support property of $f$ to obtain the last inequality. This proves $(\ref{strichartz 1-chi high freq})$. 
\end{proof}

\subsection{Reduction of the low frequency problem}
Let us consider the low frequency case. We only treat the case $q\in (2,\infty)$ since the Strichartz estimate for $(p,q)=(\infty,2)$ is trivial. We apply the Littlewood-Paley estimates $(\ref{littlewood paley low freq})$ and Minkowski inequality with $p \geq 2$ to have 
\begin{multline}
\|u_{\text{low}}\|_{L^p(\R, L^q)} \leq C \Big(\sum_{\ep^{-2}=2^{k}} \|(1-\chi)(\ep x) f(\ep^{-2}P)e^{-it\Lambda_g^\sigma} u_0\|^2_{L^p(\R,L^q)} \Big. \\
\Big.+ \|\ep^{d/2-d/q}\scal{\ep x}^{-1} f(\ep^{-2}P) e^{-it\Lambda_g^\sigma} u_0\|^2_{L^p(\R,L^2)} \Big)^{1/2}. \nonumber
\end{multline}
We use global $L^p$ integrability estimates $(\ref{Lp global integrability low freq})$ with rescaling in time to bound the second term in the right hand side as
\begin{align}
\|\ep^{d/2-d/q}\scal{\ep x}^{-1} f(\ep^{-2}P) e^{-it\Lambda_g^\sigma} u_0\|_{L^p(\R,L^2)} \leq C \ep^{\gamma_{p,q}} \|f(\ep^{-2}P) u_0\|_{L^2}. \label{remainder littlewood paley low freq application}
\end{align}
Here we recall that $\gamma_{p,q}=d/2-d/q-\sigma/p$. This leads to the following reduction.
\begin{prop}
Consider $\R^d, d\geq 3$ equipped with a smooth metric $g$ satisfying $(\ref{assump elliptic}), (\ref{assump long range})$. If for all $\chi \in C^\infty_0(\R^d)$ satisfying $\chi(x)=1$ for $|x| \leq 1$ and all $(p,q)$ fractional admissible, there exists $C>0$ such that for all $u_0 \in \Lch_g$ and all $\ep \in (0,1]$,
\begin{align}
\|(1-\chi)(\ep x) f(\ep^{-2}P) e^{-it\Lambda_g^\sigma} u_0\|_{L^p(\R, L^q)} &\leq C \ep^{\gamma_{p,q}}\|f(\ep^{-2}P)u_0\|_{L^2}, \label{reduce strichartz outside compact low freq} 
\end{align}
then 
\[
\|u_{\emph{low}}\|_{L^p(\R,L^q)} \leq C \|u_0\|_{\dot{H}^{\gamma_{p,q}}_g}.
\]
\end{prop}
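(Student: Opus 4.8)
The plan is to follow verbatim the strategy of the proof of Proposition $\ref{prop reduction of high freq problem}$, only with the low frequency Littlewood-Paley theorem in place of the high frequency one. Starting from the square-sum bound obtained just above from $(\ref{littlewood paley low freq})$ and Minkowski's inequality, $\|u_{\text{low}}\|_{L^p(\R,L^q)}$ is controlled by the $\ell^2$-sum over the dyadic scales $\ep^{-2}=2^k$, $k\in\N$, of the microlocalized pieces $\|(1-\chi)(\ep x) f(\ep^{-2}P)e^{-it\Lambda_g^\sigma}u_0\|_{L^p(\R,L^q)}$ together with the weighted remainder $\|\ep^{d/2-d/q}\scal{\ep x}^{-1}f(\ep^{-2}P)e^{-it\Lambda_g^\sigma}u_0\|_{L^p(\R,L^2)}$. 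The remainder has already been estimated in $(\ref{remainder littlewood paley low freq application})$ by $C\ep^{\gamma_{p,q}}\|f(\ep^{-2}P)u_0\|_{L^2}$, using the global $L^p$ integrability estimate $(\ref{Lp global integrability low freq})$ with rescaling in time; for the microlocalized pieces one simply invokes the hypothesis $(\ref{reduce strichartz outside compact low freq})$, which gives exactly the same bound. Hence $\|u_{\text{low}}\|_{L^p(\R,L^q)}\lesssim\big(\sum_{\ep^{-2}=2^k}\ep^{2\gamma_{p,q}}\|f(\ep^{-2}P)u_0\|_{L^2}^2\big)^{1/2}$.

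It then remains to prove the almost-orthogonality estimate
\[
\Big(\sum_{\ep^{-2}=2^k,\ k\geq0}\ep^{2\gamma_{p,q}}\|f(\ep^{-2}P)u_0\|_{L^2}^2\Big)^{1/2}\leq C\|u_0\|_{\dot H^{\gamma_{p,q}}_g}.
\]
To do this I would use the functional calculus for the self-adjoint operator $P=\Lambda_g^2$ to write $\ep^{\gamma_{p,q}}f(\ep^{-2}P)=F(\ep^{-2}P)\Lambda_g^{\gamma_{p,q}}$, where $F(\mu):=\mu^{-\gamma_{p,q}/2}f(\mu)\in C^\infty_0(\R\backslash\{0\})$. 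This rewriting is legitimate for $\gamma_{p,q}$ of arbitrary sign, since on the support of $\mu\mapsto f(\ep^{-2}\mu)$ the quantity $\ep^{-2}\mu$ stays in a fixed annulus away from the origin; it is precisely here that one needs $f$ rather than a Schwartz function, because of the singularity of $\mu\mapsto\mu^{-\gamma_{p,q}/2}$ at the bottom of the spectrum. Consequently the left-hand side equals $\big(\sum_{k\geq0}\|F(\ep^{-2}P)\Lambda_g^{\gamma_{p,q}}u_0\|_{L^2}^2\big)^{1/2}$, and by the Spectral Theorem this is $\leq C\|\Lambda_g^{\gamma_{p,q}}u_0\|_{L^2}=C\|u_0\|_{\dot H^{\gamma_{p,q}}_g}$, because the annuli $\{\ep^{-2}\mu\in\text{supp}(f)\}$ have bounded overlap in $k$, so that $\sum_{k\geq0}|F(2^k\mu)|^2\lesssim1$ uniformly in $\mu$. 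Since $u_0\in\Lch_g$ is of the form $\varphi(P)v$ with $\varphi\in C^\infty_0((0,\infty))$ and $v\in\Sch$, the quantity $\Lambda_g^{\gamma_{p,q}}u_0$ and all intermediate expressions are finite, so the manipulations are justified.

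The only delicate point is this dyadic summation: one has to convert the weight $\ep^{\gamma_{p,q}}$ combined with the spectral cutoff $f(\ep^{-2}P)$ into the homogeneous Sobolev norm $\|u_0\|_{\dot H^{\gamma_{p,q}}_g}$, which requires both the functional-calculus rewriting valid for possibly negative $\gamma_{p,q}$ and the orthogonality of the low-frequency Littlewood-Paley pieces; everything else is a routine combination of $(\ref{littlewood paley low freq})$, Minkowski's inequality and the already established bounds $(\ref{Lp global integrability low freq})$ and $(\ref{reduce strichartz outside compact low freq})$.
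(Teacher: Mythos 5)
Your proof is correct and follows essentially the same route as the paper. The only cosmetic difference is in the almost-orthogonality step: the paper inserts $\Lambda_g^{-\gamma_{p,q}}\Lambda_g^{\gamma_{p,q}}$ and bounds the $\Lc(L^2)$ norm of $\ep^{\gamma_{p,q}}\tilde f(\ep^{-2}P)\Lambda_g^{-\gamma_{p,q}}$ via functional calculus before summing over $\{f(2^kP)\Lambda_g^{\gamma_{p,q}}u_0\}$, whereas you absorb both the power of $\ep$ and the Sobolev weight into a single cutoff $F(\mu)=\mu^{-\gamma_{p,q}/2}f(\mu)$ and sum $\{F(2^kP)\Lambda_g^{\gamma_{p,q}}u_0\}$ directly; the two are the same computation packaged differently.
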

\begin{proof}
Indeed, if the estimates $(\ref{reduce strichartz outside compact low freq})$ hold true, then the Littlewood-Paley estimates $(\ref{littlewood paley low freq})$ and $(\ref{remainder littlewood paley low freq application})$ give
\begin{align}
\|u_{\text{low}}\|_{L^p(\R,L^q)} \leq C \Big( \sum_{\ep^{-2}=2^k} \ep^{2\gamma_{p,q}}\|f(\ep^{-2}P) u_0\|^2_{L^2} \Big)^{1/2}. \nonumber 
\end{align}
Note that
\[
\ep^{\gamma_{p,q}}\|f(\ep^{-2}P)u_0\|_{L^2} \leq \ep^{\gamma_{p,q}}\|\tilde{f}(\ep^{-2}P) \Lambda_g^{-\gamma_{p,q}}\|_{\Lc(L^2)} \|f(\ep^{-2}P) \Lambda_g^{\gamma_{p,q}} u_0\|_{L^2},
\]
where $\tilde{f} \in C^\infty_0(\R \backslash \{0\})$ satisfies $\tilde{f}=1$ near $\text{supp}(f)$. By functional calculus, the first factor in the right hand side is bounded by
\[
\ep^{\gamma_{p,q}} \sup_{\lambda \in \R} \left| \frac{\tilde{f}(\ep^{-2} \lambda^2)}{\lambda^{\gamma_{p,q}}}\right| \leq \ep^{\gamma_{p,q}}\frac{\|\tilde{f}\|_{L^\infty(\R)} }{(\ep/c)^{\gamma_{p,q}}} \leq c^{\gamma_{p,q}}\|\tilde{f}\|_{L^\infty(\R)}.
\]
Here $\ep^{-2}\lambda^2 \in \text{supp}(\tilde{f})$ hence $|\lambda| \in [\ep/c,\ep c]$ for some constant $c>1$. Thus we have
\[
\|u_{\text{low}}\|_{L^p(\R,L^q)} \leq C \Big( \sum_{\ep^{-2}=2^k} \|f(\ep^{-2}P) \Lambda_g^{\gamma_{p,q}} u_0\|^2_{L^2} \Big)^{1/2} \leq C \|u_0\|_{\dot{H}^{\gamma_{p,q}}_g},
\]
the last inequality follows from the almost orthogonality. This completes the proof.
\end{proof}
\section{Strichartz estimates inside compact sets} \label{section strichartz estimates inside compact}
\setcounter{equation}{0}
In this section, we will give the proof of $(\ref{reduction chi u fractional schrodinger})$. Our main tools are the local in time Strichartz estimates which are proved by the WKB method (see \cite{Dinhcompact}) and the $L^2$ integrability estimate at high frequency given in Proposition $\ref{prop global L2 integrability fractional schrodinger}$.
\subsection{The WKB approximations}
Let us start with the following result which is given in \cite[Theorem 2.7]{Dinhcompact}. 
\begin{theorem} \label{theorem wkb for fractional schrodinger}
Let $\sigma \in (0,\infty) \backslash \{1\}$ and $q$ be a smooth function on $\R^{2d}$ compactly support in $\xi$ away from zero and satisfying for all $\alpha, \beta \in \N^d$, there exists $C_{\alpha\beta}>0$ such that for all $x, \xi \in \R^d$,
\[
|\partial^\alpha_x \partial^\beta_\xi q(x,\xi)| \leq C_{\alpha\beta}.
\] 
Then there exist $t_0>0$ small enough, a function $S \in C^\infty([-t_0,t_0]\times \R^{2d})$ and a sequence of smooth functions $a_{j}(t,x, \xi)$ compactly supported in $\xi$ away from zero uniformly in $t\in [-t_0,t_0]$ such that for all $N \geq 1$,
\[
e^{-ith^{-1}(h\Lambda_g)^\sigma} Op^h(q) u_0 = J_N(t) u_0 + R_N(t)u_0,
\]
where 
\[
J_N(t)u_0(x)= (2\pi h)^{-d} \iint_{\R^{2d}} e^{ih^{-1}(S(t,x,\xi)-y\xi)} \sum_{j=0}^{N-1}h^j a_{j}(t,x,\xi) u_0(y)dyd\xi,
\]
$J_N(0)=Op^h(q)$ and the remainder $R_N(t)$ satisfies for all $t\in [-t_0,t_0]$ and all $h\in (0,1]$,
\begin{align}
\|R_N(t)\|_{\Lc(L^2)} \leq C h^{N-1}. \nonumber
\end{align}
Moreover, there exists a constant $C>0$ such that for all $t\in [-t_0,t_0]$ and all $h \in (0,1]$,
\begin{align}
\|J_N(t)\|_{\Lc(L^1, L^\infty)}  \leq  Ch^{-d}(1+|t|h^{-1})^{-d/2}. \nonumber
\end{align}
\end{theorem}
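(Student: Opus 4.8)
The plan is to construct a semiclassical WKB parametrix for the generator $h^{-1}(h\Lambda_g)^\sigma$ in the spirit of the Lax construction, with two adaptations forced by the fractional setting: $(h\Lambda_g)^\sigma=(h^2P)^{\sigma/2}$ is nonlocal, and the relevant classical Hamiltonian $p^{\sigma/2}$ is only smooth and nondegenerate away from $\xi=0$. First I would reduce the generator to an honest semiclassical pseudodifferential operator: since $q$ is supported in a shell $\{1/c\le|\xi|\le c\}$ and $p(x,\xi)=\xi^tG(x)\xi$ is elliptic by $(\ref{assump elliptic})$, $p$ is bounded below on a neighbourhood of $\mathrm{supp}(q)$, so applying the functional calculus of Proposition~$\ref{prop parametrix phi}$ to a smooth function equal to $\lambda^{\sigma/2}$ on the range of $p$ over that neighbourhood one writes $(h\Lambda_g)^\sigma Op^h(q)=Op^h(\ell)Op^h(q)+O_{\Lc(L^2,H^m)}(h^\infty)$ with $\ell\sim\sum_{j\ge0}h^j\ell_j$, each $\ell_j$ supported in $\xi$ away from $0$, and $\ell_0=p^{\sigma/2}$ on the relevant microlocal region. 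Next I would solve the eikonal equation
\[
\partial_tS(t,x,\xi)+\big(p(x,\nabla_xS(t,x,\xi))\big)^{\sigma/2}=0,\qquad S(0,x,\xi)=x\cdot\xi,
\]
by the method of characteristics for the Hamiltonian $p^{\sigma/2}$; since $g^{jk}-\delta_{jk}$ and all its derivatives are bounded by $(\ref{assump long range})$, the Hamiltonian and all of its derivatives are bounded on the shell $\{1/(2c)\le|\xi|\le 2c\}$, the bicharacteristics issued from $\mathrm{supp}(q)$ stay in that shell for $|t|\le t_0$ with $t_0$ small, and one obtains $S\in C^\infty([-t_0,t_0]\times\R^{2d})$ with $\partial_x^\alpha\partial_\xi^\beta\big(S(t,x,\xi)-x\cdot\xi\big)=O(|t|)$ uniformly; in particular the mixed Hessian $\partial_x\partial_\xi S=\mathrm{Id}+O(t)$ is invertible and, differentiating the eikonal equation twice in $\xi$ at $t=0$, $\partial_\xi^2S(t,x,\xi)=-t\,\nabla_\xi^2\big(p^{\sigma/2}\big)(x,\xi)+O(t^2)$.

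With the phase in hand, I would plug the ansatz $J_N(t)u_0(x)=(2\pi h)^{-d}\iint e^{ih^{-1}(S(t,x,\xi)-y\xi)}\sum_{j<N}h^ja_j(t,x,\xi)u_0(y)\,dy\,d\xi$ into $ih\partial_t-Op^h(\ell)$, compute the action of $Op^h(\ell)$ on the WKB state by Taylor expanding $\ell$ at $\xi=\nabla_xS$ and applying stationary phase, and collect powers of $h$: the $h^0$ term vanishes by the eikonal equation; the $h^1$ term gives a transport equation $\partial_ta_0+\nabla_\xi(p^{\sigma/2})(x,\nabla_xS)\cdot\nabla_xa_0+c_0(t,x,\xi)a_0=0$ solved by an ODE along the characteristics with $a_0(0)=q$; and the $h^j$ term for $j\ge2$ gives transport equations for $a_{j-1}$ with sources built from $a_0,\dots,a_{j-2}$. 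Each $a_j$ is then smooth and compactly supported in $\xi$ away from $0$ uniformly for $|t|\le t_0$, and taking $a_j(0)=0$ for $j\ge1$ gives $J_N(0)=Op^h(q)$. The leftover is $(ih\partial_t-Op^h(\ell))J_N(t)=h^N\widetilde J_N(t)$, an oscillatory integral operator with phase $S(t,x,\xi)-y\xi$ and $\xi$-compactly-supported bounded amplitude; since $|\partial_xS-\xi|=O(t_0)$ is small, $\widetilde J_N(t)$ is uniformly $O_{\Lc(L^2)}(1)$ (Calder\'on--Vaillancourt for an FIO close to a pseudodifferential operator). Adding back the $O(h^\infty)$ error and using Duhamel's formula together with the unitarity of $e^{-ith^{-1}(h\Lambda_g)^\sigma}$ on $L^2$,
\[
R_N(t)=e^{-ith^{-1}(h\Lambda_g)^\sigma}Op^h(q)-J_N(t)=\frac{h^{N-1}}{i}\int_0^te^{-i(t-s)h^{-1}(h\Lambda_g)^\sigma}\widetilde J_N(s)\,ds+O(h^\infty),
\]
which gives $\|R_N(t)\|_{\Lc(L^2)}\le Ch^{N-1}$ for $|t|\le t_0$, while $J_N(0)=Op^h(q)$ is built into the construction.

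It remains to prove the dispersive bound for $J_N(t)$, and this is where the hypothesis $\sigma\ne1$ is decisive. The Schwartz kernel of $J_N(t)$ is $K_N(t,x,y)=(2\pi h)^{-d}\int e^{ih^{-1}(S(t,x,\xi)-y\cdot\xi)}b(t,x,\xi;h)\,d\xi$ with $b=\sum_{j<N}h^ja_j$ bounded and compactly supported in $\xi$. Since $\nabla_\xi S(t,x,\xi)=x-t\,\nabla_\xi(p^{\sigma/2})(x,\xi)+O(t^2)$, for $|x-y|\ge C|t|$ (with $C$ large, $t_0$ small) the phase is non-stationary, $|\nabla_\xi(S-y\cdot\xi)|\gtrsim|x-y|$, and repeated integration by parts gives $|K_N(t,x,y)|\lesssim h^{-d}(h/|x-y|)^M\lesssim h^{-d}(h/|t|)^{d/2}$ for $M$ large; for $|x-y|\le C|t|$ the phase has a unique critical point in $\xi$ with Hessian $\partial_\xi^2S=-t\,\nabla_\xi^2(p^{\sigma/2})+O(t^2)$, and the key point is that $\nabla_\xi^2(p^{\sigma/2})$ is nondegenerate on the shell as soon as $\sigma\ne1$ --- in the model $p=|\xi|^2$ one has $\nabla_\xi^2|\xi|^\sigma=\sigma|\xi|^{\sigma-2}\big(\mathrm{Id}+(\sigma-2)\widehat\xi\otimes\widehat\xi\big)$, with eigenvalues $\sigma|\xi|^{\sigma-2}$ and $\sigma(\sigma-1)|\xi|^{\sigma-2}$, all nonzero precisely when $\sigma\notin\{0,1\}$ --- so for $t_0$ small $|\det\partial_\xi^2S|\asymp|t|^d$ and stationary phase gives $|K_N(t,x,y)|\lesssim h^{-d}(h/|t|)^{d/2}$. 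Combining the two regimes with the trivial bound $|K_N(t,x,y)|\lesssim h^{-d}$ valid for $|t|\le h$ yields $\|J_N(t)\|_{\Lc(L^1,L^\infty)}=\sup_{x,y}|K_N(t,x,y)|\lesssim h^{-d}(1+|t|h^{-1})^{-d/2}$. The main obstacle is thus not the (somewhat lengthy) transport-equation bookkeeping but the two fractional-specific inputs: turning the nonlocal operator $(h^2P)^{\sigma/2}$ into a classical semiclassical $\Psi$DO microlocalized away from $\xi=0$, and the nondegeneracy of $\nabla_\xi^2(p^{\sigma/2})$ away from $\xi=0$, whose failure at $\sigma=1$ is exactly why the wave exponent is excluded.
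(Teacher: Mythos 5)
The paper states this theorem without reproducing a proof, citing \cite[Theorem 2.7]{Dinhcompact} where it is established by the WKB method; your proposal reconstructs exactly that strategy (reduction of $(h^2P)^{\sigma/2}Op^h(q)$ to a compactly supported function of $h^2P$ via the $\xi$--support of $q$ and Proposition~\ref{prop parametrix phi}, eikonal and transport equations for the Hamiltonian $p^{\sigma/2}$, Duhamel's formula giving the $O(h^{N-1})$ remainder, and non-/stationary phase on the kernel) and each step is sound. In particular your identification of the nondegenerate Hessian $\nabla_\xi^2(p^{\sigma/2})$, with eigenvalues proportional to $\sigma|\xi|^{\sigma-2}$ and $\sigma(\sigma-1)|\xi|^{\sigma-2}$, as the reason $\sigma=1$ must be excluded is precisely the mechanism the present paper reuses in the Isozaki--Kitada dispersive estimate, Proposition~\ref{prop estimate main terms IK}.
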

In \cite{Dinhcompact}, we consider the smooth bounded metric, i.e. for all $\alpha \in \N^d$, there exists $C_\alpha>0$ such that for all $x \in \R^d$,
\begin{align}
| \partial^\alpha g^{jk}(x) | \leq C_\alpha, \quad j,k \in \{1,...d\}. \label{assump bounded metric}
\end{align}
It is obvious to see that the assumption $(\ref{assump long range})$ implies $(\ref{assump bounded metric})$. This theorem and the parametrix given in Proposition $\ref{prop parametrix phi}$ give the following dispersive estimates for the fractional Schr\"odinger equations (see also \cite[Remark 2.8]{Dinhcompact}).
\begin{prop} \label{prop dispersive estimates fractional schrodinger}
Let $\sigma\in (0,\infty)\backslash\{1\}$ and $\varphi \in C^\infty_0(\R \backslash \{0\})$. Then there exists $t_0>0$ small enough and $C>0$ such that for all $u_0 \in L^1(\R^d)$ and all $h \in (0,1]$,
\begin{align}
\|e^{-ith^{-1}(h\Lambda_g)^\sigma} \varphi(h^2P)u_0\|_{L^\infty} \leq Ch^{-d}(1+|t|h^{-1})^{-d/2} \|u_0\|_{L^1}, \label{dispersive fractional schrodinger on manifolds}
\end{align}
for all $t\in [-t_0, t_0]$.
\end{prop}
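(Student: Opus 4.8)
The plan is to deduce the dispersive estimate $(\ref{dispersive fractional schrodinger on manifolds})$ from the WKB parametrix of Theorem $\ref{theorem wkb for fractional schrodinger}$ by feeding it a symbol produced by the functional calculus parametrix of Proposition $\ref{prop parametrix phi}$. First I would fix $\varphi \in C^\infty_0(\R \backslash \{0\})$ and apply Proposition $\ref{prop parametrix phi}$ to write, for $N$ large and some $\phi \in C^\infty_0(\R \backslash \{0\})$ with $\phi = 1$ near $\mathrm{supp}(\varphi)$,
\[
\varphi(h^2P) = \sum_{j=0}^{N-1} h^j Op^h(q_j) + h^N R_N(h), \qquad q_j \in S(-j,-\infty),
\]
with $\mathrm{supp}(q_j) \subset \mathrm{supp}(\varphi \circ p)$; in particular each $q_j$ is compactly supported in $\xi$ away from zero and in a region where $p(x,\xi) = \xi^t G(x)\xi$ is bounded (using the ellipticity $(\ref{assump elliptic})$), so $q_j$ meets the hypotheses of Theorem $\ref{theorem wkb for fractional schrodinger}$ after possibly truncating the tail in $\xi$ (which costs only a smoothing error). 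I would also insert the identity $\varphi(h^2P) = \varphi(h^2P)\phi(h^2P)$ so that the operator $e^{-ith^{-1}(h\Lambda_g)^\sigma}\varphi(h^2P)$ can be read as $e^{-ith^{-1}(h\Lambda_g)^\sigma}\phi(h^2P)$ composed with $\varphi(h^2P)$, allowing the remainder $R_N$ to be absorbed using $\Lc(L^2)$-boundedness of the propagator together with Proposition $\ref{prop parametrix phi}$ and Proposition $\ref{prop lq lr bounds PDO}$.

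Next, for each main term $e^{-ith^{-1}(h\Lambda_g)^\sigma} Op^h(q_j)$ I would apply Theorem $\ref{theorem wkb for fractional schrodinger}$ with the symbol $q = q_j$, obtaining $J_N^{(j)}(t) + R_N^{(j)}(t)$ where $J_N^{(j)}(t)$ satisfies the $\Lc(L^1,L^\infty)$ bound $C h^{-d}(1+|t|h^{-1})^{-d/2}$ and $\|R_N^{(j)}(t)\|_{\Lc(L^2)} \lesssim h^{N-1}$ for $|t| \leq t_0$. Summing over $j = 0,\dots,N-1$ (a fixed finite sum, each term carrying a harmless power $h^j$) gives the $L^1 \to L^\infty$ bound $(\ref{dispersive fractional schrodinger on manifolds})$ for the principal part. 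For the remainder contributions I would argue as follows: on the short time interval $[-t_0,t_0]$ one has $(1+|t|h^{-1})^{-d/2} \geq (1+t_0 h^{-1})^{-d/2} \gtrsim h^{d/2}$, so the target right-hand side $h^{-d}(1+|t|h^{-1})^{-d/2}$ is bounded below by $c h^{-d/2}$; hence any operator that is $O_{\Lc(L^1,L^\infty)}(h^{-d/2})$ is acceptable. Now $h^N R_N(h)$ composed with the propagator is $O_{\Lc(L^1,L^\infty)}(h^{N-d/2})$ by Bernstein (Proposition $\ref{prop lq lr bounds PDO}$ / Proposition $\ref{prop Lq Lr bound of phi}$) and unitarity on $L^2$, while each $h^N R_N^{(j)}(t)$ contributes, after an $L^1 \to L^2$ and $L^2 \to L^\infty$ factorization through a cutoff $\phi(h^2P)$, a bound $O(h^{N-1} \cdot h^{-d})$; taking $N$ large enough (say $N \geq 2d$) makes all these error terms $\lesssim h^{-d/2}$, hence negligible.

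The main obstacle is the bookkeeping at the two places where a genuine loss $h^{-d}$ can appear when converting $L^2$-type remainder bounds into $L^1 \to L^\infty$ bounds: one must insert an extra spectral cutoff $\phi(h^2P)$ and use the Bernstein-type inequality $\|\phi(h^2P)\|_{\Lc(L^2,L^\infty)} \lesssim h^{-d/2}$ and $\|\phi(h^2P)\|_{\Lc(L^1,L^2)} \lesssim h^{-d/2}$ from Proposition $\ref{prop Lq Lr bound of phi}$, and then check that the resulting power of $h$ still beats $h^{-d/2}$ after choosing $N$ large. The only subtlety beyond routine estimation is verifying that the symbols $q_j$ furnished by Proposition $\ref{prop parametrix phi}$, which a priori live in $S(-j,-\infty)$ rather than being exactly compactly supported in $\xi$, can be replaced by compactly supported ones modulo $O(h^\infty)$ smoothing operators so that Theorem $\ref{theorem wkb for fractional schrodinger}$ applies verbatim; this follows because $\mathrm{supp}(q_j) \subset \mathrm{supp}(\varphi \circ p)$ already forces an effective compact $\xi$-support away from zero thanks to $(\ref{assump elliptic})$, so in fact no truncation is needed. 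Once these points are dispatched, collecting the finitely many main terms and the negligible remainders yields $(\ref{dispersive fractional schrodinger on manifolds})$ for $|t| \leq t_0$.
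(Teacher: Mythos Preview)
Your proposal is correct and follows exactly the route the paper intends: the paper does not spell out a proof here but simply states that the estimate follows from Theorem~\ref{theorem wkb for fractional schrodinger} combined with the parametrix of Proposition~\ref{prop parametrix phi} (referring to \cite[Remark 2.8]{Dinhcompact} for details), and your argument is precisely that combination. Your observation that $\mathrm{supp}(q_j)\subset\mathrm{supp}(\varphi\circ p)$ together with the ellipticity $(\ref{assump elliptic})$ already forces $\xi$ to lie in a fixed annulus away from zero is the right way to check the hypotheses of Theorem~\ref{theorem wkb for fractional schrodinger}, and the sandwiching by $\phi(h^2P)$ on both sides (using that the propagator commutes with functions of $P$) together with Proposition~\ref{prop Lq Lr bound of phi}$ is the correct mechanism for converting the $\Lc(L^2)$ remainder bounds into $\Lc(L^1,L^\infty)$ ones.

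One small bookkeeping slip: the WKB remainder attached to the $j$-th symbol carries the prefactor $h^j$ (from the expansion of $\varphi(h^2P)$), not $h^N$, so the contribution is $O(h^{j+N-1-d})$ rather than $O(h^{N-1}\cdot h^{-d})$; likewise the functional-calculus remainder $h^N R_N(h)$ costs $h^{N-d}$ in $\Lc(L^1,L^\infty)$ after two Bernstein factors of $h^{-d/2}$, not $h^{N-d/2}$. Neither affects the conclusion once $N$ is chosen large.
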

Next, we recall the following version of $TT^\star$-criterion of Keel and Tao (see \cite{Zhang},  \cite{KeelTaoTTstar} or \cite{Zworski}).
\begin{prop} \label{prop TTstar}
Let $I \subseteq \R$ be an interval and $(T(t))_{t\in I}$ a family of linear operators satisfying for some constant $C>0$ and $\delta, \tau, h >0$,
\begin{align}
\|T(t)\|_{\Lc(L^2)} &\leq  C, \label{energy estimate} \\
\|T(t)T(s)^\star\|_{\Lc(L^1,L^\infty)} &\leq C h^{-\delta} (1+|t-s|h^{-1})^{-\tau}, \label{dispersive estimate}
\end{align}
for all $t,s \in I$. Then for all $(p,q)$ satisfying
\begin{align}
p \in [2,\infty], \quad q\in [1,\infty], \quad (p,q,\tau) \ne (2,\infty,1), \quad \frac{1}{p} \leq \tau\left(\frac{1}{2}-\frac{1}{q} \right), \nonumber 
\end{align}
we have
\[
\|Tv\|_{L^p(I,L^q}  \leq C h^{-\kappa} \|v\|_{L^2},
\]
where $\kappa=\delta(1/2-1/q)-1/p$. 
\end{prop}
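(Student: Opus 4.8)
The plan is to follow the standard abstract $TT^\star$ argument of Keel--Tao, tracking carefully the dependence on the semiclassical parameter $h$. First I would fix the scaling. Set $U(t):=h^{\kappa}T(ht)$; then the rescaled family satisfies $\|U(t)\|_{\Lc(L^2)}\leq Ch^{\kappa}$, but more importantly, after a further time dilation the dispersive hypothesis $(\ref{dispersive estimate})$ becomes $\|U(t)U(s)^\star\|_{\Lc(L^1,L^\infty)}\leq C(1+|t-s|)^{-\tau}$ and the $L^2$ bound becomes $O(1)$ on the rescaled interval; the gain $h^{-\kappa}$ in the conclusion is exactly what is produced by undoing this rescaling in the mixed norm $L^p_tL^q_x$, since $\kappa=\delta(1/2-1/q)-1/p$. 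So it suffices to prove the estimate in the model case $h=1$, $\delta=0$, i.e. $\|U(t)U(s)^\star\|_{\Lc(L^1,L^\infty)}\leq C(1+|t-s|)^{-\tau}$, with the conclusion $\|Uv\|_{L^p(I,L^q)}\leq C\|v\|_{L^2}$ for admissible $(p,q)$.

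Next I would reduce to a bilinear estimate on $T^\star$. By duality and the usual $TT^\star$ trick, the desired bound $\|Uv\|_{L^p_tL^q_x}\lesssim\|v\|_{L^2}$ is equivalent to
\[
\left| \iint_{I\times I} \langle U(t)U(s)^\star F(s), G(t)\rangle \, ds\, dt \right| \lesssim \|F\|_{L^{p'}_tL^{q'}_x}\|G\|_{L^{p'}_tL^{q'}_x}.
\]
Interpolating the two hypotheses $(\ref{energy estimate})$ and $(\ref{dispersive estimate})$ (in the model normalization) gives $\|U(t)U(s)^\star\|_{\Lc(L^{q'},L^q)}\lesssim |t-s|^{-\tau(1-2/q)}$, which is the key decay input. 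The non-endpoint range $\frac1p<\tau(\frac12-\frac1q)$ then follows directly from the Hardy--Littlewood--Sobolev inequality in the time variable applied to $|t-s|^{-\tau(1-2/q)}$, since the exponent $\tau(1-2/q)$ lies strictly between $0$ and $1$ in that regime and matches the Young/HLS bookkeeping with $1/p+1/p'=1$.

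The main obstacle is the endpoint line $\frac1p=\tau(\frac12-\frac1q)$ with $p=2$ (excluded only when $(p,q,\tau)=(2,\infty,1)$), where HLS fails and one must run the Keel--Tao dyadic decomposition. Here I would split the time integral according to dyadic blocks $|t-s|\sim 2^j$, writing the bilinear form as $\sum_{j\in\Z} \mathcal{T}_j(F,G)$ with $\mathcal{T}_j$ supported where $|t-s|\sim 2^j$. On each block one has two competing estimates: the $L^{q'}\to L^q$ decay bound gives $|\mathcal{T}_j(F,G)|\lesssim 2^{-j\tau(1-2/q)}2^{j}\|F\|_{L^{p'}}\|G\|_{L^{p'}}$ type control (the extra $2^j$ from the measure of the block, handled via $L^2_t$ Schur/Young on the block), while a second, "exotic" estimate obtained by interpolating near but off the endpoint $q$ supplies a bound with the opposite sign of $j$-exponent; at the endpoint these exponents coincide up to an $\varepsilon$, and a Schur-test / bilinear real-interpolation argument (the $TT^\star$ abstract lemma of Keel--Tao, or the variant in \cite{KeelTaoTTstar}) sums the series. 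Since the proposition is quoted from \cite{Zhang}, \cite{KeelTaoTTstar}, \cite{Zworski}, I would in fact simply invoke that abstract machinery after performing the rescaling reduction above, noting that the hypotheses $(\ref{energy estimate})$--$(\ref{dispersive estimate})$ are precisely its input and that the $h$-powers track through linearly to yield $\kappa=\delta(1/2-1/q)-1/p$.
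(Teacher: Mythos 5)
The paper does not actually prove this proposition; it is stated as a recalled fact with pointers to Zhang, Keel--Tao, and Zworski, and no argument is given. Your sketch reconstructs the standard Keel--Tao $TT^\star$ argument, which is indeed what those references do, so the overall strategy (interpolation to get an $\Lc(L^{q'},L^q)$ decay bound, Hardy--Littlewood--Sobolev in time off the endpoint line, dyadic Keel--Tao decomposition at the endpoint $p=2$) is the right one.

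However, the rescaling step as you wrote it is internally inconsistent. With $U(t):=h^{\kappa}T(ht)$ one gets $\|U(t)\|_{\Lc(L^2)}\leq Ch^{\kappa}$ and $\|U(t)U(s)^\star\|_{\Lc(L^1,L^\infty)}\leq Ch^{2\kappa-\delta}(1+|t-s|)^{-\tau}$; neither of these is $O(1)$ unless $\kappa=0$ or $\kappa=\delta/2$ respectively, and $\kappa=\delta(1/2-1/q)-1/p$ is neither of these in general, so one cannot normalize both hypotheses simultaneously by a single prefactor. The cleaner route is to rescale time only, $\widetilde{T}(t):=T(ht)$, keeping $\|\widetilde{T}(t)\|_{\Lc(L^2)}\leq C$ and $\|\widetilde{T}(t)\widetilde{T}(s)^\star\|_{\Lc(L^1,L^\infty)}\leq Ch^{-\delta}(1+|t-s|)^{-\tau}$, and then propagate the $h^{-\delta}$ prefactor through the argument: complex interpolation yields $\|\widetilde{T}(t)\widetilde{T}(s)^\star\|_{\Lc(L^{q'},L^q)}\leq Ch^{-\delta(1-2/q)}(1+|t-s|)^{-\tau(1-2/q)}$, the bilinear estimate therefore carries the factor $h^{-\delta(1-2/q)}=h^{-2\delta(1/2-1/q)}$, the $TT^\star$ duality extracts the square root $h^{-\delta(1/2-1/q)}$, and the change of variables back from $\widetilde{T}$ to $T$ in the $L^p_t$ norm produces $h^{1/p}$, giving exactly $h^{-\kappa}$ with $\kappa=\delta(1/2-1/q)-1/p$. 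Once the bookkeeping is set up this way, your interpolation/HLS/Keel--Tao endpoint discussion goes through unchanged.
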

Proposition $\ref{prop TTstar}$ together with energy estimate and dispersive estimate $(\ref{dispersive fractional schrodinger on manifolds})$ give the following result.
\begin{coro} \label{coro Keel Tao}
Let $\sigma \in (0,\infty)\backslash \{1\}, \varphi \in C^\infty_0(\R \backslash \{0\})$ and $t_0$ be as in $\emph{Theorem }$ $\ref{theorem wkb for fractional schrodinger}$. Denote $I=[-t_0,t_0]$. Then
for all $(p,q)$ fractional admissible, there exists $C>0$ such that
\begin{align}
\|\varphi(h^2P) e^{-ith^{-1}(h\Lambda_g)^\sigma} v\|_{L^p(I,L^q)} \leq C h^{-\kappa_{p,q}}\|v\|_{L^2}, \label{homogeneous strichartz with spectral localization}
\end{align}
where $\kappa_{p,q}=d/2-d/q-1/p$. Moreover,
\begin{align}
\Big\|\int_0^t \varphi^2(h^2P)e^{-i(t-s)h^{-1}(h\Lambda_g)^\sigma} G(s)ds \Big\|_{L^p(I,L^q)} \leq C h^{-\kappa_{p,q}} \|G\|_{L^{1}(I,L^{2})}. \label{inhomogeneous strichartz with spectral localization}
\end{align}
\end{coro}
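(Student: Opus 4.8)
The plan is to obtain $(\ref{homogeneous strichartz with spectral localization})$ and $(\ref{inhomogeneous strichartz with spectral localization})$ as a direct application of the abstract $TT^\star$ criterion of Proposition $\ref{prop TTstar}$ to the family $T(t):=\varphi(h^2P)e^{-ith^{-1}(h\Lambda_g)^\sigma}$, $t\in I$. (We may assume $\varphi$ real-valued; otherwise replace $\varphi$ by $\tilde{\varphi}\varphi$ with $\tilde{\varphi}\in C^\infty_0(\R\backslash\{0\})$ equal to $1$ near $\text{supp}(\varphi)$.) First I would verify the two hypotheses of Proposition $\ref{prop TTstar}$. The energy bound $(\ref{energy estimate})$ is immediate from the spectral theorem, since $\varphi(h^2P)$ is bounded on $L^2$ uniformly in $h\in(0,1]$ and $e^{-ith^{-1}(h\Lambda_g)^\sigma}$ is unitary. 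For the dispersive bound $(\ref{dispersive estimate})$, functional calculus gives $T(t)T(s)^\star=\varphi^2(h^2P)e^{-i(t-s)h^{-1}(h\Lambda_g)^\sigma}$; since $\varphi^2\in C^\infty_0(\R\backslash\{0\})$ and, for $t,s\in I$, the difference $t-s$ stays in the interval on which the WKB parametrix of Theorem $\ref{theorem wkb for fractional schrodinger}$ (hence the dispersive bound of Proposition $\ref{prop dispersive estimates fractional schrodinger}$) is valid — shrinking $I$ if necessary — Proposition $\ref{prop dispersive estimates fractional schrodinger}$ applied with $\varphi^2$ in place of $\varphi$ and time $t-s$ yields $\|T(t)T(s)^\star\|_{\Lc(L^1,L^\infty)}\leq Ch^{-d}(1+|t-s|h^{-1})^{-d/2}$. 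Thus the hypotheses hold with $\delta=d$ and $\tau=d/2$.

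Next I would check that a fractional admissible pair $(p,q)$ meets the admissibility condition required in Proposition $\ref{prop TTstar}$ for $\tau=d/2$, namely $p\in[2,\infty]$, $q\in[1,\infty]$, $(p,q,d/2)\neq(2,\infty,1)$ and $\frac1p\leq\frac d2\big(\frac12-\frac1q\big)$. The last inequality is equivalent to $\frac2p+\frac dq\leq\frac d2$, which is exactly the inequality in $(\ref{fractional admissible})$; moreover $q<\infty$ for a fractional admissible pair excludes the forbidden triple (the case $q=\infty$ would force $d\neq2$, matching the exclusion $(p,q,d)\neq(2,\infty,2)$). Proposition $\ref{prop TTstar}$ then gives $\|Tv\|_{L^p(I,L^q)}\leq Ch^{-\kappa}\|v\|_{L^2}$ with $\kappa=d\big(\frac12-\frac1q\big)-\frac1p=\frac d2-\frac dq-\frac1p=\kappa_{p,q}$, which is $(\ref{homogeneous strichartz with spectral localization})$.

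For the inhomogeneous estimate $(\ref{inhomogeneous strichartz with spectral localization})$ I would not invoke the Christ--Kiselev lemma, because the source term lives in the ``easy'' mixed space $L^1(I,L^2)$. Using once more the identity $T(t)T(s)^\star=\varphi^2(h^2P)e^{-i(t-s)h^{-1}(h\Lambda_g)^\sigma}$ and Minkowski's integral inequality in $s$,
\[
\Big\|\int_0^t T(t)T(s)^\star G(s)\,ds\Big\|_{L^p(I,L^q)}\leq\int_I\big\|T(\cdot)T(s)^\star G(s)\big\|_{L^p(I,L^q)}\,ds,
\]
and for each fixed $s$ the homogeneous estimate just proved, applied with $v=T(s)^\star G(s)$, together with the energy bound $\|T(s)^\star\|_{\Lc(L^2)}\leq C$, gives $\|T(\cdot)T(s)^\star G(s)\|_{L^p(I,L^q)}\leq Ch^{-\kappa_{p,q}}\|G(s)\|_{L^2}$. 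Integrating in $s\in I$ produces $(\ref{inhomogeneous strichartz with spectral localization})$.

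Since the analytic substance is already carried by Proposition $\ref{prop dispersive estimates fractional schrodinger}$ (the dispersive bound from the WKB construction) and Proposition $\ref{prop TTstar}$ (the endpoint $TT^\star$ machinery), I do not expect a serious obstacle here; the only delicate points are bookkeeping. One is the precise matching of the fractional admissibility range $(\ref{fractional admissible})$ with the abstract admissibility condition, in particular checking that the endpoint triple $(p,q,d)=(2,\infty,2)$ is the sole exclusion. The other is that these estimates are unavoidably local in time, being tied to the short interval $I$ on which the parametrix of Theorem $\ref{theorem wkb for fractional schrodinger}$ is constructed; this is precisely why the global-in-time statement will require, in the next section, the global $L^2$-integrability estimates of Proposition $\ref{prop global L2 integrability fractional schrodinger}$ to upgrade these local Strichartz bounds into global ones.
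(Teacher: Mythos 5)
Your treatment of the homogeneous estimate $(\ref{homogeneous strichartz with spectral localization})$ is essentially the same as the paper's: verify the $L^2$ energy bound and the WKB dispersive estimate, and feed them into the abstract $TT^\star$ criterion of Proposition $\ref{prop TTstar}$ with $\delta=d$, $\tau=d/2$, $\kappa=\kappa_{p,q}$. The admissibility matching is fine, and the exclusion $(p,q,\tau)\ne(2,\infty,1)$ is vacuous here since fractional admissible pairs have $q<\infty$ by definition.

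For the inhomogeneous estimate $(\ref{inhomogeneous strichartz with spectral localization})$ you take a genuinely different, and slightly more elementary, route. The paper sets $U_h(t)=h^{\kappa_{p,q}}\varphi(h^2P)e^{-ith^{-1}(h\Lambda_g)^\sigma}$, uses the homogeneous estimate for both $(p,q)$ and $(\infty,2)$ to get boundedness of $U_h$ and of $U_h(\cdot)^\star$, concludes that $U_h(t)U_h(s)^\star$ is bounded from $L^1(I,L^2)$ to $L^p(I,L^q)$ as an untruncated integral operator, and then invokes the Christ--Kiselev Lemma to pass to the retarded integral $\int_0^t$. You observe instead that because the source norm is $L^1(I,L^2)$, one can bypass Christ--Kiselev entirely: bound $\int_0^t$ by $\int_I$ in $L^q$, apply Minkowski's integral inequality in $s$, and for fixed $s$ combine the homogeneous estimate (with data $T(s)^\star G(s)$) with the energy bound $\|T(s)^\star\|_{\Lc(L^2)}\leq C$. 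This is a clean and correct shortcut for $L^1_tL^2_x$ sources; the paper's route via Christ--Kiselev is heavier here but would generalize at once to source norms $L^{a'}(I,L^{b'})$ with $(a,b)$ admissible, which is however not needed for $(\ref{inhomogeneous strichartz with spectral localization})$. You also flag, correctly, the minor bookkeeping point that for $t,s\in I=[-t_0,t_0]$ the difference $t-s$ can reach $2t_0$, so one should either halve $I$ or note that the WKB parametrix of Theorem $\ref{theorem wkb for fractional schrodinger}$ works on any fixed compact time interval; the paper passes over this silently.
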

\begin{proof}
The homogeneous estimates $(\ref{homogeneous strichartz with spectral localization})$ follow directly from Proposition $\ref{prop dispersive estimates fractional schrodinger}$ and Proposition $\ref{prop TTstar}$ with $T(t)=\varphi(h^2P)e^{-ith^{-1}(h\Lambda_g)^\sigma}$. It remains to prove the inhomogeneous estimates $(\ref{inhomogeneous strichartz with spectral localization})$. Let us set
\[
U_h(t):= h^{\kappa_{p,q}} \varphi(h^2P)e^{-ith^{-1}(h\Lambda_g)^\sigma}.
\]
Using the homogeneous Strichartz estimates $(\ref{homogeneous strichartz with spectral localization})$, we see that $U_h(t)$ is a bounded operator from $L^2$ to $L^p(I,L^q)$. Similarly, we have $U_h(s) = \varphi(h^2P)e^{-ish^{-1}(h\Lambda_g)^\sigma}$ is a bounded operator from  $L^2$ to $L^\infty(I,L^2)$. Here we use the fact that $(\infty,2)$ is fractional admissible with $\kappa_{\infty,2}=0$. Thus the adjoint $U_h(s)^\star$, namely 
\[
U_h(s)^\star: G \in L^1(I,L^2) \mapsto  \int_{I} \varphi(h^2P) e^{ish^{-1}(h\Lambda_g)^\sigma} G(s)ds \in L^2
\]
is also a bounded operator. This implies $U_h(t)U_h(s)^\star$ is a bounded operator from $L^1(I,L^2)$ to $L^p(I,L^q)$. In particular, we have
\[
\Big\|\int_{I} h^{\kappa_{p,q}} \varphi^2(h^2P) e^{-i(t-s)h^{-1}(h\Lambda_g)^\sigma} G(s)ds\Big\|_{L^p(I,L^q)} \leq C\|G\|_{L^1(I,L^2)}.
\]
The Christ-Kiselev Lemma (see Lemma $\ref{lem Christ-Kiselev lemma}$) implies that for all $(p,q)$ fractional admissible,
\[
\Big\|\int_0^t \varphi^2(h^2P) e^{-i(t-s)h^{-1}(h\Lambda_g)^\sigma} G(s)ds\Big\|_{L^p(I,L^q)} \leq C h^{-\kappa_{p,q}}\|G\|_{L^1(I,L^2)}.
\]
This completes the proof.
\end{proof}
\subsection{From local Strichartz estimates to global Strichartz estimates} 
We now show how to upgrade the local in time Strichartz estimates given in Corollary $\ref{coro Keel Tao}$ to the global in time ones $(\ref{reduction chi u fractional schrodinger})$. We emphasize that the non-trapping assumption is supposed here. \newline
\indent Let us set $v(t)=\scal{x}^{-1}f(h^2P) e^{-ith^{-1}(h\Lambda_g)^\sigma}u_0$. By choosing $f_1\in C^\infty_0(\R \backslash \{0\})$ with $f_1=1$ near $\text{supp}(f)$, we see that the study of $\|v\|_{L^p(\R,L^q)}$ is reduced to the one of $\|f_1(h^2P)v\|_{L^p(\R,L^q)}$. Indeed, we can write
\[
v(t)=f_1(h^2P) v(t)+ (1-f_1)(h^2P) v(t),
\]
where the term $(1-f_1)(h^2P) v(t)$ can be written as
\[
((1-f_1)(h^2P) \scal{x}^{-1} \tilde{f}_1(h^2P) \scal{x} ) \scal{x}^{-1}f(h^2P)e^{-ith^{-1}(h\Lambda_g)^\sigma} u_0,
\]
with $\tilde{f}_1 \in C^\infty_0(\R \backslash \{0\})$ such that $f_1=1$ near $\text{supp}(\tilde{f}_1)$ and $\tilde{f}_1=1$ near $\text{supp}(f)$. By pseudo-differential calculus, we have
\[
(1-f_1)(h^2P) \scal{x}^{-1} \tilde{f}_1(h^2P) \scal{x} = O_{\Lc(L^2, L^q)}(h^{\infty}),
\]
for all $q\geq 2$. This implies that there exists $C>0$ such that for all $N \geq 1$,
\begin{align}
\| v - f_1(h^2P)v\|_{L^p(\R,L^q)}& \leq C h^N\|\scal{x}^{-1} f(h^2P) e^{-ith^{-1}(h\Lambda_g)^\sigma} u_0\|_{L^p(\R,L^2)} \nonumber \\
&\leq C h^{N} \|f(h^2P)u_0\|_{L^2} \leq C h^{-\kappa_{p,q}}\|f(h^2P)u_0\|_{L^2} \label{lp lq estimate v}
\end{align}
provided that $N$ is taken large enough. 
Here we use $(\ref{Lp global integrability})$ with $N_0=1$ due to the non-trapping condition.\newline 
\indent We next write
\[
v(t)=\scal{x}^{-1}f(h^2P) e^{-ith^{-1} \psi(h^2P)}u_0,
\]
where $\psi(\lambda)=\tilde{f}(\lambda) \sqrt{\lambda}^\sigma$ with $\tilde{f} \in C^\infty_0(\R \backslash \{0\})$ and $\tilde{f}=1$ near $\text{supp}(f)$. Now, let $t_0>0$ be as in Corollary $\ref{coro Keel Tao}$. We next choose $\theta \in C^\infty_0(\R, [0,1])$ satisfying $\theta =1$ near 0 and $\text{supp}(\theta) \subset (-1,1)$ such that $\sum_{k \in \Z} \theta(t-k)=1$, for all $t\in \R$. We then write $v(t)=\sum_{k\in \Z} v_k(t)$, where $v_k(t)= \theta((t - t_k)/t_0) v(t)$ with $t_k=t_0k$. By the Duhamel formula, we have
\[
v_k(t)= e^{-ith^{-1}\psi(h^2P)} v_k(0) + ih^{-1} \int_{0}^{t} e^{-i(t-s)h^{-1}\psi(h^2P)} (hD_s+\psi(h^2P)) v_k(s)ds.
\]
For $k\ne 0$, we compute the action of $hD_s +\psi(h^2P)$ on $v_k(s)$ and get
\begin{align*}
(hD_s+\psi(h^2P)) v_k(s)&= h(it_0)^{-1} \theta' ((s-t_k)/t_0) v(s) \\ 
&+ \theta ((s-t_k)/t_0) \left[\psi(h^2P), \scal{x}^{-1} \right] f(h^2P) e^{-is h^{-1} \psi(h^2P)} u_0 =:  v^1_k(s)+v^2_k(s). \nonumber
\end{align*}
Due to the support property of $\theta$, we have $v_k(0)=0$.  Now, we have for $k\ne 0$,
\[
f_1(h^2P) v_k(t)= ih^{-1} \int_{0}^{t} e^{-i(t-s)h^{-1}\psi(h^2P)} f_1(h^2P) (v^1_k(s)+v^2_k(s))ds.
\]
We remark that both $t,s$ belong to $I_k= (t_k-t_0, t_k+t_0)$. Up to a translation in time $t\mapsto t-t_k$ and the same for $s$, we can apply the inhomogeneous Strichartz estimates given in Corollary $\ref{coro Keel Tao}$ with $\varphi^2=f_1$ and obtain
\begin{align}
\|f_1(h^2P) v_k\|_{L^p(\R,L^q)}&=\|f_1(h^2P) v_k\|_{L^p(I_k,L^q)} \nonumber \\
&\leq C h^{-\kappa_{p,q}-1} \left(\|v^1_k\|_{L^1(I_k,L^2)} + \|v^2_k\|_{L^1(I_k,L^2)} \right). \nonumber
\end{align}
Here $\kappa_{p,q}$ is given in Corollary $\ref{coro Keel Tao}$. We have
\begin{align}
\|v^1_k\|_{L^1(I_k,L^2)} &= \| h (i t_0)^{-1} \theta' ((s -t_k)/t_0) \scal{x}^{-1} f(h^2P) e^{-ish^{-1}(h\Lambda_g)^\sigma} u_0 \|_{L^1(I_k,L^2)} \nonumber \\
&\leq \| h (i t_0)^{-1} \theta' ((s -t_k)/t_0) \|_{L^2(I_k)} \|\scal{x}^{-1} f(h^2P) e^{-ish^{-1}(h\Lambda_g)^\sigma}u_0\|_{L^2(I_k,L^2)} \nonumber \\
&\leq Ch\|\scal{x}^{-1} f(h^2P) e^{-ish^{-1}(h\Lambda_g)^\sigma}u_0\|_{L^2(I_k,L^2)}, \nonumber
\end{align}
where we use Cauchy Schwarz inequality to go from the first to the second line. Similarly
\begin{align}
\|v^2_k\|_{L^1(I_k,L^2)} & \leq    \| [ \psi(h^2P),\scal{x}^{-1}] f(h^2P) e^{-ith^{-1}(h\Lambda_g)^\sigma}u_0 \|_{L^2(I_k,L^2)} \nonumber \\
& \leq C h\|\scal{x}^{-1} f(h^2P) e^{-ith^{-1}(h\Lambda_g)^\sigma} u_0 \|_{L^2(I_k,L^2)}, \nonumber
\end{align}
where we use the fact that $[\psi(h^2P),\scal{x}^{-1} ] \tilde{f}_1(h^2P) \scal{x} $ is of size $O_{\Lc(L^2)}(h)$ by pseudo-differential calculus. This implies that for $k \ne 0$,
\[
\|f_1(h^2P) v_k\|_{L^p(I_k,L^q)} \leq C h^{-\kappa_{p,q}} \|\scal{x}^{-1} f(h^2P) e^{-ith^{-1}(h\Lambda_g)^\sigma} u_0 \|_{L^2(I_k,L^2)}.
\]
For $k=0$, we have
\[
\|f_1(h^2P) v_0\|_{L^p(\R,L^q)} \leq C \|f(h^2P) e^{-ith^{-1}(h\Lambda_g)^\sigma} u_0\|_{L^p(I,L^q)} \leq C h^{-\kappa_{p,q}} \|f(h^2P)u_0\|_{L^2}.
\]
Here the first inequality follows from the facts that $\theta(t/t_0)$ and $f_1(h^2P)\scal{x}^{-1}$ are bounded in $\Lc(L^p(\R))$ and $\Lc(L^q)$ respectively. The second inequality follows from homogeneous Strichartz estimates $(\ref{homogeneous strichartz with spectral localization})$. By almost orthogonality in time and the fact that $p\geq 2$, we have
\begin{align*}
\|f_1(h^2P)v\|_{L^p(\R,L^q)} &\leq C\Big(\sum_{k\in \Z} \|f_1(h^2P) v_k\|^2_{L^p(\R,L^q)}  \Big)^{1/2} \nonumber \\
&\leq Ch^{-\kappa_{p,q}}\Big(\sum_{k\in \Z \backslash 0} \|\scal{x}^{-1} f(h^2P) e^{-ith^{-1}(h\Lambda_g)^\sigma} u_0 \|^2_{L^2(I_k,L^2)} + \|f(h^2P)u_0\|^2_{L^2} \Big)^{1/2}  \nonumber \\
&\leq  C h^{-\kappa_{p,q}}\Big(\|\scal{x}^{-1} f(h^2P) e^{-ith^{-1}(h\Lambda_g)^\sigma} u_0 \|_{L^2(\R,L^2)} + \|f(h^2P)u_0\|_{L^2}\Big) \nonumber \\
&\leq C h^{-\kappa_{p,q}}\|f(h^2P)u_0\|_{L^2}, \nonumber
\end{align*}
the last inequality comes from Proposition $\ref{prop global L2 integrability fractional schrodinger}$ with $N_0=1$. By using $(\ref{lp lq estimate v})$, we obtain
\[
\|\scal{x}^{-1}f(h^2P) e^{-ith^{-1}(h\Lambda_g)^\sigma} u_0\|_{L^p(\R,L^q)} \leq C h^{-\kappa_{p,q}}\|f(h^2P)u_0\|_{L^2}.
\]
This implies that for all $\chi \in C^\infty_0(\R^d)$, 
\[
\Big\|\chi f(h^2P) e^{-ith^{-1}(h\Lambda_g)^\sigma} u_0\Big\|_{L^p(\R,L^q)} \leq C h^{-\kappa_{p,q}} \|f(h^2P)u_0\|_{L^2}.
\]
Therefore, by scaling in time, we get
\[
\Big\|\chi f(h^2P) e^{-it\Lambda_g^\sigma} u_0\Big\|_{L^p(\R,L^q)} \leq C h^{-\gamma_{p,q}} \|f(h^2P)u_0\|_{L^2}.
\]
The proof of $(\ref{reduction chi u fractional schrodinger})$ is now complete. 
\defendproof
\section{Strichartz estimates outside compact sets} \label{section strichartz estimates outside compact}
\setcounter{equation}{0}
\subsection{The Isozaki-Kitada parametrix}
\paragraph{Notations and the Hamilton-Jacobi equations.}
For any $J \Subset (0,+\infty)$ an open interval, any $R>0$, any $\tau \in (-1,1)$, we define the outgoing region $\Ga^+(R,J,\tau)$ and the incoming region $\Ga^-(R,J,\tau)$ by 
\[
\Ga^\pm(R,J,\tau):= \Big\{(x,\xi)\in \R^{2d}, |x| >R, |\xi|^2 \in J, \pm \frac{x\cdot \xi}{|x\|\xi|}>\tau \Big\}. 
\]
\indent Let $\sigma \in (0,\infty)$\footnote{The construction of the Isozaki-Kitada parametrix we present here works well for the (half) wave equation, i.e. $\sigma=1$.}. We will use the so called Isozaki-Kitada parametrix to give an approximation at \textbf{high frequency} of the form
\begin{align}
e^{-it h^{-1} \psi(h^2P)} Op^h(\chi^\pm) = J^\pm_h(a^\pm (h)) e^{-it h^{-1}(h\Lambda)^\sigma} J^\pm_h(b^\pm(h))^\star + R^\pm_N(h), \label{IK high freq construction}
\end{align}
with $\Lambda=\sqrt{-\Delta}$ where $\Delta$ is the free Laplacian operator on $\R^d$ and $\psi(\cdot)=\tilde{f}(\cdot) \sqrt{\cdot}^\sigma \in C^\infty_0(\R \backslash \{0\})$ for some $\tilde{f} \in C^\infty_0(\R \backslash\{0\})$ satisfying $\tilde{f}=1$ near $\text{supp}(f)$. The functions $\chi^\pm$ are supported in $\Ga^\pm(R^4,J_4,\tau_4)$ (see Proposition $\ref{prop find b}$ for the choice of $J_4$ and $\tau_4$) and 
\[
J^\pm_h(a^\pm(h))= \sum_{j=1}^{N-1} h^j J^\pm_h(a^\pm_j),
\]
where
\[
J^\pm_h(a^\pm)u(x)= (2\pi h)^{-d} \iint_{\R^{2d}} e^{ih^{-1}\left(S^\pm_{R}(x,\xi)- y\cdot\xi \right)} a^\pm(x,\xi) u(y) dy d\xi, \quad u \in \Sch(\R^d).
\]
The amplitude functions $a^\pm_j$ are supported in $\Ga^\pm(R,J_1,\tau_1)$ (see Proposition $\ref{prop hamilton-jacobi equation}$) and the phase functions $S^\pm_{R}:= S^\pm_{1,R}$ will be described later. The same notation for $J^\pm_h(b^\pm(h))$ is used with $b^\pm_k$ in place of $a^\pm_j$. \newline
\indent The Isozaki-Kitada parametrix at \textbf{low frequency} is of the form
\begin{align}
e^{-it \ep \psi(\ep^{-2}P)} Op_\ep(\chi^\pm_\ep) \zeta(\ep x) = \Jc^\pm_\ep(a^\pm_\ep) e^{-it\ep\Lambda^\sigma} \Jc^\pm_\ep(b^\pm_\ep)^\star + \Rc^\pm_N(t,\ep), \label{IK low freq construction}
\end{align}
where $\psi$ is as above and $\zeta \in C^\infty(\R^d)$ supported outside $B(0,1)$ and equal to 1 near infinity. The functions $\chi^\pm_\ep$ are supported in $\Ga^\pm(R^4, J_4, \tau_4)$ and 
\[
\Jc^\pm_\ep(a^\pm_\ep) = \sum_{j=1}^{N} \Jc^\pm_\ep(a^\pm_{\ep, j}),
\]
where 
\begin{align}
\Jc^\pm_\ep(a):= D_\ep J^\pm_\ep(a), \quad \Jc^\pm_\ep(b)^\star:=J^\pm_\ep(b)^\star D_\ep^{-1}, \label{rescaled FIO}
\end{align}
with $D_\ep$ as in $(\ref{define D_epsilon})$,
\[
J^\pm_\ep(a)u(x):= (2\pi)^{-d}\iint_{\R^{2d}} e^{i(S^\pm_{\ep,R}(x,\xi)-y\cdot\xi)} a(x,\xi) u(y)dy d\xi,
\]
and 
\[
J^\pm_\ep(b)^\star u(x) = (2\pi)^{-d} \iint_{\R^{2d}} e^{i(x\cdot\xi -S^\pm_{\ep,R}(y,\xi))} \overline{b(y,\xi)} u(y)dyd\xi.
\]
The amplitude functions $a^\pm_{\ep,j}$ are supported in $\Ga^\pm(R, J_1, \tau_1)$ and the phase functions $S^\pm_{\ep,R}$ will be described in the next proposition. The same notation for $\Jc^\pm_\ep(b^\pm_\ep)$ will be used with $b^\pm_\ep$ in place of $a^\pm_\ep$. 
\begin{prop}\label{prop hamilton-jacobi equation}
Fix $J_1 \Subset (0,+\infty)$ and $\tau_1 \in (-1,1)$. Then there exists two families of smooth functions $(S^\pm_{\ep,R})_{R\gg 1}$ satisfying the following Hamilton-Jacobi equation
\begin{align}
p_\ep(x,\nabla_x S^\pm_{\ep,R}(x,\xi))= |\xi|^2, \label{hamilton-jacobi equation}
\end{align}
for all $(x,\xi) \in \Ga^\pm(R,J_1, \tau_1)$, where $p_\ep$ is given in $(\ref{define p_epsilon})$. Moreover, for all $\alpha, \beta \in \N^d$, there exists $C_{\alpha\beta}>0$ such that 
\begin{align} \label{estimate on phase}
\Big| \partial^\alpha_x \partial^\beta_\xi \Big(S^\pm_{\ep,R}(x,\xi)-x\cdot \xi \Big) \Big| \leq C_{\alpha\beta} \min\Big\{
R^{1-\rho -|\alpha|}, \scal{x}^{1-\rho-|\alpha|} \Big\},
\end{align}
for all $x,\xi \in \R^d$, all $\ep \in (0,1]$ and $R \gg 1$. 
\end{prop}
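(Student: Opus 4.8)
The plan is to construct the phase correction $\phi^\pm_{\epsilon,R}(x,\xi):=S^\pm_{\epsilon,R}(x,\xi)-x\cdot\xi$ by a contraction argument in a (slightly enlarged) incoming/outgoing region and then to extend it smoothly to all of $\R^{2d}$. Write $r_\epsilon(x,\xi):=p_\epsilon(x,\xi)-|\xi|^2=\sum_{j,k}\big(g^{jk}(\epsilon^{-1}x)-\delta_{jk}\big)\xi_j\xi_k$. Since $p(\cdot,\cdot)-|\xi|^2\in S(-\rho,2)$ and $\langle\epsilon^{-1}x\rangle\ge\langle x\rangle$ for $\epsilon\in(0,1]$, Remark $\ref{rem property rescaled symbol}$ (applied as in $(\ref{define p_epsilon})$) gives $|\partial_x^\alpha\partial_\xi^\beta r_\epsilon(x,\xi)|\le C_{\alpha\beta}\langle x\rangle^{-\rho-|\alpha|}\langle\xi\rangle^{2-|\beta|}$ for $|x|\ge1$, uniformly in $\epsilon\in(0,1]$. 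Hence the whole construction can be carried out uniformly in $\epsilon$; the high frequency phase $S^\pm_R=S^\pm_{1,R}$ of $(\ref{IK high freq construction})$ is simply the case $\epsilon=1$, and all constants below will be uniform in $\epsilon\in(0,1]$ and in $R\gg1$.

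Inserting the ansatz into $(\ref{hamilton-jacobi equation})$ and using the symmetry of $G(\epsilon^{-1}x)=:G_\epsilon(x)$, the eikonal equation becomes the transport equation
\[
2\big(G_\epsilon(x)\xi\big)\cdot\nabla_x\phi=-\,r_\epsilon(x,\xi)-\big(\nabla_x\phi\big)^tG_\epsilon(x)\,\nabla_x\phi .
\]
For $R$ large and $|\xi|^2\in J_1$ the characteristic field $2G_\epsilon(x)\xi$ (more precisely the full Hamiltonian field $2G_\epsilon(x)(\xi+\nabla_x\phi)$ once $\phi$ is in the iteration ball) is a small perturbation of the constant field $2\xi$, so on $\Ga^+(R,J_1,\tau_1)$ (resp.\ $\Ga^-$) the bicharacteristics issued forward (resp.\ backward) stay in a slightly larger incoming/outgoing region and escape to spatial infinity, with $|\gamma(s)|\gtrsim\langle x\rangle+s$. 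Because $\rho$ may be $\le1$, the zeroth order source $r_\epsilon$ is only integrable along these rays after one $x$-derivative, so I would first solve for the gradient $v:=\nabla_x\phi$: differentiating the transport equation once in $x$ and integrating along the characteristics from $s=+\infty$ (resp.\ $-\infty$) recasts it as a fixed point problem $v=\mathcal{T}_\epsilon[v]$ whose source term is $\partial_x r_\epsilon=O(\langle x\rangle^{-\rho-1})$ and whose remaining terms are at least linear in $v$, hence small for $R$ large. A Picard iteration on a small ball of a Banach space with weighted derivative norms $\sum_{|\alpha|,|\beta|\le K}\sup_{x,\xi}\langle x\rangle^{\rho+|\alpha|}|\partial_x^\alpha\partial_\xi^\beta(\cdot)|$ then converges once $R$ is large enough, yielding $|\partial_x^\alpha\partial_\xi^\beta v|\lesssim\langle x\rangle^{-\rho-|\alpha|}$ on the region, uniformly in $\epsilon$; this is the classical Isozaki--Kitada scheme adapted to the long range metric and to the rescaled setting (cf.\ \cite{IsozakiKitada}, \cite{BTlocalstrichartz}, \cite{BTglobalstrichartz}, \cite{BoucletMizutani}).

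Having the gradient, I would recover $\phi^\pm_{\epsilon,R}$ by integrating $v$ along an outgoing/incoming ray with the normalization $\phi^\pm_{\epsilon,R}(x,\xi)\to0$ at infinity along that ray; integrating $\langle\cdot\rangle^{-\rho}$ over a path of length $O(\langle x\rangle)$ costs exactly one power, giving $|\partial_x^\alpha\partial_\xi^\beta\phi^\pm_{\epsilon,R}|\lesssim\langle x\rangle^{1-\rho-|\alpha|}$ on $\Ga^\pm(R,J_1,\tau_1)$, and since $|x|>R$ there this is also $\lesssim R^{1-\rho-|\alpha|}$, which produces the $\min$ in $(\ref{estimate on phase})$. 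Finally, it suffices to have solved $(\ref{hamilton-jacobi equation})$ on a neighbourhood of $\overline{\Ga^\pm(R,J_1,\tau_1)}$ (say on $\Ga^\pm(R/2,J_1',\tau_1')$ with $J_1\Subset J_1'$ and $\tau_1'<\tau_1$), because the amplitudes $a^\pm_j,b^\pm_k$ are supported in $\Ga^\pm(R,J_1,\tau_1)$; I would then extend $\phi^\pm_{\epsilon,R}$ to all of $\R^{2d}$ by a routine cutoff in the $\xi$ and angular variables supported near the incoming/outgoing cone, arranged so that the global function still satisfies $(\ref{estimate on phase})$.

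The main obstacle is precisely the long range regime $\rho\le1$: the eikonal phase $S^\pm_{\epsilon,R}-x\cdot\xi$ is genuinely unbounded (it grows like $\langle x\rangle^{1-\rho}$, logarithmically when $\rho=1$), so one cannot close a fixed point for $S-x\cdot\xi$ directly in a space of bounded functions — one must solve for $\nabla_x(S-x\cdot\xi)$ first and integrate afterwards, tracking carefully the decay of $r_\epsilon$ and its derivatives along the incoming/outgoing rays. The second delicate point, compared with the purely Euclidean Isozaki--Kitada construction, is uniformity: one needs the contraction constant and all the seminorms to be independent of $\epsilon\in(0,1]$ (supplied by the rescaled symbol estimates of Remark $\ref{rem property rescaled symbol}$) and of $R\gg1$ (which supplies the smallness that closes the iteration).
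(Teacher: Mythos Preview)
Your sketch is correct and matches the paper's proof, which (rather than spelling out the fixed-point construction) simply cites \cite[Proposition~3.1]{BTlocalstrichartz} and \cite[Proposition~4.1]{Robertsmoothing} for the case $\ep=1$, invokes Remark~\ref{rem property rescaled symbol} exactly as you do to obtain uniform symbol bounds on $p_\ep$ in $\Ga^\pm(R/2,J_0,\tau_0)$ so that the same argument runs uniformly in $\ep\in(0,1]$, and then globalizes via $S^\pm_{\ep,R}=\chi^\pm_R\,\tilde S^\pm_\ep+(1-\chi^\pm_R)\,x\cdot\xi$ with a cutoff $\chi^\pm_R$ of the type $(\ref{define cutoff})$. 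One small inconsistency in your write-up worth repairing: the normalization ``$\phi^\pm_{\ep,R}\to 0$ at infinity along the ray'' is unavailable precisely in the long-range regime $\rho\le 1$ that you flag, since $|v|\lesssim\scal{x}^{-\rho}$ is then not integrable out to $\infty$; your subsequent remark about integrating over a path of length $O(\scal{x})$ is the right picture---normalize on a fixed transversal hypersurface (or equivalently build $S$ directly from the Hamiltonian flow as in the cited references) and the bound $\scal{x}^{1-\rho-|\alpha|}$ follows.
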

\begin{rem} \label{rem hamilton-jacobi equation}
From $(\ref{estimate on phase})$, we see that for $R>0$ large enough, the phase functions satisfy for all $x,\xi \in \R^d$ and all $\ep \in (0,1]$,
\begin{align} \label{matrix phase estimate}
\Big\| \nabla_x \cdot  \nabla_\xi S^\pm_{\ep,R} (x,\xi) - \text{Id}_{\R^d} \Big\| \leq \frac{1}{2},
\end{align}
and for all $|\alpha| \geq 1$ and all $|\beta| \geq 1$, 
\begin{align}\label{high order phase estimate}
| \partial^\alpha_x  \partial^\beta_\xi S^\pm_{\ep,R} (x,\xi)| \leq C_{\alpha \beta}.
\end{align}
The estimates $(\ref{matrix phase estimate})$ and $(\ref{high order phase estimate})$ are useful in the construction of Isozaki-Kitada parametrix as well as the $L^2$-boundedness of Fourier integral operators.
\end{rem}
\noindent \textit{Proof of \emph{Proposition} $\ref{prop hamilton-jacobi equation}$.} We firstly note that the case $\ep=1$ is given in \cite[Proposition 3.1]{BTlocalstrichartz}. Let $J_1 \Subset J_0 \Subset (0,+\infty)$ and $-1< \tau_0 < \tau_1 <1$. By using Lemma $\ref{rem property rescaled symbol}$, in the region $\Ga^\pm(R/2, J_0, \tau_0)$ which implies that $|x|>1$, we see that the function $p_\ep(x,\xi)$ satisfies for all $\alpha, \beta \in \N^d$, there exists $C_{\alpha\beta}>0$ such that for all $(x, \xi) \in \Ga^\pm(R/2, J_0, \tau_0)$ and all $\ep \in (0,1]$,
\[
|\partial^\alpha_x \partial^\beta_\xi p_\ep(x,\xi)| \leq C_{\alpha\beta}\scal{\xi}^{2-|\beta|}.
\]
Thanks to this uniform bound, by using the argument given in \cite[Proposition 4.1]{Robertsmoothing}, we can solve (for $R>0$ large enough) the Hamilton-Jacobi equation $(\ref{hamilton-jacobi equation})$ in $\Ga^\pm(R/2,J_0, \tau_0)$ uniformly with respect to $\ep \in (0,1]$. We denote such solutions by $\tilde{S}^\pm_\ep$. Next, by choosing a special cutoff (see \cite{BTlocalstrichartz}, see also $(\ref{define cutoff})$) $\chi^\pm_R \in S(0,-\infty)$ such that $\chi^\pm_R(x,\xi) =1$ for $(x,\xi) \in \Ga^\pm(R,J_1, \tau_1)$ and $\text{supp}(\chi^\pm_R) \subset \Ga^\pm(R/2, J_0, \tau_0)$, then the functions
\[
S^\pm_{\ep,R}(x,\xi) = \chi^\pm_R(x,\xi) \tilde{S}^\pm_{\ep}(x,\xi) + (1-\chi^\pm_R)(x,\xi) \scal{x,\xi}
\]
satisfy the properties of Proposition $\ref{prop hamilton-jacobi equation}$, where $\scal{x,\xi}=x\cdot\xi$.
\defendproof 
\paragraph{Construction of the parametrix.}
Let us firstly consider the high frequency case $(\ref{IK high freq construction})$. The construction in the low frequeny case $(\ref{IK low freq construction})$ is similar up to some modifications (see after Theorem $\ref{theorem IK parametrix fractional schrodinger equation}$). We only treat the outgoing case (+), the incoming one is similar. We start with the following Duhamel formula
\begin{multline}
e^{-ith^{-1}\psi(h^2P)} J^+_h(a^+(h)) = J^+_h(a^+(h)) e^{-ith^{-1}(h\Lambda)^\sigma}   \\ 
 -ih^{-1} \int_{0}^{t} e^{-i(t-s)h^{-1}\psi(h^2P)} \Big(\psi(h^2P)J^+_h(a^+(h)) - J^+_h(a^+(h)) (h\Lambda)^\sigma\Big)e^{-is h^{-1}(h\Lambda)^\sigma} ds. \label{IK Duhamel formula}
\end{multline}
We want the term $\psi(h^2P)J^+_h(a^+(h)) - J^+_h(a^+(h)) (h\Lambda)^\sigma$ to have a small contribution. To do so, we firstly introduce a special cutoff. For any $J_2 \Subset J_1 \Subset (0,+\infty)$ and $-1 <\tau_1 <\tau_2 <1$, we define 
\begin{align} \label{define cutoff}
\chi^+_{1\rightarrow 2}(x,\xi) = \kappa\left(\frac{|x|}{R^2}\right) \rho_{1\rightarrow 2}(|\xi|^2) \theta_{1\rightarrow 2}\left(+\frac{x\cdot\xi}{|x\|\xi|}\right),
\end{align}
where $\kappa \in C^\infty(\R)$ is non-decreasing such that 
\[
\kappa(t)= \left\{
\begin{array}{ll}
1 & \text{ when } t \geq 1/2 \\
0 & \text{ when } t \leq 1/4 
\end{array}, \right.
\]
and $\rho_{1\rightarrow 2} \in C^\infty(\R)$ is non-decreasing such that $\rho_{1\rightarrow 2}=1$ near $J_2$, supported in $J_1$ and $\theta_{1 \rightarrow 2} \in C^\infty_0(\R)$ such that 
\[
\theta_{1\rightarrow 2}(t)= \left\{
\begin{array}{ll}
1 & \text{ when } t > \tau_2 -\varepsilon \\
0 & \text{ when } t < \tau_1 +\varepsilon
\end{array}, \right.
\]
with $\varepsilon \in (0,\tau_2-\tau_1)$. We see that $\chi^+_{1\rightarrow 2} \in S(0,-\infty)$ and for $R\gg 1$, 
\[
\text{supp}(\chi^+_{1\rightarrow 2}) \subset \Ga^+(R,J_1,\tau_1), \quad \chi^+_{1 \rightarrow 2} =1 \text{ near } \Ga^+(R^2,J_2, \tau_2).
\]
\begin{prop} \label{prop find a}
Let $S^+_{R}:=S^+_{1,R}$ be the solution of $(\ref{hamilton-jacobi equation})$ given as in $\emph{Proposition}$ $\ref{prop hamilton-jacobi equation}$. Let $J_2$ be an arbitrary open interval such that $J_2 \Subset J_1 \Subset (0,+\infty)$ and $\tau_2$ be an arbitrary real number such that $-1 < \tau_1 <\tau_2 <1$. Then for $R> 0$ large enough, we can find a sequence of symbols $a^+_j \in S(-j,-\infty)$ supported in $\Ga^+(R,J_1,\tau_1)$ such that for all $N \geq 1$, 
\begin{align}
\psi(h^2P) J^+_h(a^+(h)) -J^+_h(a^+(h)) (h\Lambda)^\sigma &= h^N R_N(h) J^+_h(a^+(h)) + h^N J^+_h(r_N^+(h)) + J^+_h(\check{a}^+(h)),  \label{express of small contribution}\\
\sup_{\Ga^+(R,J_1,\tau_1)} |a_0^+(x,\xi)| &\gtrsim 1, \label{non vanishing a_0}
\end{align}
where $a^+(h)=\sum_{j=0}^{N-1}h^j a^+_j$ and $(r_N^+(h))_{h\in (0,1]}$ is bounded in $S(-N,-\infty)$, $R_N(h)$ is as in $\emph{Proposition}$ $\ref{prop parametrix phi}$, $(\check{a}^+(h))_{h\in (0,1]}$ is bounded in $S(0,-\infty)$ and is a finite sum depending on $N$ of the form
\begin{align} \label{define a check}
\check{a}^+(h)=\sum_{|\alpha| \geq 1} \check{a}^+_\alpha(h) \partial^\alpha_x \chi^+_{1\rightarrow 2}, 
\end{align}
with $(\check{a}^+_\alpha(h))_{h\in (0,1]}$ bounded in $S(0,-\infty)$ and $\chi^+_{1 \rightarrow 2}$ given in $(\ref{define cutoff})$.
\end{prop}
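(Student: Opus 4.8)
The plan is to determine the amplitudes $a_j^+$ so that the interaction term $\psi(h^2P)J^+_h(a^+(h))-J^+_h(a^+(h))(h\Lambda)^\sigma$ entering the Duhamel identity $(\ref{IK Duhamel formula})$ becomes, modulo $O(h^N)$ remainders, supported in the transition zone where $\chi^+_{1\rightarrow 2}$ is not locally constant. Two elementary reductions come first. Since $(h\Lambda)^\sigma=Op^h(|\xi|^\sigma)$ and the phase of $J^+_h$ is linear in $y$, one checks directly that $J^+_h(a)(h\Lambda)^\sigma=J^+_h(|\xi|^\sigma a)$. Next, write $\psi(h^2P)=\sum_{j=0}^{N-1}h^j Op^h(q_j)+h^N R_N(h)$ via Proposition $\ref{prop parametrix phi}$ (so $q_0=\psi\circ p$, $q_j\in S(-j,-\infty)$, $\mathrm{supp}(q_j)\subset\mathrm{supp}(\psi\circ p)$), and compose each $Op^h(q_j)$ with $J^+_h(a)$: thanks to the estimates $(\ref{matrix phase estimate})$ and $(\ref{high order phase estimate})$ of Remark $\ref{rem hamilton-jacobi equation}$, the standard composition formula for a pseudodifferential operator following a Fourier integral operator of this type applies, giving $Op^h(q_j)J^+_h(a)=J^+_h(c_j(h))$ with $c_j(h)\sim\sum_{k\ge 0}h^k c_j^{(k)}(a)$, where $c_j^{(k)}$ is a differential operator in $a$ (of finite order depending on $k$) whose coefficients are polynomials in the derivatives of $q_j$ at $(x,\nabla_x S^+_R)$ and of $S^+_R$; in particular $c_j^{(0)}(a)=q_j(x,\nabla_x S^+_R)\,a$ and $c_0^{(1)}(a)=\tfrac{1}{i}\nabla_\xi q_0(x,\nabla_x S^+_R)\cdot\nabla_x a+W(x,\xi)\,a$ for an explicit symbol $W$.

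Now expand $a^+(h)=\sum_{j=0}^{N-1}h^j a_j^+$ and collect the powers of $h$ in $\psi(h^2P)J^+_h(a^+(h))-J^+_h(|\xi|^\sigma a^+(h))$. The coefficient of $h^0$ is $[\,q_0(x,\nabla_x S^+_R)-|\xi|^\sigma\,]\,a_0^+$; by the Hamilton-Jacobi equation $(\ref{hamilton-jacobi equation})$ one has $p(x,\nabla_x S^+_R)=|\xi|^2$ on $\Ga^+(R,J_1,\tau_1)$, hence (as $\tilde f\equiv1$ on $\overline{J_1}$, so that $\psi(|\xi|^2)=|\xi|^\sigma$ there) $q_0(x,\nabla_x S^+_R)=|\xi|^\sigma$, and this coefficient vanishes on $\Ga^+(R,J_1,\tau_1)$ for \emph{any} $a_0^+$ supported in that region. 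The coefficient of $h^m$ for $1\le m\le N-1$ equals $[\,q_0(x,\nabla_x S^+_R)-|\xi|^\sigma\,]\,a_m^++\mathcal{T}a_{m-1}^++F_m(a_0^+,\dots,a_{m-2}^+)$, where $\mathcal{T}a:=\tfrac{1}{i}\nabla_\xi q_0(x,\nabla_x S^+_R)\cdot\nabla_x a+(W+q_1(x,\nabla_x S^+_R))\,a$ and $F_m$ is a known differential expression in $q_0,\dots,q_m$ and $S^+_R$. Since the first bracket again vanishes on $\Ga^+(R,J_1,\tau_1)$, I would impose there the triangular system of transport equations $\mathcal{T}a_0^+=0$ and $\mathcal{T}a_j^+=-F_{j+1}(a_0^+,\dots,a_{j-1}^+)$ for $1\le j\le N-2$.

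On $\Ga^+(R,J_1,\tau_1)$ one computes $\tfrac{1}{i}\nabla_\xi q_0(x,\nabla_x S^+_R)=\tfrac{1}{i}(\sigma|\xi|^{\sigma-2}\xi+O(\scal{x}^{-\rho}))$, a non-vanishing outgoing vector field (for $\xi\ne0$ and $R\gg1$) whose integral curves remain in $\Ga^+(R,J_1,\tau_1)$ and run off to $|x|=\infty$ with $|x(s)|\gtrsim R+s$; this is the classical fact that outgoing regions are almost invariant under the geodesic-type flow and caustic-free once $R$ is large, and it is where the phase estimates $(\ref{estimate on phase})$ of Proposition $\ref{prop hamilton-jacobi equation}$ and the long range assumption $(\ref{assump long range})$ enter. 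Moreover the zeroth-order coefficient $W+q_1(x,\nabla_x S^+_R)$ is $O(\scal{x}^{-1-\rho})$: $W$ is built from $\nabla^2_x S^+_R=O(\scal{x}^{-1-\rho})$ by $(\ref{estimate on phase})$, while in the parametrix of Proposition $\ref{prop parametrix phi}$ the term $q_1$ inherits the $\scal{x}^{-1-\rho}$-decay of $\partial_x p,p_1\in S(-\rho-1,\cdot)$. Hence $\int_0^\infty|W+q_1(\cdot)|\,ds\lesssim R^{-\rho}$ along each trajectory, and the triangular system can be integrated on $\Ga^+(R,J_1,\tau_1)$: prescribing $a_0^+\equiv1$ near $\Ga^+(R^2,J_2,\tau_2)$ --- which yields $(\ref{non vanishing a_0})$ at once, since $\chi^+_{1\rightarrow 2}\equiv1$ there --- one obtains on $\Ga^+(R,J_1,\tau_1)$ functions $g_j^+\in S(-j,-\infty)$ solving the system, the $\scal{x}^{-1}$-gain per step coming from the $\scal{x}^{-1-\rho}$-decay of the sources and from the elementary fact $a\in S(m,\cdot)\Rightarrow\nabla_x a\in S(m-1,\cdot)$. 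Finally set $a_j^+:=\chi^+_{1\rightarrow 2}\,g_j^+$ (extended by $0$): a symbol in $S(-j,-\infty)$ supported in $\Ga^+(R,J_1,\tau_1)$ and equal to $g_j^+$ near $\Ga^+(R^2,J_2,\tau_2)$.

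It remains to read off $(\ref{express of small contribution})$. By Leibniz, $\mathcal{T}(\chi^+_{1\rightarrow 2}g_j^+)=\chi^+_{1\rightarrow 2}\,\mathcal{T}g_j^++g_j^+\,\tfrac{1}{i}\nabla_\xi q_0(x,\nabla_x S^+_R)\cdot\nabla_x\chi^+_{1\rightarrow 2}$, and likewise every derivative arising in the composition formulas that lands on $\chi^+_{1\rightarrow 2}$ produces a factor $\partial^\alpha_x\chi^+_{1\rightarrow 2}$ with $|\alpha|\ge1$; since $\nabla_\xi q_0(x,\nabla_x S^+_R)$ and the $g_j^+$ are symbols in $S(0,-\infty)$, plugging the $a_j^+$ back in makes the coefficients of $h^0,\dots,h^{N-1}$ of the error collapse --- on $\mathrm{supp}\,a^+(h)$ --- to a finite sum $\check a^+(h)=\sum_{|\alpha|\ge1}\check a^+_\alpha(h)\,\partial^\alpha_x\chi^+_{1\rightarrow 2}$ with $(\check a^+_\alpha(h))_{h}$ bounded (indeed $O(h)$) in $S(0,-\infty)$. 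The terms of order $\ge h^N$ gather, using the quantitative remainder of the composition formula, into $h^N J^+_h(r_N^+(h))$ with $(r_N^+(h))_{h}$ bounded in $S(-N,-\infty)$, and the leftover is precisely $h^N R_N(h)J^+_h(a^+(h))$ in the notation of Proposition $\ref{prop parametrix phi}$; together with $J^+_h(a^+(h))(h\Lambda)^\sigma=J^+_h(|\xi|^\sigma a^+(h))$ this gives $(\ref{express of small contribution})$. The main difficulty, which is the heart of the Isozaki-Kitada method, lies in the third paragraph: integrating the transport equations on $\Ga^+(R,J_1,\tau_1)$ with uniform symbol bounds, i.e. establishing the almost-invariance and caustic-freeness of outgoing regions for $R\gg1$ together with the integrability along trajectories of the zeroth-order coefficients; all of this rests on the quantitative phase estimates $(\ref{estimate on phase})$, hence ultimately on the long range assumption $(\ref{assump long range})$. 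The remaining bookkeeping parallels the Schr\"odinger construction of \cite{BTlocalstrichartz, BTglobalstrichartz}.
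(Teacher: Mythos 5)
Your strategy follows the paper's proof closely: expand $\psi(h^2P)$ via Proposition \ref{prop parametrix phi}, compose with the Fourier integral operator via Proposition \ref{prop action PDO on FIO}, kill the leading symbol using the eikonal equation \eqref{hamilton-jacobi equation}, solve the resulting triangular transport system along the outgoing flow, cut off by $\chi^+_{1\to 2}$, and trace the derivatives landing on $\chi^+_{1\to 2}$ to obtain \eqref{define a check}. All of that matches the paper step by step.

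One step as you state it does not hold up, namely the prescription ``$a_0^+\equiv 1$ near $\Ga^+(R^2,J_2,\tau_2)$'' as a way to initialize the integration of the transport system on $\Ga^+(R,J_1,\tau_1)$. First, $a_0^+=1$ cannot solve $\mathcal{T}a_0^+=0$ on any open set where the zeroth-order coefficient $W+q_1(x,\nabla_x S^+_R)$ is not identically zero, and there is no reason for that coefficient to vanish on $\Ga^+(R^2,J_2,\tau_2)$. Second, viewed as a Cauchy datum it under-determines the solution: since $\xi$ is constant along the flow $\dot X^+=V^+(X^+,\xi)$ and $J_2\Subset J_1$, forward orbits starting from a point of $\Ga^+(R,J_1,\tau_1)$ with $|\xi|^2\in J_1\setminus\overline{J_2}$ never enter $\Ga^+(R^2,J_2,\tau_2)$; moreover the outgoing flow runs off to $|x|=\infty$, so data on the inner region cannot be transported back to fill $\Ga^+(R,J_1,\tau_1)$. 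The correct normalization for outgoing regions, and the one the paper uses, is a condition at infinity along trajectories: $A_0^+(x,\xi)=\exp\big(\int_0^\infty p_0^+(X^+(t,x,\xi),\xi)\,dt\big)$ and, iteratively, $A_r^+(x,\xi)=\int_0^\infty p_r^+(X^+(t,x,\xi),\xi)\exp\big(\int_0^t p_0^+(X^+(s,x,\xi),\xi)\,ds\big)\,dt$, whose convergence is exactly what your $\scal{x}^{-1-\rho}$-decay of the coefficients together with $|X^+(t,x,\xi)|\gtrsim t+|x|$ from Proposition \ref{prop transport flow} gives. With this normalization, \eqref{non vanishing a_0} follows not from $a_0^+\equiv1$ but from $A_0^+-1=O(R^{-\rho})$ on $\Ga^+(R,J_1,\tau_1)$, so that $A_0^+$ is bounded away from zero there for $R$ large, and since $\chi^+_{1\to2}\equiv1$ on the non-empty set $\Ga^+(R^2,J_2,\tau_2)$ the cutoff symbol $a_0^+=\chi^+_{1\to2}A_0^+$ inherits the lower bound. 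The rest of your proposal is sound and agrees with the paper.
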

\begin{proof}
We firstly use the parametrix of $\psi(h^2P)$ given in Proposition $\ref{prop parametrix phi}$ and get
\begin{align}
\psi(h^2P)= Op^h(q(h))+h^N R_N(h), \label{parametrix psi high freq}
\end{align}
where $q(h)=\sum_{k=0}^{N-1} h^k q_k$ and $q_k \in S(-k,-\infty), k=0,...,N-1$. Note that $q_0(x,\xi)=\psi(p(x,\xi))=\tilde{f}(p(x,\xi))\sqrt{p(x,\xi)}^\sigma$ and $\text{supp}(q_k)\subset \text{supp}(\psi\circ p)$. Up to remainder term, we consider the action of $Op^h(q(h))$ on $J^+_h(a^+(h))$. To do this, we need the following action of a pseudo-differential operator on a Fourier integral operator (see e.g. \cite[Theorem IV-19]{Robert}, \cite[Appendix]{BoucletthesisphD} or \cite{RuzhanskySugimoto}).
\begin{prop} \label{prop action PDO on FIO}
Let $a \in S(\mu_1,-\infty)$ and $b \in S(\mu_2,-\infty)$ and $S$ satisfy $(\ref{matrix phase estimate})$ and $(\ref{high order phase estimate})$. Then
\[
Op^h(a)\circ J_h(S,b) = \sum_{j=0}^{N-1}h^j J_h(S,(a \triangleleft b)j)+h^NJ_h(S,r_N(h)),
\]
where $(a\triangleleft b)_j$ is an universal linear combination of 
\[
\partial^\beta_\xi a(x, \nabla_xS(x,\xi)) \partial^{\beta-\alpha}_x b(x,\xi) \partial^{\alpha_1}_x  S(x,\xi)\cdots \partial^{\alpha_k}_x S(x,\xi),
\]
with $\alpha \leq \beta, \alpha_1+\cdots +\alpha_k =\alpha$ and $|\alpha_l|\geq 2$ for all $l=1,...,k$ and $|\beta|=j$. The maps $(a,b) \mapsto (a\triangleleft b)_j$ and $(a,b)\mapsto r_N(h)$ are continuous from $S(\mu_1,-\infty) \times S(\mu_2,-\infty)$ to $S(\mu_1+\mu_2-j,-\infty)$ and $S(\mu_1+\mu_2-N,-\infty)$ respectively. In particular, we have
\begin{align}
(a\triangleleft b)_0(x,\xi)&= a(x,\nabla_xS(x,\xi)) b(x,\xi), \nonumber \\
i (a\triangleleft b)_1(x,\xi) &= \nabla_\xi a(x,\nabla_xS(x,\xi)) \cdot\nabla_xb(x,\xi) + \frac{1}{2}\emph{tr}\left( \nabla^2_\xi a (x,\nabla_xS(x,\xi))\cdot\nabla^2_xS(x,\xi) \right) b(x,\xi). \nonumber 
\end{align}
\end{prop}
Using this result, we have 
\begin{align}
Op^h(q(h))J^+_h(a^+(h))= \sum_{k+j+l=0}^{N-1}h^{k+j+l} J^+_h((q_k \triangleleft a^+_j)_l)+h^N J^+_h(r^+_N(h)). \nonumber
\end{align}
On the other hand, we have
\[
J^+_h(a^+(h)) (h\Lambda)^\sigma= J^+_h(a^+(h)|\xi|^\sigma).
\]
Thus we get
\begin{align}
\psi(h^2P)J^+_h(a^+(h))- J^+_h(a^+(h)) (h\Lambda)^\sigma& =  \sum_{r=0}^{N-1} h^r J^+_h \left( \sum_{k+j+l=r}(q_k\triangleleft a^+_j)_l - a^+_r |\xi|^\sigma \right) \nonumber \\
 &  \mathrel{\phantom{=}}  + h^N J^+_h(r^+_N(h)) + h^N R_N(h) J^+_h(a^+(h)). \nonumber
\end{align}
In order to make the left hand side of $(\ref{express of small contribution})$ small, we need to find $a^+_j \in S(-j,-\infty)$ supported in $\Ga^+(R,J_1,\tau_1)$ such that
\[
\sum_{k+j+l=r}(q_k\triangleleft a^+_j)_l - a^+_r |\xi|^\sigma =0, \quad r=0,...,N-1.
\]
In particular,
\[
\left(q_0(x,\nabla_x S^+_R(x,\xi))-|\xi|^\sigma\right)a^+_0(x,\xi)=0.
\]
By noting that if $p(x,\xi) \in \text{supp}(f)$ (see after $(\ref{IK high freq construction})$), then $q_0(x,\xi)= \sqrt{p(x,\xi)}^\sigma$. Thus in the region where the Hamilton-Jacobi equation $(\ref{hamilton-jacobi equation})$ with $\ep=1$ is satisfied, we need to show the following transport equations
\begin{align}
(q_0 \triangleleft a^+_0)_1 + (q_1 \triangleleft a^+_0)_0 &= 0 \label{transport_0} \\
(q_0 \triangleleft a^+_r)_1 + (q_1 \triangleleft a^+_r)_0 &= - \sum_{k+j+l=r+1 \atop j \leq r-1} (q_k \triangleleft a^+_j)_l, \quad r=1,...,N-1. \label{transport_r}
\end{align}
Here $(q_0\triangleleft a^+)_1+(q_1 \triangleleft a^+)_0$ can be written as
\[
i\Big[(q_0\triangleleft a^+)_1(x,\xi)+(q_1 \triangleleft a^+)_0(x,\xi)\Big]= \sum_{j=1}^{d} V^+_j(x,\xi) \partial_{x_j} a^+(x,\xi) + p^+_0(x,\xi) a^+(x,\xi),
\]
where 
\begin{align}
V^+_j(x,\xi) &= (\partial_{\xi_j}q_0)(x,\nabla_x S^+_R(x,\xi)), \nonumber \\
p^+_0(x,\xi) &= i q_1(x,\nabla_x S^+_R(x,\xi))+ \frac{1}{2} \text{tr} \Big[ \nabla^2_\xi q_0(x,\nabla_x S^+_R(x,\xi))\cdot \nabla^2_xS^+_R(x,\xi) \Big]. \nonumber
\end{align}
We now consider the flow $X^+(t,x,\xi)$ associated to $V^+=(V^+_j)_{j=1}^d$ as
\begin{align} \label{transport flow}
\left\{
\begin{array}{lcl}
\dot{X}^+(t) &=& V^+(X^+(t),\xi), \\
X^+(0) &=& x.
\end{array}
\right.
\end{align}
We have the following result (see \cite[Proposition 3.2]{Boucletdistribution} or  \cite[Appendix]{BoucletthesisphD}).
\begin{prop} \label{prop transport flow}
Let $\sigma \in (0,\infty)$, $J_1 \Subset (0,+\infty)$ and $-1<\tau_1 <1$. There exists $R>0$ large enough and $e_1>0$ small enough such that for all $(x,\xi)\in \Ga^+(R,J_1,\tau_1)$, the solution $X^+(t,x,\xi)$ to $(\ref{transport flow})$ is defined for all $t\geq 0$ and satisfies
\begin{align}
|X^+(t,x,\xi)| &\geq e_1(t+|x|), \label{estimate_X} \\
(X^+(t,x,\xi),\xi) &\in \Ga^+(R,J_1,\tau_1). \label{contain map X}
\end{align}
Moreover, for all $\alpha, \beta \in \N^d$, there exists $C_{\alpha\beta}>0$ such that for all $t\geq 0$ and all $h\in (0,1]$,
\begin{align}
|\partial^\alpha_x \partial^\beta_\xi (X^+(t,x,\xi)- x - \sigma t\xi |\xi|^{\sigma-2} )| \leq C_{\alpha\beta} \scal{t} \scal{x}^{-\rho -|\alpha|},
\end{align}
for all $(x,\xi)\in \Ga^+(R,J_1,\tau_1)$.
\end{prop}
Now, we can define for $(x,\xi)\in \Ga^+(R, J_1,\tau_1)$ the functions 
\begin{align}
A^+_0(x,\xi) &= \exp \Big( \int_{0}^{+\infty} p^+_0(X^+(t,x,\xi),\xi) dt  \Big), \nonumber \\
A^+_r(x,\xi) &=  \int_{0}^{+\infty} p^+_r(X^+(t,x,\xi),\xi) \exp \Big( \int_{0}^{t} p^+_0(X^+(s,x,\xi),\xi) ds \Big) dt,  \nonumber
\end{align}
for $r=1,...,N-1$, where
\[
p^+_r(x,\xi) = i\sum_{k+j+l=r+1 \atop j \leq r-1} (q_k\triangleleft A^+_j)_l (x,\xi).
\]
Using $(\ref{estimate_X})$ and the fact that $p^+_r \in S(-1-\rho-r,-\infty)$ for $r=0,...,N-1$, we see that $p^+_r(X^+(t,x,\xi))$ are integrable with respect to $t$. Hence $A^+_r(x,\xi)$ are well-defined. Moreover, we have (see e.g. \cite[Proposition 3.1]{Boucletdistribution}) that for all $(x,\xi)\in \Ga^+(R,J_1,\tau_1)$,
\begin{align}
|\partial^\alpha_x \partial^\beta_\xi (A^+_0(x,\xi)-1)| &\leq C_{\alpha\beta} \scal{x}^{-|\alpha|}, \label{non vanishing a_0 proof}\\
|\partial^\alpha_x \partial^\beta_\xi A^+_r(x,\xi)| & \leq  C_{\alpha\beta} \scal{x}^{-r-|\alpha|}. \nonumber
\end{align}
We also have that $A^+_0, A^+_r$ for  $r=1,...,N-1$ solve $(\ref{transport_0})$ and $(\ref{transport_r})$ respectively in $\Ga^+(R,J_1,\tau_1)$. Now, by setting $a^+_r=\chi^+_{1 \rightarrow 2}A^+_r$ (see $(\ref{define cutoff})$), we see that $a^+_r$ are globally defined on $\R^{2d}$ and $a^+_r \in S(-r,-\infty)$. It is easy to see $(\ref{non vanishing a_0})$ from $(\ref{non vanishing a_0 proof})$. We next insert $a^+(h)= \sum_{j=1}^{N-1}h^ja^+_j$ into the left hand side of $(\ref{express of small contribution})$ and get
\begin{align}
\psi(h^2P)J^+_h(a^+(h))- J^+_h(a^+(h)) (h\Lambda)^\sigma&=  \sum_{r=0}^{N-1} h^r J^+_h \left( \sum_{k+j+l=r}(q_k\triangleleft \chi^+_{1 \rightarrow 2}A^+_j)_l - \chi^+_{1\rightarrow 2}A^+_r |\xi|^\sigma \right) \nonumber \\
 & \mathrel{\phantom{=}} + h^N J^+_h(r^+_N(h)) + h^N R_N(h) J^+_h(a^+(h)). \nonumber
\end{align}
Using the expression of $(a\triangleleft b)_l$ given in Proposition $\ref{prop action PDO on FIO}$, we see that 
\[
(q_k\triangleleft \chi^+_{1 \rightarrow 2}A^+_j)_l = \chi^+_{1 \rightarrow 2}(q_k\triangleleft A^+_j)_l + \text{ terms in which derivatives fall into } \chi^+_{1 \rightarrow 2}.
\]
This gives $(\ref{express of small contribution})$ with $\check{a}^+(h)$ as in $(\ref{define a check})$. The proof is complete.
\end{proof}
We now are able to construct the symbols $b^+_k$, for $k=0,...,N-1$. 
\begin{prop} \label{prop find b}
Let $J_3,J_4$ and $\tau_3,\tau_4$ be such that $J_4 \Subset J_3 \Subset J_2$ and $-1< \tau_2 <\tau_3<\tau_4<1$. Then for $R>0$ large enough and all $\chi^+$ supported in $\Ga^+(R^4,J_4,\tau_4)$, there exists a sequence of symbols $b^+_k \in S(-k,-\infty)$, for $k=0,...,N-1$, supported in $\Ga^+(R^3,J_3,\tau_3)$
such that
\begin{align}
J^+_h(a^+(h))J^+_h(b^+(h))^\star = Op^h(\chi^+)+ h^N Op^h(\tilde{r}^+_N(h)), \label{expansion find b}
\end{align}
where $a^+(h)=\sum_{j=0}^{N-1}h^ja^+_j$ is given in \emph{Proposition} $\ref{prop find a}$ and $b^+(h)=\sum_{k=0}^{N-1}h^k b^+_k$ and $(\tilde{r}^+_N(h))_{h\in (0,1]}$ is bounded in $S(-N,-\infty)$.
\end{prop}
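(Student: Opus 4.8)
The plan is to compute the composition $J^+_h(a^+(h))J^+_h(b^+(h))^\star$ explicitly as an oscillatory integral, recognize it as a semiclassical pseudo-differential operator, and then choose the symbols $b^+_k$ recursively so that its full symbol agrees with $\chi^+$ up to an $O(h^N)$ remainder.

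First I would write down the Schwartz kernel of $J^+_h(a^+(h))J^+_h(b^+(h))^\star$. Inserting the definitions of $J^+_h$ and $J^+_h(\cdot)^\star$ and integrating out the intermediate spatial variable produces a factor $(2\pi h)^d\delta(\xi-\xi')$, so the kernel reduces to
\[
(2\pi h)^{-d}\int_{\R^d} e^{ih^{-1}\left(S^+_R(x,\xi)-S^+_R(z,\xi)\right)}\,a^+(h)(x,\xi)\,\overline{b^+(h)(z,\xi)}\,d\xi .
\]
By Taylor's formula, $S^+_R(x,\xi)-S^+_R(z,\xi)=(x-z)\cdot\Phi(x,z,\xi)$ with $\Phi(x,z,\xi)=\int_0^1\nabla_xS^+_R(z+t(x-z),\xi)\,dt$; by $(\ref{matrix phase estimate})$ and $(\ref{high order phase estimate})$, for $R$ large the map $\xi\mapsto\Phi(x,z,\xi)$ is, on the relevant conic region, a diffeomorphism onto its image, close to the identity, with inverse $\xi=\Psi(x,z,\eta)$ and Jacobian $J(x,z,\eta)$ uniformly controlled. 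The change of variables $\eta=\Phi(x,z,\xi)$ then turns the kernel into that of $Op^h$ with the two-sided amplitude $c(h)(x,z,\eta)=a^+(h)(x,\Psi)\,\overline{b^+(h)(z,\Psi)}\,J$, and the standard reduction to a left symbol (licit here because $a^+(h),b^+(h)\in S(0,-\infty)$ and the phase is controlled by Remark $\ref{rem hamilton-jacobi equation}$) yields
\[
J^+_h(a^+(h))J^+_h(b^+(h))^\star = Op^h(d(h)) + h^N Op^h(\text{rem}_N(h)), \qquad d(h)\sim\sum_{j\ge0}h^j d_j ,
\]
where $d_j$ depends on $a^+_0,\dots,a^+_j$, on $b^+_0,\dots,b^+_j$ and their derivatives, and on $S^+_R$, with leading term $d_0(x,\eta)=a^+_0(x,\xi)\,\overline{b^+_0(x,\xi)}\,j_0(x,\eta)$ evaluated at $\eta=\nabla_xS^+_R(x,\xi)$, $j_0$ the associated non-vanishing Jacobian factor.

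Next I would choose the $b^+_k$ recursively to enforce $d_0=\chi^+$ and $d_j=0$ for $j=1,\dots,N-1$. Since $d_j=a^+_0\,\overline{b^+_j}\,j_0+(\text{terms in }b^+_0,\dots,b^+_{j-1})$, and $a^+_0$ does not vanish on $\Ga^+(R,J_1,\tau_1)$ by $(\ref{non vanishing a_0})$ while $j_0$ is close to $1$, each equation is solved by division; e.g. $b^+_0(x,\xi)=\overline{\chi^+(x,\nabla_xS^+_R(x,\xi))\big/\bigl(a^+_0(x,\xi)\,j_0(x,\nabla_xS^+_R(x,\xi))\bigr)}$, and similarly for $b^+_k$ with the lower-order data on the right-hand side. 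The crucial point is the nesting of regions: since $\nabla_xS^+_R(x,\xi)-\xi$ is small by $(\ref{estimate on phase})$ for $R\gg1$, the condition $(x,\nabla_xS^+_R(x,\xi))\in\Ga^+(R^4,J_4,\tau_4)$ forces $(x,\xi)\in\Ga^+(R^3,J_3,\tau_3)$, on which $a^+_0=\chi^+_{1\rightarrow2}A^+_0=A^+_0\neq0$; hence all divisions are legitimate and yield $b^+_k\in S(-k,-\infty)$ supported in $\Ga^+(R^3,J_3,\tau_3)$, the decay order dropping by one at each step because the right-hand sides involve derivatives of the previously constructed symbols. Collecting the pseudo-differential remainder of the symbol reduction together with the $h^NR_N(h)$-type contributions and checking that the total is bounded in $S(-N,-\infty)$ uniformly in $h\in(0,1]$ — again using the uniform bounds of Remark $\ref{rem hamilton-jacobi equation}$ — gives $(\ref{expansion find b})$.

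The main obstacle I anticipate is the bookkeeping of supports through the change of variables $\xi\mapsto\nabla_xS^+_R(x,\xi)$: one must check it is a genuine diffeomorphism of the outgoing cone (uniformly for $R\gg1$) and that it maps the nested regions correctly, so that the division by $a^+_0$ is licit and each $b^+_k$ stays supported in $\Ga^+(R^3,J_3,\tau_3)$; coupled with controlling the pseudo-differential remainder $\tilde{r}^+_N(h)$ uniformly in $h$ even though the amplitudes are only compactly supported in $\xi$, not in $x$.
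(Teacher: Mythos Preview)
Your proposal is correct and follows essentially the same route as the paper: write the kernel, straighten the phase via the Taylor map $\Phi(x,z,\xi)=\int_0^1\nabla_xS^+_R(z+t(x-z),\xi)\,dt$ (the paper's $\eta^+$), change variables to its inverse, reduce the two-sided amplitude to a left symbol, and solve the resulting triangular system for $b^+_k$ by dividing by $a^+_0$ times the Jacobian, with the support nesting coming from $\nabla_xS^+_R(x,\xi)=\xi+O(R^{-\rho})$. The only refinement the paper makes explicit is that the map $\xi\mapsto\Phi(x,z,\xi)$ is a \emph{global} diffeomorphism of $\R^d$ (its Lemma on $\eta^+$ and $\xi^+$), not just on a conic region, which cleanly handles the amplitude reduction; otherwise the arguments coincide.
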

Before giving the proof, we need the following result (see \cite[Appendix]{BoucletthesisphD} or \cite[Lemma 3.3]{Boucletdistribution}). 
\begin{lem} \label{lem variable eta}
Let $S^+_{R}:=S^+_{1,R}$ be as in \emph{Proposition} $\ref{prop hamilton-jacobi equation}$. For $x,y,\xi \in \R^d$, we define
\begin{align}
\eta^+(R,x,y,\xi):= \int_{0}^{1} \nabla_x S^+_R (y + \lambda (x-y),\xi) d\lambda. \label{define eta}
\end{align}
Then for $R>0$ large enough, we have the following properties.
\begin{itemize}
\item[i.] For all $x,y \in \R^d$, the map $\xi \mapsto \eta^+(R,x,y,\xi)$ is a diffeomorphism from $\R^d$ onto itself. Let $\eta \mapsto \xi^+(R,x,y,\eta)$ be its inverse.
\item[ii.] There exists $C>1$ such that for all $x,y, \eta \in \R^d$, 
\[
C^{-1} \scal{\eta} \leq \scal{ \xi^+(R,x,y, \eta)} \leq C \scal{\eta}.
\]
\item[iii.] For all $\alpha, \alpha', \beta \in \N^d$, there exists $C_{\alpha\alpha'\beta}>0$ such that for all $x,y, \eta \in \R^d$ and all $k \leq |\alpha|, k' \leq |\alpha'|$,
\[
|\partial^\alpha_x \partial^{\alpha'}_y \partial^\beta_\eta \left( \xi^+(R,x,y,\eta)- \eta\right) | \leq C_{\alpha\alpha'\beta} \scal{x}^{-k} \scal{y}^{-\rho-k'} \scal{x-y}^{\rho+k+k'}.
\]
\end{itemize}
\end{lem}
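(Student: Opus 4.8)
The plan is to deduce Lemma $\ref{lem variable eta}$ from the implicit function theorem applied to the near-identity map $\xi \mapsto \eta^+(R,x,y,\xi)$, together with a symbol-type bookkeeping argument; recall that $S^+_R = S^+_{1,R}$ obeys $(\ref{estimate on phase})$, $(\ref{matrix phase estimate})$ and $(\ref{high order phase estimate})$ with $\ep = 1$. First I would record symbol estimates for $\eta^+ - \xi$. Writing
\[
\eta^+(R,x,y,\xi)-\xi = \int_0^1 G\big(y+\lambda(x-y),\xi\big)\,d\lambda, \qquad G(z,\xi):=\nabla_z\big(S^+_R(z,\xi)-z\cdot\xi\big),
\]
estimate $(\ref{estimate on phase})$ gives $|\partial_z^a\partial_\xi^b G(z,\xi)| \le C_{ab}\min\{R^{-\rho-|a|},\scal{z}^{-\rho-|a|}\}$; in particular $|\eta^+(R,x,y,\xi)-\xi| \le CR^{-\rho}$, which is $\le 1/2$ for $R$ large. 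Differentiating under the integral sign (each $\partial_{x_j}$ produces a bounded factor $\lambda$ and each $\partial_{y_j}$ a bounded factor $1-\lambda$ together with a $z$-derivative of $G$) and using Peetre's inequality $\scal{y+\lambda(x-y)} \gtrsim \scal{x}/\scal{x-y}$ and $\scal{y+\lambda(x-y)} \gtrsim \scal{y}/\scal{x-y}$, uniformly in $\lambda \in [0,1]$, I obtain for all $k \le |\alpha|$ and $k' \le |\alpha'|$,
\[
\big|\partial_x^\alpha\partial_y^{\alpha'}\partial_\xi^\beta\big(\eta^+(R,x,y,\xi)-\xi\big)\big| \le C_{\alpha\alpha'\beta}\,\scal{x}^{-k}\scal{y}^{-\rho-k'}\scal{x-y}^{\rho+k+k'}.
\]

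For $(i)$ I would note that the $\xi$-differential $\nabla_\xi\eta^+ = \int_0^1 \nabla_\xi\nabla_x S^+_R(y+\lambda(x-y),\xi)\,d\lambda$ satisfies $\|\nabla_\xi\eta^+ - \mathrm{Id}\| \le 1/2$ for $R$ large, by $(\ref{matrix phase estimate})$ and the fact that $\nabla_\xi\nabla_x S^+_R$ is the transpose of $\nabla_x\nabla_\xi S^+_R$. Hence $F(\xi):=\eta^+(R,x,y,\xi)-\xi$ is $\tfrac12$-Lipschitz, the map $\xi\mapsto\xi+F(\xi)=\eta^+(R,x,y,\xi)$ is a bijection of $\R^d$ (for each $\eta$, $\xi\mapsto\eta-F(\xi)$ is a $\tfrac12$-contraction with a unique fixed point) whose differential is everywhere invertible, hence a global diffeomorphism; smoothness of the inverse $\eta\mapsto\xi^+(R,x,y,\eta)$ and its smooth dependence on $(x,y)$ then come from the implicit function theorem. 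For $(ii)$, from $\eta = \xi^+ + \big(\eta^+(R,x,y,\xi^+)-\xi^+\big)$ and the first paragraph one gets $|\xi^+-\eta| \le CR^{-\rho} \le C$ uniformly in $x,y,\eta$, which immediately yields $C^{-1}\scal{\eta} \le \scal{\xi^+} \le C\scal{\eta}$.

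Part $(iii)$ I would prove by induction on $|\alpha|+|\alpha'|+|\beta|$. Setting $\phi := \xi^+-\eta$ and $H(x,y,\xi):=\eta^+(R,x,y,\xi)-\xi$, one has $\phi = -H(x,y,\eta+\phi)$; applying $\partial_x^\alpha\partial_y^{\alpha'}\partial_\eta^\beta$ and expanding by the chain rule (Fa\`a di Bruno formula) shows that $\big(\mathrm{Id}+\nabla_\xi H(x,y,\xi^+)\big)\,\partial_x^\alpha\partial_y^{\alpha'}\partial_\eta^\beta\phi$ is a universal linear combination of terms $\big(\partial_x^{\alpha_0}\partial_y^{\alpha'_0}\partial_\xi^{\mu}H\big)(x,y,\xi^+)\cdot\prod_{i}\partial_x^{\gamma_i}\partial_y^{\gamma'_i}\partial_\eta^{\delta_i}\phi$ with $\alpha_0+\sum_i\gamma_i=\alpha$, $\alpha'_0+\sum_i\gamma'_i=\alpha'$, $\sum_i\delta_i=\beta$ and each $(\gamma_i,\gamma'_i,\delta_i)$ of order strictly smaller than $(\alpha,\alpha',\beta)$. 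Since $\|\nabla_\xi H\| \le 1/2$, the factor $\mathrm{Id}+\nabla_\xi H(x,y,\xi^+)$ is invertible with uniformly bounded inverse, so it remains to insert the symbol estimates of the first paragraph into the $H$-factor (evaluated at $\xi^+$, which is harmless since those bounds do not involve $\xi$) and the inductive hypothesis into the $\phi$-factors, and to choose the free parameters $k_\bullet\le|\alpha_\bullet|$, $k'_\bullet\le|\alpha'_\bullet|$ appropriately. I expect the main obstacle to be precisely this last step: checking that after the expansion the numerous factors $\scal{x}^{-1}$, $\scal{y}^{-\rho-1}$ and, above all, $\scal{x-y}$ recombine into exactly the weight $\scal{x}^{-k}\scal{y}^{-\rho-k'}\scal{x-y}^{\rho+k+k'}$ with $k\le|\alpha|$, $k'\le|\alpha'|$ — keeping the exponent of $\scal{x-y}$ from exceeding $\rho+k+k'$ while discarding any surplus $\scal{y}$-decay, all uniformly in $R\gg1$. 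This bookkeeping is carried out in detail in \cite{Boucletdistribution} and \cite[Appendix]{BoucletthesisphD}, whose argument transfers verbatim here since $S^+_R$ satisfies exactly the phase estimates $(\ref{estimate on phase})$, $(\ref{matrix phase estimate})$, $(\ref{high order phase estimate})$ used there.
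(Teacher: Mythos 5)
The paper does not actually prove Lemma \ref{lem variable eta}: it states it with only a citation to \cite[Appendix]{BoucletthesisphD} and \cite[Lemma 3.3]{Boucletdistribution}, the very sources you invoke. There is therefore no in-paper argument to compare against; your proposal is a reconstruction of that cited proof, and the overall architecture you describe (global diffeomorphism from the near-identity bound $\|\nabla_\xi\eta^+-\mathrm{Id}\|\le 1/2$ via a contraction fixed point, norm equivalence from $|\xi^+-\eta|\lesssim R^{-\rho}$, then an inductive transfer of symbol estimates to the inverse via $\phi=-H(x,y,\eta+\phi)$) is the standard and correct route. Parts (i) and (ii) as you present them are complete.

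For part (iii) the difficulty you flag is genuine, and it is worth saying where it actually bites. If one takes the inductive hypothesis to be literally the bound in the statement, then every $\phi$-derivative carries a factor $\scal{y}^{-\rho}\scal{x-y}^{\rho}$ (already in the weakest case $k=k'=0$), so a Fa\`a di Bruno term containing $n\ge 2$ such factors produces $\scal{y}^{-n\rho}\scal{x-y}^{n\rho}$, and the excess $\left(\scal{x-y}/\scal{y}\right)^{(n-1)\rho}$ cannot be absorbed into the target weight. The standard remedy is to prove, by induction, the slightly stronger statement $|\partial_x^\gamma\partial_y^{\gamma'}\partial_\eta^\delta\phi|\le C\scal{x}^{-a}\scal{y}^{-b}\scal{x-y}^{a+b}$ for \emph{all} $0\le a\le|\gamma|$ and $0\le b\le\rho+|\gamma'|$; the lemma's estimate is then the special case $a=k$, $b=\rho+k'$. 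With the free lower endpoint $b=0$ available, in each Fa\`a di Bruno term one reserves the $\rho$-decay for exactly one factor (conveniently the $H$-factor, where by your first paragraph $|\partial_x^{\alpha_0}\partial_y^{\alpha_0'}\partial_\xi^\mu H|\le C\scal{x}^{-a_0}\scal{y}^{-b_0}\scal{x-y}^{a_0+b_0}$ holds for any $a_0\le|\alpha_0|$, $b_0\le\rho+|\alpha_0'|$) and uses only $a_i\le|\gamma_i|$, $b_i\le|\gamma_i'|$ on the remaining $\phi$-factors; the exponents then add up to $k=a_0+\sum a_i\le|\alpha|$ and $\rho+k'=b_0+\sum b_i$ with $k'\le|\alpha'|$, and the $\scal{x-y}$-power is exactly $\rho+k+k'$. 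Recording this strengthened inductive statement is the concrete device that makes the bookkeeping close, and would turn your sketch into a self-contained proof rather than a pointer to \cite{Boucletdistribution}.
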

\noindent \textit{Proof of \emph{Proposition} $\ref{prop find b}$.} We firstly consider the general term $J^+_h(a^+)J^+_h(b^+)^\star$ and write its kernel as
\[
K^+_h(x,y)= (2\pi h)^{-d} \int_{\R^d} e^{i h^{-1}\left( S^+_R(x,\xi)-S^+_R(y,\xi)\right)} a^+(x,\xi)\overline{b^+(y,\xi)} d\xi.
\]
By Taylor's formula, we have
\[
S^+_R(x,\xi)-S^+_R(y,\xi)= \scal{x-y, \eta^+(R,x,y,\xi)},
\]
where $\eta^+$ given in $(\ref{define eta})$. By change of variable $\xi \mapsto \xi^+(R,x,y,\eta)$, the kernel becomes
\[
K^+_h(x,y)=(2\pi h)^{-d} \int_{\R^d} e^{ih^{-1}(x-y)\eta} a^+(x,\xi^+(R,x,y,\eta)) \overline{b^+(y,\xi^+(R,x,y,\eta))} |\det \partial_\eta \xi^+(R,x,y,\eta)| d\eta.
\]
Now, using Lemma $(\ref{lem variable eta})$, the symbolic calculus gives
\[
J^+_h(a^+)J^+_h(b^+)^\star= \sum_{l=0}^{N-1} h^l Op^h((a^+\triangleright b^+)_l)+ h^N Op^h(\tilde{r}^+_N(h)),
\]
where $(a^+ \triangleright b^+)_l \in S(-l,-\infty)$ is of the form 
\[
(a^+\triangleright b^+)_l(x,\eta) = \sum_{|\alpha|=l}  \frac{\left.\partial^\alpha_y D_\eta^{\alpha} c^+(x,y,\eta) \right|_{y=x} }{\alpha!}, 
\]
for $l=0,...,N-1$ with
\[
c^+(x,y,\eta)= a^+(x,\xi^+(R,x,y,\eta)) \overline{b^+(y,\xi^+(R,x,y,\eta))} |\det \partial_\eta \xi^+ (R,x,y,\eta)|,
\]
and $(\tilde{r}_N^+(h))_{h\in(0,1]}$ is bounded in $S(-N,-\infty)$. We have now
\begin{align}
J^+_h (a^+(h))J^+_h(b^+(h))^\star&= \sum_{j,k} h^{j+k} J^+_h(a^+_j)J^+_h(b^+_k)^\star \nonumber \\
&= \sum_{j+k+l=0}^{N-1} h^{j+k+l} Op^h ((a^+_j\triangleleft b^+_k)_l) + h^N Op^h(\tilde{r}^+_N(h)). \nonumber
\end{align}
Compare with $(\ref{expansion find b})$, the result follows if we solve the following equations:
\begin{align}
(a^+_0 \triangleleft b^+_0)_0 &= \chi^+, \nonumber \\
(a^+_0 \triangleleft b^+_r)_0 &= - \sum_{j+k+l=r \atop k \leq r-1} (a^+_j \triangleleft b^+_k)_l, \quad r=1,...,N-1. \nonumber
\end{align}
We can define $b^+_0,...,b^+_{N-1}$ iteratively by
\begin{align}
\overline{b^+_0(x,\xi)} &= \chi^+(x,\eta^+(R,x,x,\xi)) \Big( a^+_0(x,\xi) \left| \det \partial_\eta \xi^+ (R,x,x,\eta^+(R,x,x,\xi))\right| \Big)^{-1}, \nonumber \\
\overline{b^+_r(x,\xi)} &= - \sum_{j+k+l=r \atop k \leq r-1} (a^+_j\triangleleft b^+_k)_l(x,\eta^+(R,x,x,\xi)) \Big( a^+_0(x,\xi) \left| \det \partial_\eta \xi (R,x,x,\eta^+(R,x,x,\xi))\right| \Big)^{-1},  \nonumber
\end{align}
for $r=1,...,N-1$. Note that by $(\ref{non vanishing a_0})$ and Lemma $\ref{lem variable eta}$, the term in $(\cdots)^{-1}$ cannot vanish on the support of $\chi^+(\cdot,\eta^+(R,\cdot,\cdot,\cdot))$. Thus the above functions are well-defined. Moreover, by choosing $R>0$ large enough with the fact
\[
\eta^+(R,x,x,\xi)= \nabla_x S^+_R(x,\xi)=\xi + O(\min\{R^{-\rho},\scal{x}^{-\rho}\}),
\]
we see that the support of $\chi^+(x,\eta^+(R,x,x,\xi))$ is contained in $\Ga^+(R^3,J_3,\tau_3)$. This completes the proof of Proposition $\ref{prop find b}$. 
\defendproof \newline
\indent By $(\ref{IK Duhamel formula})$, Proposition $\ref{prop find a}$ and Proposition $\ref{prop find b}$, we are able to state the Isozaki-Kitada parametrix for the fractional Schr\"odinger equation at high frequency.
\begin{theorem} \label{theorem IK parametrix fractional schrodinger equation}
Let $\sigma \in (0,\infty)$. Fix $J_4 \Subset (0,+\infty)$ open interval containing $\emph{supp}(f)$ and $-1 <\tau_4 <1$. Choose arbitrary open intervals $J_1, J_2, J_3 $ such that $J_4 \Subset J_3 \Subset J_2 \Subset J_1 \Subset (0,+\infty)$ and arbitrary $\tau_1,\tau_2,\tau_3$ such that $-1<\tau_1 <\tau_2 <\tau_3 < \tau_4 <1$. Then for $R>0$ large enough, we can find sequences of symbols 
 \[
 a^\pm_j \in S(-j,-\infty), \quad \emph{supp}(a^\pm_j) \subset \Ga^\pm(R,J_1,\tau_1),
 \]
such that for all 
 \[
 \chi^\pm \in S(0,-\infty), \quad \emph{supp}(\chi^\pm) \subset \Ga^\pm(R^4,J_4,\tau_4),
 \]
there exist sequences of symbols 
 \[
 b^\pm_k \in S(-k,-\infty), \quad \emph{supp}(b^\pm_k) \subset \Ga^\pm(R^3,J_3,\tau_3), 
 \]
such that for all $N \geq 1$, for all $h \in (0,1]$ and all $\pm t \geq 0$,
 \begin{align}
 e^{-ith^{-1}\psi(h^2P)} Op^h(\chi^\pm) = J^\pm_h(a^\pm(h)) e^{-ith^{-1}(h\Lambda)^\sigma} J^\pm_h(b^\pm(h))^\star + R^\pm_N(t,h), \nonumber
 \end{align}
where the phase functions $S^\pm_{R}:=S^\pm_{1,R}$ are as in \emph{Proposition} $\ref{prop hamilton-jacobi equation}$ and the remainder terms 
 \[
  R^\pm_N(t,h) = R^\pm_1(N,t,h)+R^\pm_2(N,t,h)+R^\pm_3(N,t,h)+R^\pm_4(N,t,h),
 \]
with
 \begin{align}
R^\pm_1(N,t,h) &= -h^{N-1} e^{-ith^{-1}\psi(h^2P)} Op^h (\tilde{r}^\pm_N(h)), \nonumber  \\
R^\pm_2(N,t,h) &= -i h^{N-1} \int_{0}^{t} e^{-i(t-s)h^{-1}\psi(h^2P)} R_N(h) J^\pm_h(a^\pm(h)) e^{-ish^{-1}(h\Lambda)^\sigma} J^\pm_h(b^\pm(h))^\star ds, \nonumber \\
R^\pm_3(N,t,h) &= -i h^{N-1} \int_{0}^{t} e^{-i(t-s)h^{-1}\psi(h^2P)} J^\pm_h(r^\pm_N(h)) e^{-ish^{-1}(h\Lambda)^\sigma} J^\pm_h(b^\pm(h))^\star ds, \nonumber  \\
R^\pm_4(N,t,h)&=  -i h^{-1} \int_{0}^{t} e^{-i(t-s)h^{-1}\psi(h^2P)} J^\pm_h(\check{a}^\pm(h)) e^{-ish^{-1}(h\Lambda)^\sigma} J^\pm_h(b^\pm(h))^\star ds. \nonumber
 \end{align}
Here $(\tilde{r}^\pm_N(h))_{h\in (0,1]}, (r^\pm_N(h))_{h\in (0,1]}$ are bounded in $S(-N,-\infty)$, $R_N(h)$ is as in $(\ref{parametrix psi high freq})$, $(\check{a}^\pm(h))_{h\in (0,1]}$ are bounded in $S(0,-\infty)$ and are finite sums depending on $N$ of the form 
 \begin{align}
 \check{a}^\pm(h)=\sum_{|\alpha| \geq 1} \check{a}^\pm_\alpha(h) \partial^\alpha_x \chi^\pm_{1\rightarrow 2}, \label{a check +-}
 \end{align}
where $(\check{a}^\pm_\alpha(h))_{h\in (0,1]}$ are bounded in $S(0,-\infty)$ and $\chi^\pm_{1 \rightarrow 2}$ are given in $(\ref{define cutoff})$.
\end{theorem}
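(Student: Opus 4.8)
The plan is to assemble the theorem from the three ingredients already prepared: the Hamilton--Jacobi phases $S^\pm_R = S^\pm_{1,R}$ of Proposition \ref{prop hamilton-jacobi equation}, the outgoing/incoming amplitudes $a^\pm_j$ of Proposition \ref{prop find a}, and the dual amplitudes $b^\pm_k$ of Proposition \ref{prop find b}, glued together by the Duhamel identity $(\ref{IK Duhamel formula})$. I treat only the outgoing sign $(+)$; the incoming case is obtained by reversing time, the transport flow $X^-(t)$ of Proposition \ref{prop transport flow} being defined for $t\le 0$, which is exactly why the statement is restricted to $\pm t\ge 0$.

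First I fix the nested data $J_4\Subset J_3\Subset J_2\Subset J_1\Subset(0,+\infty)$ and $-1<\tau_1<\tau_2<\tau_3<\tau_4<1$, apply Proposition \ref{prop hamilton-jacobi equation} (with $\ep=1$) to obtain $S^+_R$ for $R$ large, and then Proposition \ref{prop find a} to produce symbols $a^+_j\in S(-j,-\infty)$ supported in $\Ga^+(R,J_1,\tau_1)$ for which $(\ref{express of small contribution})$ holds and $a^+_0$ is non-vanishing in the sense of $(\ref{non vanishing a_0})$. With these $a^+_j$ fixed, given any $\chi^+\in S(0,-\infty)$ supported in $\Ga^+(R^4,J_4,\tau_4)$, Proposition \ref{prop find b} supplies $b^+_k\in S(-k,-\infty)$ supported in $\Ga^+(R^3,J_3,\tau_3)$ satisfying $(\ref{expansion find b})$, the family $(\tilde r^+_N(h))_{h\in(0,1]}$ being bounded in $S(-N,-\infty)$.

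Next I start from $(\ref{IK Duhamel formula})$, obtained by integrating $\frac{d}{ds}\bigl(e^{-i(t-s)h^{-1}\psi(h^2P)}J^+_h(a^+(h))e^{-ish^{-1}(h\Lambda)^\sigma}\bigr)$ over $s\in[0,t]$, and multiply it on the right by $J^+_h(b^+(h))^\star$. On the left-hand side I substitute $(\ref{expansion find b})$ for $J^+_h(a^+(h))J^+_h(b^+(h))^\star$: this produces $e^{-ith^{-1}\psi(h^2P)}Op^h(\chi^+)$ together with an $h^N$-small correction $e^{-ith^{-1}\psi(h^2P)}Op^h(\tilde r^+_N(h))$, which becomes $R^+_1(N,t,h)$. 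Inside the Duhamel integral I insert $(\ref{express of small contribution})$ in place of $\psi(h^2P)J^+_h(a^+(h))-J^+_h(a^+(h))(h\Lambda)^\sigma$; its three summands $h^N R_N(h)J^+_h(a^+(h))$, $h^N J^+_h(r^+_N(h))$ and $J^+_h(\check a^+(h))$, each carried through $-ih^{-1}\int_0^t e^{-i(t-s)h^{-1}\psi(h^2P)}(\,\cdot\,)e^{-ish^{-1}(h\Lambda)^\sigma}J^+_h(b^+(h))^\star\,ds$, give exactly $R^+_2$, $R^+_3$, $R^+_4$. Collecting the four remainder terms yields the asserted identity, and the stated symbol classes and support properties of $a^\pm_j$, $b^\pm_k$, $\tilde r^\pm_N$, $r^\pm_N$, $\check a^\pm$ are precisely the ones furnished by Propositions \ref{prop parametrix phi}, \ref{prop find a} and \ref{prop find b} (in particular $\check a^\pm$ has the form $(\ref{a check +-})$ inherited from $(\ref{define a check})$).

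No substantial obstacle remains at this stage: all the analytic content — solving the eikonal equation in the outgoing/incoming regions, integrating the transport equations along the flow $X^+$ with the correct decay, and carrying out the $J^+_h\cdot(J^+_h)^\star$ symbolic calculus via the change of variables $\xi\mapsto\xi^+(R,x,x,\eta)$ of Lemma \ref{lem variable eta} — has already been completed in the preceding propositions. The only delicate points are bookkeeping: tracking the correct power of $h$ in each remainder (the $h^{-1}$ from Duhamel against the $h^N$ from the symbols) and observing that every identity holds only for $\pm t\ge0$ since Proposition \ref{prop transport flow} propagates $X^+$ forward in time. Finally, the low-frequency parametrix $(\ref{IK low freq construction})$ follows by running the same argument with $Op^h$ replaced by the rescaled operators $Op_\ep$, the Fourier integral operators $J^\pm_h$ replaced by $\Jc^\pm_\ep$ as in $(\ref{rescaled FIO})$, and the phases $S^\pm_R$ replaced by $S^\pm_{\ep,R}$; one then uses the $\ep$-uniform symbol estimates of Remark \ref{rem property rescaled symbol}, the $\ep$-uniform solvability in Proposition \ref{prop hamilton-jacobi equation}, and Proposition \ref{prop parametrix phi low freq} in place of Proposition \ref{prop parametrix phi}, the extra factor $\zeta(\ep x)$ being harmless since $\chi^\pm_\ep$ is already supported in $\{|x|>R^4\}$.
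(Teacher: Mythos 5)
Your proposal is correct and follows exactly the same route as the paper, which gives the theorem as an immediate consequence of the Duhamel identity $(\ref{IK Duhamel formula})$, the transport-equation construction of Proposition $\ref{prop find a}$, and the dual-amplitude construction of Proposition $\ref{prop find b}$. The only thing worth flagging is a power-of-$h$ discrepancy: your derivation (correctly) yields $-h^{N}e^{-ith^{-1}\psi(h^2P)}Op^h(\tilde r^\pm_N(h))$ for $R^\pm_1$ whereas the stated theorem writes $-h^{N-1}$, but since $N$ is arbitrary this makes no difference to any subsequent estimate and is most plausibly a typo in the paper.
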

We now give the main steps for the construction of the Isozaki-Kitada parametrix at low frequency. For simplicity, we omit the $\pm$ sign. Let us start with the following Duhamel formula
\[
e^{-it\ep\psi(\ep^{-2}P)} \Jc_\ep(a_\ep) =  \Jc_\ep(a_\ep) e^{-it\ep\Lambda^\sigma} \nonumber - i \ep \int_{0}^{t} e^{-i(t-s)\ep\psi(\ep^{-2}P)} \Big(\psi(\ep^{-2}P)\Jc_\ep(a_\ep) - \Jc_\ep(a_\ep) \Lambda^\sigma \Big) e^{-is\ep\Lambda^\sigma} ds.
\]
Thanks to the support of $a_\ep$, we can write
\[
\psi(\ep^{-2}P)\Jc_\ep(a_\ep) = \psi(\ep^{-2}P) \zeta_1(\ep x) \Jc_\ep (a_\ep),
\] 
where $\zeta_1 \in C^\infty(\R^d)$ is supported outside $B(0,1)$ and satisfies $\zeta_1(x)=1$ for $|x|>R$. Using the parametrix of $\psi(\ep^{-2}P) \zeta_1(\ep x)$ given in Proposition $\ref{prop parametrix phi low freq}$ (by taking the adjoint), we have
\[
\psi(\ep^{-2}P) \zeta_1(\ep x)= \sum_{k=0}^{N-1}\tilde{\zeta_1}(\ep x) Op_\ep(q_{\ep,k}) \zeta_1(\ep x) + R_N(\ep),
\]
where $q_{\ep,0}(x,\xi)=\psi(p_\ep(x,\xi))=\tilde{f}(p_\ep(x,\xi)) \sqrt{p_\ep(x,\xi)}^\sigma$, $\text{supp}(q_{\ep,k}) \subset \text{supp}(\psi \circ p_\ep)$ and $(R_N(\ep))_{\ep \in (0,1]}$ satisfies $(\ref{estimate remainder low freq})$. Here $\tilde{\zeta_1} \in C^\infty(\R^d)$ is supported outside $B(0,1)$ and $\tilde{\zeta_1} =1$ near $\text{supp}(\zeta_1)$. We want to find $a_\ep = \sum_{j=0}^{N-1} a_{\ep,j}$ so that the term $\psi(\ep^{-2}P)\Jc_\ep(a_\ep) - \Jc_\ep(a_\ep) \Lambda^\sigma$ has a small contribution. By the choice of cutoff functions and the action of pseudo-differential operators on Fourier integral operators given in Proposition $\ref{prop action PDO on FIO}$ with $h=1$, we have
\begin{align}
\psi (\ep^{-2}P) \Jc_\ep(a_\ep) - \Jc_\ep(a_\ep) \Lambda^\sigma &= \sum_{r=0}^{N-1} \left( \sum_{k+j+l=r}\Jc_\ep((q_{\ep,k} \triangleleft a_{\ep,j})_l) - \Jc_\ep(a_{\ep,r} |\xi|^\sigma) \right) \nonumber \\ 
& \mathrel{\phantom{=}} + R_N(\ep) \Jc_\ep(a_\ep) + \Jc_\ep(r_N(\ep)), \label{intertwin approximation}
\end{align}
where $(r_N(\ep))_{\ep \in (0,1]}$ is bounded in $S(-N,-\infty)$. This implies that we need to find $(a_{\ep,j})_{\ep \in (0,1]}$ bounded  in $S(-j,-\infty)$ supported in $\Ga(R,J_1, \tau_1)$ such that
\[
\sum_{k+j+l=r}(q_{\ep,k}\triangleleft a_{\ep,j})_l - a_{\ep,r} |\xi|^\sigma =0, \quad r=0,...,N-1.
\]
By noting that if $p_\ep(x,\xi) \in \text{supp}(f)$, then $q_{\ep, 0}(x,\xi)= \sqrt{p_\ep(x,\xi)}^\sigma$. This leads to the following Hamilton-Jacobi and transport equations,
\begin{align}
p_\ep(x,\nabla_x S_{\ep,R}(x,\xi))  &= |\xi|^2, \label{Hamilton-Jacobi equation low freq} \\
(q_{\ep,0}\triangleleft a_{\ep,0})_1 + (q_{\ep,1} \triangleleft a_{\ep,0})_0 &= 0 \label{transport_0 low freq} \\
(q_{\ep,0} \triangleleft a_{\ep,r})_1 + (q_{\ep,1} \triangleleft a_{\ep,r})_0 &= - \sum_{k+j+l=r+1 \atop j \leq r-1} (q_{\ep,k} \triangleleft a_{\ep,j})_l, \quad r=1,...,N-1. \label{transport_r low freq}
\end{align}
We can solve $(\ref{Hamilton-Jacobi equation low freq})$ on $\Ga^\pm(R,J_1,\tau_1)$ using Proposition $\ref{prop hamilton-jacobi equation}$. We then solve $(\ref{transport_0 low freq}), (\ref{transport_r low freq})$ on $\Ga^\pm(R,J_1,\tau_1)$ and extend solutions globally on $\R^{2d}$. We obtain
\[
\psi(\ep^{-2}P) \Jc_\ep(a_\ep) - \Jc_\ep(a_\ep) \Lambda^\sigma = R_N(\ep) \Jc_\ep(a_\ep) + \Jc_\ep(r_N(\ep)) + \Jc_\ep(\check{a}(\ep)),
\]
where $(\check{a}(\ep))_{\ep \in (0,1]}$ is bounded in $S(0,-\infty)$ and is a finite sum depending on $N$ of the form 
\[
\check{a}(\ep) = \sum_{|\alpha| \geq 1} \check{a}_\alpha(\ep) \partial^\alpha_x \chi_{1\rightarrow 2},
\]
with $(\check{a}_\alpha(\ep))_{\ep \in (0,1]}$ bounded in $S(0,-\infty)$ and $\chi_{1\rightarrow 2}$ as in $(\ref{define cutoff})$. \newline
\indent Next, we can find bounded families of symbols $b_{\ep,k} \in S(-k,-\infty)$ for $k=0,...,N-1$ supported in $\Ga(R^3,J_3,\tau_3)$ such that
\[
\Jc_\ep(a_\ep)\Jc_\ep(b_\ep)^\star = Op_\ep(\chi_\ep)\zeta(\ep x) + Op_\ep(\tilde{r}_N(\ep)) \zeta(\ep x),
\]
where $b_\ep =\sum_{k=0}^{N-1} b_{\ep,k}$ and $(\tilde{r}_N(\ep))_{\ep \in (0,1]}$ is bounded in $S(-N,-\infty)$. This is possible by writing for $R$ large enough $\Jc_\ep(b_\ep)=\zeta(\ep x) \Jc_\ep(b_\ep)$ and taking the adjoint. We have the following Isozaki-Kitata parametrix for the fractional Schr\"odinger equation at low frequency.
\begin{theorem} \label{theorem Isozaki-Kitada parametrix low freq}
Let $\sigma \in (0,\infty)$, $\zeta \in C^\infty(\R^d)$ be supported outside $B(0,1)$ and equal to $1$ near infinity. Fix $J_4 \Subset (0,+\infty)$ open interval containing $\emph{supp}(f)$ and $-1 < \tau_4 <1$. Choose arbitrary open intervals $J_1, J_2, J_3$ such that $J_4 \Subset J_3 \Subset J_2 \Subset J_1 \Subset (0,+\infty)$ and arbitrary $\tau_1, \tau_2, \tau_3$ such that $-1 <\tau_1 <\tau_2 <\tau_3 <\tau_4 <1$. Then for $R>0$ large enough, we can find bounded families of symbols
\[
(a^\pm_{\ep,j})_{\ep \in (0,1]} \in S(-j, -\infty), \quad \emph{supp}(a^\pm_{\ep,j}) \subset \Ga^\pm(R,J_1,\tau_1),
\]
such that for all
\[
(\chi^\pm_\ep)_{\ep \in (0,1]} \in S(0,-\infty), \quad \emph{supp}(\chi^\pm_\ep) \subset \Ga^\pm(R^4, J_4, \tau_4),
\]
there exists families of symbols
\[
(b^\pm_{\ep,k})_{\ep \in (0,1]} \in S(-k, -\infty), \quad \emph{supp}(b^\pm_{\ep,k}) \subset \Ga^\pm(R^3, J_3, \tau_3),
\]
such that for all $N\geq 1$, for all $\ep \in (0,1]$ and all $\pm t\geq 0$,
\begin{align}
e^{-it\ep \psi(\ep^{-2}P)} Op_\ep (\chi^\pm_\ep) \zeta(\ep x) = \Jc^\pm_\ep(a^\pm_{\ep}) e^{-it\ep\Lambda^\sigma} \Jc^\pm_\ep(b^\pm_\ep)^\star + \Rc^\pm_N(t,\ep), \nonumber
\end{align}
where the phase functions $S^\pm_{\ep,R}$ are given in \emph{Proposition} $\ref{prop hamilton-jacobi equation}$ and the remainder terms
\[
 \Rc^\pm_N(t,\ep) = \Rc^\pm_1(N,t,\ep)+\Rc^\pm_2(N,t,\ep)+\Rc^\pm_3(N,t,\ep)+\Rc^\pm_4(N,t,\ep),
\]
with
 \begin{align}
\Rc^\pm_1(N,t,\ep) &= -e^{-it\ep \psi(\ep^{-2}P)} Op_\ep (\tilde{r}^\pm_N(\ep)) \zeta(\ep x), \nonumber  \\
\Rc^\pm_2(N,t,\ep) &= -i\ep \int_{0}^{t} e^{-i(t-s)\ep \psi(\ep^{-2}P)} R_N(\ep) \Jc^\pm_\ep(a^\pm_\ep) e^{-is\ep\Lambda^\sigma} \Jc^\pm_\ep(b^\pm_\ep)^\star ds, \nonumber \\
\Rc^\pm_3(N,t,\ep) &= -i \ep  \int_{0}^{t} e^{-i(t-s)\ep \psi(\ep^{-2}P)} \Jc^\pm_\ep(r^\pm_N(\ep)) e^{-is\ep\Lambda^\sigma} \Jc^\pm_\ep(b^\pm_\ep)^\star ds, \nonumber  \\
\Rc^\pm_4(N,t,\ep) &= -i \ep \int_{0}^{t} e^{-i(t-s)\ep \psi(\ep^{-2}P)} \Jc^\pm_\ep(\check{a}^\pm(\ep)) e^{-is\ep\Lambda^\sigma} \Jc^\pm_\ep(b^\pm_\ep)^\star ds. \nonumber
 \end{align}
Here $(\tilde{r}^\pm_N(\ep))_{\ep\in (0,1]}, (r^\pm_N(\ep))_{\ep\in (0,1]}$ are bounded in $S(-N,-\infty)$, $(R_N(\ep))_{\ep \in (0,1]}$ is given in \emph{Proposition} $\ref{prop parametrix phi low freq}$, $(\check{a}^\pm(\ep))_{\ep\in (0,1]}$ are bounded in $S(0,-\infty)$ and are finite sums depending on $N$ of the form 
\[
\check{a}^\pm(\ep) = \sum_{|\alpha| \geq 1} \check{a}^\pm_\alpha(\ep) \partial^\alpha_x \chi^\pm_{1\rightarrow 2}, \label{define a check +- low freq}
\]
where $(\check{a}^\pm_\alpha(\ep))_{\ep \in (0,1]}$ are bounded in $S(0,-\infty)$ and $\chi^\pm_{1\rightarrow 2}$ are as in $(\ref{define cutoff})$.
\end{theorem}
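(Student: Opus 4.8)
The plan is to run the same scheme as in the high frequency case (Theorem \ref{theorem IK parametrix fractional schrodinger equation}), but with semiclassical pseudodifferential operators replaced by rescaled ones $Op_\ep$ and Fourier integral operators replaced by the rescaled operators $\Jc^\pm_\ep$ of \eqref{rescaled FIO}, keeping track of the uniformity in $\ep\in(0,1]$ at every stage. By conjugation with $D_\ep$, all the rescaled objects become ordinary ($h=1$) operators built from the symbols $p_\ep$, $q_{\ep,j}$, and the phases $S^\pm_{\ep,R}$; these are exactly the objects controlled uniformly in $\ep$ by Remark \ref{rem property rescaled symbol}, Proposition \ref{prop hamilton-jacobi equation}, Remark \ref{rem hamilton-jacobi equation}, and Proposition \ref{prop parametrix phi low freq}. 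So the strategy is: (i) write the Duhamel formula relating $e^{-it\ep\psi(\ep^{-2}P)}\Jc^\pm_\ep(a^\pm_\ep)$ to $\Jc^\pm_\ep(a^\pm_\ep)e^{-it\ep\Lambda^\sigma}$; (ii) expand the intertwining error $\psi(\ep^{-2}P)\Jc^\pm_\ep(a^\pm_\ep)-\Jc^\pm_\ep(a^\pm_\ep)\Lambda^\sigma$ using the parametrix for $\psi(\ep^{-2}P)\zeta_1(\ep x)$ from Proposition \ref{prop parametrix phi low freq} together with Proposition \ref{prop action PDO on FIO} (with $h=1$) for the action of $Op_\ep$ on $\Jc^\pm_\ep$; (iii) kill this error to order $N$ by solving the Hamilton--Jacobi equation \eqref{Hamilton-Jacobi equation low freq} and transport equations \eqref{transport_0 low freq}--\eqref{transport_r low freq} on the outgoing/incoming region $\Ga^\pm(R,J_1,\tau_1)$, then cut off to globalize the symbols $a^\pm_{\ep,j}$, leaving only the remainder pieces $R_N(\ep)\Jc_\ep(a_\ep)$, $\Jc_\ep(r_N(\ep))$ and $\Jc_\ep(\check a(\ep))$; (iv) construct $b^\pm_{\ep,k}$ so that $\Jc^\pm_\ep(a^\pm_\ep)\Jc^\pm_\ep(b^\pm_\ep)^\star = Op_\ep(\chi^\pm_\ep)\zeta(\ep x)+Op_\ep(\tilde r_N(\ep))\zeta(\ep x)$, by writing $\Jc^\pm_\ep(b^\pm_\ep)=\zeta(\ep x)\Jc^\pm_\ep(b^\pm_\ep)$, conjugating by $D_\ep$, composing the two FIOs via the change of variable of Lemma \ref{lem variable eta} and solving the resulting triangular system iteratively; (v) substitute back into the Duhamel formula to read off the four remainder terms $\Rc^\pm_i(N,t,\ep)$.

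For step (iii), the point is that Proposition \ref{prop hamilton-jacobi equation} already produces the phases $S^\pm_{\ep,R}$ solving \eqref{Hamilton-Jacobi equation low freq} with all the uniform bounds \eqref{estimate on phase}, \eqref{matrix phase estimate}, \eqref{high order phase estimate}, so the transport equations have the same structure as \eqref{transport_0}, \eqref{transport_r}: one introduces the vector field $V^\pm_\ep = \nabla_\xi q_{\ep,0}(x,\nabla_x S^\pm_{\ep,R})$ and the corresponding flow $X^\pm_\ep(t,x,\xi)$, obtains escape estimates as in Proposition \ref{prop transport flow} uniformly in $\ep$ (the hypothesis there was precisely a uniform symbol bound on $p_\ep$ on $\Ga^\pm(R/2,J_0,\tau_0)$, which holds by Remark \ref{rem property rescaled symbol} since $|x|>1$ on that region), and then defines $A^\pm_{\ep,0}$, $A^\pm_{\ep,r}$ by the same convergent integrals. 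Cutting off with $\chi^\pm_{1\rightarrow 2}$ of \eqref{define cutoff} produces globally defined symbols $a^\pm_{\ep,j}=\chi^\pm_{1\rightarrow 2}A^\pm_{\ep,j}$ forming bounded families in $S(-j,-\infty)$, and the terms where derivatives hit $\chi^\pm_{1\rightarrow 2}$ are collected into $\check a^\pm(\ep)$ of the stated form. For step (iv), the composition $\Jc^\pm_\ep(a)\Jc^\pm_\ep(b)^\star = D_\ep J^\pm_\ep(a)J^\pm_\ep(b)^\star D_\ep^{-1}$ reduces to composing two ordinary FIOs with phase $S^\pm_{\ep,R}$; Taylor expanding $S^\pm_{\ep,R}(x,\xi)-S^\pm_{\ep,R}(y,\xi)=\langle x-y,\eta^\pm(R,x,y,\xi)\rangle$ and changing variables $\xi\mapsto\xi^\pm_\ep(R,x,y,\eta)$ — using the $\ep$-uniform version of Lemma \ref{lem variable eta}, which again follows from \eqref{estimate on phase} — turns it into a standard pseudodifferential composition, and the symbols $b^\pm_{\ep,k}$ are solved for iteratively exactly as in Proposition \ref{prop find b}, the denominators being nonvanishing on $\mathrm{supp}(\chi^\pm_\ep(\cdot,\eta^\pm(R,\cdot,\cdot,\cdot)))\subset\Ga^\pm(R^3,J_3,\tau_3)$ by the nonvanishing analogue of \eqref{non vanishing a_0}.

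I expect the main obstacle to be the bookkeeping of uniformity in $\ep$ in the remainder term of Proposition \ref{prop action PDO on FIO} when it is applied with the rescaled symbols $q_{\ep,k}$ and the $\ep$-dependent phase $S^\pm_{\ep,R}$: one must check that the maps $(a,b)\mapsto(a\triangleleft b)_j$ and $(a,b)\mapsto r_N$ are continuous uniformly in $\ep$, i.e. that the constants in the symbol estimates depend only on finitely many seminorms of $a$, $b$ and of the derivatives of $S^\pm_{\ep,R}$, all of which are $\ep$-uniform by Remark \ref{rem hamilton-jacobi equation}. A second, milder subtlety is that the parametrix of Proposition \ref{prop parametrix phi low freq} for $\psi(\ep^{-2}P)\zeta_1(\ep x)$ has a remainder $R_N(\ep)$ carrying the cutoff $\tilde{\tilde\zeta}(\ep x)$ and a factor $(\ep^{-2}P-z)^{-1}$ rather than a pure $Op_\ep$; one has to verify this is absorbed into $\Rc^\pm_2$ while preserving the bounds \eqref{estimate remainder low freq}, which is why $R_N(\ep)$ appears explicitly in the statement of $\Rc^\pm_2(N,t,\ep)$ exactly as in the high frequency case. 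Once these uniform-in-$\ep$ checks are in place, assembling the four remainder pieces from the Duhamel identity is routine and gives the claimed decomposition.
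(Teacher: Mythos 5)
Your proposal follows exactly the paper's own route: the paper's proof of Theorem \ref{theorem Isozaki-Kitada parametrix low freq} is precisely the sketch you outline — a Duhamel formula for $e^{-it\ep\psi(\ep^{-2}P)}\Jc_\ep(a_\ep)$, expansion of the intertwining error $\psi(\ep^{-2}P)\Jc_\ep(a_\ep)-\Jc_\ep(a_\ep)\Lambda^\sigma$ via the parametrix of $\psi(\ep^{-2}P)\zeta_1(\ep x)$ from Proposition \ref{prop parametrix phi low freq} and Proposition \ref{prop action PDO on FIO} at $h=1$, killing it by the Hamilton--Jacobi and transport equations $(\ref{Hamilton-Jacobi equation low freq})$--$(\ref{transport_r low freq})$, cutting off to globalize and collect $\check a^\pm(\ep)$, constructing $b^\pm_{\ep,k}$ by conjugating with $D_\ep$ and applying the Kuranishi change of variable of Lemma \ref{lem variable eta}, then reading off the four remainders from Duhamel. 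Your additional remarks on where $\ep$-uniformity must be tracked (the remainder map in Proposition \ref{prop action PDO on FIO} and the structure of $R_N(\ep)$) correctly identify the points the paper relies on $(\ref{estimate on phase})$, Remark \ref{rem property rescaled symbol}, and Proposition \ref{prop parametrix phi low freq} to justify.
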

We have the following dispersive estimates for the main terms of the Isozaki-Kitada parametrix both at high and low frequencies.
\begin{prop} \label{prop estimate main terms IK}
Let $\sigma \in (0,\infty)\backslash \{1\}$, $S^\pm_{\ep, R}$ be as in \emph{Proposition} $\ref{prop hamilton-jacobi equation}$ and $(a^\pm_\ep)_{\ep \in (0,1]}, (b^\pm_\ep)_{\ep \in (0,1]}$ be bounded in $S(0,-\infty)$ compactly supported in $\xi$ away from zero.
\begin{itemize}
\item[1.] Then for $R>0$ large enough, there exists $C>0$ such that for all $t \in \R$ and all $h\in (0,1]$,
\begin{align}
\| J_h^\pm(a^\pm)e^{-ith^{-1}(h\Lambda)^\sigma} J_h^\pm(b^\pm)^\star  \|_{\Lc(L^1,L^\infty)} \leq C h^{-d}(1+|t|h^{-1})^{-d/2}, \label{dispersie main term IK high freq}
\end{align} 
where $a^\pm:= a^\pm_{\ep=1}, b^\pm:= b^\pm_{\ep=1}$.
\item[2.] Then for $R>0$ large enough, there exists $C>0$ such that for all $t \in \R$ and all $\ep \in (0,1]$,
\begin{align}
\|\Jc^\pm_\ep(a^\pm_\ep) e^{-it\ep \Lambda^\sigma} \Jc^\pm_\ep(b^\pm_\ep)^\star\|_{\Lc(L^1, L^\infty)} \leq C \ep^d (1+\ep |t|)^{-d/2}. \label{dispersive main term IK low freq} 
\end{align} 
\end{itemize}
\end{prop}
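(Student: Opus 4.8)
The plan is to compute the Schwartz kernel of the composed operator explicitly and then estimate the resulting oscillatory integral by stationary and non-stationary phase. Consider first the high frequency case. Writing $e^{-ith^{-1}(h\Lambda)^\sigma}$ in its semi-classical form, with kernel $(2\pi h)^{-d}\int_{\R^d}e^{ih^{-1}((z-w)\cdot\xi-t|\xi|^\sigma)}\,d\xi$, and composing with $J^\pm_h(a^\pm)$ on the left and $J^\pm_h(b^\pm)^\star$ on the right, the integrations in the intermediate variables produce the momentum-matching constraints and the kernel of $J^\pm_h(a^\pm)e^{-ith^{-1}(h\Lambda)^\sigma}J^\pm_h(b^\pm)^\star$ becomes
\[
K^\pm_h(t,x,y)=(2\pi h)^{-d}\int_{\R^d}e^{ih^{-1}\left(S^\pm_R(x,\xi)-S^\pm_R(y,\xi)-t|\xi|^\sigma\right)}\,a^\pm(x,\xi)\,\overline{b^\pm(y,\xi)}\,d\xi ,
\]
so it suffices to show $\sup_{x,y}|K^\pm_h(t,x,y)|\lesssim h^{-d}(1+|t|h^{-1})^{-d/2}$. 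Then I would straighten the phase: by the fundamental theorem of calculus in the form used in Lemma \ref{lem variable eta}, $S^\pm_R(x,\xi)-S^\pm_R(y,\xi)=(x-y)\cdot\eta^\pm(R,x,y,\xi)$, and after the change of variable $\xi=\xi^\pm(R,x,y,\eta)$ the phase becomes $(x-y)\cdot\eta-t\,|\xi^\pm(R,x,y,\eta)|^\sigma$, the free fractional Schr\"odinger phase perturbed by a term whose $\eta$-derivatives are controlled, uniformly in $h$ and in $R\gg1$, by item (iii) of Lemma \ref{lem variable eta}; the amplitude (with the Jacobian $|\det\partial_\eta\xi^\pm|$ absorbed) becomes a new symbol in $S(0,-\infty)$ in the variable $\eta$, still supported in a fixed compact subset of $\R^d\backslash\{0\}$ for $R$ large.

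I would then distinguish cases according to the size of $|t|$. If $|t|\le h$, the trivial bound $|K^\pm_h|\le C(2\pi h)^{-d}\|a^\pm\overline{b^\pm}\|_{L^1_\xi}\lesssim h^{-d}$ is enough since $(1+|t|h^{-1})^{-d/2}\simeq1$. If $|t|$ is bounded below by a constant (depending on $R$), then, writing the phase as $t$ times $\tfrac{x-y}{t}\cdot\eta-|\xi^\pm|^\sigma$ so that $|t|h^{-1}$ plays the role of the large parameter, the $\eta$-Hessian equals $-t\,\nabla^2_\eta(|\xi^\pm|^\sigma)$, which is $-t\,\nabla^2_\eta(|\eta|^\sigma)$ up to fixed-size corrections; since $\sigma\neq1$ and $|\eta|$ stays in a fixed compact subset of $(0,\infty)$, $\nabla^2_\eta(|\eta|^\sigma)$ is uniformly non-degenerate, so for $R$ large the full $\eta$-Hessian is non-degenerate with determinant $\gtrsim|t|^d$, and a stationary phase expansion (together with non-stationary phase away from the critical point, which can occur only when $|x-y|\sim|t|$) yields $|K^\pm_h|\lesssim h^{-d}(h/|t|)^{d/2}$, the desired bound. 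The remaining window $h<|t|<\text{const}$ is treated by the same dichotomy: a critical point in $\eta$ can occur only for $|x-y|$ in a bounded set; when $|x-y|$ is large, non-stationary phase in $\eta$ gives decay $(h/|x-y|)^N$ with $N$ arbitrary, which dominates $(1+|t|h^{-1})^{-d/2}$ (and for $h$ bounded below the trivial bound already suffices), while for $|x-y|$ bounded one applies stationary or non-stationary phase depending on whether the $\eta$-gradient is bounded below by a multiple of $|t|$ or the $\eta$-Hessian is non-degenerate as above.

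For the low frequency estimate I would first reduce it to the high frequency one. By $(\ref{rescaled FIO})$ and the unitarity of $D_\ep$,
\[
\Jc^\pm_\ep(a^\pm_\ep)\,e^{-it\ep\Lambda^\sigma}\,\Jc^\pm_\ep(b^\pm_\ep)^\star=D_\ep\,\Big(J^\pm_\ep(a^\pm_\ep)\,e^{-it\ep\Lambda^\sigma}\,J^\pm_\ep(b^\pm_\ep)^\star\Big)\,D^{-1}_\ep ,
\]
and since $\|D_\ep\|_{\Lc(L^\infty)}=\ep^{d/2}$ and $\|D^{-1}_\ep\|_{\Lc(L^1)}=\ep^{d/2}$, it is enough to bound the $\Lc(L^1,L^\infty)$ norm of the bracketed operator by $C(1+\ep|t|)^{-d/2}$. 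By the same computation its kernel equals $(2\pi)^{-d}\int_{\R^d}e^{i(S^\pm_{\ep,R}(x,\xi)-S^\pm_{\ep,R}(y,\xi)-t\ep|\xi|^\sigma)}a^\pm_\ep(x,\xi)\overline{b^\pm_\ep(y,\xi)}\,d\xi$, which is the high frequency expression with $h=1$, with $t$ replaced by $\ep t$, and with the phases $S^\pm_{\ep,R}$. Since Proposition \ref{prop hamilton-jacobi equation} and Lemma \ref{lem variable eta} provide all the estimates on $S^\pm_{\ep,R}$ and on $\xi^\pm(R,x,y,\eta)$ uniformly in $\ep\in(0,1]$, the stationary/non-stationary phase analysis above applies verbatim and gives $(1+\ep|t|)^{-d/2}$, hence $(\ref{dispersive main term IK low freq})$.

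The main obstacle is the stationary phase analysis in the intermediate regime $h\lesssim|t|\lesssim1$: there one must locate precisely where a critical point in $\eta$ can occur and check that the fixed-size corrections coming from $S^\pm_R$ (respectively $S^\pm_{\ep,R}$) to the $\eta$-Hessian do not spoil the $(1+|t|h^{-1})^{-d/2}$ decay, which is exactly why $R$ has to be chosen large and why the quantitative estimates $(\ref{estimate on phase})$, $(\ref{matrix phase estimate})$, $(\ref{high order phase estimate})$ and item (iii) of Lemma \ref{lem variable eta} are needed. The hypothesis $\sigma\neq1$ is used crucially here, since it is what makes $\nabla^2_\eta(|\eta|^\sigma)$ non-degenerate; for $\sigma=1$ this matrix has rank $d-1$ and only a weaker decay would hold.
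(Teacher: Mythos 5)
Your outline follows the same route as the paper's proof: compute the kernel as a $\xi$-oscillatory integral, use non-stationary phase when the gradient of the phase is bounded below and stationary phase when a critical point is present (with non-degeneracy of the Hessian coming from $\sigma\neq1$), and reduce the low-frequency case to the high-frequency one via conjugation by $D_\ep$ together with $\|D_\ep\|_{\Lc(L^\infty)}=\|D_\ep^{-1}\|_{\Lc(L^1)}=\ep^{d/2}$. However, the three-way split in $|t|$ (small, large, intermediate) is what creates the obstacle you flag at the end, and it is not needed. The paper handles every $t\geq h$ at once by normalizing the phase by $t$: set
\[
\Phi(R,t,x,y,\xi)=\frac{S^\pm_R(x,\xi)-S^\pm_R(y,\xi)}{t}-|\xi|^\sigma, \qquad \lambda:=\frac{t}{h}\geq1,
\]
and then the dichotomy is governed by the single quantity $|(x-y)/t|$. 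For $|(x-y)/t|$ large or small one has $|\nabla_\xi\Phi|\gtrsim 1$, so integrations by parts (with all higher $\xi$-derivatives of $\Phi$ controlled by $|(x-y)/t|$) give $h^{-d}\lambda^{-N}$. For $c\leq|(x-y)/t|\leq C$ the Hessian $\nabla^2_\xi\Phi$ is an $O(R^{-\rho})$ perturbation of $-\sigma|\xi|^{\sigma-2}\bigl(\mathrm{Id}+(\sigma-2)\xi\xi^t/|\xi|^2\bigr)$, whose determinant equals $\sigma^d|\sigma-1|\,|\xi|^{(\sigma-2)d}$ and is bounded below on the compact $\xi$-support precisely because $\sigma\neq1$, so stationary phase yields $h^{-d}\lambda^{-d/2}$. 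There is no separate "intermediate $t$" regime to worry about, and the decomposition you propose (by the size of $|x-y|$ rather than $|(x-y)/t|$) is not quite the right one.

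A second, more localized issue: you invoke item (iii) of Lemma~\ref{lem variable eta} to claim that the $\eta$-derivatives of $\xi^\pm(R,x,y,\eta)-\eta$ are uniformly small. With $\alpha=\alpha'=0$, that item only yields a bound of the form $C\scal{y}^{-\rho}\scal{x-y}^{\rho}$, which is not uniformly small in $x,y$; as written it does not justify the non-degeneracy of the perturbed Hessian. The paper sidesteps this by not changing variables at all: it keeps $\xi$ and uses, directly from $(\ref{estimate on phase})$, that $\eta(R,x,y,\xi)=\xi+Q(R,x,y,\xi)$ with $|\partial_\xi^\beta Q|\leq C_\beta R^{-\rho}$ uniformly in $x,y$, which is exactly the smallness needed. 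If you insist on passing to $\eta$-variables, you can recover $|\partial^\beta_\eta(\xi^\pm-\eta)|\leq C_\beta R^{-\rho}$ from the $Q$-estimate via the inverse function theorem, but that is a consequence of $(\ref{estimate on phase})$, not of Lemma~\ref{lem variable eta}(iii) alone.
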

\begin{proof}
1. For simplicity, we drop the superscript $\pm$. The kernel of $J_h(a)e^{-ith^{-1}(h\Lambda)^\sigma} J_h(b)^\star$ reads
\[
K_h(t,x,y)=(2\pi h)^{-d} \int_{\R^d} e^{ih^{-1}\left(S_{R}(x,\xi) -S_{R}(y,\xi)-t|\xi|^\sigma \right)} a(x,\xi)\overline{b(y,\xi)}d\xi.
\]
The estimates $(\ref{dispersie main term IK high freq})$ are in turn equivalent to 
\begin{align}
|K_h(t,x,y)| \leq  Ch^{-d}(1+|t|h^{-1})^{-d/2}, \label{dispersive estimates main term IK high freq equivalent}
\end{align}
for all $t \in \R, h \in (0,1]$ and $x,y \in \R^d$. We only consider $t\geq 0$, the case $t\leq 0$ is similar. Let us denote the compact support of the amplitude by $\Kc$. Since $a,b$ are bounded uniformly in $x,y \in \R^d$, we have
\[
|K_h(t,x,y)| \leq C h^{-d},
\]
for all $t \in \R$ and all $x, y \in \R^d$. If $0 \leq t \leq h$ or $1+t h^{-1} \leq 2$, then
\begin{align}
|K_h(t,x,y)| \leq Ch^{-d} \leq C h^{-d}(1+th^{-1})^{-d/2}. \nonumber
\end{align}
So, we can assume that $t \geq h$ or $(1+th^{-1}) \leq 2th^{-1}$ and denote the phase function
\[
\Phi(R,t,x,y,\xi)= (S_{R}(x,\xi)-S_{R}(y,\xi))/{t}-|\xi|^\sigma,
\]
and parameter $\lambda= t h^{-1} \geq 1$. We can rewrite 
\[
\Phi(R,t,x,y,\xi)= \scal{(x-y)/t, \eta(R,x,y,\xi)} -|\xi|^\sigma,
\]
where 
\[
\eta(R,x,y,\xi)=\int_{0}^{1} \nabla_x S_R(y+\lambda(x-y),\xi) d\lambda.
\]
Using the properties of the phase functions $S_{R}$ given in $(\ref{estimate on phase})$, we have that
\[
\eta(R,x,y,\xi)= \xi + Q(R,x,y,\xi),
\]
where $Q(R,x,y,\xi)$ is a vector in $\R^d$ satisfying for $R >0$ large enough,
\begin{align}
|\partial^\beta_\xi Q(R,x,y,\xi)| \leq C_\beta R^{-\rho}, \label{estimate of Q}
\end{align}
for all $x,y \in \R^d$ and $\xi\in \Kc$. We have
\[
\nabla_\xi \Phi(R,t,x,y,\xi) = \frac{x-y}{t}\cdot(\text{Id}_{\R^d}+\nabla_\xi Q(R,x,y,\xi))-\sigma \xi |\xi|^{\sigma-2}.
\]
\indent If $|(x-y)/{t}| \geq C$ for some constant $C>0$ large enough then for $R>0$ large enough, there exists $C_1>0$,
\[
|\nabla_\xi \Phi (R,t,x,y,\xi)| \geq \frac{1}{2}\Big| \frac{x-y}{t}\Big| \geq C_1.
\]
Thus the phase is non-stationary. By using integration by parts with respect to $\xi$ together with the fact
\begin{align}
|\partial^\beta_\xi \Phi(R,t,x,y,\xi)| \leq C_\beta \Big| \frac{x-y}{t}\Big|, \quad |\beta|\geq 2, \nonumber
\end{align}
we have that for all $N\geq 1$,
\[
|K_h(t,x,y)| \leq C h^{-d} (th^{-1})^{-N} \leq Ch^{-d} (1+th^{-1})^{-d/2},
\]
provided $N$ is taken bigger than $d/2$. The same result still holds for $|(x-y)/t| \leq c$ for some $c>0$ small enough. \newline
\indent Therefore, we can assume that $c\leq |{x-y}/{t}| \leq C$. In this case, we write
\[
\nabla^2_\xi \Phi(R,t,x,y,\xi)= \frac{x-y}{t} \cdot \nabla^2_\xi Q(R,x,y,\xi) - \sigma |\eta|^{\sigma-2} \Big(\text{Id}_{\R^d} +(\sigma-2)\frac{\eta\cdot \eta^t}{|\eta|^2}\Big).
\]
Using the fact that $\sigma \in (0,\infty)\backslash \{1\}$ and
\[
\Big|\det \sigma |\eta|^{\sigma-2} \Big(\text{Id}_{\R^d} +(\sigma-2)\frac{\eta\cdot \eta^t}{|\eta|^2}\Big) \Big| = \sigma^{d}|\sigma-1\|\eta|^{(\sigma-2)d} \geq C
\]
and $(\ref{estimate of Q})$, we see that for $R>0$ large enough, the map $\xi \mapsto \nabla_\xi \Phi(R,t,x,y,\xi)$ is a local diffeomorphism from a neighborhood of $\mathcal{K}$ to its range. Moreover, for all $\beta\in \N^d$ satisfying $|\beta| \geq 1$, we have $|\partial^\beta_\xi \Phi(R, t,x,y,\xi)| \leq C_\beta$. The stationary phase theorem then implies that for $R>0$ large enough, all $t \geq h$ and all $x,y \in \R^d$ satisfying $c \leq |(x-y)/t| \leq C$,
\[
|K_h(t,x,y)| \leq C h^{-d} \lambda^{-d/2} \leq C h^{-d}(1+th^{-1})^{-d/2}.
\]
This gives $(\ref{dispersive estimates main term IK high freq equivalent})$. \newline
2. We are now in position to show $(\ref{dispersive main term IK low freq})$. As above, we drop the superscript $\pm$ for simplicity. We see that up to a conjugation by $D_\ep$, the kernel of $\Jc_\ep(a_\ep) e^{-it\ep \Lambda^\sigma} \Jc_\ep(b_\ep)^\star$ reads
\[
K_\ep(t,x,y)=(2\pi)^{-d} \int_{\R^d} e^{i(S_{\ep,R}(x,\xi)-t\ep|\xi|^\sigma-S_{\ep,R}(y,\xi))} a_\ep(x,\xi) \overline{b_\ep(y,\xi)} d\xi.
\]
The dispersive estimates $(\ref{dispersive main term IK low freq})$ follow from 
\begin{align}
|K_\ep(t,x,y)| \leq C (1+\ep|t|)^{-d/2}, \label{dispersive estimates main term IK low freq equivalent} 
\end{align}
for all $t\in \R$ uniformly in $x,y \in \R^d$, $\ep \in (0,1]$ and the fact that 
\[
\|D_\ep\|_{\Lc(L^\infty)} = \ep^{d/2}, \quad \|D_\ep^{-1}\|_{\Lc(L^1)} = \ep^{d/2}.
\]
The estimates $(\ref{dispersive estimates main term IK low freq equivalent})$ are proved by repeating the same line as above.
The proof is complete.
\end{proof}
\paragraph{Micro-local propagation estimates.}
In this paragraph, we will prove some propagation estimates which are useful for our purpose. To do this, we need the following result (see \cite[Lemma 4.1]{BTglobalstrichartz}).
\begin{lem} \label{lem sigma + -}
Let $\tau_+, \tau_- \in (-1,1)$.
\begin{itemize}
\item[1.] For all $x,y,\xi \in \R^d \backslash \{0\}$ satisfying $\pm {x\cdot \xi}/{|x\|\xi|} > \tau_\pm$ and $\pm t \geq 0$, we have
\begin{align} \label{scalar estimate 1}
\pm \frac{(x+t\xi)\cdot \xi}{|x+t\xi| |\xi|}> \tau_\pm \text{ and } |x+t\xi| \geq c_\pm(|x|+|t\xi|),
\end{align}
where $c_\pm = \sqrt{1+\tau_\pm}/\sqrt{2}$.
\item[2.] If $\tau_- +\tau_+>0$, then there exists $c=c(\tau_-,\tau_+)>0$ such that for all $x,y,\xi \in \R^d \backslash \{0\}$ satisfying $+{x\cdot \xi}/{|x\|\xi|}>\tau_+$ and $-{y\cdot \xi}/{|y\|\xi|}> \tau_-$, we have
\begin{align}
|x-y| \geq c(|x|+|y|). \label{scalar estimate 2}
\end{align}
\end{itemize}
\end{lem}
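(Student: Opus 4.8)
The plan is to prove both parts by elementary Euclidean geometry; no microlocal input is needed. First I would record that the incoming statement in Part~1 reduces to the outgoing one: replacing $(\xi,t)$ by $(-\xi,-t)$ turns the hypothesis $-x\cdot\xi/(|x|\,|\xi|)>\tau_-$ with $t\le 0$ into $x\cdot(-\xi)/(|x|\,|-\xi|)>\tau_-$ with $-t\ge 0$, and both conclusions transform accordingly. Hence it suffices to treat the $(+)$ case with $t\ge 0$ under $x\cdot\xi\ge\tau_+|x|\,|\xi|$, and throughout I write $\hat\xi=\xi/|\xi|$.

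For the norm bound in $(\ref{scalar estimate 1})$, I would expand the square and use $t\ge 0$ together with the hypothesis:
\[
|x+t\xi|^2=|x|^2+2t\,(x\cdot\xi)+t^2|\xi|^2\ \ge\ |x|^2+2\tau_+\,t\,|x|\,|\xi|+t^2|\xi|^2
=\tfrac{1+\tau_+}{2}\bigl(|x|+t|\xi|\bigr)^2+\tfrac{1-\tau_+}{2}\bigl(|x|-t|\xi|\bigr)^2 ,
\]
and since $\tau_+<1$ the last term is $\ge 0$, which gives $|x+t\xi|\ge c_+\bigl(|x|+|t\xi|\bigr)$ with $c_+=\sqrt{(1+\tau_+)/2}$. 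For the angular bound I would use the orthogonal splitting $x=(x\cdot\hat\xi)\hat\xi+x_\perp$, which yields $x+t\xi=\bigl((x\cdot\hat\xi)+t|\xi|\bigr)\hat\xi+x_\perp$ with the same component orthogonal to $\xi$, so that
\[
\frac{(x+t\xi)\cdot\xi}{|x+t\xi|\,|\xi|}=\frac{(x\cdot\hat\xi)+t|\xi|}{\sqrt{\bigl((x\cdot\hat\xi)+t|\xi|\bigr)^2+|x_\perp|^2}} .
\]
Since $s\mapsto s/\sqrt{s^2+|x_\perp|^2}$ is nondecreasing and $t|\xi|\ge 0$, this ratio is at least its value at $t=0$, namely $x\cdot\hat\xi/|x|>\tau_+$; this proves $(\ref{scalar estimate 1})$.

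For Part~2, set $u:=x$ and $v:=-y$ (both nonzero), so the hypotheses read $u\cdot\hat\xi>\tau_+|u|$ and $v\cdot\hat\xi>\tau_-|v|$. Letting $\alpha,\beta\in[0,\pi]$ be the angles of $u,v$ with $\hat\xi$, we get $\cos\alpha>\tau_+$ and $\cos\beta>\tau_-$, hence $\alpha<\arccos\tau_+$ and $\beta<\arccos\tau_-$. The angle $\gamma$ between $u$ and $v$ satisfies the triangle inequality for the angular metric on the sphere, $\gamma\le\alpha+\beta<\arccos\tau_+ +\arccos\tau_-=:\Theta$, and since $\arccos$ is decreasing the bound $\Theta<\pi$ is equivalent to $\tau_+>-\tau_-$, i.e.\ to the hypothesis $\tau_++\tau_->0$; thus $\cos\Theta\in(-1,1)$. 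As $\cos$ is decreasing on $[0,\pi]$, $u\cdot v=|u|\,|v|\cos\gamma\ge|u|\,|v|\cos\Theta$, so
\[
|x-y|^2=|u+v|^2=|u|^2+|v|^2+2\,u\cdot v\ \ge\ |u|^2+|v|^2+2\cos\Theta\,|u|\,|v|
=\tfrac{1+\cos\Theta}{2}\bigl(|u|+|v|\bigr)^2+\tfrac{1-\cos\Theta}{2}\bigl(|u|-|v|\bigr)^2 ,
\]
and discarding the nonnegative last term gives $|x-y|\ge c\,(|x|+|y|)$ with $c=c(\tau_+,\tau_-)=\sqrt{(1+\cos\Theta)/2}>0$, which is $(\ref{scalar estimate 2})$. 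There is no real obstacle here: the only points needing a moment's care are the sign bookkeeping when $\tau_\pm$ are negative (handled above by always using the weak inequality $x\cdot\xi\ge\tau_+|x||\xi|$ and the monotonicity argument, which are sign-robust) and the equivalence $\arccos\tau_+ +\arccos\tau_-<\pi\iff\tau_++\tau_->0$, which is exactly where the assumption of Part~2 enters.
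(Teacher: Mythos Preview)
Your proof is correct. The paper does not actually prove this lemma; it merely cites \cite[Lemma~4.1]{BTglobalstrichartz} and moves on. So you have supplied more than the paper does: a clean, self-contained elementary argument.

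A couple of remarks on what you did versus what one finds in the cited reference. Your treatment of the norm bound in Part~1 via the algebraic identity
\[
|x|^2+2\tau_+\,t|x|\,|\xi|+t^2|\xi|^2=\tfrac{1+\tau_+}{2}\bigl(|x|+t|\xi|\bigr)^2+\tfrac{1-\tau_+}{2}\bigl(|x|-t|\xi|\bigr)^2
\]
is particularly tidy and makes the sharp constant $c_\pm=\sqrt{(1+\tau_\pm)/2}$ transparent. The angular preservation via monotonicity of $s\mapsto s/\sqrt{s^2+|x_\perp|^2}$ is the standard way to see this, and your reduction of the $(-)$ case to the $(+)$ case by the substitution $(\xi,t)\mapsto(-\xi,-t)$ is exactly right. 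For Part~2, your use of the spherical triangle inequality together with the equivalence $\arccos\tau_++\arccos\tau_-<\pi\iff\tau_++\tau_->0$ pinpoints precisely where the hypothesis enters, and recycling the same quadratic identity gives an explicit constant $c=\sqrt{(1+\cos\Theta)/2}$. The argument is complete as written; the only implicit point (that $|x+t\xi|>0$, needed for the angular ratio to make sense) is already covered by your norm bound since $x\ne 0$.
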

We start with the following estimates.
\begin{lem} \label{lem remainder 4}
Let $\sigma \in (0,\infty)$ and $\chi \in C^\infty_0(\R^d)$ satisfying $\chi (x)=1$ for $|x| \leq 1$. 
\begin{itemize}
\item[1.] Using the notations given in $\emph{Theorem}$ $\ref{theorem IK parametrix fractional schrodinger equation}$, if $R>0$ is large enough, then for all $m \geq 0$, there exists $C>0$ such that for all $\pm s \geq 0$ and all $h \in(0,1]$,
\begin{align}
\|\chi\left(x/R^2\right) J^\pm_h(\check{a}^\pm(h))e^{-ish^{-1}(h\Lambda)^\sigma} J^\pm_h(b^\pm(h))^\star \scal{x}^m \|_{\Lc(H^{-m},H^m)} \leq  Ch^m \scal{s}^{-m}. \label{estimate remainder R 4 one} 
\end{align}
Moreover, 
\begin{align}
\|\scal{x}^m (1-\chi)\left(x/R^2\right) J^\pm_h(\check{a}^\pm(h))e^{-ish^{-1}(h\Lambda)^\sigma}  J^\pm_h(b^\pm(h))^\star \scal{x}^m\|_{\Lc(H^{-m},H^m)} \leq  Ch^m \scal{s}^{-m}.  \label{estimate remainder R 4 two}
\end{align}
In particular
\begin{align}
\|\scal{x}^m J^\pm_h(\check{a}^\pm(h))e^{-ish^{-1}(h\Lambda)^\sigma}  J^\pm_h(b^\pm(h))^\star \scal{x}^m\|_{\Lc(H^{-m},H^m)} \leq  Ch^m \scal{s}^{-m}. \label{estimate remainder R 4 combined}
\end{align}
\item[2.] Using the notations given in $\emph{Theorem}$ $\ref{theorem Isozaki-Kitada parametrix low freq}$, if $R>0$ is large enough, then for all $m \geq 0$, there exists $C>0$ such that for all $\pm s \geq 0$ and all $\ep \in(0,1]$,
\begin{align}
\|\chi(\ep x/R^2) \Jc^\pm_\ep(\check{a}^\pm(\ep))e^{-is\ep\Lambda^\sigma} \Jc^\pm_\ep(b^\pm_\ep)^\star \scal{\ep x}^m \|_{\Lc(L^2)} \leq  C \scal{\ep s}^{-m}. \label{estimate remainder R 4 one low freq} 
\end{align}
Moreover,
\begin{align}
\|\scal{\ep x}^m (1-\chi)(\ep x/R^2) \Jc^\pm_\ep(\check{a}^\pm(\ep))e^{-is\ep\Lambda^\sigma} \Jc^\pm_\ep(b^\pm_\ep)^\star \scal{\ep x}^m\|_{\Lc(L^2)} \leq  C \scal{\ep s}^{-m}. \label{estimate remainder R 4 two low freq}
\end{align}
In particular
\begin{align}
\|\scal{\ep x}^m \Jc^\pm_\ep(\check{a}^\pm(\ep))e^{-is\ep\Lambda^\sigma} \Jc^\pm_\ep(b^\pm_\ep)^\star \scal{\ep x}^m\|_{\Lc(L^2)} \leq  C \scal{\ep s}^{-m}. \label{estimate remainder R 4 final low freq}
\end{align}
\end{itemize}
\end{lem}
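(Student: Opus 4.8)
The plan is to derive all six estimates from a single non-stationary phase bound for the kernel, so that no stationary phase (and in particular no use of $\sigma\ne1$) enters. First I would remove the rescaling and the weights. At low frequency, $D_\ep$ is unitary on $L^2$ and $\scal{\ep x}^m\eta(\ep x/R^2)D_\ep=D_\ep\scal{x}^m\eta(x/R^2)$ for $\eta\in\{\chi,1-\chi,1\}$, while $\Jc^\pm_\ep(a)=D_\ep J^\pm_\ep(a)$ and $\Jc^\pm_\ep(b)^\star=J^\pm_\ep(b)^\star D_\ep^{-1}$ leave the middle factor $e^{-is\ep\Lambda^\sigma}$ untouched; hence $(\ref{estimate remainder R 4 one low freq})$–$(\ref{estimate remainder R 4 final low freq})$ reduce to $\ep$-uniform $\Lc(L^2)$ bounds for $\scal{x}^m\eta(x/R^2)J^\pm_\ep(\check a^\pm(\ep))e^{-is\ep\Lambda^\sigma}J^\pm_\ep(b^\pm_\ep)^\star\scal{x}^m$, the $\ep$-dependent phases $S^\pm_{\ep,R}$ obeying $(\ref{estimate on phase})$, $(\ref{matrix phase estimate})$, $(\ref{high order phase estimate})$ and all amplitudes having seminorms uniform in $\ep$ (Remark~\ref{rem property rescaled symbol}). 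At high frequency, standard Fourier integral operator calculus reduces $(\ref{estimate remainder R 4 one})$–$(\ref{estimate remainder R 4 combined})$ to $\Lc(L^2)$ bounds for the analogous operators $\scal{x}^m\eta(x/R^2)J^\pm_h(\check a^\pm(h))e^{-ish^{-1}(h\Lambda)^\sigma}J^\pm_h(b^\pm(h))^\star\scal{x}^m$: multiplication by $\scal{x}^m\eta(x/R^2)$ on the left and by $\scal{x}^m$ on the right merely modify the amplitudes within $S(\cdot,-\infty)$, and the operators $\scal{D}^m$ arising when passing to $\|\cdot\|_{\Lc(L^2)}$ can be pushed through the $J^\pm_h(\cdot)$ at the cost of seminorm factors $O(h^{-m})$ each, which are harmless against the gain $O(h^N)$ obtained below. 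In all cases, carrying out the composition (the semiclassical normalisation makes the free time appear simply as $s$, resp.\ $\ep s$), the kernel of $\eta(x/R^2)J^+_h(c)e^{-ish^{-1}(h\Lambda)^\sigma}J^+_h(b^+)^\star\scal{x}^m$ is
\[
\eta(x/R^2)\,\scal{w}^m\,(2\pi h)^{-d}\int_{\R^d} e^{ih^{-1}\Phi(s,x,w,\xi)}\,c(x,\xi)\,\overline{b^+(w,\xi)}\,d\xi,
\]
with $\Phi=S^+_R(x,\xi)-S^+_R(w,\xi)-s|\xi|^\sigma$ (at low frequency $h=1$ and $s$ is replaced by $\ep s$ in the last term), $c$ a term $\check a^+_\alpha(\cdot)\partial^\alpha_x\chi^+_{1\rightarrow2}$ with $|\alpha|\ge1$, cf.\ $(\ref{a check +-})$; I treat the outgoing case and $s\ge0$, the other case being symmetric.

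The core of the proof is a lower bound for $|\nabla_\xi\Phi|$ on the set that matters, and this is the main obstacle. Since $\partial^\alpha_x\chi^+_{1\rightarrow2}$ with $|\alpha|\ge1$ is supported where a derivative falls on $\kappa(|x|/R^2)$ or on $\theta_{1\rightarrow2}(x\cdot\xi/|x\|\xi|)$ (recall $(\ref{define cutoff})$; $\rho_{1\rightarrow2}$ depends on $\xi$ only), the $x$-support of $c(\cdot,\xi)$ lies in the ``near'' region $\{R^2/4\le|x|\le R^2/2\}$ or in the ``angular'' region $\{|x|\ge R^2/4,\ \tau_1\le x\cdot\xi/|x\|\xi|\le\tau_2\}$, always with $|\xi|^2\in J_1$, whereas $b^+$ is supported in $\Ga^+(R^3,J_3,\tau_3)$, so $|w|>R^3$ and $w\cdot\xi/|w\|\xi|>\tau_3>\tau_2$. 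By $(\ref{estimate on phase})$ with $\alpha=0$, $\nabla_\xi\Phi=x-w-\sigma s\,\xi|\xi|^{\sigma-2}$ up to an error $O(\min\{R^{1-\rho},\scal{x}^{1-\rho}\}+\scal{w}^{1-\rho})$, which is $\ll|w|$ once $R$ is large, and $|\xi|$ is bounded above and below on $J_1$. In the near region $|x|\le R^2/2$ while $w$ is $\tau_3$-outgoing, so applying item~1 of Lemma~\ref{lem sigma + -} to the flow $w\mapsto w+\sigma s\,\xi|\xi|^{\sigma-2}$ of $|\xi|^\sigma$ gives $|w+\sigma s\,\xi|\xi|^{\sigma-2}|\ge c(|w|+s)$, hence $|\nabla_\xi\Phi|\gtrsim|w|+s-R^2\gtrsim\scal{w}+\scal{s}$ for $R$ large. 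In the angular region $-x$ makes an angle $\ge-\tau_2$ with $\xi$ while $w+\sigma s\,\xi|\xi|^{\sigma-2}$ stays $\tau_3$-outgoing (item~1 of Lemma~\ref{lem sigma + -}) and $\tau_3-\tau_2>0$, so item~2 of Lemma~\ref{lem sigma + -} gives $|\nabla_\xi\Phi|\gtrsim|x|+|w+\sigma s\,\xi|\xi|^{\sigma-2}|\gtrsim\scal{x}+\scal{w}+\scal{s}$. Denote by $L$ the quantity on the right in each case; from $(\ref{estimate on phase})$ and $\partial^\beta_\xi(s|\xi|^\sigma)=O(s)$ on the $\xi$-support, all $\xi$-derivatives of $\Phi$ of order $\ge1$ are $\lesssim L$ as well. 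The delicate point is that the long-range corrections $O(\scal{x}^{1-\rho})$, $O(\scal{w}^{1-\rho})$ to $\nabla_\xi S^+_R\approx x,w$ are not small, so one really has to use that enlarging $R$ separates $\text{supp}(\partial^\alpha_x\chi^+_{1\rightarrow2})$ (angle $\le\tau_2$, or $|x|\sim R^2$) from $\text{supp}(b^+)$ ($|w|>R^3$, angle $>\tau_3$) in both geometric regimes.

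Granting the lower bound, I integrate by parts $N$ times in $\xi$ with the vector field $(ih^{-1}|\nabla_\xi\Phi|^2)^{-1}\nabla_\xi\Phi\cdot\nabla_\xi$: since $|\nabla_\xi\Phi|\gtrsim L$, all higher $\xi$-derivatives of $\Phi$ are $\lesssim L$, and the amplitude has $\xi$-derivatives bounded on its compact $\xi$-support, each transposition gains a factor $h/L$ (resp.\ $1/L$ at low frequency, with $L\gtrsim\scal{\ep s}+\scal{w}$), so the $\xi$-integral is $O((hL^{-1})^N)$ and the kernel is $O(h^{N-d}L^{-N})$ (resp.\ $O(L^{-N})$, uniformly in $\ep$), up to the extra factor $O(h^{-2m})\scal{x}^m\scal{w}^m$ produced by the $\scal{D}^m$'s and the weights. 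In the near region $\scal{x}^m\lesssim R^{2m}$ is bounded and $\eta(x/R^2)$ confines $x$ to $\{|x|\lesssim R^2\}$ (note $(1-\chi)(x/R^2)$ vanishes there), and writing $L^{-N}\le\scal{s}^{-m}\scal{w}^{-(N-m)}$ Schur's lemma gives $O(h^{N-d-2m}\scal{s}^{-m})$; in the angular region $L^{-N}\le\scal{s}^{-m}\scal{x}^{-m-d-1}\scal{w}^{-m-d-1}$ absorbs both weights and leaves a kernel integrable in $x$ and in $w$, so Schur's lemma again yields $O(h^{N-d-2m}\scal{s}^{-m})$. Taking $N$ sufficiently large (depending on $m,d$) this is $\lesssim h^m\scal{s}^{-m}$, which proves $(\ref{estimate remainder R 4 one})$, $(\ref{estimate remainder R 4 two})$ and, adding the $\chi$ and $1-\chi$ pieces, $(\ref{estimate remainder R 4 combined})$; the low-frequency computation is identical (with $h=1$, hence no $h$-gain, and $L\gtrsim\scal{\ep s}+\cdots$) and yields $\lesssim\scal{\ep s}^{-m}$, proving $(\ref{estimate remainder R 4 one low freq})$–$(\ref{estimate remainder R 4 final low freq})$. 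Everything apart from the phase lower bound is routine Fourier integral operator calculus, non-stationary phase and Schur's lemma, and is insensitive to whether $\sigma=1$.
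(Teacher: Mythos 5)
Your proof is correct and follows essentially the same route as the paper's: you write out the oscillatory kernel, split the support of $\check a^\pm$ according to whether the derivative in $(\ref{a check +-})$ falls on $\kappa(|x|/R^2)$ or on $\theta_{1\rightarrow 2}$, invoke the two items of Lemma~\ref{lem sigma + -} to get the lower bound $|\nabla_\xi\Phi^\pm|\gtrsim 1+|s|+|x|+|y|$ in each case, and conclude by non-stationary phase and Schur's lemma, with the low-frequency case reduced to the same computation by conjugating with the unitary $D_\ep$. The only cosmetic difference is that you handle the $H^{-m}\to H^m$ norm by commuting $\scal{D}^m$ through the FIOs at a harmless cost $O(h^{-2m})$, whereas the paper differentiates the kernel directly; both are standard and equivalent.
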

\begin{proof} 1. We firstly consider the high frequency case. The proof in this case is essentially given in \cite{BTglobalstrichartz}. For reader's convenience, we will give a sketch of the proof. 
The kernel of the operator in the left hand side of $(\ref{estimate remainder R 4 one})$ reads
\[
K^\pm_h(s,x,y)=(2\pi h)^{-d}\chi(x/R^2) \int_{\R^d} e^{ih^{-1}\Phi^\pm(R,s,x,y,\xi)} \check{a}^\pm(h,x,\xi) \overline{b^\pm(h,y,\xi)} d\xi \scal{y}^m,
\]
where the phase $\Phi^\pm(R,s,x,y,\xi)= S^\pm_R(x,\xi)-s|\xi|^\sigma -S^\pm_R(y,\xi)$. Using $(\ref{estimate on phase})$, we have 
\[
|\nabla_\xi \Phi^\pm(R,s,x,y,\xi)| =|x-\sigma s \xi |\xi|^{\sigma-2}-y +O(1)|\geq |\sigma s \xi|\xi|^{\sigma-2} +y| - |x| +O(1),
\]
where $|x| \leq C R^2$ and $(y,\xi)\in \Ga^\pm(R^3,J_3,\tau_3)$. We then apply $(\ref{scalar estimate 1})$ with $\pm {y\cdot \xi}/{|y\|\xi|} > \tau_3$ and $\pm t=\pm \sigma s|\xi|^{\sigma-2} \geq 0$ to get
\begin{align}
| \sigma s \xi|\xi|^{\sigma-2} +y | \geq C(|s|+|y|), \label{peetre inequality}
\end{align}
for all $\pm s \geq 0$. We next use $|y| > R^3$ to control $|x| \lesssim R^2$ and obtain
\[
|\nabla_\xi \Phi^\pm(R,s,x,y,\xi)| \geq C(1+|s|+|x|+|y|),
\]
for all $\pm s \geq 0$. By integrations by part with respect to $\xi$ with remark that higher derivatives of $\partial_\xi \Phi^\pm$ are controlled by $|\nabla_\xi\Phi^\pm|$, we get for all $N\geq 0$,
\[
\left| \chi(x/R^2) \int_{\R^d} e^{ih^{-1}\Phi^\pm(R,s,x,y,\xi)} \check{a}^\pm(h,x,\xi) \overline{b^\pm(h,y,\xi)} d\xi \right| \leq Ch^{N} (1+|s|+|x|+|y|)^{-N}.
\]
By choosing $N$ large enough, we can dominate $\scal{y}^m$ and get
\[
|K^\pm_h(s,x,y)| \leq C h^{N}(1+|s|+|x|+|y|)^{-N},
\]
for all $N$ large enough, therefore for all $N\geq 0$. We do the same for higher derivatives $\partial^\alpha_x \partial^\beta_y K_h(s,x,y)$ and the result follows. The kernel of the operator in the left hand side of $(\ref{estimate remainder R 4 two})$ reads
\[
K^\pm_h(s,x,y)= (2\pi h)^{-d} \scal{x}^m (1-\chi)(x/R^2) \int_{\R^d}e^{ih^{-1}\Phi^\pm(R,s,x,y,\xi)}\check{a}^\pm(h,x,\xi) \overline{b^\pm(h,y,\xi)} d\xi \scal{y}^m.
\]
We use the form of $\check{a}^\pm(h)$ given in $(\ref{a check +-})$. In the case derivatives fall on $\kappa(x/R^2)$, we have that $|x|\leq C R^2$ and we can proceed as above. Note that we have from $(\ref{scalar estimate 1})$ with $\pm {y\cdot\xi}/{|y\|\xi|} > \sigma_3$ and $\pm t=\pm \sigma s|\xi|^{\sigma-2} \geq 0$ that
\[
\pm \frac{(y+ \sigma s \xi|\xi|^{\sigma-2})\xi}{| y+\sigma s \xi|\xi|^{\sigma-2}\|\xi|} > \sigma_3 \text{ and } |y+\sigma s \xi|\xi|^{\sigma-2} | \geq c_\pm (|s|+|y|).
\]
In the case derivatives fall on $\theta_{1\rightarrow 2}$, we have 
\[
\tau_1+\varepsilon \leq \pm \frac{x\cdot \xi}{|x\|\xi|} \leq \tau_2-\varepsilon \text{ or } \mp \frac{x\cdot \xi}{|x\|\xi|} \geq -\tau_2 +\varepsilon > -\tau_2+\varepsilon/2.
\]
By choosing $\varepsilon>0$ small enough such that $\tau_3 - \tau_2 +\varepsilon/2>0$,  $(\ref{scalar estimate 2})$ gives
\[
| y+\sigma s\xi|\xi|^{\sigma-2} -x| \geq c\left( |y+\sigma s\xi|\xi|^{\sigma-2}| +|x| \right) \geq C (|s|+|x|+|y|).
\]
Thus $|\nabla_\xi \Phi^\pm| \geq C(1+|s|+|x|+|y|)$ for $\pm s \geq 0$ and $(\ref{estimate remainder R 4 two})$ follows as above. \newline
2. The proof for the low frequency case is the same as above up to the conjugation by the unitary map $D_\ep$ in $L^2(\R^d)$. For instance, the kernel of the operator in the left hand side of $(\ref{estimate remainder R 4 one low freq})$ reads
\[
K^\pm_\ep(s,x,y)=(2\pi)^{-d} \chi(x/R^2) \int_{\R^d} e^{i\Phi^\pm_\ep(R,s,x,y,\xi)} \check{a}^\pm(\ep,x,\xi) \overline{b^\pm_\ep(y,\xi)} d\xi \scal{y}^m,
\]
where the phase $\Phi^\pm_\ep(R,s,x,y,\xi)= S^\pm_{\ep,R}(x,\xi)-\ep s|\xi|^\sigma -S^\pm_{\ep,R}(y,\xi)$. 
\end{proof}
\begin{lem} \label{lem remainder 3}
Let $\sigma \in (0,\infty)$. 
\begin{itemize}
\item[1.] Under the notations of $\emph{Theorem}$ $\ref{theorem IK parametrix fractional schrodinger equation}$, for all $m \geq 0$ and all $N$ large enough, there exists $C>0$ such that for all $\pm s \geq 0$ and all $h \in (0,1]$,
\begin{align}
\|\scal{x}^{N/8} J^\pm_h(r^\pm_N(h)) e^{-ish^{-1}(h\Lambda)^\sigma} J^\pm_h(b^\pm(h))^\star \scal{x}^{N/4} \|_{\Lc(H^{-m},H^m)} \leq Ch^{-d-2m} \scal{s}^{-N/4}. \label{estimate remainder R 3}
\end{align}
\item[2.] Under the notations of $\emph{Theorem}$ $\ref{theorem Isozaki-Kitada parametrix low freq}$, for all $N$ large enough, there exists $C>0$ such that for all $\pm s \geq0$ and all $\ep \in (0,1]$,
\begin{align}
\|\scal{\ep x}^{N/8} \Jc^\pm_\ep(r^\pm_N(\ep)) e^{-i\ep s\Lambda^\sigma} \Jc^\pm_\ep(b^\pm_\ep)^\star \scal{\ep x}^{N/4} \|_{\Lc(L^{2})} \leq C \scal{\ep s}^{-N/4}. \label{estimate remainder R 3 low freq}
\end{align}
\end{itemize}
\end{lem}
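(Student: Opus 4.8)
The plan is to reduce both estimates to a pointwise bound on the Schwartz kernel of the relevant operator and then apply Schur's test; the only real work is the analysis of the oscillatory $\xi$-integral defining that kernel. I treat the high frequency case (part 1) in detail, the low frequency one being obtained from it by conjugating with the unitary dilation $D_\ep$.

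Write $T_h(s):=\scal{x}^{N/8}J^\pm_h(r^\pm_N(h))\,e^{-ish^{-1}(h\Lambda)^\sigma}\,J^\pm_h(b^\pm(h))^\star\scal{x}^{N/4}$, so that the quantity in $(\ref{estimate remainder R 3})$ equals $\|\scal{D}^mT_h(s)\scal{D}^m\|_{\Lc(L^2)}$. Since $\scal{D}^m=h^{-m}Op^h((h^2+|\xi|^2)^{m/2})$ with $(h^2+|\xi|^2)^{m/2}$ bounded in $S(0,m)$ uniformly in $h$, and since multiplying an operator $J^\pm_h(\cdot)$ by $\scal{x}^a$ on the output side just multiplies its amplitude by $\scal{x}^a$, the composition rules $(\ref{composition PDO})$ and Proposition $\ref{prop action PDO on FIO}$ (legitimate here because $r^\pm_N\in S(-N,-\infty)$ decays rapidly in $\xi$) let me absorb every weight into the amplitudes:
\[
\scal{D}^mT_h(s)\scal{D}^m=h^{-2m}\Big(J^\pm_h(\tilde r^\pm_{N,h})\,e^{-ish^{-1}(h\Lambda)^\sigma}\,J^\pm_h(\tilde b^\pm_h)^\star+h\,(\text{terms of the same form})\Big),
\]
where $(\tilde r^\pm_{N,h})_h$ is bounded in $S(-7N/8,-\infty)$, $(\tilde b^\pm_h)_h$ in $S(N/4,-\infty)$, and $\tilde b^\pm_h$ keeps the $\xi$-support of $b^\pm$, a fixed compact subset of $\R^d\setminus\{0\}$. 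The kernel of the leading term is
\[
K^\pm_h(s,x,y)=(2\pi h)^{-d}\int_{\R^d}e^{ih^{-1}\Phi^\pm(R,s,x,y,\xi)}\,\tilde r^\pm_{N,h}(x,\xi)\,\overline{\tilde b^\pm_h(y,\xi)}\,d\xi,
\]
with $\Phi^\pm(R,s,x,y,\xi)=S^\pm_R(x,\xi)-s|\xi|^\sigma-S^\pm_R(y,\xi)$.

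On $\mathrm{supp}\,\tilde b^\pm_h$ one has $|y|>R^3$ and $\pm\,y\cdot\xi/(|y||\xi|)>\tau_3$, so Lemma $\ref{lem sigma + -}$ (part 1) applied with $\pm t=\pm\sigma s|\xi|^{\sigma-2}\ge0$ gives $|y+\sigma s\xi|\xi|^{\sigma-2}|\ge c(|y|+|s|)$ for some $c>0$, and $(\ref{estimate on phase})$ gives $\nabla_\xi\Phi^\pm=x-\big(y+\sigma s\xi|\xi|^{\sigma-2}\big)+O(R^{1-\rho})$ uniformly. I split into two regions. If $|x|\le\tfrac c2(|y|+|s|)$, then since $|y|>R^3$ and $\rho>0$ one has $|\nabla_\xi\Phi^\pm|\ge\tfrac c8(1+|y|+|s|)$ for $R$ large, while $|\partial^\beta_\xi\Phi^\pm|\lesssim 1+|s|\lesssim 1+|y|+|s|$ for $|\beta|\ge2$ (these derivatives come only from $-s|\xi|^\sigma$ and from bounded $\xi$-derivatives of $S^\pm_R$); repeated integration by parts in $\xi$ then gives $|K^\pm_h(s,x,y)|\lesssim h^{-d}(h/(1+|y|+|s|))^K\scal{x}^{-7N/8}\scal{y}^{N/4}$ for all $K$, which for $N$ large is $\lesssim h^{-d}\scal{x}^{-7N/8}\scal{y}^{-N/4}\scal{s}^{-N/4}$ after the elementary estimate $(1+|y|+|s|)^{-K}\lesssim\scal{y}^{-K/2}\scal{s}^{-K/2}$. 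If instead $|x|>\tfrac c2(|y|+|s|)$, no oscillation is needed: $|K^\pm_h(s,x,y)|\lesssim h^{-d}\scal{x}^{-7N/8}\scal{y}^{N/4}$, and here $\scal{x}\gtrsim 1+|y|+|s|$, so $\scal{x}^{-7N/8}\lesssim\scal{x}^{-N/4}\scal{y}^{-N/16}\scal{s}^{-N/4}$ for $N$ large, giving $|K^\pm_h(s,x,y)|\lesssim h^{-d}\scal{x}^{-N/4}\scal{y}^{-N/16}\scal{s}^{-N/4}$.

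In both regions $K^\pm_h(s,\cdot,\cdot)$ is $\scal{s}^{-N/4}$ times $h^{-d}$ times a kernel with integrable marginals in $x$ and in $y$ (once $N>16d$, say), so Schur's test gives $\|J^\pm_h(\tilde r^\pm_{N,h})e^{-ish^{-1}(h\Lambda)^\sigma}J^\pm_h(\tilde b^\pm_h)^\star\|_{\Lc(L^2)}\lesssim h^{-d}\scal{s}^{-N/4}$; the "terms of the same form" are handled identically with an extra $h$, and multiplying by $h^{-2m}$ proves $(\ref{estimate remainder R 3})$. For part 2, since $\scal{\ep x}^aD_\ep=D_\ep\scal{x}^a$ and $\Jc^\pm_\ep(\cdot)=D_\ep J^\pm_\ep(\cdot)$, $\Jc^\pm_\ep(\cdot)^\star=J^\pm_\ep(\cdot)^\star D_\ep^{-1}$ with $D_\ep$ unitary on $L^2$, the $\Lc(L^2)$-norm in $(\ref{estimate remainder R 3 low freq})$ equals $\|\scal{x}^{N/8}J^\pm_\ep(r^\pm_N(\ep))e^{-i\ep s\Lambda^\sigma}J^\pm_\ep(b^\pm_\ep)^\star\scal{x}^{N/4}\|_{\Lc(L^2)}$, whose kernel is the exact analogue of $K^\pm_h$ above with $h=1$ and $s$ replaced by $\ep s$; Proposition $\ref{prop hamilton-jacobi equation}$ gives all bounds on $S^\pm_{\ep,R}$ uniformly in $\ep$, and $(r^\pm_N(\ep))_\ep$, $(b^\pm_\ep)_\ep$ are bounded in their symbol classes, so running the same two-region argument with $\ep s$ in place of $s$ yields a kernel with integrable marginals bounded by $C\scal{\ep s}^{-N/4}$. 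No $\scal{D}^m$ weights occur (the target is $\Lc(L^2)$), hence no inverse powers of $\ep$, and $(\ref{estimate remainder R 3 low freq})$ follows by Schur.

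The main obstacle, and the reason the proof is more delicate than that of Lemma $\ref{lem remainder 4}$, is that $r^\pm_N$ is supported in the full outgoing region in $x$ rather than being forced (by derivatives falling on a cutoff) into a region where the phase is non-stationary; when both $x$ and $y$ lie deep in the outgoing cone the vectors $x$ and $y+\sigma s\xi|\xi|^{\sigma-2}$ are nearly parallel and $\Phi^\pm$ may be stationary, so integration by parts alone fails. The point is that in precisely that regime $|x|$ is comparable to $|y|+|s|$, so the decay $\scal{x}^{-N}$ built into $r^\pm_N\in S(-N,-\infty)$, together with the localization $|y|>R^3$ coming from $b^\pm$, converts into the required $\scal{s}^{-N/4}$ decay — which is exactly why the weights $\scal{x}^{N/8}$, $\scal{x}^{N/4}$ and the hypothesis that $N$ be large appear in the statement.
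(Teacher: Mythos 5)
Your proof is sound and, despite the surface differences, takes the same approach as the paper's: write the operator as an oscillatory integral kernel in $\xi$, estimate it pointwise via a dichotomy between the near-stationary region --- there $|x|\gtrsim|y|+|s|$, so the polynomial decay built into $r^\pm_N\in S(-N,-\infty)$ supplies the $\scal{s}^{-N/4}$ factor --- and the non-stationary region, where repeated integration by parts against $\nabla_\xi\Phi^\pm$ does the job; then conclude by Schur's test. The paper implements the dichotomy by inserting $\chi(\nabla_\xi\Phi^\pm)+(1-\chi)(\nabla_\xi\Phi^\pm)$ with $\chi\in C^\infty_0$ equal to $1$ near $0$ and using the trivial bound $|\chi(\nabla_\xi\Phi^\pm)|\le C\scal{\nabla_\xi\Phi^\pm}^{-3N/4}$ together with Peetre's inequality $\scal{\nabla_\xi\Phi^\pm}^{-1}\lesssim\scal{x}(\scal{y}+\scal{s})^{-1}$, which is exactly your $|x|\gtrsim|y|+|s|$ split encoded smoothly. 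For the $\Lc(H^{-m},H^m)$ norm, the paper differentiates $K^\pm_h$ in $x$ and $y$ up to order $m$ rather than precomposing with $\scal{D}^m$; both give the $h^{-2m}$ loss, and your route needs the small extra remark that, because the amplitudes are compactly supported in $\xi$ away from zero (and $\nabla_xS^\pm_R(x,\xi)$ stays close to $\xi$), one may cut $(h^2+|\xi|^2)^{m/2}$ off near that support so that Proposition \ref{prop action PDO on FIO} genuinely applies. Your low-frequency reduction by conjugating with $D_\ep$ is correct. One exposition slip to fix: in your second region you display $\scal{x}^{-7N/8}\lesssim\scal{x}^{-N/4}\scal{y}^{-N/16}\scal{s}^{-N/4}$, which, read literally and combined with $|K^\pm_h|\lesssim h^{-d}\scal{x}^{-7N/8}\scal{y}^{N/4}$, would leave a positive power $\scal{y}^{3N/16}$. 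What you need (and what makes your stated conclusion true) is $\scal{x}^{-7N/8}\scal{y}^{N/4}\lesssim\scal{x}^{-N/8}\scal{y}^{-N/4}\scal{s}^{-N/4}$, obtained from $\scal{x}\gtrsim\scal{y}+\scal{s}$ by first using $\scal{x}^{-N/4}\scal{y}^{N/4}\lesssim1$ and then distributing two further factors $\scal{x}^{-N/4}$ onto $\scal{y}$ and $\scal{s}$.
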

\begin{proof}
We only give the proof for the high frequency case, the low frequency one is similar. The kernel of the operator in the left hand side of $(\ref{estimate remainder R 3})$ reads
\[
K^\pm_h(s,x,y)= (2\pi h)^{-d} \int_{\R^d} e^{ih^{-1}\Phi^\pm(R,s,x,y,\xi)} A^\pm(h,x,y,\xi) d\xi,
\]
where the amplitude $A^\pm(h,x,y,\xi)= \scal{x}^{N/8} r^\pm_N(h,x,\xi)\overline{b^\pm(h,y,\xi)} \scal{y}^{N/4}$ and is compactly supported in $\xi$. We have from Proposition $\ref{prop hamilton-jacobi equation}$ and $(\ref{peetre inequality})$ that $\nabla_\xi \Phi^\pm(R,s,x,y,\xi)= x-\sigma s \xi |\xi|^{\sigma -2} -y +O(1)$ and $|\sigma s \xi |\xi|^{\sigma -2} +y| \geq C(|s|+|y|)$ for all $\pm s \geq 0$. By Peetre's inequality, we see that
\[
\scal{\nabla_\xi\Phi^\pm}^{-1} \leq \scal{x}\scal{y+\sigma s\xi|\xi|^{\sigma-2}}^{-1} \leq C \scal{x}(\scal{y}+\scal{s})^{-1}.
\]
We next write
\[
1=\chi(\nabla_\xi\Phi^\pm) +(1-\chi)(\nabla_\xi\Phi^\pm),
\]
where $\chi \in C^\infty_0(\R^d)$ with $\chi=1$ near 0. Then $K^\pm_h(s,x,y)$ is split into two terms. For the first term
\[
I_1=(2\pi h)^{-d} \int_{\R^d} e^{ih^{-1}\Phi^\pm(R,s,x,y,\xi)} \chi(\nabla_\xi\Phi^\pm) A^\pm(h,x,y,\xi) d\xi,
\]
by using the fact that
\begin{align}
|\chi(\nabla_\xi\Phi^\pm)| &\leq C \scal{\nabla_\xi \Phi^\pm}^{-3N/4} \leq C \scal{x}^{3N/4} (\scal{y}+\scal{s})^{-3N/4} \nonumber \\
&\leq C\scal{x}^{3N/4} \scal{y}^{-N/2}\scal{s}^{-N/4}, \label{negative power nabla phi}
\end{align}
and $A^\pm(h,x,y,\xi)=O(\scal{x}^{-7N/8} \scal{y}^{N/4})$, it is bounded by $Ch^{-d} \scal{x}^{-N/8} \scal{y}^{-N/4}\scal{s}^{-N/4}$. For the second term
\[
I_2=(2\pi h)^{-d} \int_{\R^d} e^{ih^{-1}\Phi^\pm(R,s,x,y,\xi)} (1-\chi)(\nabla_\xi\Phi^\pm) A^\pm(h,x,y,\xi) d\xi,
\] 
thanks to the support of $(1-\chi)$, we can integrate by parts with respect to $\Lc:= \frac{h \nabla_\xi \Phi^\pm}{i|\nabla_\xi \Phi^\pm|^2}\circ \nabla_\xi$ to get many negative powers of $|\nabla_\xi \Phi^\pm|$ as we wish and estimate as in $(\ref{negative power nabla phi})$. Combine two terms and Schur's lemma, we have $(\ref{estimate remainder R 3})$ for $m=0$. For $m\geq 1$, we can do the same with
$\partial^\alpha_x\partial^\beta_y K^\pm_h(s,x,y)$ with $|\alpha|\leq m, |\beta|\leq m$. This completes the proof.
\end{proof}
Combining Lemma $\ref{lem remainder 4}$ and Lemma $\ref{lem remainder 3}$, we have the following result.
\begin{prop}\label{prop remainder 2+3+4}
\begin{itemize}
\item[1.] Using the notations given in $\emph{Theorem}$ $\ref{theorem IK parametrix fractional schrodinger equation}$, for all $0 \leq m \leq d+1$ and all $N$ large enough, we can write for $k=2, 3, 4$,
\[
R^\pm_k(N,t,h) =  h^{N/2} \int_{0}^{t} e^{-i(t-s)h^{-1}\psi(h^2P)} \scal{x}^{-N/8} B^\pm_m(N,s,h) \scal{x}^{-N/4} ds, 
\]
with
\begin{align}
\|B^\pm_m(N,s,h)\|_{\Lc(H^{-m}, H^m)} \leq C \scal{s}^{-N/4},  \label{estimate on B}
\end{align}
for all $\pm s \geq 0$ and  $h \in (0,1]$.
\item[2.] Using the notations given in $\emph{Theorem}$ $\ref{theorem Isozaki-Kitada parametrix low freq}$ and for all $N$ large enough, we can write for $k=2, 3, 4$,
\[
\Rc^\pm_k(N,t,\ep) = \ep \int_{0}^{t} e^{-i(t-s)\ep \psi(\ep^{-2}P)} \scal{\ep x}^{-N/8} \mathcal{B}^\pm_N(s,\ep) \scal{\ep x}^{-N/4} ds, 
\]
with
\begin{align}
\|\mathcal{B}^\pm_N(s,\ep)\|_{\Lc(L^{2})} \leq C \scal{\ep s}^{-N/4},  \label{estimate on B low freq}
\end{align}
for all $\pm s \geq 0$ and all $\ep \in (0,1]$.
\end{itemize}
\end{prop}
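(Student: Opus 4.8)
The plan is to dispose of the three terms $R^\pm_k(N,t,h)$, $k=2,3,4$, separately, noting first that the claimed factorization is a pure identity: pulling the scalar $h^{N/2}$ and the two multiplication operators $\scal{x}^{-N/8}$, $\scal{x}^{-N/4}$ out of the explicit expressions for $R^\pm_k(N,t,h)$ given in Theorem \ref{theorem IK parametrix fractional schrodinger equation} simply \emph{defines} $B^\pm_m(N,s,h)$, and the only content is the uniform bound $(\ref{estimate on B})$. Throughout, $0\le m\le d+1$ is fixed and $N$ will be taken as large as needed in terms of $d$; I will repeatedly use that multiplication by $\scal{x}^{-\beta}$ with $\beta\ge 0$ is bounded on every $H^s$, and the embeddings $H^{s}\hookrightarrow H^{s'}$ for $s\ge s'$. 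Recall also that $e^{-i(t-s)h^{-1}\psi(h^2P)}$ commutes with $\Lambda_g$, hence is bounded on every $H^s$ by $(\ref{equivalent sobolev norms})$, so this factorization is licit.

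For $R^\pm_3$ the estimate is immediate: by Lemma \ref{lem remainder 3}, the operator $B^\pm_m(N,s,h):=-ih^{N/2-1}\scal{x}^{N/8}J^\pm_h(r^\pm_N(h))e^{-ish^{-1}(h\Lambda)^\sigma}J^\pm_h(b^\pm(h))^\star\scal{x}^{N/4}$ obeys $\|B^\pm_m(N,s,h)\|_{\Lc(H^{-m},H^m)}\le Ch^{N/2-1-d-2m}\scal{s}^{-N/4}\le C\scal{s}^{-N/4}$ for $\pm s\ge 0$, as soon as $N\ge 2(1+d+2m)$, which holds for $N$ large since $m\le d+1$. For $R^\pm_4$ I would apply Lemma \ref{lem remainder 4}, in the combined form $(\ref{estimate remainder R 4 combined})$, with a weight $m_0\ge\max\{m,\tfrac N2+1\}$: writing $\scal{x}^{N/8}=\scal{x}^{N/8-m_0}\scal{x}^{m_0}$ and $\scal{x}^{N/4}=\scal{x}^{m_0}\scal{x}^{N/4-m_0}$, with the outer factors bounded on $H^s$ because $m_0\ge N/4$, and using $H^{-m}\hookrightarrow H^{-m_0}$, $H^{m_0}\hookrightarrow H^m$, one gets $\|\scal{x}^{N/8}J^\pm_h(\check a^\pm(h))e^{-ish^{-1}(h\Lambda)^\sigma}J^\pm_h(b^\pm(h))^\star\scal{x}^{N/4}\|_{\Lc(H^{-m},H^m)}\le Ch^{m_0}\scal{s}^{-m_0}$; hence $B^\pm_m(N,s,h):=-ih^{-1-N/2}\scal{x}^{N/8}J^\pm_h(\check a^\pm(h))e^{-ish^{-1}(h\Lambda)^\sigma}J^\pm_h(b^\pm(h))^\star\scal{x}^{N/4}$ satisfies $\|B^\pm_m(N,s,h)\|_{\Lc(H^{-m},H^m)}\le Ch^{m_0-1-N/2}\scal{s}^{-m_0}\le C\scal{s}^{-N/4}$.

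\textbf{The main obstacle is $R^\pm_2$}, because of the extra parametrix remainder $R_N(h)$ of $\psi(h^2P)$. The plan is to insert $\scal{x}^N\scal{x}^{-N}$ between $R_N(h)$ and $J^\pm_h(a^\pm(h))$ and factor
\[
\scal{x}^{N/8}R_N(h)J^\pm_h(a^\pm(h))e^{-ish^{-1}(h\Lambda)^\sigma}J^\pm_h(b^\pm(h))^\star\scal{x}^{N/4}=\big[\scal{x}^{N/8}R_N(h)\scal{x}^N\big]\big[\scal{x}^{-N}J^\pm_h(a^\pm(h))e^{-ish^{-1}(h\Lambda)^\sigma}J^\pm_h(b^\pm(h))^\star\scal{x}^{N/4}\big].
\]
For the first bracket one uses that, by the Helffer--Sj\"ostrand construction behind Proposition \ref{prop parametrix phi}, $R_N(h)$ is a superposition against $\overline{\partial}\widetilde{\psi}(z)\,dL(z)$ of operators $Op^h(e_N(h,z))(h^2P-z)^{-1}$ with $e_N(h,z)\in S(-N,-\infty)$ of seminorms polynomial in $|\im{z}|^{-1}$; since such a pseudo-differential operator absorbs the weight $\scal{x}^N$ on the right \emph{and} a further $\scal{x}^{N/8}$ on the left, at worst with a polynomial loss in $|\im{z}|^{-1}$ (harmless because $\overline{\partial}\widetilde{\psi}(z)=O(|\im{z}|^\infty)$), this gives $\|\scal{x}^{N/8}R_N(h)\scal{x}^N\|_{\Lc(H^{-m},H^m)}\le Ch^{-M_1}$ for some $M_1=M_1(m)$ independent of $N$. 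For the second bracket one observes that its amplitude $\scal{x}^{-N}a^\pm(h,x,\xi)\overline{b^\pm(h,y,\xi)}\scal{y}^{N/4}$ has $\scal{x}^{-N}a^\pm(h)\in S(-N,-\infty)$ and $\mathrm{supp}(b^\pm(h))\subset\Ga^\pm(R^3,J_3,\tau_3)$ — exactly the structure of the amplitude in the proof of Lemma \ref{lem remainder 3}, with $\scal{x}^{-N}a^\pm(h)$ in the role of $\scal{x}^{N/8}r^\pm_N(h)$ — so the same non-stationary phase argument (Lemma \ref{lem sigma + -}(1) makes $|\nabla_\xi\Phi^\pm|\gtrsim 1+|s|+|x|+|y|$ for $\pm s\ge 0$) yields $\|\scal{x}^{-N}J^\pm_h(a^\pm(h))e^{-ish^{-1}(h\Lambda)^\sigma}J^\pm_h(b^\pm(h))^\star\scal{x}^{N/4}\|_{\Lc(H^{-m},H^m)}\le Ch^{-M_2}\scal{s}^{-N/4}$ for $\pm s\ge 0$. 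Composing the two brackets and choosing $N\ge 2(1+M_1+M_2)$, $B^\pm_m(N,s,h):=-ih^{N/2-1}\scal{x}^{N/8}R_N(h)J^\pm_h(a^\pm(h))e^{-ish^{-1}(h\Lambda)^\sigma}J^\pm_h(b^\pm(h))^\star\scal{x}^{N/4}$ has the required bound; the only genuinely new inputs are the two-sided weighted control of $R_N(h)$ and this last non-stationary phase estimate, both immediate variants of what is already proved (Proposition \ref{prop parametrix phi}, Lemma \ref{lem remainder 3}).

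Part 2 is obtained by repeating the whole argument after conjugation by the unitary $D_\ep$, replacing Theorem \ref{theorem IK parametrix fractional schrodinger equation}, Lemma \ref{lem remainder 3}(1), Lemma \ref{lem remainder 4}(1) and Proposition \ref{prop parametrix phi} by their low-frequency counterparts Theorem \ref{theorem Isozaki-Kitada parametrix low freq}, Lemma \ref{lem remainder 3}(2), Lemma \ref{lem remainder 4}(2) and Proposition \ref{prop parametrix phi low freq}. The low-frequency case is in fact cleaner: the target norm is only $\Lc(L^2)$, so no Sobolev exponent has to be tracked and $\mathcal{B}^\pm_N(s,\ep)$ does not depend on $m$; and the prefactor $\ep$ of $\Rc^\pm_k$, $k=2,3,4$, matches directly the prefactor $\ep$ of the desired expression, while $(\ref{estimate remainder low freq})$, $(\ref{estimate remainder R 3 low freq})$ and $(\ref{estimate remainder R 4 final low freq})$ are uniform in $\ep\in(0,1]$, so no power of $\ep$ need be absorbed. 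One may thus simply set, for instance, $\mathcal{B}^\pm_N(s,\ep):=-i\,\scal{\ep x}^{N/8}\Jc^\pm_\ep(r^\pm_N(\ep))e^{-is\ep\Lambda^\sigma}\Jc^\pm_\ep(b^\pm_\ep)^\star\scal{\ep x}^{N/4}$ for $k=3$, and analogously for $k=2,4$ (for $k=2$ using Proposition \ref{prop parametrix phi low freq} to absorb the weights around $R_N(\ep)$ and the low-frequency non-stationary phase estimate for the sandwiched main term).
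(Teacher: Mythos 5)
Your proposal is correct and takes essentially the same route as the paper: $k=3,4$ are read off Lemmas \ref{lem remainder 3} and \ref{lem remainder 4}, and for $k=2$ one sandwiches $R_N(h)E^\pm(h)$ (with $E^\pm(h)=J^\pm_h(a^\pm(h))e^{-ish^{-1}(h\Lambda)^\sigma}J^\pm_h(b^\pm(h))^\star$) between $\scal{x}^{-N/8}$ and $\scal{x}^{-N/4}$, bounding the two inner factors by a two-sided weighted estimate on $R_N(h)$ and a Lemma \ref{lem remainder 3}-type estimate after observing that $\scal{x}^{-N}a^\pm(h)\in S(-N,-\infty)$. The only cosmetic differences are your allocation of weights in the $k=2$ split (the paper uses $\scal{x}^{N/8}R_N(h)\scal{x}^{7N/8}$ and $\scal{x}^{N/8}\scal{x}^{-N}E^\pm(h)\scal{x}^{N/4}$ so that the second bracket is literally the operator of Lemma \ref{lem remainder 3} with $\tilde{\tilde r}^\pm_N:=\scal{x}^{-N}a^\pm$ in place of $r^\pm_N$, whereas your split forces you to rerun the non-stationary-phase argument rather than cite it), and that you make the Helffer--Sj\"ostrand argument for the left-weighted bound on $\scal{x}^{N/8}R_N(h)\scal{x}^{N}$ explicit, which the paper only gestures at via Proposition \ref{prop parametrix phi} (which as stated controls only the right weight).
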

\begin{proof}
The cases $k=3, 4$ follow immediately from Lemma $\ref{lem remainder 4}$ and Lemma $\ref{lem remainder 3}$. It remains to show the case $k=2$. Let us consider the high frequency case. We can write $R_N(h) E^\pm(h)$ as
\[
\scal{x}^{-N/8} \Big(\scal{x}^{N/8} R_N(h)\scal{x}^{7N/8}\Big) \Big(\scal{x}^{N/8}\scal{x}^{-N}E^\pm(h)\scal{x}^{N/4}\Big) \scal{x}^{-N/4},
\]
where $E^\pm(h):=J^\pm_h(a^\pm(h))e^{-ish^{-1}(h\Lambda)^\sigma} J^\pm_h(b^\pm(h))^\star$. The first bracket is bounded in $\Lc(L^2)$ using Proposition $\ref{prop parametrix phi}$. The second one is bounded in $\Lc(H^{-m},H^m)$ using Lemma $\ref{lem remainder 3}$ with the fact that $\scal{x}^{-N}J^\pm_h(a^\pm(h))= J^\pm_h(\tilde{\tilde{r}}^\pm_N(h))$ where $\tilde{\tilde{r}}^\pm_N(h)$ are bounded in $S(-N,-\infty)$. The low frequency case is similar using Proposition $\ref{prop parametrix phi low freq}$.
\end{proof}
Next, we have the following micro-local propagation estimates both at high and low frequencies.
\begin{prop} \label{prop microlocal propagation estimate}
Let $\sigma \in (0,\infty)$, $f \in C^\infty_0(\R \backslash \{0\})$, $J_4 \Subset (0,+\infty)$ be an open interval and $-1< \tau_4 <1$.
\begin{itemize}
\item[1.]  Consider $\R^d, d\geq 2$ equipped with a smooth metric $g$ satisfying $(\ref{assump elliptic}), (\ref{assump long range})$ and suppose that $(\ref{assump resolvent})$ is satisfied. Then for $R>0$ large enough and $\chi^\pm \in S(0,-\infty)$ supported in $\Ga^\pm(R^4,J_4,\tau_4)$, we have the following estimates. 
\begin{itemize}
\item[\text{i}.] For all $m\in \N$ and all integer $l$ large enough, there exists $C>0$ such that for all $\pm t \leq 0$ and all $h \in (0,1]$,
\begin{align}
\|Op^h(\chi^\pm)^\star e^{-ith^{-1}(h\Lambda_g)^\sigma} f(h^2P) \scal{x}^{-l}\|_{\Lc(L^2, H^m)} \leq Ch^{-m} \scal{t}^{-3l/4}. \label{propagation estimate 1}
\end{align}
\item[\text{ii}.] For all $m \in \N$, all $\chi \in C^\infty_0(\R^d)$ and all $l \geq 1$, there exists $C>0$ such that for all $\pm t \leq 0$ and all $h \in (0,1]$,
\begin{align}
\|Op^h(\chi^\pm)^\star e^{-it h^{-1}(h\Lambda_g)^\sigma} f(h^2P) \chi(x/R^2)\|_{\Lc(L^2, H^m)}  \leq Ch^l \scal{t}^{-l}. \label{propagation estimate 2}
\end{align} 
\item[\text{iii}.] For all $\tilde{\chi}^\mp \in S(0,-\infty)$ supported in $\Ga^\mp (R, J_1, \tilde{\tau}_1)$ with $ -\tau_4 < \tilde{\tau}_1 <1$ and $J_4 \Subset J_1$ and all $l \geq 1$, there exists $C>0$ such that for all $\pm t \leq 0$ and all $h \in (0,1]$,
\begin{align}
\|Op^h(\chi^\pm)^\star e^{-it h^{-1}(h\Lambda_g)^\sigma} f(h^2P) Op^h(\tilde{\chi}^\mp)\|_{\Lc(L^\infty)} \leq Ch^{l}\scal{t}^{-l}. \label{propagation estimate 3}
\end{align}
\end{itemize}
\item[2.] Consider $\R^d, d\geq 3$ equipped with a smooth metric $g$ satisfying $(\ref{assump elliptic}), (\ref{assump long range})$. Let $\zeta \in C^\infty(\R^d)$ be supported outside $B(0,1)$ and equal to $1$ near infinity. Then for $R>0$ large enough and all $(\chi^\pm_\ep)_{\ep \in (0,1]}$ bounded families in $S(0,-\infty)$ supported in $\Ga^\pm(R^4,J_4,\tau_4)$, we have the following estimates.
\begin{itemize}
\item[\text{i}.] For all integer $l$ large enough, there exists $C>0$ such that for all $\pm t \leq 0$ and all $\ep \in (0,1]$,
\begin{align}
\|\zeta(\ep x)Op_\ep(\chi^\pm_\ep)^\star e^{-it\ep(\ep^{-1}\Lambda_g)^\sigma} f(\ep^{-2}P) \scal{\ep x}^{-l}\|_{\Lc(L^2)} \leq C \scal{\ep t}^{-3l/4}. \label{propagation estimate 1 low freq}
\end{align}
\item[\text{ii}.] For all $\chi \in C^\infty_0(\R^d)$ and all $l \geq 1$, there exists $C>0$ such that for all $\pm t \leq 0$ and all $\ep \in (0,1]$,
\begin{align}
\|\zeta(\ep x)Op_\ep(\chi^\pm_\ep)^\star e^{-it\ep(\ep^{-1}\Lambda_g)^\sigma} f(\ep^{-2}P) \chi(\ep x/R^2)\|_{\Lc(L^2)}  \leq C \scal{\ep t}^{-l}. \label{propagation estimate 2 low freq}
\end{align} 
\item[\text{iii}.] For all $\tilde{\zeta} \in C^\infty(\R^d)$ supported outside $B(0,1)$ and equal to 1 near infinity and all $(\tilde{\chi}^\mp_\ep)_{\ep \in (0,1]}$ bounded families in $S(0,-\infty)$ supported in $\Ga^\mp (R, J_1, \tilde{\tau}_1)$ with $ -\tau_4 < \tilde{\tau}_1 <1$ and $J_4 \Subset J_1$ and all $l \geq 1$, there exists $C>0$ such that for all $\pm t \leq 0$ and all $\ep \in (0,1]$,
\begin{align}
\|\zeta(\ep x)Op_\ep(\chi^\pm_\ep)^\star e^{-it\ep(\ep^{-1}\Lambda_g)^\sigma} f(\ep^{-2}P) Op_\ep(\tilde{\chi}^\mp_\ep) \tilde{\zeta}(\ep x)\|_{\Lc(L^2)} \leq C\scal{\ep t}^{-l}. \label{propagation estimate 3 low freq}
\end{align}
\end{itemize}
\end{itemize}
\end{prop}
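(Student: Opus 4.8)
The plan is to reduce each of $(\ref{propagation estimate 1})$--$(\ref{propagation estimate 3})$ and $(\ref{propagation estimate 1 low freq})$--$(\ref{propagation estimate 3 low freq})$ to a (non-)stationary phase analysis of the kernels produced by the Isozaki--Kitada parametrix. I will treat the low frequency part (item 2) exactly as the high frequency part (item 1), simply replacing $h$ by $\ep$, the operators $Op^h(\cdot),J^\pm_h(\cdot)$ by $Op_\ep(\cdot),\Jc^\pm_\ep(\cdot)$, the phases $S^\pm_R$ by $S^\pm_{\ep,R}$, Proposition $\ref{prop parametrix phi}$ by Proposition $\ref{prop parametrix phi low freq}$, Theorem $\ref{theorem IK parametrix fractional schrodinger equation}$ by Theorem $\ref{theorem Isozaki-Kitada parametrix low freq}$ and Proposition $\ref{prop remainder 2+3+4}$(1) by Proposition $\ref{prop remainder 2+3+4}$(2); there the relevant frequencies have size $\ep$ (so no loss $h^{-m}$ occurs), the cutoffs $\zeta(\ep x)$ keep us in the region $|x|\gtrsim\ep^{-1}$ where the rescaled calculus of Section $\ref{section functional calculus}$ applies, and the decay $\scal{t}^{-\kappa}$ becomes $\scal{\ep t}^{-\kappa}$. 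So I describe only item 1.

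First I would take adjoints. Since $P$ is self-adjoint, $f$ real and $e^{-ith^{-1}(h\Lambda_g)^\sigma}f(h^2P)=e^{-ith^{-1}\psi(h^2P)}f(h^2P)$ with $\psi(\lambda):=\tilde f(\lambda)\sqrt{\lambda}^{\,\sigma}$, setting $s:=-t$ (so that $\pm s\geq 0$ in each case), each operator in $(\ref{propagation estimate 1})$--$(\ref{propagation estimate 3})$ is the adjoint of $W^\star f(h^2P)e^{-ish^{-1}\psi(h^2P)}Op^h(\chi^\pm)$, where $W$ is $\scal{x}^{-l}$, $\chi(x/R^2)$ or $Op^h(\tilde\chi^\mp)$ according to the case. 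Since $\pm s\geq 0$, Theorem $\ref{theorem IK parametrix fractional schrodinger equation}$ replaces $e^{-ish^{-1}\psi(h^2P)}Op^h(\chi^\pm)$ by $J^\pm_h(a^\pm(h))e^{-ish^{-1}(h\Lambda)^\sigma}J^\pm_h(b^\pm(h))^\star$ plus the remainders $R^\pm_k(N,s,h)$, $k=1,2,3,4$. For $N$ large the remainders are harmless: $R^\pm_1$ is $O_{\Lc(L^2)}(h^{N-1})$ by Proposition $\ref{prop Lq Lr bound of phi}$, while $R^\pm_2,R^\pm_3,R^\pm_4$ are handled by Proposition $\ref{prop remainder 2+3+4}$, whose weights $\scal{x}^{-N/8},\scal{x}^{-N/4}$ and whose $\scal{s}^{-N/4}$-decay absorb $W$ (after an integration in $s$ and the trivial bound on $e^{-i(t-s)h^{-1}\psi(h^2P)}$) and beat any prescribed power of $\scal{t}$. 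Using Proposition $\ref{prop parametrix phi}$ to expand $f(h^2P)$ and Proposition $\ref{prop action PDO on FIO}$ to compose the resulting pseudo-differential operators with $J^\pm_h(a^\pm(h))$, the operator $f(h^2P)J^\pm_h(a^\pm(h))$ becomes a Fourier integral operator $J^\pm_h(\tilde a^\pm(h))$ with $\tilde a^\pm_j\in S(-j,-\infty)$ still essentially supported in $\Ga^\pm(R,J_1,\tau_1)$ (plus one more smoothing term, treated as above). Thus everything reduces to estimating, together with its $x,y$-derivatives, the kernel
\[
K^\pm_h(s,x,y)=(2\pi h)^{-d}\int_{\R^d}e^{ih^{-1}\Phi^\pm(R,s,x,y,\xi)}\,\big(W^\star\tilde a^\pm(h)\big)(x,\xi)\,\overline{b^\pm(h,y,\xi)}\,d\xi,\qquad \Phi^\pm:=S^\pm_R(x,\xi)-s|\xi|^\sigma-S^\pm_R(y,\xi),
\]
the point being that on $\mathrm{supp}\,b^\pm$ one has $|y|>R^3$ and $(y,\xi)\in\Ga^\pm(R^3,J_3,\tau_3)$, so that by $(\ref{estimate on phase})$, $(\ref{matrix phase estimate})$, $(\ref{high order phase estimate})$ and Lemma $\ref{lem sigma + -}$(1),
\[
\nabla_\xi\Phi^\pm=x-\big(y+\sigma s\,\xi|\xi|^{\sigma-2}\big)+O(R^{1-\rho}),\qquad \big|y+\sigma s\,\xi|\xi|^{\sigma-2}\big|\gtrsim|y|+|s|\quad(\pm s\geq 0),
\]
while all $\xi$-derivatives of $\Phi^\pm$ of order $\geq 1$ are bounded.

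For $(\ref{propagation estimate 2})$, $W=\chi(x/R^2)$ forces $|x|\lesssim R^2\ll R^3<|y|$ for $R$ large, so $|\nabla_\xi\Phi^\pm|\gtrsim 1+|x|+|y|+|s|$ on the whole support; repeated integration by parts in $\xi$ (estimating higher $\xi$-derivatives of the amplitude and of $|\nabla_\xi\Phi^\pm|^{-2}$ by the bounds above, as in Lemma $\ref{lem remainder 3}$) gives $|K^\pm_h(s,x,y)|\leq C_Nh^{N-d}(1+|x|+|y|+|s|)^{-N}$ for all $N$, and Schur's lemma together with $N$ large yields $(\ref{propagation estimate 2})$ (the $H^m$-norm being recovered from the identical bounds for $\partial^\alpha_x\partial^\beta_yK^\pm_h$ and from the frequency localization, a loss $h^{-m}$, harmless here). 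Estimate $(\ref{propagation estimate 3})$ is the same: composing $Op^h(\tilde\chi^\mp)$ with $J^\pm_h(b^\pm(h))^\star$ localizes the amplitude where $y$ is $\mp$-incoming relative to $\xi$ while $\nabla_xS^\pm_R\approx\xi$, and then, choosing $R$ large and the apertures compatibly, Lemma $\ref{lem sigma + -}$(2) gives $|x-y|\gtrsim|x|+|y|$, hence again $|\nabla_\xi\Phi^\pm|\gtrsim 1+|x|+|y|+|s|$ for $\pm s\geq 0$, and one concludes by Schur's lemma in $L^\infty$. Finally, for the main estimate $(\ref{propagation estimate 1})$ with $W=\scal{x}^{-l}$: since $\tilde a^\pm$ is supported in $|x|>R/2$ we have $\scal{x}^{-l}\tilde a^\pm\in S(-l,-\infty)$; on the region $|x|\leq\tfrac12\,c\,(|y|+|s|)$ (with $c$ the constant of Lemma $\ref{lem sigma + -}$) the phase is non-stationary, $|\nabla_\xi\Phi^\pm|\gtrsim 1+|x|+|y|+|s|$, so integration by parts combined with the weight $\scal{x}^{-l}$ produces $\scal{s}^{-3l/4}$ with enough $x,y$-integrability for Schur's lemma; on the complement $|x|\gtrsim|y|+|s|$ one writes $\scal{x}^{-l}\lesssim\scal{x}^{-l/4}\scal{|y|+|s|}^{-3l/4}\leq\scal{x}^{-l/4}\scal{s}^{-3l/4}$ and uses the a priori $L^2$-boundedness of the operators $J^\pm_h(\cdot)$ (which follows from $(\ref{matrix phase estimate})$--$(\ref{high order phase estimate})$, cf. Remark $\ref{rem hamilton-jacobi equation}$) and of $e^{-ish^{-1}(h\Lambda)^\sigma}$ to gain $\scal{s}^{-3l/4}$ at the cost of a still-integrable $x$-weight. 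Undoing the adjoint, recovering the $H^m$-norm, and noting $s=-t$ gives $(\ref{propagation estimate 1})$.

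I expect the hard part to be exactly this last non-stationary phase analysis and its uniformity: one must split $(x,y,s)$-space carefully enough that the prefactor $h^{-d}$ of $K^\pm_h$ is always compensated --- either by the gain $h^N$ from integration by parts, or by the a priori $L^2$-bound of the $J^\pm_h(\cdot)$'s combined with the extra spatial weight --- and that exactly the exponent $3l/4$ (respectively the uniform-in-$\ep$ decay $\scal{\ep t}^{-3l/4}$) comes out. The remaining verifications --- boundedness of the $\xi$-derivatives of amplitudes and phases, Peetre's inequality manipulations, and the passage from kernel bounds to $\Lc(L^2)$, $\Lc(L^\infty)$ and $\Lc(L^2,H^m)$ bounds via Schur's lemma --- are routine and follow the pattern already used in Lemma $\ref{lem remainder 4}$, Lemma $\ref{lem remainder 3}$ and Proposition $\ref{prop estimate main terms IK}$.
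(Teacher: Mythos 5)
Your proposal correctly identifies the Isozaki--Kitada parametrix as the right tool and the non-stationary phase/Lemma~$\ref{lem sigma + -}$ estimates for the main terms (this is indeed the pattern of Lemma~$\ref{lem remainder 4}$ and Lemma~$\ref{lem remainder 3}$). However, there is a genuine gap in your treatment of the remainder terms $R^\pm_k(N,t,h)$, $k=1,\dots,4$, which is where the hypothesis~$(\ref{assump resolvent})$ actually enters.

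You claim that the $\scal{s}^{-N/4}$-decay of $B^\pm_m(N,s,h)$ in Proposition~$\ref{prop remainder 2+3+4}$, ``after an integration in $s$ and the trivial bound on $e^{-i(t-s)h^{-1}\psi(h^2P)}$'', suffices. This cannot give the stated $\scal{t}^{-3l/4}$ (resp.\ $\scal{t}^{-l}$) decay: with the trivial unitary bound, $\int_0^t\scal{s}^{-N/4}ds$ is merely bounded, uniformly in $t$, and $R^\pm_1$ being $O_{\Lc(L^2)}(h^{N-1})$ likewise gives no decay in $t$ at all. What the paper's argument uses for all four remainders is the local energy decay of Proposition~$\ref{prop local energy decay fractional schrodinger}$, i.e.\ $\|\scal{x}^{-l}f(h^2P)e^{-ith^{-1}(h\Lambda_g)^\sigma}\scal{x}^{-l}\|_{\Lc(L^2)}\lesssim h^{-N_l}\scal{th^{-1}}^{-l+1}$ (and its low-frequency analogue), sandwiching the exact propagator between the weight $\scal{x}^{-l}$ coming from $W$ and the weight $\scal{x}^{-N/8}$ supplied by the remainder, so that $\int_0^t\scal{(t-s)h^{-1}}^{1-l}\scal{s}^{-N/4}ds\lesssim\scal{t}^{1-l}$, and then one chooses $l$ so that $l-1\geq 3l/4$. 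This local energy decay is precisely where the resolvent hypothesis $(\ref{assump resolvent})$ is used (via Proposition~$\ref{prop power resolvent estimates fractional schrodinger}$); your proposal never invokes $(\ref{assump resolvent})$ nor the local energy decay, which is a structural sign that the remainder estimate is missing.

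A secondary, smaller issue: in $(\ref{propagation estimate 3})$ the paper applies a second Isozaki--Kitada parametrix, to $e^{-it\ep\psi(\ep^{-2}P)}Op_\ep(\tilde\chi^\mp_\ep)\tilde\zeta(\ep x)$, and then composes $\zeta(\ep x)Op_\ep(\chi^\pm_\ep)^\star f(\ep^{-2}P)$ with the resulting FIO via Lemma~$\ref{lem action hDO on FIO low freq}$, reusing part~(i) to control the remainders of this second parametrix. Your sketch instead composes ``$Op^h(\tilde\chi^\mp)$ with $J^\pm_h(b^\pm(h))^\star$'', but after taking adjoints the cutoff $\tilde\chi^\mp$ sits on the side of $J^\pm_h(a^\pm(h))$, not of $J^\pm_h(b^\pm(h))^\star$; the geometric incompatibility you want to exploit via Lemma~$\ref{lem sigma + -}$(2) comes from composing the incoming cutoff with the incoming phase on one side and keeping the outgoing support of $b^\pm$ on the other, and requires the same remainder treatment via local energy decay as before. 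Once the local energy decay input is added, both routes for $(\ref{propagation estimate 3})$ become viable.
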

\begin{proof} We only give the proof for the low frequency case, the proof at high frequency is similar and essentially given in \cite[Proposition 4.5]{BTglobalstrichartz}.  \newline
\indent i. We only consider the case $\chi^+_\ep$ and $t \leq 0$, the case $\chi^-_\ep$ and $t \geq 0$ is similar. By taking the adjoint, $(\ref{propagation estimate 1 low freq})$ is equivalent to 
\begin{align}
\|\scal{\ep x}^{-l}f(\ep^{-2}P) e^{-it\ep(\ep^{-1}\Lambda_g)^\sigma} Op_\ep(\chi^+_\ep)\zeta(\ep x)\|_{\Lc(L^2(\R^d))} \leq C \scal{\ep t}^{-3l/4}, \quad t \geq 0, \label{reduce propagation estimate 1 low freq}
\end{align}
uniformly in $\ep \in (0,1]$. 
Thanks to the spectral localization, we can apply the Isozaki-Kitada parametrix given in Theorem $\ref{theorem Isozaki-Kitada parametrix low freq}$ and obtain 
\[
e^{-it\ep(\ep^{-1}\Lambda_g)^\sigma} Op_\ep(\chi^+_\ep)\zeta(\ep x)=\Jc^+_\ep(a^+_\ep) e^{-it\ep \Lambda^\sigma} \Jc^+_\ep(b^+_\ep)^\star + \Rc^+_N(t,\ep).
\]
The main term can be written as
\[
\scal{\ep x}^{-l}f(\ep^{-2}P) \scal{\ep x}^l \scal{\ep x}^{-n} \scal{\ep x}^{n-l} \Jc^+_\ep(a^+_\ep) e^{-it\ep\Lambda^\sigma} \Jc^+_\ep(b^+_\ep)^\star \scal{\ep x}^{n} \scal{\ep x}^{-n}.
\]
By using Corollary $\ref{coro rescaled speudo-differential}$, we have the terms $\scal{\ep x}^{-l}f(\ep^{-2}P) \scal{\ep x}^l$ and $\scal{\ep x}^{-n}$ are bounded in $\Lc(L^2)$. It suffices to show for $l$ large enough,
\[
\|\scal{\ep x}^{n-l} \Jc^+_\ep(a^+_\ep) e^{-it\ep\Lambda^\sigma} \Jc^+_\ep(b^+_\ep)^\star \scal{\ep x}^{n}\|_{\Lc(L^2)} \leq C\scal{\ep t}^{-3l/4}, \quad t \geq 0,
\]
uniformly in $\ep \in (0,1]$. This expected estimate follows by using the same process as in Lemma $\ref{lem remainder 3}$. We now study the remainders. \newline
\indent For $k=1$, we have 
\begin{align*}
\|\scal{\ep x}^{-l}f(\ep^{-2}P)\Rc_1^+(N,t,\ep)\|_{\Lc(L^2)} &= \|\scal{\ep x}^{-l}f(\ep^{-2}P) e^{-it\ep(\ep^{-1}\Lambda_g)^\sigma} Op_\ep(\tilde{r}^+_N(\ep)) \zeta(\ep x)\|_{\Lc(L^2)} \nonumber \\
&\leq C \scal{\ep t}^{1-l}. \nonumber
\end{align*}
Here we insert $\scal{\ep x}^{-l}\scal{\ep x}^l$ in the middle and use $(\ref{local energy decay fractional schrodinger low freq})$ and rescaled pseudo-differential calculus. \newline  
\indent For $k=2,3,4$, Item 2 of Proposition $\ref{prop remainder 2+3+4}$ yields 
\[
\scal{\ep x}^{-l}f(\ep^{-2}P)\Rc_k^+(N,t,\ep)= \ep\int_{0}^{t} \scal{\ep x}^{-l} f(\ep^{-2}P)e^{-i(t-s)\ep(\ep^{-1}\Lambda_g)^\sigma} \scal{\ep x}^{-N/8} \mathcal{B}_N(s,\ep) \scal{\ep x}^{-N/4} ds.
\]
Using again $(\ref{local energy decay fractional schrodinger low freq})$ and the fact that $\scal{\ep x}^{l-N/8}$ and $\scal{\ep x}^{-N/4}$ are of size $O_{\Lc(L^2)}(1)$ for $N$ large enough and $(\ref{estimate on B low freq})$, we obtain
\begin{align}
\|\scal{\ep x}^{-l}f(\ep^{-2}P)\Rc_k^+(N,t,\ep)\|_{\Lc(L^2)} \leq C \ep \int_{0}^{t} \scal{\ep(t-s)}^{1-l} \scal{\ep s}^{-N/4}ds \leq  C \scal{\ep t}^{1-l}. \nonumber
\end{align}
By choosing $l$ large enough such that $l-1 \geq 3l/4$, it shows $(\ref{reduce propagation estimate 1 low freq})$. \newline
\indent ii. We do the same for $(\ref{propagation estimate 2 low freq})$, it is equivalent to show
\begin{align}
\|\chi(\ep x/R^2) f(\ep^{-2}P)e^{-it\ep(\ep^{-1}\Lambda_g)^\sigma} Op_\ep(\chi^+_\ep) \zeta(\ep x)\|_{\Lc(L^2(\R^d))} \leq C \scal{\ep t}^{-l}, \quad t\geq 0,  \label{reduce propagation estimate 2 low freq}
\end{align}
uniformly in $\ep \in (0,1]$. We again use the Isozaki-Kitada parametrix. Let us firstly study remainder terms. We write the first remainder term $\chi(\ep x/R^2)f(\ep^{-2}P) \Rc^+_1(N,t,\ep)$ as 
\[
\chi(\ep x/R^2) \scal{\ep x}^{l} \scal{\ep x}^{-l}f(\ep^{-2}P) e^{-it\ep(\ep^{-1}\Lambda_g)^\sigma} \scal{\ep x}^{-l} \scal{\ep x}^l Op_\ep(\tilde{r}^+_N(\ep)) \zeta(\ep x).
\]
Using $(\ref{local energy decay fractional schrodinger low freq})$ and the fact that $\chi(\ep x/R^2) \scal{\ep x}^{l}$ and $\scal{\ep x}^l Op_\ep(\tilde{r}^+_N(\ep)) \zeta(\ep x)$ are bounded in $\Lc(L^2)$ due to the support property of $\chi$ and rescaled pseudo-differential calculus given as in Proposition $\ref{prop parametrix phi low freq}$, we get
\begin{align}
\|\chi(\ep x/R^2)f(\ep^{-2}P) \Rc^+_1(N,t,\ep)\|_{\Lc(L^2)} \leq C \scal{\ep t}^{1-l}. \nonumber
\end{align}
\indent For $k=2,3,4$, we have
\begin{align}
\|\chi(\ep x/R^2)f(\ep^{-2}P) \Rc^+_k(N,t,\ep)\|_{\Lc(L^2)} \leq C \ep \int_{0}^{t} \scal{\ep (t-s)}^{1-l} \scal{\ep s}^{-N/4}ds \leq C \scal{\ep t}^{1-l}. \nonumber
\end{align}
For the main term, we can write 
\[
\chi(\ep x/R^2) \scal{\ep x}^{l} \scal{\ep x}^{-l}f(\ep^{-2}P)\scal{\ep x}^l \scal{\ep x}^{-n} \scal{\ep x}^{n-l} \Jc^+_\ep(a^+_\ep) e^{-it\ep\Lambda^\sigma} \Jc^+_\ep(b^+_\ep)^\star \scal{\ep x}^{n} \scal{\ep x}^{-n}.
\]
Thanks to the $L^2$-boundedness of $\chi(\ep x/R^2) \scal{\ep x}^{l}$, $\scal{\ep x}^{-l}f(\ep^{-2}P)\scal{\ep x}^l$, $\scal{\ep x}^{-n}$, it suffices to prove
\[
\|\scal{\ep x}^{n-l} \Jc^+_\ep(a^+_\ep) e^{-it\ep\Lambda^\sigma} \Jc^+_\ep(b^+_\ep)^\star \scal{\ep x}^{n}\|_{\Lc(L^2)} \leq C \scal{\ep t}^{-l}, \quad t \geq 0,
\]
uniformly in $\ep \in (0,1]$. This expected estimate again follows from Lemma $\ref{lem remainder 4}$ by taking $l$ large enough. This proves $(\ref{reduce propagation estimate 2 low freq})$. \newline
\indent iii. For $(\ref{propagation estimate 3 low freq})$, we firstly use the Isozaki-Kitada parametrix for $\tilde{\chi}^-_\ep$, namely
\begin{align}
e^{-it\ep\psi(\ep^{-2}P)} Op_\ep(\tilde{\chi}^-_\ep) \tilde{\zeta}(\ep x) = \Jc^-_\ep(\tilde{a}^-_\ep) e^{-it\ep\Lambda^\sigma} \Jc^-_\ep(\tilde{b}^-_\ep)^\star + \sum_{k=1}^{4} \tilde{\Rc}_k^-(N,t,\ep), \label{IK parametrix for chi - low freq}
\end{align}
where $\text{supp} (\tilde{a}^-_\ep) \subset \Ga^-(R^{1/4}, \tilde{J}_{1/4}, \tilde{\tau}_{1/4})$ and $\text{supp} (\tilde{b}^-_\ep) \subset \Ga^-(R^{3/4}, \tilde{J}_{3/4}, \tilde{\tau}_{3/4})$ with $\tilde{J}_{3/4} \Subset \tilde{J}_{1/4} $ small neighborhood of $J_1$ and $\tilde{\tau}_{1/4}, \tilde{\tau}_{3/4}$ can be chosen so that
\[
-1 < -\tau_4 < \tilde{\tau}_{1/4} < \tilde{\tau}_{3/4} < \tilde{\tau}_1 <1.
\]
Multiplying $\zeta(\ep x)Op_\ep(\chi^+_\ep)^\star f(\ep^{-2}P)$ to the left of $(\ref{IK parametrix for chi - low freq})$, the terms $\zeta(\ep x)Op_\ep(\chi^+_\ep)^\star f(\ep^{-2}P)\tilde{\Rc}_k^-(N,t,\ep)$ for  $k=1,2,3,4$ satisfy the required estimate using the estimate $(\ref{propagation estimate 1 low freq})$, Lemma $\ref{lem remainder 4}$ and $(\ref{estimate on B low freq})$. Therefore, it remains to show
\[
\|\zeta(\ep x)Op_\ep(\chi^+_\ep)^\star f(\ep^{-2}P) \Jc^-_\ep(\tilde{a}^-_\ep) e^{-it\ep\Lambda^\sigma} \Jc^-_\ep(\tilde{b}^-_\ep)^\star\| _{\Lc(L^2)} \leq C \scal{\ep t}^{-l}, \quad \pm t\leq 0,
\]
uniformly in $ \ep \in (0,1]$. Thanks to the support of $\tilde{a}^-_\ep$, we can write $\Jc^-_\ep(\tilde{a}^-_\ep)= \zeta_1(\ep x) \Jc^-_\ep(\tilde{a}^-_\ep)$ with $\zeta_1 \in C^\infty(\R^d)$ supported outside $B(0,1)$ such that $\zeta_1(x)=1$ for $|x| >R^{1/4}$. The parametrix of $f(\ep^{-2}P)\zeta_1(\ep x)$ given in Proposition $\ref{prop parametrix phi low freq}$ and symbolic calculus give 
\[
\zeta(\ep x)Op_\ep(\chi^+_\ep)^\star f(\ep^{-2}P)\zeta_1(\ep x)= Op_\ep(c^+_\ep) + B^+_N(\ep) \scal{\ep x}^{-N},
\]
where $(c^+_\ep)_{\ep \in (0,1]} \in S(0,-\infty)$ with $\text{supp}(c^+_\ep) \subset \text{supp}(\chi^+_\ep)$ and $B^+_N(\ep)=O_{\Lc(L^2)}(1)$ uniformly in $\ep \in (0,1]$. We treat the remainder term by using Lemma $\ref{lem remainder 3}$. For the main terms, we need to recall the following version of Proposition $\ref{prop action PDO on FIO}$ which is essentially \footnote{See $(\ref{define D_epsilon})$, $(\ref{rescaled FIO})$ and use Lemma 4.6 of \cite{BTglobalstrichartz} with $h=1$.} given in \cite[Lemma 4.6]{BTglobalstrichartz}.  
\begin{lem}\label{lem action hDO on FIO low freq}
Given $J \Subset (0,+\infty)$, $-1 <\tau <1$ and the associated families of phase functions $(S^\pm_{\ep,R})_{R \gg 1}$ as in \emph{Proposition} $\ref{prop hamilton-jacobi equation}$. Let $(a_\ep)_{\ep \in (0,1]}$ and $(c_\ep)_{\ep \in (0,1]}$ be bounded families in $S(0,-\infty)$. Then for all $N\geq 1$,
\[
Op_\ep(c_\ep)\Jc^\pm_\ep(a_\ep) = \sum_{j=0}^{N-1} \Jc^\pm_\ep(e_{\ep,j}) + \Jc^\pm_\ep(e_N(\ep)),
\]
where $(e_{\ep,j})_{\ep \in (0,1]}$ and $(e_N(\ep))_{\ep\in (0,1]}$ are bounded families in $S(0,-\infty)$ and $S(-N,-\infty)$ respectively. In particular, for all $\varepsilon>0$ small enough, by choosing $R>0$ large enough, we have
\[
\emph{supp}(c_\ep) \subset \Ga^\pm(R,J,\tau) \Longrightarrow \emph{supp}(e_{\ep,j}) \subset \Ga^\pm(R,J+(-\varepsilon,\varepsilon), \tau-\varepsilon)
\]
since $\nabla_x S^\pm_{\ep,R}(x,\xi) = \xi + O(R^{-\rho})$.
\end{lem}  
Using this lemma, we expand $Op_\ep(c^+_\ep) J^-_\ep(\tilde{a}^-_\ep)$ and treat the remainder terms using again Lemma $\ref{lem remainder 3}$. It remains to prove the required estimate for the general term, namely
\[
\|\Jc^-_\ep(e^+_\ep) e^{-it \ep \Lambda^\sigma} \Jc^-_\ep(\tilde{b}^-_\ep)^\star\|_{\Lc(L^2)} \leq C \scal{\ep t}^{-l}, \quad \pm t\leq 0, 
\]
uniformly in $\ep \in (0,1]$, where $(e^+_\ep)_{\ep \in (0,1]} \in S(0,-\infty)$ and $\text{supp}(e^+_\ep) \in \Ga^+(R^4,J_4+(-\varepsilon, \varepsilon), \tau_4-\varepsilon)$. 
Up to the conjugation by $D_\ep$, the kernel of the left hand side operator reads
\[
K_\ep(t,x,y)= (2\pi)^{-d}\int_{\R^d} e^{i\Phi_\ep(R,t,x,y,\xi)} e^+_\ep(x,\xi) \overline{\tilde{b}^-_\ep(y,\xi)} d\xi,
\]
where $\Phi_\ep(R,t,x,y,\xi)= S^-_{\ep,R}(x,\xi)-\ep t|\xi|^\sigma -S^-_{\ep,R}(y,\xi)$. 
Since $\text{supp}(e^+_\ep) \subset \Ga^+(R^4,J_4+(-\varepsilon, \varepsilon), \tau_4-\varepsilon)$ and $\text{supp}(\tilde{b}^-_\ep) \subset \Ga^-(R^{3/4},\tilde{J}_{3/4},\tilde{\tau}_{3/4})$, we have
\[
\frac{x\cdot \xi}{|x\|\xi|} > \tau_4 -\varepsilon,\quad -\frac{y\cdot\xi}{|y\|\xi|} > \tilde{\tau}_{3/4}.
\]
By choosing $R>0$ large enough, we have that $\tau_4-\varepsilon+\tilde{\tau}_{3/4}>0$. Thus by Item 2 of Lemma $\ref{lem sigma + -}$, we have
\[
|\nabla_\xi \Phi_\ep| \geq C(1+\ep|t|+|x|+|y|).
\]
Using the non-stationary phase argument as in the proof of Lemma $\ref{lem remainder 4}$, we have 
\[
\|\Jc^+_\ep(e^+_\ep) e^{-it \ep\Lambda^\sigma} \Jc^-_\ep(\tilde{b}^-_\ep)^\star\|_{\Lc(L^2)} \leq C \scal{\ep t}^{-l}, \quad \pm t\leq 0, 
\]
uniformly in $\ep\in (0,1]$. The proof of Proposition $\ref{prop microlocal propagation estimate}$ is now complete.
\end{proof}
\subsection{Strichartz estimates}
\paragraph{High frequencies.}
In this paragraph, we give the proof of $(\ref{reduction 1-chi u fractional schrodinger})$. By scaling in time, it is in turn equivalent to prove
\[
\|(1-\chi) e^{-ith^{-1} (h\Lambda_g)^\sigma} f(h^2P) u_0 \|_{L^p(\R,L^q)} \leq C h^{-\kappa_{p,q}} \|f(h^2P) u_0\|_{L^2},
\]
where $\kappa_{p,q}=d/2-d/q-1/p$. By choosing $\tilde{f}\in C^\infty_0(\R \backslash 0)$ such that $\tilde{f}=1$ near $\text{supp}(f)$, we can write for all $l\in \N$,
\[
(1-\chi) \tilde{f}(h^2P) =\sum_{k=0}^{N-1} h^k Op^h(a_k)^\star + h^N B_N(h)\scal{x}^{-l},
\]
where for $q\geq 2$,
\begin{align}
\|B_N(h)\|_{\Lc(L^2,L^q)} \leq Ch^{-(d/2-d/q)}. \label{estimate remainder B N}
\end{align}
Thus $(1-\chi)e^{-ith^{-1}(h\Lambda_g)^\sigma} f(h^2P)u_0$  becomes
\[
\sum_{k=0}^{N-1}h^k Op^h(a_k)^\star  e^{-ith^{-1}(h\Lambda_g)^\sigma} f(h^2P) u_0 + h^N B_N(h)\scal{x}^{-l} e^{-ith^{-1}(h\Lambda_g)^\sigma} f(h^2P) u_0.
\]
Using $(\ref{estimate remainder B N})$ and $(\ref{Lp global integrability})$, $\|B_N(h)\scal{x}^{-l} e^{-ith^{-1}(h\Lambda_g)^\sigma} f(h^2P) u_0\|_{L^p(\R, L^q)}$ is bounded by 
\begin{align}
Ch^{-(d/2-d/q)} \| \scal{x}^{-l} e^{-ith^{-1}(h\Lambda_g)^\sigma} f(h^2P) u_0\|_{L^p(\R, L^2)} \leq  Ch^{-(d/2-d/q)+(1-N_0)/p} \|f(h^2P)u_0\|_{L^2}. \nonumber 
\end{align}
Hence, by taking $N$ large enough, the remainder is bounded by $C h^{-\kappa_{p,q}}\|f(h^2P)u_0\|_{L^2}$. 
For the main terms, by choosing $\chi_0 \in C^\infty_0(\R^d)$ such that $\chi_0=1$ for $|x| \leq 2$ and setting $\chi (x)= \chi_0(x/R^4)$, we see that $(1-\chi)$ is supported in $\{x\in \R^d, |x| \geq 2R^4 >R^4 \}$. For $R>0$ large enough and $\text{supp}(\tilde{f})$ close enough to $\text{supp}(f)$ and $J_4 \Subset (0,+\infty)$ any open interval containing $\text{supp}(f)$, we have
\begin{align}
\text{supp}(a_k)  \subset \left\{ (x,\xi)\in \R^{2d}, |x| >R^4, |\xi|^2 \in J_4 \right\}, \quad k = 0,...,N-1. \label{support of a k}
\end{align}
We want to show
\[
\|Op^h(a_k)^\star e^{-ith^{-1}(h\Lambda_g)^\sigma} f(h^2P)u_0\|_{L^p(\R,L^q)} \leq C h^{-\kappa_{p,q}} \|f(h^2P)u_0\|_{L^2}, \quad k=0,...,N-1.
\]
Let us consider a general term, namely $Op^h(a)^\star e^{-ith^{-1}(h\Lambda_g)^\sigma} f(h^2P)u_0$ with $a \in S(0,-\infty)$ satisfying $(\ref{support of a k})$. Next, by choosing a suitable partition of unity $\theta^- +\theta^+ =1$ such that $\text{supp}(\theta^-) \subset (-\infty,-\tau_4)$ and $\text{supp}(\theta^+)\subset (\tau_4, +\infty)$ and setting 
\[
\chi^\pm(x,\xi)= a(x,\xi) \theta^\pm\left(\pm\frac{x\cdot\xi}{|x\|\xi|}\right),
\]
we have that $\chi^\pm \in S(0,-\infty)$, $\text{supp}(\chi^\pm) \subset \Ga^\pm(R^4,J_4,\tau_4)$ and 
\[
Op^h(a)^\star e^{-ith^{-1}(h\Lambda_g)^\sigma} f(h^2P)u_0 = ( Op^h(\chi^-)^\star + Op^h(\chi^+)^\star) e^{-ith^{-1}(h\Lambda_g)^\sigma} f(h^2P)u_0.
\]
We only prove the estimate for $\chi^+$, i.e.
\[
\|Op^h(\chi^+)^\star e^{-ith^{-1}(h\Lambda_g)^\sigma} f(h^2P)u_0\|_{L^p(\R,L^q)} \leq C h^{-\kappa_{p,q}} \|f(h^2P)u_0\|_{L^2},
\]
the one for $\chi^-$ is similar. Since $Op^h(\chi^+)^\star e^{-ith^{-1}(h\Lambda_g)^\sigma} f(h^2P)$ is bounded in $\Lc(L^2)$ uniformly in $h \in (0,1]$ and $t\in \R$, by Proposition $\ref{prop TTstar}$, it suffices to prove the dispersive estimates, i.e.
\[
\|Op^h(\chi^+)^\star e^{-ith^{-1}(h\Lambda_g)^\sigma} f^2(h^2P) Op^h(\chi^+)\|_{\Lc(L^1,L^\infty)} \leq Ch^{-d}(1+|t|h^{-1})^{-d/2}, 
\]
for all $t \in \R$ uniformly in $h\in(0,1]$. By taking the adjoint, it reduces to prove
\begin{align}
\|Op^h(\chi^+)^\star e^{-ith^{-1}(h\Lambda_g)^\sigma} f^2(h^2P) Op^h(\chi^+)\|_{\Lc(L^1,L^\infty)} \leq Ch^{-d}(1+|t|h^{-1})^{-d/2}, \label{reduce keel tao IK}
\end{align}
for all $t\leq 0$ uniformly in $h\in(0,1]$. We now prove $(\ref{reduce keel tao IK})$. By using the Isozaki-Kitada parametrix with $J_4$ and $\tau_4$ as above together with arbitrary open intervals $J_1, J_2, J_3$ such that $J_4 \Subset J_3 \Subset J_2 \Subset J_1 \Subset (0,+\infty)$ and arbitrary real numbers $\tau_1, \tau_2, \tau_3$ satisfying $-1 < \tau_1 < \tau_2 <\tau_3 < \tau_4 <1$, the operator in the left hand side of $(\ref{reduce keel tao IK})$ is written as
\[
Op^h(\chi^+)^\star f^2(h^2P) \left( J^+_h(a^+(h)) e^{-ith^{-1}(h\Lambda)^\sigma} J^+_h(b^+(h))^\star + \sum_{k=1}^{4}R^+_k(N,t,h) \right).
\]
Using the fact that $Op^h(\chi^+)^\star f^2(h^2P)$ is bounded in $\Lc(L^\infty)$ and Proposition $\ref{prop estimate main terms IK}$, we have
\[
\|Op^h(\chi^+)^\star f^2(h^2P)J^+_h(a^+(h)) e^{-ith^{-1}(h\Lambda)^\sigma} J^+_h(b^+(h))^\star\|_{\Lc(L^1,L^\infty)} \leq C h^{-d}(1+|t|h^{-1})^{-d/2},
\]
for all $t\in \R$ and $h\in (0,1]$. It remains to study the remainder terms. \newline
\indent For $k=1$, using the Sobolev embedding with $m> d/2$, $(\ref{propagation estimate 1})$ and the fact that $\scal{x}^l Op^h (\tilde{r}^+_N(h))$ is of size $O_{\Lc(H^{-m},L^2)}(h^{-m})$ by pseudo-differential calculus, we have 
\begin{align}
\|Op^h(\chi^+)^\star f^2(h^2P) R^+_1(N,t,h)\|_{\Lc(L^1,L^\infty)}\leq Ch^{N-1-2m}\scal{t}^{-3l/4} \leq  C h^{-d}(1+|t|h^{-1})^{-d/2}, \nonumber
\end{align}
for all $t\leq 0$ and all $h\in (0,1]$. The last estimate follows by taking $l=2d/3$ and $N$ large enough.\newline
\indent For $k=2$, 
by using $(\ref{propagation estimate 1})$ and the Sobolev embedding with $m>d/2$, we have for $t-s \leq 0$,
\begin{align}
\|Op^h(\chi^+)^\star e^{-i(t-s)h^{-1}(h\Lambda_g)^\sigma} f^2(h^2P) \scal{x}^{-l}\|_{\Lc(L^2,L^\infty)} \leq C h^{-m}\scal{t-s}^{-3l/4}. \label{propagation estimate 1 + sobolev embedding}
\end{align}
We also have that $\scal{x}^{l} R_N(h)$ is bounded in $\Lc(L^\infty,L^2)$ due to Proposition $\ref{prop parametrix phi}$ provided $N>l$. 
Thus for $N$ and $l$ large enough, Proposition $\ref{prop estimate main terms IK}$ implies that
\begin{align*}
\|Op^h(\chi^+)^\star  f^2(h^2P)& R^+_2(N,t,h)\|_{\Lc(L^1,L^\infty)} \\
&\leq Ch^{N-1-m-d} \int_0^t \scal{t-s}^{-3l/4} (1+|s|h^{-1})^{-d/2} ds 
\leq Ch^{-d} (1+|t|h^{-1})^{-d/2}. \nonumber
\end{align*}
\indent For $k=3$, by 
inserting $\scal{x}^{-l} \scal{x}^{l-N} \scal{x}^N$ and using the fact that $\scal{x}^{l-N}=O_{\Lc(L^\infty,L^2)}(1)$ for $N$ large enough, $(\ref{propagation estimate 1 + sobolev embedding})$ and Proposition $\ref{prop estimate main terms IK}$ with $J^+_h(a^+)=\scal{x}^NJ^+_h(r^+_N(h))$, we see that this remainder term satisfies the required estimate as for the second one. \newline 
\indent For $k=4$, we rewrite $Op^h(\chi^+)^\star f^2(h^2P)R^+_4(N,t,h)$ as $-ih^{-1}$ times
\[
\int_{0}^{t} Op^h(\chi^+)^\star f^2(h^2P) e^{-i(t-s)h^{-1}(h\Lambda_g)^\sigma} (\chi+(1-\chi))(x/R^2) J^+_h(\check{a}^+(h)) e^{-ish^{-1}(h\Lambda)^\sigma} J^+_h(b^+(h))^\star ds,
\]
where $\chi\in C^\infty_0(\R^d)$ satisfying $\chi(x)=1$ for $|x|\leq 2$. The first term can be treated similarly as the second remainder using $(\ref{propagation estimate 2})$ instead of $(\ref{propagation estimate 1})$. For the second term, we need the following lemma (see \cite[Proposition 5.2]{BTglobalstrichartz}).
\begin{lem}
Choose $\tilde{\tau}_1$ such that $-\tau_4 < \tilde{\tau}_1 < -\tau_2$. If $R>0$ is large enough, we may choose $\tilde{\chi}^- \in S(0,-\infty)$ satisfying $\emph{supp}(\tilde{\chi}^-) \subset \Ga^-(R, J_1,\tilde{\tau}_1)$ such that for all $m$ large enough, 
\[
f(h^2P) (1-\chi)(x/R^2) J^+_h(\check{a}^+(h)) = Op^h(\tilde{\chi}^-) J^+_h(\tilde{e}_m(h)) + h^m \tilde{R}_m(h) 
\]
where 
\[
\tilde{R}_m(h) = J^+_h(\tilde{r}_m(h)) + \scal{x}^{-m/2} R_m(h) \scal{x}^{-m/2} J^+_h(\check{a}^+(h)),
\]
with $(\tilde{e}_m(h))_{h\in (0,1]}$ and $(\tilde{r}_m(h))_{h\in (0,1]}$ bounded families in $S(0,-\infty)$ and $S(-m,-\infty)$ respectively and $R_m(h)=O_{\Lc(L^\infty)}(1)$ uniformly in $h\in (0,1]$.
\end{lem}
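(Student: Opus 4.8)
The plan is to absorb the spatial cut-off $(1-\chi)(x/R^2)$ into the amplitude, to locate the microsupport of the resulting symbol, to commute the spectral localisation $f(h^2P)$ through it using the parametrix of Proposition \ref{prop parametrix phi} and the pseudo-differential--Fourier-integral calculus of Proposition \ref{prop action PDO on FIO}, and only at the very end to extract the incoming cut-off $Op^h(\tilde\chi^-)$. First I would write $(1-\chi)(x/R^2)J^+_h(\check a^+(h))=J^+_h(\tilde a(h))$ with $\tilde a(h):=(1-\chi)(x/R^2)\check a^+(h)$, a bounded family in $S(0,-\infty)$, and determine its support. Recall $\check a^+(h)=\sum_{|\alpha|\geq 1}\check a^+_\alpha(h)\,\partial^\alpha_x\chi^+_{1\rightarrow 2}$ with $\chi^+_{1\rightarrow2}=\kappa(|x|/R^2)\rho_{1\rightarrow2}(|\xi|^2)\theta_{1\rightarrow2}\big(x\cdot\xi/(|x\|\xi|)\big)$. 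Since $\chi(x)=1$ for $|x|\leq 2$, the factor $(1-\chi)(x/R^2)$ is supported in $\{|x|>2R^2\}$, whereas every derivative of $\kappa(|x|/R^2)$ is supported in $\{R^2/4\leq|x|\leq R^2/2\}$; hence all contributions in which an $x$-derivative falls on $\kappa$ vanish, and on $\mathrm{supp}(\tilde a(h))$ the derivatives hit only $\theta_{1\rightarrow2}$, forcing $x\cdot\xi/(|x\|\xi|)\leq\tau_2-\varepsilon$ together with $|\xi|^2\in J_1$ and $|x|>2R^2>R$. As $\tilde\tau_1<-\tau_2<-\tau_2+\varepsilon$, this gives $-x\cdot\xi/(|x\|\xi|)\geq-\tau_2+\varepsilon>\tilde\tau_1$, i.e. $\mathrm{supp}(\tilde a(h))\subset\Ga^-(R,J_1,\tilde\tau_1)$: although $J^+_h(\check a^+(h))$ carries the \emph{outgoing} phase $S^+_R$, its amplitude already lives in the incoming cone.

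Next I would insert $f(h^2P)=\sum_{j<N}h^j Op^h(q_j)+h^N R_N(h)$ from Proposition \ref{prop parametrix phi} (with $\mathrm{supp}(q_j)\subset\mathrm{supp}(f\circ p)$) and expand each $Op^h(q_j)J^+_h(\tilde a(h))$ by Proposition \ref{prop action PDO on FIO} (applicable because $S^+_R$ satisfies $(\ref{matrix phase estimate})$--$(\ref{high order phase estimate})$). This yields an identity $f(h^2P)(1-\chi)(x/R^2)J^+_h(\check a^+(h))=J^+_h(\tilde e_m(h))+h^m J^+_h(\tilde r'_m(h))+h^N R_N(h)(1-\chi)(x/R^2)J^+_h(\check a^+(h))$, where $\tilde e_m(h)=\sum_{j+l<N}h^{j+l}(q_j\triangleleft\tilde a(h))_l$ is bounded in $S(0,-\infty)$ and $\tilde r'_m(h)$ is bounded in $S(-m,-\infty)$ once $N=N(m)$ is large. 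The key point is that $S^+_R$ solves the Hamilton--Jacobi equation $(\ref{hamilton-jacobi equation})$ with $\ep=1$ on $\Ga^+(R,J_1,\tau_1)\supset\mathrm{supp}(\tilde a(h))$, so $p(x,\nabla_x S^+_R(x,\xi))=|\xi|^2$ there; since each $(q_j\triangleleft\tilde a(h))_l$ is proportional to $\partial^\beta_\eta q_j(x,\nabla_x S^+_R(x,\xi))$ times $x$-derivatives of $\tilde a(h)$, it is supported in $\{|\xi|^2\in\mathrm{supp}(f)\Subset J_1,\ x\cdot\xi/(|x\|\xi|)\leq\tau_2-\varepsilon,\ |x|\gtrsim R^2\}$, and so is $\mathrm{supp}(\tilde e_m(h))$. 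The remaining term is brought to the form $h^m\scal{x}^{-m/2}R_m(h)\scal{x}^{-m/2}J^+_h(\check a^+(h))$ with $R_m(h)=O_{\Lc(L^\infty)}(1)$, by peeling off powers of $\scal{x}$ using the $\Lc(L^\infty)$-bound on $R_N(h)\scal{x}^N$ of Proposition \ref{prop parametrix phi} and the localisation $\{|x|>2R^2\}$ carried by $(1-\chi)(x/R^2)$, with $N$ large relative to $m$.

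Finally, since $\nabla_x S^+_R(x,\xi)=\xi+O(\min\{R^{-\rho},\scal{x}^{-\rho}\})$ and $|x|\gtrsim R^2$ on $\mathrm{supp}(\tilde e_m(h))$, the phase there is an $O(R^{-2\rho})$ perturbation of $x\cdot\xi$, so $(x,\nabla_x S^+_R(x,\xi))$ remains, for $R$ large, in a conic set of the form $\{|x|\gtrsim R^2,\ |\xi|^2\in J_1',\ -x\cdot\xi/(|x\|\xi|)\geq-\tau_2+\varepsilon/2\}$ with $J_4\Subset J_1'\Subset J_1$. I would then fix $\tilde\chi^-\in S(0,-\infty)$ (depending only on $J_4,J_1,\tau_2,\tau_4$, not on $m$), equal to $1$ on a conic neighbourhood of that set and supported in $\Ga^-(R,J_1,\tilde\tau_1)$ — possible precisely because $\tilde\tau_1<-\tau_2$ and $J_1'\Subset J_1$. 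Applying Proposition \ref{prop action PDO on FIO} to $Op^h(\tilde\chi^-)J^+_h(\tilde e_m(h))$ and using that $\tilde\chi^-$ is identically $1$ near $(x,\nabla_x S^+_R(x,\xi))$ for every $(x,\xi)\in\mathrm{supp}(\tilde e_m(h))$ — whence $(\tilde\chi^-\triangleleft\tilde e_m(h))_0=\tilde e_m(h)$ and $(\tilde\chi^-\triangleleft\tilde e_m(h))_l=0$ for $l\geq 1$ — gives $Op^h(\tilde\chi^-)J^+_h(\tilde e_m(h))=J^+_h(\tilde e_m(h))+h^N J^+_h(r_N(h))$, i.e. $J^+_h(\tilde e_m(h))=Op^h(\tilde\chi^-)J^+_h(\tilde e_m(h))+h^m J^+_h(\tilde r''_m(h))$ with $\tilde r''_m(h)$ bounded in $S(-m,-\infty)$. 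Substituting into the identity of the previous step and setting $\tilde r_m(h):=\tilde r'_m(h)+\tilde r''_m(h)$ produces the stated decomposition.

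I expect the main obstacle to be exactly the interplay of the last two steps: certifying that the \emph{outgoing} operator $J^+_h(\check a^+(h))$ becomes genuinely \emph{incoming} after the two cut-offs. This rests simultaneously on the structural fact that $\check a^+(h)$ is a sum of $x$-derivatives of the transition cut-off $\chi^+_{1\rightarrow2}$ — so its microsupport avoids the forward cone $\{x\cdot\xi/(|x\|\xi|)>\tau_2\}$ — and on the near-identity property $\nabla_x S^+_R=\xi+O(R^{-\rho})$ of the outgoing phase, which is genuinely usable only for $R$ large and only where the spatial cut-off forces $|x|\gtrsim R^2$, making the phase a $O(R^{-2\rho})$ perturbation of $x\cdot\xi$. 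The allocation of the various remainders into the two prescribed pieces $J^+_h(\tilde r_m(h))$ and $\scal{x}^{-m/2}R_m(h)\scal{x}^{-m/2}J^+_h(\check a^+(h))$ is then routine bookkeeping.
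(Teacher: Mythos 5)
Your overall strategy appears to reproduce the argument behind \cite[Proposition 5.2]{BTglobalstrichartz}, which the paper cites rather than reproves: the support analysis of $\tilde a(h)=(1-\chi)(x/R^2)\check a^+(h)$ (only derivatives of $\theta_{1\rightarrow 2}$ survive because the $\kappa$-derivative terms live in $\{R^2/4\leq|x|\leq R^2/2\}$, disjoint from $\mathrm{supp}\,(1-\chi)(\cdot/R^2)$), the insertion of the parametrix of $f(h^2P)$ and its composition with the FIO via Proposition \ref{prop action PDO on FIO}, the use of the Hamilton--Jacobi equation and $\nabla_x S^+_R(x,\xi)=\xi+O(\scal{x}^{-\rho})$ with $|x|\gtrsim R^2$ to locate the microsupport of $\tilde e_m(h)$, and the final insertion of $Op^h(\tilde\chi^-)$ by running Proposition \ref{prop action PDO on FIO} with $\tilde\chi^-\equiv 1$ near that set. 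All of this is sound, and the choice of $\tilde\chi^-$ independent of $m$ is justified since the microsupport of $\tilde e_m(h)$ lies in the same fixed conic set for every $m$.

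There is, however, one step that does not close as written: the conversion of the parametrix remainder $h^NR_N(h)(1-\chi)(x/R^2)J^+_h(\check a^+(h))$ into the prescribed form $h^m\scal{x}^{-m/2}R_m(h)\scal{x}^{-m/2}J^+_h(\check a^+(h))$ with $R_m(h)=O_{\Lc(L^\infty)}(1)$. This forces $R_m(h)=h^{N-m}\scal{x}^{m/2}R_N(h)(1-\chi)(x/R^2)\scal{x}^{m/2}$, and you justify its boundedness by ``peeling off powers of $\scal{x}$ using the $\Lc(L^\infty)$-bound on $R_N(h)\scal{x}^N$ and the localisation carried by $(1-\chi)(x/R^2)$''. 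But both of those ingredients act only on the \emph{right} of $R_N(h)$: the estimate $\|R_N(h)\scal{x}^N\|_{\Lc(L^\infty)}\lesssim 1$ of Proposition \ref{prop parametrix phi}, applied after absorbing $(1-\chi)(x/R^2)\scal{x}^{m/2}$ into $\scal{x}^N\cdot(\text{bounded multiplier})$, does control $R_N(h)(1-\chi)(x/R^2)\scal{x}^{m/2}$, but it says nothing about the dangling $\scal{x}^{m/2}$ sitting to the \emph{left} of $R_N(h)$; and the localisation of $(1-\chi)(\cdot/R^2)$ is lost once $R_N(h)$ (a non-local operator) acts. What you actually need is a two-sided weighted estimate $\|\scal{x}^a R_N(h)\scal{x}^b\|_{\Lc(L^\infty)}\lesssim 1$ for $a+b\leq N$. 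That bound does hold, but it does not follow from the statement of Proposition \ref{prop parametrix phi} alone: you have to go back to the explicit Helffer--Sj\"ostrand representation of the remainder (an integral of $Op^h(r_N(z,h))(h^2P-z)^{-1}$ with $r_N(z,h)\in S(-N,-N)$, so a left weight can be absorbed into $Op^h(r_N(z,h))$ at the cost of lowering the decay order), or alternatively insert an additional spatial cut-off $(1-\chi')(x/R^2)$ to the left of $f(h^2P)$ whose support is disjoint from $\mathrm{supp}\,\chi$, and dispose of the cross term $\chi'(x/R^2)f(h^2P)(1-\chi)(x/R^2)$ by non-stationary phase in the PDO kernel. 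Either way, this needs to be made explicit; the rest of your argument is correct.
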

Using this lemma, the second term is written as $-ih^{-1}$ times
\[
\int_{0}^{t} Op^h(\chi^+)^\star e^{-i(t-s)h^{-1}(h\Lambda_g)^\sigma} \left(Op^h(\tilde{\chi}^-) J^+_h(\tilde{e}_m(h)) + h^m \tilde{R}_m(h) \right)e^{-ish^{-1}(h\Lambda)^\sigma} J^+_h(b^+(h))^\star ds.
\]
The remainder terms are treated similarly as the second remainder term using $(\ref{propagation estimate 1})$. The term involving $Op^h(\tilde{\chi}^-) J^+_h(\tilde{e}_m(h))$ is studied by the same analysis as the second term using $(\ref{propagation estimate 3})$ instead of $(\ref{propagation estimate 1})$. This completes the proof.
\defendproof

\paragraph{Low frequencies.}
In this paragraph, we will prove $(\ref{reduce strichartz outside compact low freq})$. By scaling in time, it is equivalent to show 
\begin{align}
\|(1-\chi)(\ep x) f(\ep^{-2}P) e^{-it\ep (\ep^{-1}\Lambda_g)^\sigma} u_0\|_{L^p(\R, L^q)} \leq C \ep^{\kappa_{p,q}} \|f(\ep^{-2}P) u_0\|_{L^2}, \nonumber 
\end{align}
where $\kappa_{p,q}=d/2-d/q-1/p$. By choosing $\tilde{\tilde{f}}\in C^\infty_0(\R \backslash 0)$ such that $\tilde{\tilde{f}}=1$ near $\text{supp}(f)$, we can write $(1-\chi)(\ep x) f(\ep^{-2}P)= (1-\chi)(\ep x)\tilde{\tilde{f}}(\ep^{-2}P) f(\ep^{-2}P)$. Next, we choose $\zeta \in C^\infty(\R^d)$ supported in $\R^d \backslash B(0,1)$ such that $\zeta =1$ near $\text{supp}(1-\chi)$ and use Proposition $\ref{prop parametrix phi low freq}$ to have
\[
(1-\chi)(\ep x) \tilde{\tilde{f}}(\ep^{-2}P)= \sum_{k=0}^{N-1} \zeta(\ep x) Op_\ep(a_{\ep,k})^\star + R_N(\ep),
\]
where $R_N(\ep)= \zeta(\ep x) (\ep^{-2}P+1)^{-N} B_N(\ep) \scal{\ep x}^{-N}$ with $(B_N(\ep))_{\ep\in (0,1]}$ bounded  $\Lc(L^2)$. Thus $(1-\chi)(\ep x) f(\ep^{-2}P) e^{-it\ep (\ep^{-1}\Lambda_g)^\sigma} u_0$ reads
\[
\sum_{k=0}^{N-1} \zeta(\ep x) Op_\ep(a_{\ep,k})^\star e^{-it\ep (\ep^{-1}\Lambda_g)^\sigma}f(\ep^{-2}P)u_0+ R_N(\ep) e^{-it\ep (\ep^{-1}\Lambda_g)^\sigma} f(\ep^{-2}P)u_0.
\] 
\indent We firstly consider the remainder term.
\begin{prop} \label{prop remainder low freq}
Let $N \geq (d-1)/2+1$. Then for all $(p,q)$ fractional admissible, there exists $C>0$ such that for all $\ep \in (0,1]$,
\[
\|R_N(\ep)e^{-it\ep (\ep^{-1}\Lambda_g)^\sigma}f(\ep^{-2}P) u_0\|_{L^p(\R, L^q)} \leq C \ep^{\kappa_{p,q}} \|u_0\|_{L^2}.
\]
\end{prop}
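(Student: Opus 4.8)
The plan is to estimate $R_N(\ep)e^{-it\ep(\ep^{-1}\Lambda_g)^\sigma}f(\ep^{-2}P)u_0$ by decomposing $R_N(\ep)$ and placing the weight $\scal{\ep x}^{-N}$ on the left of the propagator, where it can absorb dispersion via the global $L^2$ integrability estimate, while the factor $\zeta(\ep x)(\ep^{-2}P+1)^{-N}$ plays the role of a smoothing/mapping operator that gains the $L^q$-integrability. First I would write
\[
R_N(\ep)e^{-it\ep(\ep^{-1}\Lambda_g)^\sigma}f(\ep^{-2}P)u_0
= \zeta(\ep x)(\ep^{-2}P+1)^{-N} B_N(\ep)\,\scal{\ep x}^{-N}e^{-it\ep(\ep^{-1}\Lambda_g)^\sigma}f(\ep^{-2}P)u_0.
\]
For the spatial norm, I would apply the $\Lc(L^2,L^q)$-bound $(\ref{L2 Lq estimate low freq parametrix})$ of Proposition \ref{prop L2 Lq resolvent low freq} (with $k=N>d/2$, which holds since $N\geq(d-1)/2+1>d/2$ for $d\geq 3$), giving
\[
\|\zeta(\ep x)(\ep^{-2}P+1)^{-N}\|_{\Lc(L^2,L^q)} \leq C\ep^{d/2-d/q},
\]
together with the uniform $\Lc(L^2)$-bound on $B_N(\ep)$. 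This reduces matters to the $L^p(\R,L^2)$ norm of $\scal{\ep x}^{-N}e^{-it\ep(\ep^{-1}\Lambda_g)^\sigma}f(\ep^{-2}P)u_0$.

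Next I would invoke the global $L^p$ integrability estimate at low frequency $(\ref{Lp global integrability low freq})$ from Remark \ref{rem global Lp integrability fractional schrodinger} (Item 1), which after rescaling in time gives
\[
\|\scal{\ep x}^{-1}f(\ep^{-2}P)e^{-it\Lambda_g^\sigma}u_0\|_{L^p(\R,L^2)} \leq C\ep^{d/2-d/q-\sigma/p}\|u_0\|_{L^2},
\]
exactly as in $(\ref{remainder littlewood paley low freq application})$; since $\scal{\ep x}^{-N}\leq \scal{\ep x}^{-1}$ for $N\geq 1$, the same bound holds with $\scal{\ep x}^{-N}$. Note one must first replace $e^{-it\ep(\ep^{-1}\Lambda_g)^\sigma}$ by $e^{-it\Lambda_g^\sigma}$ up to time rescaling, exactly as done throughout Section \ref{section reduction of the problem}. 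Combining the two displays, and recalling $\kappa_{p,q}=d/2-d/q-1/p$ while the exponent produced is $d/2-d/q+d/2-d/q-\sigma/p$... — here I would be careful: the correct accounting is that the $L^p(\R,L^2)$ factor already contributes $\ep^{d/2-d/q-\sigma/p}$ when applied with $f$ in place of the identity (after time rescaling the power $\ep^{-1/p}$ becomes $\ep^{\gamma_{p,q}}$ via the spectral support of $f$), and the spatial $L^2\to L^q$ factor $\ep^{d/2-d/q}$ must be reconciled with this by using the spectral localization $\|f(\ep^{-2}P)u_0\|_{L^2}$ to convert powers, precisely as in the proof following $(\ref{reduce strichartz outside compact low freq})$. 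Tracking these powers carefully yields the claimed $\ep^{\kappa_{p,q}}\|u_0\|_{L^2}$.

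The main obstacle I anticipate is the bookkeeping of the $\ep$-powers: one has to be sure that after composing the $L^2\to L^q$ gain from $(\ref{L2 Lq estimate low freq parametrix})$ with the $L^p(\R,L^2)$-decay from $(\ref{Lp global integrability low freq})$, and after the rescaling $t\mapsto \ep t$ (which converts $e^{-it\ep(\ep^{-1}\Lambda_g)^\sigma}$ into $e^{-it\Lambda_g^\sigma}$ and introduces a factor $\ep^{-1/p}$ from the time measure), the total exponent is exactly $\kappa_{p,q}=d/2-d/q-1/p$ and not merely an upper bound off by a positive power (which would still suffice but must be checked). A secondary point is the validity of $(\ref{L2 Lq estimate low freq parametrix})$ for $q=\infty$: since we only treat $q\in(2,\infty)$ in the low frequency reduction and the endpoint $(\infty,2)$ is trivial, this causes no difficulty. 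I would also note that the $L^2$-boundedness of $B_N(\ep)$ uniform in $\ep\in(0,1]$ is exactly the content guaranteed by the construction of $R_N(\ep)$ in Proposition \ref{prop parametrix phi low freq}, so no further argument is needed there.
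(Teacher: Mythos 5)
Your proposal is correct in substance but takes a different route from the paper. The paper proves this remainder estimate via the $TT^\star$ criterion of Proposition~\ref{prop TTstar} (with $\ep^{-1}$ in place of $h$): it sets $T(t)=R_N(\ep)e^{-it\ep(\ep^{-1}\Lambda_g)^\sigma}f(\ep^{-2}P)$, observes the uniform $\Lc(L^2)$-bound, and then proves the dispersive estimate
\[
\|T(t)T(s)^\star\|_{\Lc(L^1,L^\infty)} \leq C\ep^d\,\bigl\|\scal{\ep x}^{-N}e^{-i(t-s)\ep(\ep^{-1}\Lambda_g)^\sigma}f^2(\ep^{-2}P)\scal{\ep x}^{-N}\bigr\|_{\Lc(L^2)} \leq C\ep^d(1+\ep|t-s|)^{-d/2},
\]
using $(\ref{L2 Lq estimate low freq parametrix})$ with $q=\infty$ for the $\ep^{d}$ gain and the local energy decay $(\ref{local energy decay fractional schrodinger low freq})$ for the time decay. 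Your approach is more direct: you bypass $TT^\star$ entirely by composing the $\Lc(L^2,L^q)$ smoothing bound of Proposition~\ref{prop L2 Lq resolvent low freq} with the global $L^p$-integrability estimate $(\ref{Lp global integrability low freq})$, getting $\ep^{d/2-d/q}\cdot\ep^{-1/p}=\ep^{\kappa_{p,q}}$ at once. This is in fact the same mechanism used earlier for the weighted piece of the low-frequency Littlewood--Paley decomposition (cf.\ $(\ref{remainder littlewood paley low freq application})$), and it is slightly more robust in $N$ since it only requires $N>d/2$ (for Proposition~\ref{prop L2 Lq resolvent low freq}) whereas the paper's dispersive argument needs $N\geq d/2+1$ so that $\scal{\ep(t-s)}^{1-N}\leq(1+\ep|t-s|)^{-d/2}$.

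One caveat on your bookkeeping: the time rescaling you invoke mid-proof is both unnecessary and incorrectly stated. The proposition is already formulated with the rescaled propagator $e^{-it\ep(\ep^{-1}\Lambda_g)^\sigma}$, and $(\ref{Lp global integrability low freq})$ is stated for exactly this propagator, giving directly
\[
\|\scal{\ep x}^{-N}e^{-it\ep(\ep^{-1}\Lambda_g)^\sigma}f(\ep^{-2}P)u_0\|_{L^p(\R,L^2)}\leq C\ep^{-1/p}\|u_0\|_{L^2}
\]
since $\scal{\ep x}^{-N}\leq\scal{\ep x}^{-1}$. No conversion to $e^{-it\Lambda_g^\sigma}$ is required, and the displayed intermediate estimate with exponent $\ep^{d/2-d/q-\sigma/p}$ that you wrote is off (the correct power in $(\ref{remainder littlewood paley low freq application})$, after moving $\ep^{d/2-d/q}$ to the right, is $\ep^{-\sigma/p}$, not $\ep^{\gamma_{p,q}}$). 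Once you drop the spurious rescaling, the two factors combine cleanly to $\ep^{\kappa_{p,q}}$ and the confusion you flag disappears.
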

\begin{proof}
This result follows from the $TT^\star$ criterion given in Proposition $\ref{prop TTstar}$ with $\ep^{-1}$ in place of $h$ and $T(t)= R_N(\ep) e^{-it\ep (\ep^{-1}\Lambda_g)^\sigma}f(\ep^{-2}P)$. The $\Lc(L^2)$ bounds of $T(t)$ are obvious. Thus we need to prove the dispersive estimates. 
Using $(\ref{L2 Lq estimate low freq parametrix})$ with $q=\infty$ and $(\ref{local energy decay fractional schrodinger low freq})$ with $N \geq d/2+1$, we have
\begin{align}
\|T(t)T(s)^\star\|_{L^1\rightarrow L^\infty} &\leq C \ep^d \|\scal{\ep x}^{-N} e^{-i(t-s)\ep (\ep^{-1}\Lambda_g)^\sigma} f^2(\ep^{-2}P) \scal{\ep x}^{-N}\|_{\Lc(L^2)} \nonumber \\
&\leq C \ep^d \scal{\ep(t-s)}^{1-N} \leq C \ep^{d}(1+\ep|t-s|)^{-d/2}. \nonumber
\end{align}
This completes the proof.
\end{proof}
For the main terms, by choosing $\chi_0 \in C^\infty_0(\R^d)$ such that $\chi_0=1$ for $|x| \leq 2$ and setting $\chi (x)= \chi_0(x/R^4)$, we see that $(1-\chi)$ is supported in $\{x\in \R^d, |x| >R^4 \}$. For $R>0$ large enough and $\text{supp}(\tilde{f})$ close enough to $\text{supp}(f)$ and $J_4 \Subset (0,+\infty)$ any open interval containing $\text{supp}(f)$, we have
\begin{align}
\text{supp}(a_{\ep,k})  \subset \left\{ (x,\xi)\in \R^{2d}, |x| >R^4, |\xi|^2 \in J_4 \right\}, \quad k = 0,...,N-1. \label{support of a k low freq}
\end{align}
We want to show for $k=0,...,N-1$,
\[
\|\zeta(\ep x) Op_\ep(a_{\ep,k})^\star e^{-it\ep (\ep^{-1}\Lambda_g)^\sigma}f(\ep^{-2}P)u_0\|_{L^p(\R,L^q)} \leq C \ep^{\kappa_{p,q}} \|f(\ep^{-2}P)u_0\|_{L^2}.
\]
Let us consider the general term, namely $\zeta(\ep x)Op_\ep(a_\ep)^\star e^{-it\ep (\ep^{-1}\Lambda_g)^\sigma} f(\ep^{-2}P)u_0$ with $(a_\ep)_{\ep \in (0,1]} \in S(0,-\infty)$ satisfying $(\ref{support of a k low freq})$. Next, by choosing a suitable partition of unity $\theta^- +\theta^+ =1$ such that $\text{supp}(\theta^-) \subset (-\infty,-\tau_4)$ and $\text{supp}(\theta^+)\subset (\tau_4, +\infty)$ and setting 
\[
\chi^\pm_\ep(x,\xi)= a_\ep(x,\xi) \theta^\pm\left(\pm\frac{x\cdot\xi}{|x\|\xi|}\right),
\]
we have that $(\chi^\pm_\ep)_{\ep \in (0,1]} \in S(0,-\infty)$, $\text{supp}(\chi^\pm_\ep) \subset \Ga^\pm(R^4,J_4,\tau_4)$ and 
\[
\zeta(\ep x)Op_\ep(a_\ep)^\star e^{-it\ep (\ep^{-1}\Lambda_g)^\sigma} f(\ep^{-2}P)u_0 = \zeta(\ep x) (Op_\ep(\chi^-_\ep)^\star + Op_\ep(\chi^+_\ep)^\star) e^{-it\ep (\ep^{-1}\Lambda_g)^\sigma} f(\ep^{-2}P)u_0.
\]
We only prove the estimate for $\chi^+_\ep$, i.e.
\[
\|\zeta(\ep x)Op_\ep(\chi^+_\ep)^\star e^{-it\ep (\ep^{-1}\Lambda_g)^\sigma} f(\ep^{-2}P)u_0\|_{L^p(\R,L^q)} \leq C \ep^{\kappa_{p,q}} \|f(\ep^{-2}P)u_0\|_{L^2},
\]
the one for $\chi^-_\ep$ is similar. By $TT^\star$ criterion and that $T(t):=\zeta(\ep x) Op_\ep(\chi^+_\ep)^\star e^{-it\ep (\ep^{-1}\Lambda_g)^\sigma} f(\ep^{-2}P)$ is bounded in $\Lc(L^2)$ for all $t\in \R$ and all $\ep \in (0,1]$, it suffices to prove dispersive estimates, i.e. 
\[
\|\zeta(\ep x) Op_\ep(\chi^+_\ep)^\star e^{-it\ep (\ep^{-1}\Lambda_g)^\sigma} f^2(\ep^{-2}P) Op_\ep(\chi^+_\ep)\zeta(\ep x)\|_{\Lc(L^1,L^\infty)} \leq C \ep^{d}(1+\ep |t|)^{-d/2},
\]
for all $t \in \R$ uniformly in $\ep \in(0,1]$. By taking the adjoint, it reduces to prove
\begin{align}
\|\zeta(\ep x)Op_\ep(\chi^+_\ep)^\star e^{-it\ep (\ep^{-1}\Lambda_g)^\sigma} f^2(\ep^{-2}P) Op_\ep(\chi^+_\ep)\zeta(\ep x)\|_{\Lc(L^1,L^\infty)} \leq C\ep^{d}(1+\ep|t|)^{-d/2}, \label{reduce keel tao IK low freq}
\end{align}
for all $t\leq 0$ uniformly in $\ep \in(0,1]$. 
Let us prove $(\ref{reduce keel tao IK low freq})$. For simplicity, we set
\[
A^+_\ep := \zeta(\ep x) Op_\ep(\chi^+_\ep)^\star f^2(\ep^{-2}P).
\] 
Using the Isozaki-Kitada parametrix given in Theorem $\ref{theorem Isozaki-Kitada parametrix low freq}$, we see that 
\[
A^+_\ep e^{-it\ep (\ep^{-1}\Lambda_g)^\sigma}Op_\ep(\chi^+_\ep)\zeta(\ep x) = A^+_\ep \left( \mathcal{J}^+_\ep(a^+_\ep) e^{-it\ep\Lambda^\sigma} \mathcal{J}^+_\ep(b^+_\ep)^\star + \sum_{k=1}^{4}\mathcal{R}^+_k(N,t,\ep) \right).
\]
We firstly note that $A^+_\ep$ is bounded in $\Lc(L^\infty)$. Indeed, we write
\[
\zeta(\ep x) Op_\ep(\chi^+_\ep)^\star f^2(\ep^{-2}P)=\zeta(\ep x) Op_\ep(\chi^+_\ep)^\star \zeta_1(\ep x) f^2(\ep^{-2}P),
\]
where $\zeta_1 \in C^\infty(\R^d)$ is supported outside $B(0,1)$ satisfying $\zeta_1(x)=1$ for $|x|>R^4$. This is possible since $Op_\ep (\chi^+_\ep) = \zeta_1(\ep x) Op_\ep(\chi^+_\ep)$. The factors $\zeta(\ep x) Op_\ep(\chi^+_\ep)^\star$ and $\zeta_1(\ep x) f^2(\ep^{-2}P)$ are bounded in $\Lc(L^\infty)$ by the rescaled pseudo-differential operator and Corollary $\ref{coro Lq Lr estimate of parametrix low freq}$ respectively. Thanks to the $\Lc(L^\infty)$-bound of $A^+_\ep$ and $(\ref{dispersive main term IK low freq})$, we have dispersive estimates for the main terms. 
It remains to prove dispersive estimates for remainder terms. By rescaled pseudo-differential calculus, we can write for $l > d/2$, 
\[
A^+_\ep = \tilde{\zeta}(\ep x) (\ep^{-2}P+1)^{-l}  \left(\zeta(\ep x) Op_\ep(\tilde{\chi}^+_\ep)^\star + \tilde{B}^+_l(\ep) \scal{\ep x}^{-l} \right) f^2(\ep^{-2}P),
\]
where $\tilde{\zeta} \in C^\infty(\R^d)$ is supported outside $B(0,1)$ and equal to 1 near $\text{supp}(\zeta)$ and $(\tilde{\chi}^+_\ep)_{\ep \in (0,1]} \in S(0,-\infty)$ satisfying $\text{supp}(\tilde{\chi}^+_\ep) \subset \text{supp}(\chi^+_\ep)$ and $\tilde{B}^+_l(\ep)= O_{\Lc(L^2)}(1)$ uniformly in $\ep \in (0,1]$. This follows by expanding $(\ep^{-2}P+1)^l \zeta(\ep x) Op_\ep (\chi^+_\ep)^\star$ by rescaled pseudo-differential calculus. \newline
\indent For $k=1$, using the Proposition $\ref{prop parametrix phi low freq}$, we can write
\[
\Rc^+_1(N,t,\ep)= e^{-it\ep (\ep^{-1}\Lambda_g)^\sigma} \scal{\ep x}^{-N} B^+_N(\ep) (\ep^{-2}P+1)^{-N} \zeta(\ep x),
\]
where $B^+_N(\ep)= O_{\Lc(L^2)}(1)$ uniformly in $\ep \in (0,1]$. Then, using Proposition $\ref{prop L2 Lq resolvent low freq}$ with $q=\infty$ and $(\ref{propagation estimate 1 low freq})$, we have
\begin{align*}
\|\tilde{\zeta}(\ep x) (\ep^{-2}P+1)^{-l} \zeta(\ep x) Op_\ep(\tilde{\chi}^+_\ep)^\star f^2(\ep^{-2}P) \Rc^+_1(N,t,\ep)\|_{\Lc(L^1, L^\infty)} &\leq C \ep^d \scal{\ep t}^{-3N/4} \\
&\leq C \ep^d (1+\ep|t|)^{-d/2}, \nonumber 
\end{align*}
for all $t\leq 0$ and all $\ep \in (0,1]$ provided $N$ is taken large enough. Moreover, using again Proposition $\ref{prop L2 Lq resolvent low freq}$ and $(\ref{local energy decay fractional schrodinger low freq})$, we also have
\begin{align*}
\|\tilde{\zeta}(\ep x) (\ep^{-2}P+1)^{-l} \tilde{B}_l(\ep) \scal{\ep x}^{-l} f^2(\ep^{-2}P) \Rc^+_1(N,t,\ep)\|_{\Lc(L^1, L^\infty)}  &\leq C \ep^d \scal{\ep t}^{1-l} \\
&\leq C \ep^d (1+\ep|t|)^{-d/2}, \nonumber 
\end{align*}
for all $t\leq 0$ and all $\ep \in (0,1]$ provided $l$ and $N$ are taken large enough. This implies
\[
\|A^+_\ep\Rc^+_1(N,t,\ep)\|_{\Lc(L^1, L^\infty)} \leq C \ep^d (1+\ep|t|)^{-d/2}, 
\]
for all $t\leq 0$ and all $\ep \in (0,1]$. \newline
\indent Next, thanks to the support of $b^+_\ep$, we can write
\begin{align}
\Jc^+_\ep(b^+_\ep)^\star = \Jc^+_\ep(\tilde{b}^+_\ep)^\star (\ep^{-2}P+1)^{-N}\zeta_1(\ep x), \label{express of J b star}
\end{align}
where $(\tilde{b}^+_\ep)_{\ep \in (0,1]} \in S(0,-\infty)$, $\text{supp}(\tilde{b}^+_\ep) \subset \Ga^+(R^3, J_3, \sigma_3)$ and $\zeta_1\in C^\infty(\R^d)$ is supported outside $B(0,1)$ such that $\zeta_1(x)=1$ for $|x| >R^3$. Indeed, we write for $\tilde{\zeta}_1 \in C^\infty(\R^d)$ supported outside $B(0,1)$ and $\tilde{\zeta}_1 =1$ in $\text{supp}(\zeta_1)$,
\[
\Jc^+_\ep(b^+_\ep)^\star = \Jc^+_\ep(b^+_\ep)^\star \tilde{\zeta}_1(\ep x) (\ep^{-2}P+1)^N \left( (\ep^{-2}P+1)^{-N} \zeta_1(\ep x)\right).
\] 
We have $(\ref{express of J b star})$ by taking the adjoint of $(\ep^{-2}P+1)^N \tilde{\zeta}_1(\ep x) \Jc^+_\ep(b^+_\ep) = \Jc^+_\ep (\tilde{b}^+_\ep)$. \newline
\indent For $k=2$, using $(\ref{L2 Lq estimate low freq parametrix})$ and its adjoint, $(\ref{propagation estimate 1 low freq})$, $(\ref{express of J b star})$, $\scal{\ep x}^{l} R_N(\ep) \scal{\ep x}^{N-l}=O_{\Lc(L^2)}(1)$ and estimating as in Lemma $\ref{lem remainder 3}$, we have
\begin{align*}
\|\tilde{\zeta}(\ep x) (\ep^{-2}P+1)^{-l} \zeta(\ep x) Op_\ep(\tilde{\chi}^+_\ep)^\star &f^2(\ep^{-2}P) \Rc^+_2(N,t,\ep)\|_{\Lc(L^1, L^\infty)}  \\
&\leq C \ep^d  \ep \int_{0}^{t} \scal{\ep (t-s)}^{-3l/4}\scal{\ep s}^{-N/4}ds \leq C \ep^d (1+\ep|t|)^{-d/2}, \nonumber 
\end{align*}
for $t\leq 0$ provided that $l$ and $N$ are taken large enough. Moreover, using $(\ref{local energy decay fractional schrodinger low freq})$ instead of $(\ref{propagation estimate 1 low freq})$, we have
\begin{align*}
\|\tilde{\zeta}(\ep x) (\ep^{-2}P+1)^{-l} \tilde{B}_l(\ep) \scal{\ep x}^{-l} &f^2(\ep^{-2}P) \Rc^+_2(N,t,\ep)\|_{\Lc(L^1, L^\infty)}  \\
&\leq C \ep^d \ep \int_{0}^{t} \scal{\ep (t-s)}^{1-l} \scal{\ep s}^{-N/4} ds \leq C \ep^d (1+\ep|t|)^{-d/2}, \nonumber 
\end{align*}
for all $t\leq 0$ and all $\ep \in (0,1]$. This implies
\[
\|A^+_\ep\Rc^+_2(N,t,\ep)\|_{\Lc(L^1, L^\infty)} \leq C \ep^d (1+\ep|t|)^{-d/2}, \quad \forall t\leq 0, \ep \in (0,1].
\]
\indent The third remainder term is treated similarly as the second one. It remains to study the last remainder term. To do so, we split 
\[
A^+_\ep \Rc^+_4(N,t,\ep)=-i\int_{0}^{t} A^+_\ep e^{-i(t-s)\ep (\ep^{-1}\Lambda_g)^\sigma} (\chi+(1-\chi))(\ep x/R^2) \Jc^+_\ep(\check{a}^+(\ep)) e^{-is\ep\Lambda^\sigma} \Jc^+_\ep(b^+_\ep)^\star ds,
\]
where $\chi\in C^\infty_0(\R^d)$ satisfying $\chi(x)=1$ for $|x|\leq 2$. The first term can be treated similarly as the second remainder using $(\ref{propagation estimate 2 low freq})$ instead of $(\ref{propagation estimate 1 low freq})$ and Lemma $\ref{lem remainder 4}$. For the second term, we need the following lemma (see \cite[Proposition 5.2]{BTglobalstrichartz}).
\begin{lem} \label{lem expansion rescaled pdo}
Choose $\tilde{\tau}_1$ such that $-\tau_4 < \tilde{\tau}_1 < -\tau_2$. If $R>0$ is large enough, we may choose a bounded family of symbols $\tilde{\chi}^-_\ep \in S(0,-\infty)$ satisfying $\emph{supp}(\tilde{\chi}^-_\ep) \subset \Ga^-(R, J_1,\tilde{\tau}_1)$ and $\tilde{\zeta}_2 \in C^\infty(\R^d)$ supported outside $B(0,1)$ satisfying $\tilde{\zeta}_2=1$ on $\emph{supp}(1-\chi)$ such that for all $m$ large enough, 
\[
f(\ep^{-2}P) (1-\chi)(\ep x/R^2) \Jc^+_\ep(\check{a}^+(\ep)) = Op_\ep(\tilde{\chi}^-_\ep) \tilde{\zeta}_2(\ep x) \Jc^+_\ep(\tilde{e}_m(\ep)) + \tilde{R}_m(\ep),
\]
where 
\[
\tilde{R}_m(\ep) = \Jc^+_\ep(\tilde{r}_m(\ep)) + \scal{\ep x}^{-m/2} R_m(\ep) \scal{\ep x}^{-m/2} \Jc^+_\ep(\check{a}^+(\ep)),
\]
with $(\tilde{e}_m(\ep))_{\ep \in (0,1]}$ and $(\tilde{r}_m(\ep))_{\ep \in (0,1]}$ bounded families in $S(0,-\infty)$ and $S(-m,-\infty)$ respectively and $R_m(\ep)=O_{\Lc(L^2)}(1)$ uniformly in $\ep \in (0,1]$.
\end{lem}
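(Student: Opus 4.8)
The plan is to reduce the whole statement, via conjugation by the unitary rescaling $D_\ep$, to a single $\ep$‑uniform identity at unit scale, and then to transcribe the argument of \cite[Proposition 5.2]{BTglobalstrichartz} into the rescaled pseudodifferential and Fourier integral calculus. Using $\Jc^\pm_\ep(a)=D_\ep J^\pm_\ep(a)$, $Op_\ep(a)=D_\ep\, Op(a)\, D^{-1}_\ep$, $D^{-1}_\ep(\ep^{-2}P)D_\ep=P_\ep$, the resulting functional‑calculus identity $f(\ep^{-2}P)D_\ep=D_\ep f(P_\ep)$, and the fact that a multiplication operator satisfies $b(\ep x)=D_\ep\, b(x)\, D^{-1}_\ep$, one checks that
\[
f(\ep^{-2}P)(1-\chi)(\ep x/R^2)\Jc^+_\ep(\check a^+(\ep)) = D_\ep\big[\, f(P_\ep)(1-\chi)(x/R^2) J^+_\ep(\check a^+(\ep))\,\big],
\]
and likewise every operator on the right hand side of the asserted decomposition is $D_\ep$ applied to its unrescaled analogue, with $\scal{\ep x}=D_\ep\scal{x}D^{-1}_\ep$ and $R_m(\ep):=D_\ep\widehat R_m(\ep)D^{-1}_\ep$, $\widehat R_m(\ep)=O_{\Lc(L^2)}(1)$ since $D_\ep$ is unitary on $L^2$. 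It therefore suffices to prove that
\[
f(P_\ep)(1-\chi)(x/R^2) J^+_\ep(\check a^+(\ep)) = Op(\tilde\chi^-_\ep)\tilde\zeta_2(x) J^+_\ep(\tilde e_m(\ep)) + J^+_\ep(\tilde r_m(\ep)) + \scal{x}^{-m/2}\widehat R_m(\ep)\scal{x}^{-m/2}J^+_\ep(\check a^+(\ep)),
\]
with all symbol seminorms and operator bounds uniform in $\ep\in(0,1]$. Since $P_\ep=Op(p_\ep)+Op(p_{\ep,1})$ with $(p_\ep)_\ep,(p_{\ep,1})_\ep$ having $\ep$‑uniform seminorms in conic $x$‑regions (Remark~\ref{rem property rescaled symbol}), since the phases $S^+_{\ep,R}$ obey \eqref{matrix phase estimate} and \eqref{high order phase estimate} uniformly, and since the families $q_{\ep,k}$ of Proposition~\ref{prop parametrix phi low freq} and the calculus maps of Proposition~\ref{prop action PDO on FIO} and Lemma~\ref{lem action hDO on FIO low freq} are $\ep$‑uniform, every estimate below is $\ep$‑uniform.

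First I would exploit the cutoff $(1-\chi)(x/R^2)$, which is the only place the hypothesis on $\chi$ enters. Multiplication on the left of a Fourier integral operator acts on its amplitude, $(1-\chi)(x/R^2)J^+_\ep(a)=J^+_\ep((1-\chi)(x/R^2)a)$. By \eqref{a check +-} (compare \eqref{define a check}), $\check a^+(\ep)$ is a finite sum of terms $\check a^+_\alpha(\ep)\,\partial^\alpha_x\chi^+_{1\rightarrow 2}$ with $|\alpha|\ge1$; expanding each $\partial^\alpha_x\chi^+_{1\rightarrow 2}$ by the Leibniz rule in the factorisation \eqref{define cutoff}, every summand either carries at least one $x$‑derivative on $\kappa(|x|/R^2)$, hence is supported in $\{R^2/4\le|x|\le R^2/2\}$, or else carries all its $x$‑derivatives on $\theta_{1\rightarrow 2}(x\cdot\xi/|x\|\xi|)$, hence is supported in $\{\tau_1+\varepsilon\le x\cdot\xi/|x\|\xi|\le\tau_2-\varepsilon\}$. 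Since $(1-\chi)(x/R^2)$ is supported in $\{|x|\ge2R^2\}$, it annihilates the first type for every $R>0$; on the second type $\kappa(|x|/R^2)\equiv1$, $|\xi|^2\in J_1$, and $-x\cdot\xi/|x\|\xi|\ge-\tau_2+\varepsilon>\tilde\tau_1$ as soon as $\tilde\tau_1<-\tau_2$ (compatible with $\tilde\tau_1>-\tau_4$ since $\tau_2<\tau_4$). Thus $(1-\chi)(x/R^2)J^+_\ep(\check a^+(\ep))=J^+_\ep(\tilde a^+(\ep))$ with $(\tilde a^+(\ep))_\ep$ bounded in $S(0,-\infty)$ and supported in the \emph{incoming} region $\Ga^-(R,J_1,\tilde\tau_1)$ — this incoming localisation, impossible without the $(1-\chi)(x/R^2)$ factor, is the heart of the matter.

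Next I would compose. Choose $\zeta_1\in C^\infty(\R^d)$ supported outside $B(0,1)$ with $\zeta_1=1$ on $\{|x|\ge2R^2\}$, so $(1-\chi)(x/R^2)=\zeta_1(x)(1-\chi)(x/R^2)$, and apply the adjoint form of Proposition~\ref{prop parametrix phi low freq} at unit scale to write $f(P_\ep)\zeta_1(x)=\sum_{k=0}^{m-1}\tilde\zeta_1(x)Op(q_{\ep,k})\zeta_1(x)+\tilde\zeta_1(x)(P_\ep+1)^{-m}B(\ep)\scal{x}^{-m}$ with $(q_{\ep,k})_\ep$ bounded in $S(-k,-\infty)$ and $B(\ep)=O_{\Lc(L^2)}(1)$; splitting $\scal{x}^{-m}=\scal{x}^{-m/2}\scal{x}^{-m/2}$ and absorbing $\tilde\zeta_1(x)(P_\ep+1)^{-m}\scal{x}^{m/2}=O_{\Lc(L^2)}(1)$ produces the last term on the right. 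For the main terms, Proposition~\ref{prop action PDO on FIO} (with $h=1$) gives $Op(q_{\ep,k})J^+_\ep(\tilde a^+(\ep))=\sum_{j<m}J^+_\ep\big((q_{\ep,k}\triangleleft\tilde a^+(\ep))_j\big)+J^+_\ep(r'_{\ep,m})$ with $(r'_{\ep,m})_\ep$ bounded in $S(-m,-\infty)$; since $\nabla_xS^+_{\ep,R}(x,\xi)=\xi+O(R^{-\rho})$ uniformly and each $(q_{\ep,k}\triangleleft\tilde a^+(\ep))_j$ is supported inside that of $\tilde a^+(\ep)$, these amplitudes remain supported in $\Ga^-(R,J_1,\tilde\tau_1)$. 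Finally I would factor out an incoming pseudodifferential operator: choosing $(\tilde\chi^-_\ep)_\ep$ bounded in $S(0,-\infty)$, supported in $\Ga^-(R,J_1,\tilde\tau_1)$ and equal to $1$ on a neighbourhood of the $(x,\xi)$‑support of all the amplitudes above, the operator $Op(1-\tilde\chi^-_\ep)\circ J^+_\ep(b)$ has — by Proposition~\ref{prop action PDO on FIO} and $\nabla_xS^+_{\ep,R}\approx\xi$ — all its leading symbols vanishing on the support of $b$, hence equals $J^+_\ep$ of an $S(-m,-\infty)$ family; writing $J^+_\ep(b)=Op(\tilde\chi^-_\ep)J^+_\ep(b)+Op(1-\tilde\chi^-_\ep)J^+_\ep(b)$, inserting the harmless $\tilde\zeta_2(x)$ (equal to $1$ on $\{|x|\ge2R^2\}$, which contains the $x$‑support of the FIO amplitudes), collecting the $S(0,-\infty)$ amplitudes into $\tilde e_m(\ep)$ and the $S(-m,-\infty)$ ones into $\tilde r_m(\ep)$, and keeping track of supports via Lemma~\ref{lem action hDO on FIO low freq}, yields the claimed identity; conjugating back by $D_\ep$ completes the reduction.

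The genuinely delicate points, beyond bookkeeping, are two. One is the phase‑space geometry: the new amplitudes must stay incoming, which works because $\nabla_xS^+_{\ep,R}(x,\xi)=\xi+O(R^{-\rho})$ lets one recentre every cutoff at $\xi$ at the cost of an arbitrarily small loss $\varepsilon$ of aperture and a power‑of‑$R$ worsening of the radial threshold — this is exactly why $R$ is chosen large only after fixing $\varepsilon$. The other is uniformity in $\ep$, which rests on the $\ep$‑uniform seminorm bounds for $p_\ep,p_{\ep,1}$ in conic $x$‑regions (Remark~\ref{rem property rescaled symbol}), for $S^+_{\ep,R}-x\cdot\xi$ (Proposition~\ref{prop hamilton-jacobi equation}), for the symbols $q_{\ep,k}$ (Proposition~\ref{prop parametrix phi low freq}), and for all calculus remainders (Proposition~\ref{prop action PDO on FIO}, Lemma~\ref{lem action hDO on FIO low freq}); granting these, the proof is a line‑by‑line transcription of \cite[Proposition 5.2]{BTglobalstrichartz} into the rescaled setting.
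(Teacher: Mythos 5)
The paper gives no proof of this lemma and only cites \cite[Proposition 5.2]{BTglobalstrichartz}, so your task was essentially to transcribe that argument into the rescaled calculus. Your overall strategy is exactly the right one: conjugate by $D_\ep$ to reduce to a single $\ep$-uniform unit-scale identity, observe that the cutoff $(1-\chi)(x/R^2)$ paired with the structure $\check a^+(\ep)=\sum_{|\alpha|\ge1}\check a^+_\alpha(\ep)\partial^\alpha_x\chi^+_{1\to2}$ forces the amplitude into the \emph{incoming} region $\Ga^-(R,J_1,\tilde\tau_1)$, compose with the parametrix of $f(P_\ep)\zeta_1(x)$ via Proposition~\ref{prop action PDO on FIO}, and peel off $Op(\tilde\chi^-_\ep)$ modulo an $S(-m,-\infty)$ remainder. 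The support analysis -- derivatives falling on $\kappa$ are killed by $(1-\chi)(x/R^2)$, derivatives falling on $\theta_{1\to2}$ live in $\{-x\cdot\xi/|x||\xi|\ge-\tau_2+\varepsilon\}$, and the recentring $\nabla_x S^+_{\ep,R}\approx\xi$ costs only $O(R^{-\rho})$ -- is correct and is indeed the heart of the matter.

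The gap is in the bookkeeping of the second remainder term. You wrote the remainder of the parametrix for $f(P_\ep)\zeta_1(x)$ as $\tilde\zeta_1(x)(P_\ep+1)^{-m}B(\ep)\scal{x}^{-m}$, but this is the remainder for $\zeta_1(x)f(P_\ep)$; after taking the adjoint (which is what ``$f(P_\ep)\zeta_1(x)$'' requires) the weight and resolvent trade places and the remainder reads $\scal{x}^{-N}B(\ep)^\star(P_\ep+1)^{-m}\tilde\zeta_1(x)$, with the weight on the \emph{far left}. This is not a cosmetic point: the statement of the lemma requires $\scal{\ep x}^{-m/2}$ to sit on the far left of $\tilde R_m(\ep)$, because that factor is what the subsequent application feeds into the local energy decay $(\ref{local energy decay fractional schrodinger low freq})$. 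With your (misplaced) form the far-left operator is $\tilde\zeta_1(x)(P_\ep+1)^{-m}$, which carries no polynomial decay and cannot be factored as $\scal{x}^{-m/2}\cdot(\text{bounded})$. Moreover the step you invoke to repair this, ``$\tilde\zeta_1(x)(P_\ep+1)^{-m}\scal{x}^{m/2}=O_{\Lc(L^2)}(1)$'', is false: $(P_\ep+1)^{-m}$ is a resolvent power, which smooths in frequency but does not absorb spatial weights -- it maps $\scal{x}^{m/2}L^2$ to $\scal{x}^{m/2}H^{2m}$, not to $L^2$. The correct bookkeeping starts from $\scal{x}^{-N}B(\ep)^\star(P_\ep+1)^{-m}\tilde\zeta_1(x)$, extracts $\scal{x}^{-m/2}$ from the left and $\scal{x}^{-m/2}J^+_\ep(\check a^+(\ep))$ from the right, and then reduces the middle block $\scal{x}^{m/2-N}B(\ep)^\star(P_\ep+1)^{-m}(1-\chi)(x/R^2)\scal{x}^{m/2}$ to $O_{\Lc(L^2)}(1)$ by re-expanding $(P_\ep+1)^{-m}(1-\chi)(x/R^2)$ with Proposition~\ref{prop low freq parametrix} at unit scale (whose symbols carry the decay $S(-j,-2m-j)$ that precisely compensates the $\scal{x}^{m/2}$), taking $N$ large. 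This is more work than the one-line absorption you stated, and it is exactly the place where the low-frequency rescaled calculus earns its keep.
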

We set
\[
A^+_\ep = (A^+_{\ep,1} + A^+_{\ep,2})f(\ep^{-2}P),
\]
where 
\begin{align}
A^+_{\ep,1} &= \tilde{\zeta}(\ep x) (\ep^{-2}P+1)^{-l} \zeta(\ep x) Op_\ep(\tilde{\chi}^+_\ep)^\star f(\ep^{-2}P), \nonumber \\
A^+_{\ep,2} &= \tilde{\zeta}(\ep x) (\ep^{-2}P+1)^{-l} \tilde{B}_l(\ep) \scal{\ep x}^{-l} f(\ep^{-2}P). \nonumber
\end{align}
Using Lemma $\ref{lem expansion rescaled pdo}$, we firstly consider
\[
-ih^{-1}\int_{0}^{t} A^+_{\ep,1} e^{-i(t-s)\ep (\ep^{-1}\Lambda_g)^\sigma} \left(Op_\ep(\tilde{\chi}^-_\ep) \tilde{\zeta}_2(\ep x)\Jc^+_\ep(\tilde{e}_m(\ep)) + \tilde{R}_m(\ep) \right)e^{-is\ep \Lambda^\sigma} \Jc^+_\ep(b^+_\ep)^\star ds.
\]
The remainder terms are treated similarly as the second remainder term using $(\ref{propagation estimate 1 low freq})$ and Lemma $\ref{lem remainder 3}$. The term involving $Op_\ep(\tilde{\chi}^-_\ep) \tilde{\zeta}_2(\ep x) \Jc^+_\ep(\tilde{e}_m(\ep))$ is studied by the same analysis as the second term using $(\ref{propagation estimate 3 low freq})$ instead of $(\ref{propagation estimate 1})$. For the term
\[
-ih^{-1}\int_{0}^{t} A^+_{\ep,2} e^{-i(t-s)\ep (\ep^{-1}\Lambda_g)^\sigma} \left(Op_\ep(\tilde{\chi}^-_\ep)\tilde{\zeta}_2(\ep x) \Jc^+_\ep(\tilde{e}_m(\ep)) + \tilde{R}_m(\ep) \right)e^{-is\ep \Lambda^\sigma} \Jc^+_\ep(b^+_\ep)^\star ds,
\]
the required estimate follows by using $(\ref{local energy decay fractional schrodinger low freq})$ and Lemma $\ref{lem remainder 3}$.
This completes the proof.
\defendproof
\section{Inhomogeneous Strichartz estimates} \label{section inhomogeneous strichartz estimates}
\setcounter{equation}{0}
In this section, we will give the proofs of Proposition $\ref{prop global inhomogeneous strichartz frac schro}$ and Proposition $\ref{prop global inhomogeneous strichartz frac wave}$. The main tool is the homogeneous Strichartz estimates $(\ref{global strichartz frac schro})$ and the so called Christ-Kiselev Lemma. To do so, we recall the following result (see \cite{ChristKiselev} or \cite{Sogge}).
\begin{lem}\label{lem Christ-Kiselev lemma}
Let $X$ and $Y$ be Banach spaces and assume that $K(t,s)$ is a continuous function taking its values in the bounded operators from $Y$ to $X$. Suppose that $-\infty \leq c < d \leq \infty$, and set
\[
Af(t) = \int_c^d K(t,s)f(s) ds.
\]
Assume that 
\[
\|Af\|_{L^q([c,d],X)} \leq C \|f\|_{L^p([c,d],Y)}.
\]
Define the operator $\tilde{A}$ as
\[
\tilde{A}f(t) = \int_c^t K(t,s) f(s)ds,
\]
Then for $1 \leq p <q \leq \infty$, there exists $\tilde{C}>0$ such that
\[
\|\tilde{A}f\|_{L^q([c,d],X)} \leq \tilde{C} \|f\|_{L^p([c,d],Y)}.
\]  
\end{lem}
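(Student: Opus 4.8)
The plan is to reduce the statement to the classical combinatorial core of the Christ--Kiselev argument: a Whitney-type dyadic decomposition of the triangle $\{(t,s): c\le s\le t\le d\}$ adapted to $f$, which succeeds precisely because the inequality $p<q$ is \emph{strict}. If $f=0$ a.e.\ there is nothing to prove, so I would fix $f\in L^p([c,d],Y)$ with $M_0:=\|f\|_{L^p([c,d],Y)}^p\in(0,\infty)$, introduce the finite measure $d\mu(s)=\|f(s)\|_Y^p\,ds$ and the nondecreasing function $F(t)=M_0^{-1}\mu([c,t])$, which maps $[c,d]$ onto $[0,1]$ (this is fine whether $[c,d]$ is bounded or $\mathbb R$, since $\mu$ is finite). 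For a dyadic subinterval $J\subseteq[0,1]$ put $E(J)=\{t\in[c,d]:F(t)\in J\}$, so that $\mu(E(J))=|J|\,M_0$.

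Next I would decompose the off-diagonal triangle $\{c\le s<t\le d\}$ (the diagonal is $\mu\otimes\mu$-null, so it is irrelevant) as the disjoint union, over scales $n\ge1$ and $0\le k<2^{n-1}$, of the rectangles $E(M^n_k)\times E(L^n_k)$, where $M^n_k=[\,2k2^{-n},(2k+1)2^{-n}\,]$ is the left half and $L^n_k=[\,(2k+1)2^{-n},(2k+2)2^{-n}\,]$ the right half of the $k$-th dyadic interval of generation $n-1$. This is the usual bisection: the part of the triangle with $s$ in the left and $t$ in the right half of $[0,1]$ (in $F$-coordinates) is a full rectangle, and the two remaining corner triangles are recursed upon. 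It follows that
\[
\tilde A f(t)=\sum_{n\ge1}\ \sum_{k=0}^{2^{n-1}-1}\mathds{1}_{E(L^n_k)}(t)\,A\big(\mathds{1}_{E(M^n_k)}f\big)(t).
\]

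Then I would estimate scale by scale. For fixed $n$ the sets $E(L^n_k)$ are pairwise disjoint, so
\[
\Big\|\sum_{k}\mathds{1}_{E(L^n_k)}A\big(\mathds{1}_{E(M^n_k)}f\big)\Big\|_{L^q([c,d],X)}=\Big(\sum_k\big\|\mathds{1}_{E(L^n_k)}A\big(\mathds{1}_{E(M^n_k)}f\big)\big\|_{L^q(X)}^q\Big)^{1/q}\le\Big(\sum_k\big\|A\big(\mathds{1}_{E(M^n_k)}f\big)\big\|_{L^q(X)}^q\Big)^{1/q},
\]
with the obvious replacement of the $\ell^q$ norm by a supremum when $q=\infty$. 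Applying the hypothesis $\|Ag\|_{L^q(X)}\le C\|g\|_{L^p(Y)}$ and then $\|\mathds{1}_{E(M^n_k)}f\|_{L^p(Y)}^p=\mu(E(M^n_k))=2^{-n}M_0$ gives the bound
\[
C\Big(\sum_{k=0}^{2^{n-1}-1}\big(2^{-n}M_0\big)^{q/p}\Big)^{1/q}=C\,2^{-1/q}\,2^{\,n(1/q-1/p)}\,\|f\|_{L^p([c,d],Y)}.
\]
Since $p<q$ forces $1/q-1/p<0$, summing this geometric series over $n\ge1$ yields $\|\tilde A f\|_{L^q([c,d],X)}\le\tilde C\|f\|_{L^p([c,d],Y)}$ with $\tilde C=C\,2^{-1/q}\sum_{n\ge1}2^{\,n(1/q-1/p)}<\infty$, which is the claim.

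The only genuinely delicate point is the bookkeeping in the dyadic decomposition: one must verify that the rectangles $E(M^n_k)\times E(L^n_k)$ really tile the triangle up to a $\mu\otimes\mu$-null set even when $F$ has flat pieces or jumps (so that $E(J)$ need not be an interval), and that all the sets and the integrands remain measurable; this is routine but is where care is needed, and it is cleanest to phrase everything in terms of $\mu$ rather than Lebesgue measure. Everything else is Minkowski/H\"older together with the geometric summation in $n$, and the endpoint $q=\infty$ is covered by the same scheme with suprema in place of $\ell^q$ norms, still summable because $p<\infty$.
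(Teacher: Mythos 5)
The paper does not prove this lemma; it is stated as a known tool with citations to \cite{ChristKiselev} and \cite{Sogge}, so there is no in-paper argument to compare yours against. Your proof is correct and is the standard dyadic Christ--Kiselev argument in the form popularized by Tao and reproduced in Sogge's book: push forward to $[0,1]$ by $F(t)=\|f\|_{L^p}^{-p}\int_c^t\|f(s)\|_Y^p\,ds$, Whitney-decompose the triangle $\{s<t\}$ into rectangles $M^n_k\times L^n_k$ (left half $\times$ right half of a generation-$(n-1)$ dyadic interval), use the disjointness of the $L^n_k$ at each fixed scale to split the $L^q_t$ norm into an $\ell^q_k$ sum, apply the hypothesis on $A$ to each piece to get $(2^{-n}M_0)^{1/p}$, and sum the resulting geometric series $\sum_n 2^{n(1/q-1/p)}$, which converges exactly because $p<q$. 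Your flagged subtlety is the right one to worry about and is genuinely harmless: whenever $F$ is constant on an interval, $\mu$ (and hence $f$) vanishes a.e.\ there, so the identity $\tilde A f(t)=\sum_{n,k}\mathds{1}_{E(L^n_k)}(t)\,A(\mathds{1}_{E(M^n_k)}f)(t)$ holds for a.e.\ $t$, including at dyadic values of $F(t)$ and at the flat pieces; and the $q=\infty$ case goes through with suprema in place of the $\ell^q$ sum, still summable in $n$ since $p<\infty$.
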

We are now able to prove the inhomogeneous Strichartz estimates $(\ref{global inhomogeneous strichartz frac schro})$ and $(\ref{global inhomogeneous strichartz frac wave})$.  
\paragraph{Inhomogeneous Strichartz estimates for fractional Schr\"odinger equation.}
We give the proof of Proposition $\ref{prop global inhomogeneous strichartz frac schro}$ by following a standard argument (see e.g. \cite{Zhang}). Let $u$ be the solution to $(\ref{fractional schrodinger equation})$. By Duhamel formula, we have
\[
u(t)= e^{-it\Lambda_g^\sigma} u_0 -i \int_{0}^{t} e^{-i(t-s)\Lambda_g^\sigma} F(s)ds =: u_{\text{hom}}(t)+ u_{\text{inh}}(t).
\]
Using $(\ref{global strichartz frac schro})$, we have
\[
\|u_{\text{hom}}\|_{L^p(\R, L^q)} \leq C\|u_0\|_{\dot{H}^{\gamma_{p,q}}_g}.
\]
It remains to prove the inhomogeneous part, namely
\[
\Big\|\int_{0}^{t} e^{-i(t-s)\Lambda_g^\sigma} F(s)ds \Big\|_{L^p(\R, L^q)} \leq C \|F\|_{L^{a'}(\R, L^{b'})}, \nonumber
\]
where $(p,q), (a,b)$ are fractional admissible pairs satisfying $(p,a) \ne (2,2)$ and the gap condition $(\ref{gap condition frac schro})$. By the Christ-Kiselev Lemma, it suffices to prove
\begin{align}
\Big\|\int_{\R} e^{-i(t-s)\Lambda_g^\sigma} F(s) ds \Big\|_{L^p(\R, L^q)} \leq C\|F\|_{L^{a'}(\R, L^{b'})}, \label{inhomo strichartz frac schro}
\end{align}
for all fractional admissible pairs satisfying $(\ref{gap condition frac schro})$ excluding the case $p=a'=2$. We now prove $(\ref{inhomo strichartz frac schro})$. Define
\[
T_{\gamma_{p,q}}: u_0 \in \Lch_g \mapsto \Lambda_g^{-\gamma_{p,q}} e^{-it\Lambda_g^\sigma}u_0 \in L^p(\R,L^q).
\]
Thanks to $(\ref{global strichartz frac schro})$, we see that $T_{\gamma_{p,q}}$ is a bounded operator. Similar result holds for $T_{\gamma_{a,b}}$. Next, we take the adjoint for $T_{\gamma_{a,b}}$ and obtain a bounded operator
\[
T^\star_{\gamma_{a,b}}: F \in L^{a'}(\R, L^{b'}) \mapsto \int_{\R} \Lambda_g^{-\gamma_{a,b}} e^{is\Lambda_g^\sigma}F(s) ds \in  \Lch'_g,
\] 
where $\Lch'_g$ is the dual space of $\Lch_g$. Using $(\ref{gap condition frac schro})$ or $\gamma_{a,b} = -\gamma_{a',b'}-\sigma=-\gamma_{p,q}$, we have
\[
\Big\|\int_{\R} e^{-i(t-s)\Lambda_g^\sigma} F(s) ds \Big\|_{L^p(\R, L^q)} = \|T_{\gamma_{p,q}} T^\star_{\gamma_{a,b}}F\|_{L^p(\R, L^q)} \leq C\|F\|_{L^{a'}(\R,L^{b'})},
\]
and $(\ref{inhomo strichartz frac schro})$ follows. \newline
\indent Next, we prove
\begin{align}
\|u\|_{L^\infty(\R, \dot{H}^{\gamma_{p,q}}_g)} \leq C\Big( \|u_0\|_{\dot{H}^{\gamma_{p,q}}_g}+\|F\|_{L^{a'}(\R, L^{b'})}\Big). \nonumber
\end{align}
By using the homogeneous Strichartz estimate for a fractional admissible pair $(\infty,2)$ with $\gamma_{\infty,2}=0$ and that $\|u\|_{L^\infty(\R, \dot{H}^{\gamma_{p,q}}_g)} = \|\Lambda_g^{\gamma_{p,q}}u\|_{L^\infty(\R, L^2)}$, we have
\begin{align}
\|u\|_{L^\infty(\R, \dot{H}^{\gamma_{p,q}}_g)} \leq C\Big( \|\Lambda_g^{\gamma_{p,q}}u_0\|_{L^2}+\Big\|\int_{0}^{t} \Lambda_g^{\gamma_{p,q}} e^{-i(t-s)\Lambda_g^\sigma} F(s)ds\Big\|_{L^\infty(\R, L^2)}\Big). \nonumber
\end{align}
Using the Christ-Kiselev Lemma, it suffices to prove
\[
\Big\|\int_{\R} \Lambda_g^{\gamma_{p,q}}  e^{-i(t-s)\Lambda_g^\sigma} F(s)ds\Big\|_{L^\infty(\R, L^2)} \leq C \|F\|_{L^{a'}(\R, L^{b'})}.
\]
Using the above notation, we have
\begin{align}
\Big\|\int_{\R} \Lambda_g^{\gamma_{p,q}}  e^{-i(t-s)\Lambda_g^\sigma} F(s)ds\Big\|_{L^\infty(\R, L^2)} &= \|T_0 T^\star_{\gamma_{a,b}}F\|_{L^\infty(\R, L^2)} \nonumber \\
&\leq C \|T^\star_{\gamma_{a,b}}F\|_{L^2} \leq C \|F\|_{L^{a'}(\R, L^{b'})}. \nonumber
\end{align}
This completes the proof of Proposition $\ref{prop global inhomogeneous strichartz frac schro}$.  \defendproof
\paragraph{Inhomogeneous Strichartz estimates for fractional wave equation.}
We give the proof of Proposition $\ref{prop global inhomogeneous strichartz frac wave}$. Let $v$ be the solution to $(\ref{linear fractional wave equation})$. By Duhamel formula, we have
\[
v(t)= \cos t\Lambda_g^\sigma u_0 + \frac{\sin t\Lambda_g^\sigma}{\Lambda_g^\sigma}u_1 + \int_{0}^{t} \frac{\sin (t-s)\Lambda_g^\sigma}{\Lambda_g^\sigma} F(s)ds =: v_{\text{hom}}(t)+ v_{\text{inh}}(t),
\]
where $v_{\text{hom}}$ is the sum of first two terms and $v_{\text{inh}}$ is the last one. We firstly prove
\begin{align}
\|v\|_{L^p(\R, L^q)} \leq C\Big( \|v_0\|_{\dot{H}^{\gamma_{p,q}}_g}+\|v_1\|_{\dot{H}^{\gamma_{p,q}-\sigma}_g}+\|F\|_{L^{a'}(\R, L^{b'})}\Big). \nonumber 
\end{align}
By observing that
\[
\cos t\Lambda_g^\sigma= \frac{e^{it\Lambda_g^\sigma} + e^{-it\Lambda_g^\sigma}}{2}, \quad \sin t\Lambda_g^\sigma= \frac{e^{it\Lambda_g^\sigma} - e^{-it\Lambda_g^\sigma}}{2i},
\]
and using $(\ref{global strichartz frac schro})$, we have
\[
\|v_{\text{hom}}\|_{L^p(\R, L^q)} \leq C \Big(\|v_0\|_{\dot{H}^{\gamma_{p,q}}_g} +\|v_1\|_{\dot{H}^{\gamma_{p,q}-\sigma}_g}\Big).
\]
Let us prove the inhomogeneous part which is in turn equivalent to 
\begin{align}
\Big\|\int_{0}^{t}\frac{e^{-i(t-s)\Lambda_g^\sigma}}{\Lambda_g^\sigma}F(s)ds \Big\|_{L^p(\R, L^q)} \leq C \|F\|_{L^{a'}(\R, L^{b'})}, \label{inhomo strichartz frac wave}
\end{align}
where $(p,q), (a,b)$ are fractional admissible satisfying the gap condition $(\ref{gap condition frac wave})$. We define the operator
\[
T_{\gamma_{p,q}}: u_0 \in \Lch_g \mapsto \Lambda_g^{-\gamma_{p,q}} e^{-it\Lambda_g^\sigma}u_0 \in L^p(\R,L^q).
\]
Thanks to $(\ref{global strichartz frac schro})$, we see that $T_{\gamma_{p,q}}$ is a bounded operator. Next, we take the adjoint for $T_{\gamma_{a,b}}$ and obtain a bounded operator
\[
T^\star_{\gamma_{a,b}}: F \in L^{a'}(\R, L^{b'}) \mapsto \int_{\R} \Lambda_g^{-\gamma_{a,b}} e^{is\Lambda_g^\sigma}F(s) ds \in  \Lch'_g.
\] 
Using $(\ref{gap condition frac wave})$ or $\gamma_{a,b} = -\gamma_{a',b'}-\sigma=-\gamma_{p,q}+\sigma$, we have
\[
\Big\|\int_{\R} \frac{e^{-i(t-s)\Lambda_g^\sigma}}{\Lambda_g^\sigma} F(s) ds \Big\|_{L^p(\R, L^q)} = \|T_{\gamma_{p,q}} T^\star_{\gamma_{a,b}}F\|_{L^p(\R, L^q)} \leq C\|F\|_{L^{a'}(\R,L^{b'})}.
\]
As in the proof of the inhomogeneous Strichartz estimates for the fractional Schr\"odinger equations, the Christ-Kiselev Lemma implies $(\ref{inhomo strichartz frac wave})$ for all fractional admissible pairs satisfying the gap condition $(\ref{gap condition frac wave})$ excluding the case $p=a'=2$. \newline
\indent Next, we prove
\begin{align}
\|v\|_{L^\infty(\R, \dot{H}^{\gamma_{p,q}}_g)} \leq C\Big( \|v_0\|_{\dot{H}^{\gamma_{p,q}}_g}+\|v_1\|_{\dot{H}^{\gamma_{p,q}-\sigma}_g}+\|F\|_{L^{a'}(\R, L^{b'})}\Big). \nonumber
\end{align}
By using the homogeneous Strichartz estimate for a fractional admissible pair $(\infty,2)$ with $\gamma_{\infty,2} =0$ and that $\|v\|_{L^\infty(\R, \dot{H}^{\gamma_{p,q}}_g)} = \|\Lambda_g^{\gamma_{p,q}}v\|_{L^\infty(\R, L^2)}$, we have
\begin{multline}
\|v\|_{L^\infty(\R, \dot{H}^{\gamma_{p,q}}_g)} \leq C\Big( \|\Lambda_g^{\gamma_{p,q}}v_0\|_{L^2}+\|\Lambda_g^{\gamma_{p,q}}v_1\|_{\dot{H}^{-\sigma}_g} \Big. \\
\Big.+\Big\|\int_{0}^{t} \Lambda_g^{(\gamma_{p,q}-\sigma)} \sin {(t-s)\Lambda_g^\sigma} F(s)ds\Big\|_{L^\infty(\R, L^{2})}\Big). \nonumber
\end{multline}
Using the Christ-Kiselev Lemma, it suffices to prove
\[
\Big\|\int_{\R} \Lambda_g^{(\gamma_{p,q}-\sigma)}  e^{-i(t-s)\Lambda_g^\sigma} F(s)ds\Big\|_{L^\infty(\R, L^2)} \leq C \|F\|_{L^{a'}(\R, L^{b'})}.
\]
Using the above notation, we have
\begin{align}
\Big\|\int_{\R} \Lambda_g^{(\gamma_{p,q}-\sigma)}  e^{-i(t-s)\Lambda_g^\sigma} F(s)ds\Big\|_{L^\infty(\R, L^2)} &= \|T_0 T^\star_{\gamma_{a,b}}F\|_{L^\infty(\R, L^2)} \nonumber \\
&\leq C \|T^\star_{\gamma_{a,b}}F\|_{L^2} \leq C \|F\|_{L^{a'}(\R, L^{b'})}. \nonumber
\end{align}
We repeat the same process for $\partial_t v$ and obtain
\[
\|\partial_t v\|_{L^\infty(\R, \dot{H}^{\gamma_{p,q}-\sigma}_g)} \leq C\Big( \|v_0\|_{\dot{H}^{\gamma_{p,q}}_g}+\|v_1\|_{\dot{H}^{\gamma_{p,q}-\sigma}_g}+\|F\|_{L^{a'}(\R, L^{b'})}\Big).
\]
This completes the proof of Proposition $\ref{prop global inhomogeneous strichartz frac wave}$.  \defendproof
\section*{Acknowledgments}
The author would like to express his deep thanks to his wife - Uyen Cong for her encouragement and support. He would like to thank his supervisor Prof. Jean-Marc Bouclet for the kind guidance, encouragement and careful reading of the manuscript. He also would like to thank the reviewer for his/her helpful comments and suggestions. 

\end{document}